\newtheorem{theorem}{Theorem}[section]
\newtheorem{proposition}[theorem]{Proposition}
\newtheorem{lemma}[theorem]{Lemma}
\newtheorem{corollary}[theorem]{Corollary}
\theoremstyle{definition}
\newtheorem{example}[theorem]{Example}
\newtheorem{remark}[theorem]{Remark}
\newcommand{\va}{\mathsf} 
\newcommand{\m}{\mathbf} 
\newcommand{\jn}{\mathbin{\vee}}
\newcommand{\mt}{\mathbin{\wedge}}
\newcommand{\ZZ}{\mathbb{Z}}
\newcommand{\NN}{\mathbb{N}}
\newcommand{\vr}{\mathbb{V}}
\newcommand{\NC}{\mathcal{N}}
\newcommand{\Nc}{\mathrm{N}}
\newcommand{\Grp}[1]{{#1_{g}}}
\newcommand{\grp}[1]{{#1_{g}}}
\newcommand{\cn}[1]{\boldsymbol{#1}}
\newcommand{\g}{\sigma}
\newcommand{\h}{\gamma}
\newcommand{\slt}{\delta}
\DeclareMathOperator{\lcm}{lcm}
\DeclareMathOperator{\supp}{supp}
\newcommand{\loc}[1]{{#1}_{\mathrm{loc}}}
\newcommand{\norm}[1]{\lVert #1 \rVert}
\DeclareMathOperator{\per}{per}
\DeclareMathOperator{\Cv}{C}
\DeclareMathOperator{\core}{\lambda}
\DeclareMathOperator{\hg}{h}
\newcommand{\f}{\varphi}
\newcommand{\ps}{\psi}
\journal{-}
\begin{document}

\begin{frontmatter}

\title{Axiomatizing small varieties of periodic \texorpdfstring{$\ell$}{ℓ}-pregroups}
\author[inst1]{Nikolaos Galatos}
\ead{ngalatos@du.edu}

\affiliation[inst1]{organization= {Department of Mathematics, University of Denver},
            addressline={2390 S. York St.}, 
            city={Denver},
            postcode={80208}, 
            state={CO},
            country={USA}}

\author[inst2]{Simon Santschi}
\ead{simon.santschi@unibe.ch}
\affiliation[inst2]{organization= {Mathematical Institute, University of Bern},
            addressline={Alpeneggstrasse 22}, 
            city={Bern},
            postcode={3012}, 
            country={Switzerland}}

\begin{abstract}
We provide an axiomatization for the variety generated by the $n$-periodic $\ell$-pregroup $\m F_n(\ZZ)$, for each $n \in \ZZ^+$, as well as for all possible 
joins of such varieties; the finite joins  form an ideal in the subvariety lattice of $\ell$-pregroups and we describe fully its lattice structure.

On the way, we characterize all finitely subdirectly irreducible (FSI) algebras in the variety generated by $\m F_n(\ZZ)$ as the $n$-periodic $\ell$-pregroups that have a totally ordered group skeleton (and are not trivial). The finitely generated FSIs that are not $\ell$-groups are further characterized as lexicographic products of a (finitely generated) totally ordered abelian $\ell$-group and $\m F_k(\ZZ)$, where $k \mid n$.
\end{abstract}
\begin{keyword}

periodic lattice-ordered pregroups\sep  equational axiomatization\sep variety generation \sep residuated lattices \sep lattice-ordered groups 
\end{keyword}
\end{frontmatter}

\section{Introduction}\label{s: intro}

A \emph{lattice-ordered pregroup} (\emph{$\ell$-pregroup}) is an algebra
$(A,\mt,\jn,\cdot,1,^{\ell} ,^{r})$,
such that $(A,\mt,\jn)$ is a lattice, $(A,\cdot,1)$ is a monoid, multiplication preserves the lattice order, and for all $a\in A$,
\[
a^{\ell} a\leq1\leq aa^{\ell} \text{ and }aa^{r}\leq1\leq a^{r}a.
\]
The $\ell$-pregroups that satisfy $x^\ell\approx x^r$ are precisely the \emph{lattice-ordered groups} (\emph{$\ell$-groups}); see \cite{AF, KM, Da} for an introduction to the theory of $\ell$-groups. \emph{Pregroups} are ordered algebraic structures defined similar to $\ell$-pregroups, but without the demand that the partial order forms a lattice; they were first introduced in the study of mathematical linguistics   and later studied extensively in the context of both theoretical and applied linguistics (see \cite{La, Bu, Ba} for example). Moreover,  $\ell$-pregroups are precisely the \emph{involutive residuated lattice}s, where multiplication coincides with its De Morgan dual, hence their study connects to the algebraic investigation of \emph{substructural logics}; see \cite{GJKO} for more on residuated lattices and substructural logics, and \cite{GG1, GG2} for more on the motivation and importance of $\ell$-pregroups.

A prominent and simple example of an $\ell$-pregroup is $\m{F}(\ZZ)$ which is based on the set of all order-preserving finite-to-one functions on the chain $\ZZ$  under functional composition, pointwise order, and suitably (and uniquely) defined $^\ell$ and $^r$ operations; this $\ell$-pregroup is actually \emph{distributive} (it has a distributive lattice reduct). In \cite{GG1} it is shown that $\m{F}(\ZZ)$ generates the whole variety $\mathsf{DLP}$ of distributive $\ell$-pregroups. 
 
An $\ell$-pregroup is called \emph{$n$-periodic}, for a given $n \in \ZZ^+$, if it satisfies the equation $x^{\ell^n} \approx x^{r^n}$ and the corresponding variety is denoted by $\mathsf{LP}_n$; as mentioned above, $\mathsf{LP}_1$ ends up being the variety of $\ell$-groups. An $\ell$-pregroup is called \emph{periodic}, if it is $n$-periodic for some $n \in \ZZ^+$. In \cite{GJ} it is shown that every periodic $\ell$-pregroup is distributive and in \cite{GG2} it is proved that $\mathsf{DLP}$ is equal to the join of the varieties  $\mathsf{LP}_n$. 

Also, for every $\ell$-pregroup, its elements $a$ that satisfy $a^{\ell^n}=a^{r^n}$ (the $n$-periodic elements) form a subalgebra; the subalgebra of $1$-periodic elements is an $\ell$-group and we call it the \emph{group skeleton} of the algebra. The subalgebra of $n$-periodic elements of $\m{F}(\ZZ)$ is denoted by $\m{F}_n(\ZZ)$ and it ends up consisting of the functions on $\ZZ$ that are $n$-periodic as functions (thus justifying the terminology). 
It is observed in \cite{GG2} that $\m{F}_n(\ZZ)$ is a simple algebra, for all $n$. 

In \cite{GG2} it is shown that $\mathsf{DLP}$ is also equal to the join of the varieties $\vr(\m{F}_n(\ZZ))$, for $n \in \ZZ^+$, showcasing the important role of these varieties in understanding distributive $\ell$-pregroups. In contrast to the fact that $\m{F}(\ZZ)$ generates $\mathsf{DLP}$, $\m{F}_n(\ZZ)$ generates a proper subvariety $\vr(\m{F}_n(\ZZ))$ of $\mathsf{LP}_n$, for every $n$. (In that respect the two approximations $\mathsf{DLP}= \bigvee \mathsf{LP}_n$ and $\mathsf{DLP}= \bigvee \vr(\m{F}_n(\ZZ))$ of $\mathsf{DLP}$  are quite different.)
  $\mathsf{LP}_n$ is axiomatized by the $n$-periodicity equation, but no axiomatization of $\vr(\m{F}_n(\ZZ))$ is known. As shown in \cite{GG2}, $\vr(\m{F}_n(\ZZ))$ is decidable for every $n$, so the variety is fairly well behaved, but this does not shed any light on its axiomatization.  Our main goal is to provide such an axiomatization and we will show that this can be achieved by one additional equation. 

 Moreover,  the algebras $\m{F}_n(\ZZ)$, and the varieties they generate, are not only important in the study of the variety of distributive $\ell$-pregroups, as discussed above, but also in the study of each and every $n$-periodic variety  $\mathsf{LP}_n$. This is because, as shown in \cite{GG2}, every $n$-periodic $\ell$-pregroup embeds into a wreath product of an $\ell$-group and $\m{F}_n(\ZZ)$.

\medskip

We prove that the non-trivial subvarieties of $\vr(\m{F}_n(\ZZ))$ are all joins of varieties of the form $\vr(\m{F}_k(\ZZ))$, where $k\mid n$, and that they form a lattice isomorphic to the downset lattice of the divisibility lattice of $n$;  
moreover, the finite joins of varieties of the form $\vr(\m{F}_k(\ZZ))$, where $k \in \ZZ^+$, constitute an ideal in the lattice of $\ell$-pregroup varieties (infinite joins always produce $\mathsf{DLP}$). In that sense they are `small' varieties of periodic $\ell$-pregroups and they occupy a prominent position in this lattice. Also, this result shows that, for each $n$, the subvarieties that are properly $n$-periodic (not periodic for a smaller period) form an interval whose top is $\mathsf{LP}_n$ and whose bottom is the join 
$\bigvee \vr(\m{F}_{p^k}(\ZZ))$, where $p^k$ ranges over all of the prime-power divisors of $n$ with $k$ maximal for each prime $p$;
if $n$ is a prime power this interval is $[\vr(\m{F}_n(\ZZ)), \mathsf{LP}_n]$. 
 Since, for all $n$, the variety $\vr(\m{F}_n(\ZZ))$ is very low relative to $\mathsf{LP}_n$, it is expected that its axiomatizing identity must be quite strong; it is, thus, surprising that this can be achieved by a single simple equation. The equation itself states that: \emph{$n$th powers} of group elements are central.

In more detail, the main result of the paper is that  $\vr(\m{F}_n(\ZZ))$ is axiomatized, relative to $\mathsf{LP}_n$, by the equation $x\g(y)^n \approx \g(y)^n x$, where $\g(y)$ denotes the term $y\mt y^{\ell \ell} \mt \dots \mt y^{\ell^{2(n-1)}}$. In the context of $\mathsf{LP}_n$, the elements of the form $\g(a)$ range precisely over the group/invertible elements of the algebra; we note that in the context of $\mathsf{DLP}$,  there is no term that captures the group elements of the algebra.

It is easy to see that $\m{F}_n(\ZZ)$ is $n$-periodic and satisfies the equation.
Before discussing our approach for obtaining the converse direction of the axiomatization result we should mention that the operators $\m{F}$ and $\m{F}_n$, for all $n$, can be applied to an arbitrary chain $\m \Omega$ and they define the (distributive) $\ell$-pregroup of all maps on  $\m \Omega$ that have \emph{residuals and dual residuals of all orders}; in particular, $\m{F}_1$ is the same as the operator $\m {Aut}$ of all automorphisms (order-preserving bijections) on a chain.  Cayley's representation theorem for groups extends to Holland's embedding theorem for $\ell$-groups  (every $\ell$-group embeds into  $\m {Aut}(\m \Omega)$ for some chain $\m \Omega$ \cite{Ho-em}), to $\mathsf{DLP}$ (every distributive $\ell$-pregroup embeds into  $\m {F}(\m \Omega)$ for some chain $\m \Omega$  \cite{GH}, and $\m \Omega$ can be taken to be a lexicographic product of $\ZZ$ \cite{GG1}) and to $\mathsf{LP}_n$ for every $n$ (every $n$-periodic $\ell$-pregroup embeds into $\m {F}_n(\m \Omega)$ for some chain $\m \Omega$, which can be taken to be a lexicographic product of $\ZZ$ \cite{GG2}). Also, as mentioned above,  every $n$-periodic $\ell$-pregroup embeds into a wreath product of an $\ell$-group and $\m{F}_n(\ZZ)$. The last two results demonstrate that arbitrary $n$-periodic $\ell$-pregroups have a quite intricate and complex structure. Our proof of the main theorem requires a deep understanding of the structure of $n$-periodic $\ell$-pregroups represented as subalgebras of wreath products and this leads us to discovering key structural results of independent interest (about lexicographic products and about $\ell$-pregroups with totally ordered group skeletons); these include the following:
\begin{enumerate}
    \item Among $n$-periodic $\ell$-pregroups that satisfy the equation $x\g(y)^n \approx \g(y)^n x$
the finitely subdirectly irreducibles are exactly the ones whose group skeleton is totally ordered; in this case the group is actually abelian. (Corollary~\ref{c:fsi-commutative}) 
\item  All finitely generated $n$-periodic $\ell$-pregroups with a totally ordered abelian skeleton are lexicographic products of a totally ordered abelian $\ell$-group and an algebra in $\vr(\m{F}_n(\ZZ))$; this algebra can actually be taken to be $\m{F}_k(\ZZ)$ for some  $k \mid n$. (Theorem~\ref{t:lexprod-fsi-Fn})
\item $\vr(\m{F}_n(\ZZ))$ is closed under lexicographic products by totally ordered abelian $\ell$-groups. (Proposition~\ref{p:Fn-closed-lexprod})
\end{enumerate}
To establish the converse (of the main axiomatization theorem), i.e., that every $n$-periodic $\ell$-pregroup satisfying $x\g(y)^n \approx \g(y)^n x$ is in $\vr(\m{F}_n(\ZZ))$, it suffices to focus on  finitely generated FSI algebras. 
If  $\m L$ is a finitely generated FSI $n$-periodic $\ell$-pregroup satisfying the equation $x\g(y)^n \approx \g(y)^n x$, then by Corollary~\ref{c:fsi-commutative} we get that $\m L$ has a totally ordered abelian group skeleton. 
By Theorem~\ref{t:lexprod-fsi-Fn},  $\m L$ is a lexicographic product of a totally ordered abelian $\ell$-group and an algebra in $\vr(\m{F}_n(\ZZ))$.
Finally, by Proposition~\ref{p:Fn-closed-lexprod},  $\m L$ is in $\vr(\m{F}_n(\ZZ))$, as desired.

To prove these three key facts, we also need to study deeply the structure of the algebra $\m{F}_n(\ZZ)$ itself, define and understand lexicographic products, and work with wreath products. 

In particular, in Section~\ref{s: Sk CNS} we show that the group skeleton of an $n$-periodic $\ell$-pregroup can be realized as the image of a closure operator and also as the image of an interior operator ({the term function} $\sigma$), both of which are term-definable. Then we define a norm on an $n$-periodic $\ell$-pregroup that generalizes the absolute value of $\ell$-groups and use it to characterize the convex subalgebra closure process and to show that the lattice of convex subalgebras of an  $n$-periodic $\ell$-pregroup is isomorphic to the one of its group skeleton. Next, we use the characterization of congruences of $n$-periodic $\ell$-pregroups as convex normal subalgebras and show how the equation $x\g(y)^n \approx \g(y)^n x$ simplifies conjugation (needed for normality) and allows us to show that such an algebra and its group skeleton also have isomorphic lattices of convex normal subalgebras (hence congruences). As a result, the FSIs that satisfy $x\g(y)^n \approx \g(y)^n x$ are precisely the non-trivial algebras whose group skeleton is a totally ordered (abelian) $\ell$-group; this is the first item in our list above.

In Section~\ref{s: FnZ}, we show that $\m{F}_n(\ZZ)$ is generated by any of its elements that has periodicity $n$. This is done in successive steps, where  we consider less and less special elements of periodicity $n$, starting with the case of a minimal strictly positive idempotent, then a strictly positive idempotent and finally any such element. The proofs require a good understanding of the idempotents and of the flat elements (between two idempotents), as well as the generation process inside $\m{F}_n(\ZZ)$. We conclude the section by showing that elements in $\m{F}(\ZZ)$ generate an element of periodicity the least common multiple of their periodicities.

In Section~\ref{s: lex} we extend the notion of a lexicographic product of a totally ordered $\ell$-group with an $\ell$-group to the case where the second factor is an arbitrary $\ell$-pregroup, and show that $n$-periodicity is preserved by this construction. More importantly, we establish Proposition~\ref{p:Fn-closed-lexprod}, which is the third item in the list above. 

Section~\ref{s: n-per l-pr} starts by reviewing the representation of any $n$-periodic $\ell$-pregroup as a wreath product of an $\ell$-group with $\m{F}_n(\ZZ)$. We investigate  how to characterize internally, within such a representing wreath product, attributes of an element, such as periodicity, idempotency, and flatness. We also define, relative to a representation of an $n$-periodic $\ell$-pregroup $\m L$, its local subalgebra $\loc{\m L}$, consisting of the elements whose global component in the wreath product is the identity,  and prove that it is a convex normal subalgebra of $\m L$ and that $\m{L}/\loc{\m L} \cong \Grp{\m L}/\loc{(\Grp{\m L})}$, where $\Grp{\m L}$ is the group skeleton of $\m L$.

Section~\ref{s: axiomatization} starts with a deep study of $\ell$-pregroups of periodicity $n$ that have a totally ordered group skeleton. We prove that this skeleton is discretely ordered  and  its local subalgebra is isomorphic to $\ZZ$, when the $\ell$-pregroup is not an $\ell$-group; in particular, if the skeleton is further abelian then it decomposes as a lexicographic product, where the inner factor is the $\ell$-group $\ZZ$. 
This leads to the key result that  if a proper $n$-periodic $\ell$-pregroup has a totally ordered group skeleton, then its local subalgebra is isomorphic to $\m{F}_k(\ZZ)$, for some $k \mid n$ (Theorem~\ref{t:Fn-FSI-local}), and allows us to further establish the second item of our list above, yielding a decomposition of each finitely generated such algebra that is not an $\ell$-group and has an abelian group skeleton as a lexicographic product of an $\ell$-group and  $\m{F}_k(\ZZ)$, for some $k \mid n$.

The previous work allows us to prove, in the second part of Section~\ref{s: axiomatization}, that $\vr(\m{F}_n(\ZZ))$ is axiomatized by $x\g(y)^n \approx \g(y)^n x$ among $n$-periodic $\ell$-pregroups and we also obtain finite (actually  one-equation) axiomatizations for every join of such varieties. We prove that such joins (excluding the variety $\mathsf{DLP}$ itself) form an ideal of the subvariety lattice of $\mathsf{DLP}$ and that it is isomorphic to the finite-downset lattice of the divisibility lattice of the natural numbers.
Finally, as corollaries of our results, we get that the  FSIs in $\vr(\m{F}_n(\ZZ))$  are the $n$-periodic $\ell$-pregroups that have a totally ordered abelian-group skeleton (and are non-trivial). The finitely generated FSIs that are not $\ell$-groups are exactly the lexicographic products of a finitely generated totally ordered abelian $\ell$-group and $\m F_k(\ZZ)$, where $k \mid n$.

\section{The group skeleton and convex (normal) subalgebras}\label{s: Sk CNS}

In this section we define the group skeleton of an $\ell$-pregroup, a subalgebra that is an $\ell$-group and consists of the invertible elements, and the notion of a norm of an element that allows for a convenient description of  convex subalgebras of an $n$-periodic $\ell$-pregroup. Also, we show that every $n$-periodic $\ell$-pregroup and its group skeleton have isomorphic lattices of convex subalgebras. If the algebras satisfy the equation $x\g(y)^n \approx \g(y)^n x$, then the same is true for the lattices of convex normal subalgebras, hence also of their congruence lattices. This will yield the crucial characterization of the finitely subdirectly irreducibles, which will be useful in later sections.

Recall from Section~\ref{s: intro} that a \emph{lattice-ordered pregroup} (\emph{$\ell$-pregroup}) is an algebra
$(A,\mt,\jn,\cdot,1,^{\ell} ,^{r})$,
such that $(A,\mt,\jn)$ is a lattice, $(A,\cdot,1)$ is a monoid, multiplication preserves the lattice order, and for all $a\in A$,
\[
a^{\ell} a\leq1\leq aa^{\ell} \text{ and }aa^{r}\leq1\leq a^{r}a.
\]
An \emph{involutive residuated lattice} is an algebra
$(A,\mt,\jn,\cdot,1,^{\ell} ,^{r})$,
where $(A,\mt,\jn)$ is a lattice, $(A,\cdot,1)$ is a monoid, and for all $a, b, c\in A$,
\[
a^{\ell r}=a=a^{r \ell} \text{ and } ab \leq c \Leftrightarrow b \leq a^r + c \Leftrightarrow a \leq c+ b^\ell,
\]
where $a+b:=(b^r a^r)^\ell$; in this case we also have $a+b=(b^\ell a^\ell)^r$. The next lemma shows that $\ell$-pregroups are special involutive residuated lattices and provides some useful properties that we will use later.

\begin{lemma}[\cite{GJ, GJKO}] \label{l: InRL}
$\ell$-pregroups are exactly the involutive residuated lattices that satisfy $x+y \approx xy$. Hence, they also satisfy $(xy)^r \approx y^rx^r$, $(xy)^\ell \approx y^\ell x^\ell$, and for all $a,b,c$
  \[
  ab \leq c \Leftrightarrow b \leq a^r  c \Leftrightarrow a \leq c b^\ell;
  \]
the latter is called \emph{residuation}.
 Also, multiplication distributes over both joins and meets, and the De Morgan laws hold: 
 $(x\jn y)^r \approx x^r \mt y^r$, $(x\mt y)^r \approx x^r \jn y^r$, 
 $(x\jn y)^\ell \approx x^\ell \mt y^\ell$, $(x\mt y)^\ell \approx x^\ell \jn y^\ell$.
\end{lemma}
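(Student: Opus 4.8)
The plan is to prove the stated equivalence in both directions and then read off the remaining identities, everything being a sequence of short residuated‑lattice computations. For the forward direction, let $\m A$ be an $\ell$-pregroup. The backbone is the \emph{uniqueness of inverses}: if $xy\leq 1\leq yx$ then $y=x^{r}$, and if $yx\leq 1\leq xy$ then $y=x^{\ell}$. Indeed, using $xx^{r}\leq 1\leq x^{r}x$ with associativity and monotonicity one gets $y=1\cdot y\leq (x^{r}x)y=x^{r}(xy)\leq x^{r}$ and $x^{r}=1\cdot x^{r}\leq (yx)x^{r}=y(xx^{r})\leq y$, so $y=x^{r}$; the other case is symmetric. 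Taking $x:=a^{\ell}$ (whose right inverse is witnessed by $a$, since $a^{\ell}a\leq 1\leq aa^{\ell}$) gives $a^{\ell r}=a$, and taking $x:=a^{r}$ gives $a^{r\ell}=a$; thus ${}^{\ell}$ and ${}^{r}$ are mutually inverse bijections. Checking that $b^{r}a^{r}$ witnesses the right inverse of $ab$ (one computes $(ab)(b^{r}a^{r})=a(bb^{r})a^{r}\leq aa^{r}\leq 1$ and $(b^{r}a^{r})(ab)=b^{r}(a^{r}a)b\geq b^{r}b\geq 1$) yields $(ab)^{r}\approx b^{r}a^{r}$, and symmetrically $(ab)^{\ell}\approx b^{\ell}a^{\ell}$. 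Residuation $ab\leq c\Leftrightarrow b\leq a^{r}c\Leftrightarrow a\leq cb^{\ell}$ is then immediate (from $ab\leq c$: $b\leq (a^{r}a)b=a^{r}(ab)\leq a^{r}c$; from $b\leq a^{r}c$: $ab\leq a(a^{r}c)\leq c$; and symmetrically for the other). Setting $a+b:=(b^{r}a^{r})^{\ell}$ we get $a+b=((ab)^{r})^{\ell}=ab$, so the involutive‑residuated‑lattice biconditionals $ab\leq c\Leftrightarrow b\leq a^{r}+c\Leftrightarrow a\leq c+b^{\ell}$ reduce to residuation; together with $a^{\ell r}=a=a^{r\ell}$ this exhibits $\m A$ as an involutive residuated lattice satisfying $x+y\approx xy$.

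For the converse, let $\m A$ be an involutive residuated lattice with $x+y\approx xy$. The defining biconditionals then read $ab\leq c\Leftrightarrow b\leq a^{r}c\Leftrightarrow a\leq cb^{\ell}$, and specializing $c:=1$ gives $ab\leq 1\Leftrightarrow b\leq a^{r}\Leftrightarrow a\leq b^{\ell}$. Putting $(a,b):=(a^{\ell},a)$ in the last form gives $a^{\ell}a\leq 1$, and putting $(a,b):=(a,a^{r})$ and using $a^{r\ell}=a$ gives $aa^{r}\leq 1$. For the two remaining inequalities I would use $(ab)^{\ell}\approx b^{\ell}a^{\ell}$ and $(ab)^{r}\approx b^{r}a^{r}$ — which follow immediately by applying ${}^{\ell}$ to $a+b=(b^{\ell}a^{\ell})^{r}$ and ${}^{r}$ to $a+b=(b^{r}a^{r})^{\ell}$, using $x^{r\ell}=x=x^{\ell r}$ — together with antitonicity of ${}^{\ell}$ and ${}^{r}$ (if $a\leq b$ then $ab^{r}\leq bb^{r}\leq 1$, so $b^{r}\leq a^{r}$, and similarly for ${}^{\ell}$). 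Then applying ${}^{\ell}$ to $aa^{r}\leq 1$ gives $1\leq (aa^{r})^{\ell}=a^{r\ell}a^{\ell}=aa^{\ell}$, and applying ${}^{r}$ to $a^{\ell}a\leq 1$ gives $1\leq a^{r}a$; hence $\m A$ is an $\ell$-pregroup.

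It remains to collect the last assertions, valid in either description. The identities $(xy)^{r}\approx y^{r}x^{r}$, $(xy)^{\ell}\approx y^{\ell}x^{\ell}$ and residuation were established above. Since ${}^{\ell}$ and ${}^{r}$ are mutually inverse antitone bijections, each is an order anti-automorphism of the lattice $(A,\mt,\jn)$ and hence interchanges $\jn$ and $\mt$; this gives all four De Morgan laws. Multiplication distributes over $\jn$ by the standard argument from the residuation adjunctions, and over $\mt$ using the pregroup inequalities: from $a^{\ell}(ab\mt ac)\leq a^{\ell}(ab)=(a^{\ell}a)b\leq b$ and likewise $\leq c$ one obtains $ab\mt ac\leq aa^{\ell}(ab\mt ac)=a\bigl(a^{\ell}(ab\mt ac)\bigr)\leq a(b\mt c)$, the reverse inequality being monotonicity (the right‑handed versions are symmetric). \textbf{The main obstacle}, such as it is, is not any single step but the bookkeeping — at each point one must select the correct one of the four defining inequalities and the correct one of the two negations — and the fact that the passage between the primitive operations ${}^{\ell},{}^{r}$ of an $\ell$-pregroup and the linear negations of an involutive residuated lattice rests entirely on the uniqueness‑of‑inverses observation; I would therefore state and prove that first and lean on it throughout.
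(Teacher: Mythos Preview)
Your argument is correct; the paper does not give its own proof of this lemma but cites it from \cite{GJ,GJKO}, so there is no approach to compare against. Two minor points worth tightening: in the converse direction you invoke both $a+b=(b^{r}a^{r})^{\ell}$ and $a+b=(b^{\ell}a^{\ell})^{r}$, but only the first is definitional --- the second should be derived (apply the already-established $(xy)^{r}=y^{r}x^{r}$ to $b^{\ell}a^{\ell}$ and use $x^{\ell r}=x$) before you use it. Also, the inequalities $1\leq a^{r}a$ and $1\leq aa^{\ell}$ come more directly by instantiating the residuation biconditional with $b=1,\,c=a$ and with $a=1,\,c=b$ respectively, which avoids the detour through antitonicity; and in that same converse direction you should note (it is one line from residuation) that multiplication is order-preserving, since this is part of the $\ell$-pregroup definition.
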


\begin{remark}\label{r: xxl}
 In later sections we will see that positive idempotent elements play an important role in our results. 
    Using Lemma~\ref{l: InRL} it is straightforward to check that for an $\ell$-pregroup $\m{L}$ and $a\in L$, $aa^\ell$ is  idempotent and $1 \leq aa^\ell$. Moreover, if $a$ is already an idempotent with $1\leq a$, then $aa^\ell =a$. Indeed, if $1\leq a$ is an idempotent, then we have $aa \leq a$, so, by residuation, $a \leq aa^\ell$; conversely $1\leq a$ implies $a^\ell \leq 1$; hence $aa^\ell \leq a$. 
\end{remark}

\subsection{The group skeleton}

As mentioned in the introduction, for an $\ell$-pregroup $\m{L}$ we define the \emph{group skeleton of} $\m{L}$ to be the set $\grp{L} := \{a \in L : a^\ell=a^r \}$. The following lemma justifies the terminology and shows that $\grp{L}$ supports a subalgebra $\Grp{\m{L}}$.

\begin{lemma} [Lemma 2.6(1) of \cite{GG2}]
   For every $\ell$-pregroup $\m{L}$, $\grp{L}$ gives rise to a subalgebra $\Grp{\m{L}} = (\grp{L}, \mt, \jn, \cdot, 1  , {}^\ell, {}^r)$ of $\m{L}$; moreover,  $\Grp{\m{L}}$ is an $\ell$-group and consists precisely of the invertible elements of $\m L$.
\end{lemma}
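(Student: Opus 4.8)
The plan is to prove the two assertions in sequence: first that $\grp{L}$ is closed under all the operations (so forms a subalgebra $\Grp{\m L}$) and that it is an $\ell$-group, and second that $\grp{L}$ is exactly the set of invertible elements of $\m L$. I would start with the second, more concrete characterization, since it essentially drives everything. Suppose $a \in L$ is invertible, say $ab = ba = 1$. Then from $a^\ell a \le 1 \le a a^\ell$ and $a^r a \le 1 \le a a^r$ I want to squeeze $b = a^\ell = a^r$. Indeed, multiplying $a^\ell a \le 1$ on the right by $b$ gives $a^\ell \le b$, while multiplying $1 \le a a^\ell$ on the left by $b$ gives $b \le a^\ell$; so $a^\ell = b$, and symmetrically $a^r = b$, hence $a^\ell = a^r$ and $a \in \grp{L}$. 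Conversely, if $a \in \grp{L}$, set $b := a^\ell = a^r$; then $ba = a^\ell a \le 1$ and $ba = a^r a \ge 1$ give $ba = 1$, and $ab = a a^r \le 1$ together with $ab = a a^\ell \ge 1$ give $ab = 1$, so $a$ is invertible with inverse $b \in \grp{L}$. This simultaneously shows $\grp{L}$ is closed under $^\ell$ and $^r$ (the inverse of an invertible element is invertible, and on $\grp{L}$ both unary operations return that inverse).

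Next I would check closure under the monoid structure: $1 \in \grp{L}$ since $1^\ell = 1 = 1^r$ (which follows from $1^\ell = 1^\ell 1 \le 1 \le 1 1^\ell = 1^\ell$, and similarly for $1^r$), and $\grp{L}$ is closed under $\cdot$ because a product of invertible elements is invertible: if $a, b$ are invertible then $(ab)(b^{-1}a^{-1}) = 1 = (b^{-1}a^{-1})(ab)$. For closure under $\mt$ and $\jn$, here is where I would lean on Lemma~\ref{l: InRL}, in particular the De Morgan laws $(x \jn y)^r \approx x^r \mt y^r$ and $(x \mt y)^r \approx x^r \jn y^r$ (and their $^\ell$ versions). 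If $a, b \in \grp{L}$ then $(a \jn b)^r = a^r \mt b^r = a^\ell \mt b^\ell = (a \jn b)^\ell$, so $a \jn b \in \grp{L}$; dually for $a \mt b$. Thus $\Grp{\m L} = (\grp{L}, \mt, \jn, \cdot, 1, {}^r, {}^\ell)$ is a subalgebra of $\m L$. It is a lattice-ordered monoid inheriting these properties from $\m L$, and since every element is invertible with $^\ell$ (equivalently $^r$) giving a genuine two-sided inverse that is order-inverting (because $x \le y$ implies $y^\ell \le x^\ell$ in any $\ell$-pregroup, via residuation), $\Grp{\m L}$ is a lattice-ordered group; the equation $x^\ell \approx x^r$ holds on it by definition of $\grp{L}$.

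I do not expect a serious obstacle here — this is a foundational lemma — but the one point that needs a little care is verifying closure under $\mt$ and $\jn$ without circularity, which is why invoking the De Morgan identities from Lemma~\ref{l: InRL} rather than trying to argue directly about invertibility of meets and joins is the clean route. (Directly, one could note that in an $\ell$-group the meet and join of invertible elements are invertible, but proving that from scratch inside $\m L$ is more work than the De Morgan one-liner.) A secondary point worth stating explicitly is that the unary operations on $\Grp{\m L}$ really do land in $\grp{L}$ and agree there — this is immediate from the first part, since $a^\ell = a^r$ is precisely membership in $\grp{L}$, and that common value is the group inverse. With these pieces assembled, the lemma follows.
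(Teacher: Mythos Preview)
Your argument is correct. The paper does not prove this lemma itself; it is quoted from \cite{GG2} without proof. What you have written is the standard verification, and all the steps are sound: the characterization of $\grp{L}$ as the set of invertible elements via the squeeze argument, closure under the monoid operations by invertibility, and closure under $\mt,\jn$ via the De~Morgan laws of Lemma~\ref{l: InRL}. There is nothing to compare against in the paper beyond the citation, so your write-up simply supplies the omitted details.
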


For $k\in \NN$ we will write $x^{[k]}$ for $x^{\ell^{2k}}$ and $x^{[-k]}$ for $x^{r^{2k}}$. Note that the $n$-periodicity equation $x^{\ell^n} \approx x^{r^n}$ is equivalent to $x^{\ell^{2n}} \approx x$, i.e., to  $x^{[n]} \approx x$. 

\begin{lemma}\label{l:double-l-auto}
For every  $\ell$-pregroup $\m L$ and $k \in \ZZ$, the map $a \mapsto a^{[k]}$ is an automorphism of  $\m L$ that fixes the group skeleton of $\m{L}$.
\end{lemma}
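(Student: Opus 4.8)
The plan is to reduce the whole statement to properties of the single bijection $\lambda\colon a\mapsto a^\ell$ of $L$. By the involutive residuated lattice axioms recalled before Lemma~\ref{l: InRL} we have $a^{\ell r}=a=a^{r\ell}$, so $\lambda$ is a bijection with inverse $\rho\colon a\mapsto a^r$. For $k\ge 0$ the map $a\mapsto a^{[k]}$ is $\lambda^{2k}$, and for $k<0$ it is $\rho^{-2k}=\lambda^{2k}$; thus in every case $a\mapsto a^{[k]}$ is the integer power $\lambda^{2k}$ of $\lambda$. Since compositions and inverses of automorphisms are automorphisms, it suffices to prove that $\lambda^2$ is an automorphism of $\m L$.

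To check that $\lambda^2$ preserves each basic operation I would use Lemma~\ref{l: InRL}. Multiplication: $\lambda$ is an anti-homomorphism of the monoid, i.e.\ $(ab)^\ell=b^\ell a^\ell$, so the composite $\lambda^2$ is a homomorphism; the identity satisfies $1^\ell=1$ (from $1^\ell\,1\le 1\le 1\,1^\ell$), hence is fixed by $\lambda$ and by $\lambda^2$. Lattice operations: by the De Morgan laws $\lambda$ interchanges $\mt$ and $\jn$, so $\lambda^2$ preserves each of them. The unary operations: $^\ell=\lambda$ and $^r=\lambda^{-1}$ commute with $\lambda^2$, so $\lambda^2$ preserves $^\ell$ and $^r$. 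Finally $\lambda^2$ is bijective, with inverse $\rho^2=\lambda^{-2}$. Hence $\lambda^2$, and therefore every $\lambda^{2k}$, is an automorphism.

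For the last assertion, let $a$ belong to the group skeleton, so $a^\ell=a^r$, i.e.\ $\lambda(a)=\lambda^{-1}(a)$; applying $\lambda$ gives $\lambda^2(a)=a$, and so $\lambda^{2k}(a)=a$ for every $k\in\ZZ$. Thus $a\mapsto a^{[k]}$ fixes $\grp{L}$ pointwise. The argument is entirely a matter of unwinding the involutive residuated lattice identities, so I do not expect a genuine obstacle; the only point requiring a little care is observing that $\lambda$ commutes with $\rho=\lambda^{-1}$, which is what makes preservation of $^\ell$ and $^r$ automatic rather than a separate verification.
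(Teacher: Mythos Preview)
Your proof is correct and follows essentially the same approach as the paper: the paper also reduces to showing that $a\mapsto a^{\ell\ell}$ is an automorphism (citing \cite{GG2} for that step rather than spelling out the monoid/lattice/unary checks as you do) and then observes that $a^{\ell\ell}=a^{r\ell}=a$ for skeleton elements. Your explicit verification via the anti-automorphism $\lambda$ and the De Morgan laws is exactly the content of the cited argument, so the two proofs coincide in substance.
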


\begin{proof}
The argument that the map $a \mapsto a^{\ell \ell}$ is an automorphism is given in the proof of Lemma 2.6 of \cite{GG2}, so its inverse $a\mapsto a^{rr}$, and more generally its $k$th power, i.e., the map $a \mapsto a^{[k]}$, is also an automorphism. That it fixes the group skeleton of $\m L$ follows from the fact that an element $a \in L$ is  invertible iff $a^\ell=a^r(=a^{-1})$, so for invertible elements we have $a^{\ell\ell}=a^{r \ell}=a$. 
\end{proof}

For each $n\in \ZZ^+$ we define the terms 
\begin{center}
 $\g_n(x) := x\mt x^{[1]} \mt \dots \mt x^{[n-1]}$ and $\h_n(x) := x\jn x^{[1]} \jn \dots \jn x^{[n-1]}$.    
\end{center}
When we consider $n$-periodic $\ell$-pregroups for a fixed $n$,  we will drop the subscript and write $\g(x)$ and $\h(x)$ for $\g_n(x)$ and $\h_n(x)$, respectively.  As noted in \cite{GJ}, the images of the maps $\g$ and $\h$ on an $n$-periodic $\ell$-pregroup $\m L$ are both equal to the group skeleton $\grp{L}$ of $\m L$.

\begin{remark}
       All of the results in this section still hold if the assumption of $n$-periodicity is replaced by the  equation  $\g_n(x)^\ell \approx \g_n(x)^r$, which is a properly weaker equation (for $n>2$), as we explain below. 
   First note that in a $k$-periodic $\ell$-pregroup, with $k\leq n$, for every element $a$ and $m\in \NN$, we obtain $a^{[m]} = a^{[l]}$, where $l$ is the remainder of dividing $m$ by $k$ (so $0\leq l \leq k-1$), by iteratively applying the map $x \mapsto x^{\ell\ell}$ to $k$-periodicity. Since $k \leq n$, we have
    \[
    \g_n(a) = a \mt a^{[1]} \mt \dots \mt a^{[n-1]} = a \mt a^{[1]} \mt \dots a^{[k-1]} = \g_k(a),
    \]
    and since $\g_k(a)$ is invertible under the assumption of $k$-periodicity, we obtain $\g_n(a)^\ell = \g_k(a)^\ell = \g_k(a)^r = \g_n(a)^r$. Therefore, the variety $\bigvee_{k=1}^n \mathsf{LP}_k$ is contained in the variety axiomatized by $\g_n(x)^\ell \approx \g_n(x)^r$; an interesting question is whether we have equality.
     Now, for $n>2$, there exists $k\leq n$ with $k \nmid n$, thus $k$-periodicity
     implies $\g_n(x)^\ell \approx \g_n(x)^r$ but not $n$-periodicity (as the algebra $\m F_k(\ZZ)$ illustrates); as a result, for $n>2$, $\g_n(x)^\ell \approx \g_n(x)^r$ is strictly weaker than $n$-periodicity. 
     Even though all of our results in this section hold if we replace $n$-periodicity by  $\g_n(x)^\ell \approx \g_n(x)^r$, we opt to state them under the assumption of $n$-periodicity for simplicity, given that we do not have an example of a non-periodic $\ell$-pregroup satisfying $\g_n(x)^\ell \approx \g_n(x)^r$.
    \end{remark}

Recall that an interior operator $f$ on an $\ell$-pregroup $\m L$ (more generally on a residuated lattice) that satisfies $f(a)f(b) \leq f(ab)$ is called a \emph{conucleus}.
Moreover, a closure operator is called \emph{topological} if it is a join homomorphism and an interior operator is called \emph{topological} if it is a meet homomorphism.

\begin{lemma}\label{l: sigmagamma}
    Let $\m{L}$ be an $n$-periodic $\ell$-pregroup.
    \begin{enumerate}[label = \textup{(\arabic*)}]
        \item The map $\g\colon L \to L$, $a \mapsto \g(a)$ is a conucleus on $\m L$ that preserves meets (hence a topological interior operator). In particular, $\Grp{\m{L}}$ is the conuclear image of $\m L$ under $\g$ and for each $a\in L$, $\g(a) = \max \{b \in \grp{L} : b \leq a \}$.
        \item The map $\h\colon L \to L$, $a \mapsto \h(a)$ is a closure operator that preserves joins (hence a topological closure operator) and satisfies $\h(ab) \leq \h(a)\h(b)$.
        In particular, $\Grp{\m{L}}$ is the closure image of $\m L$ under $\h$ and for each $a\in L$, $\h(a) = \min \{b \in \grp{L} : a \leq b \}$.
    \end{enumerate}
\end{lemma}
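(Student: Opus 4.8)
The plan is to verify every claim directly from the formulas $\g(a) = \bigwedge_{k=0}^{n-1} a^{[k]}$ and $\h(a) = \bigvee_{k=0}^{n-1} a^{[k]}$, using two ingredients from the preceding material. First, by Lemma~\ref{l:double-l-auto}, for each $k$ the map $a \mapsto a^{[k]}$ is an automorphism of $\m L$; in particular it is a lattice homomorphism, a monoid homomorphism, and fixes $\grp{L}$ pointwise. Second, $n$-periodicity is exactly $a^{[n]} = a$; applying the automorphism $a \mapsto a^{[1]}$ termwise to $\g(a)$ and reindexing gives $\g(a)^{[1]} = \bigwedge_{k=0}^{n-1}(a^{[k]})^{[1]} = \bigwedge_{k=1}^{n} a^{[k]} = \bigwedge_{k=1}^{n-1} a^{[k]} \mt a = \g(a)$, and an element $c$ with $c^{[1]} = c$ (i.e.\ $c^{\ell\ell}=c$) satisfies $c^\ell = c^r$ (apply ${}^r$ and use $c^{\ell r}=c$), so $\g(a) \in \grp{L}$; conversely $\g(b) = b$ for every $b \in \grp{L}$, since each $[k]$ fixes $b$. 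This reproves the fact, recalled earlier, that the image of $\g$ is exactly $\grp{L}$ (and dually for $\h$, via $\h(a)^{[1]} = \bigvee_{k=1}^{n} a^{[k]} = \h(a)$).

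For part (1) I would then check the interior-operator axioms in turn. Monotonicity: $\g$ is a finite meet of the monotone maps $a \mapsto a^{[k]}$. Contractivity: $\g(a) \le a^{[0]} = a$. Idempotency: $\g(a) \in \grp{L}$ by the paragraph above, so each $[k]$ fixes $\g(a)$ and $\g(\g(a)) = \bigwedge_k \g(a)^{[k]} = \g(a)$. Meet-preservation: since each $[k]$ is a lattice homomorphism, $\g(a \mt b) = \bigwedge_k (a \mt b)^{[k]} = \bigwedge_k (a^{[k]} \mt b^{[k]}) = \bigl(\bigwedge_k a^{[k]}\bigr) \mt \bigl(\bigwedge_k b^{[k]}\bigr) = \g(a) \mt \g(b)$, so $\g$ is a topological interior operator. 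For the conucleus inequality, from $\g(a) \le a^{[k]}$, $\g(b) \le b^{[k]}$ and monotonicity of multiplication we get $\g(a)\g(b) \le a^{[k]} b^{[k]} = (ab)^{[k]}$ for every $k$, hence $\g(a)\g(b) \le \bigwedge_k (ab)^{[k]} = \g(ab)$; also $\g(1) = \bigwedge_k 1^{[k]} = 1$. Thus $\g$ is a conucleus.

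For the ``in particular'' clause, the image of $\g$ is $\grp{L}$ by the first paragraph, and since $\Grp{\m L}$ is already known to be a subalgebra of $\m L$ (Lemma 2.6(1) of \cite{GG2}), the conuclear image of $\m L$ under $\g$ coincides with $\Grp{\m L}$. The maximum characterization is then immediate: $\g(a) \le a$ with $\g(a) \in \grp{L}$, and if $b \in \grp{L}$ with $b \le a$ then $b = \g(b) \le \g(a)$ by monotonicity. Part (2) is entirely dual with $\h(a) = \bigvee_{k=0}^{n-1} a^{[k]}$: $\h$ is monotone, extensive ($a \le \h(a)$), idempotent (again $\h(a) \in \grp{L}$), and preserves joins since each $[k]$ is a lattice homomorphism; moreover $\h(ab) = \bigvee_k (ab)^{[k]} = \bigvee_k a^{[k]}b^{[k]} \le \h(a)\h(b)$ using $a^{[k]} \le \h(a)$ and $b^{[k]} \le \h(b)$. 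The image is $\grp{L}$ and $\h(a) = \min\{b \in \grp{L} : a \le b\}$ exactly as before.

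I do not expect a genuine obstacle here: every step reduces to a short computation once Lemma~\ref{l:double-l-auto} and the reindexing that turns $n$-periodicity into ``$\g(a)$ (resp.\ $\h(a)$) is fixed by $[1]$'' are in hand. The only mildly delicate points are observing that being $[1]$-fixed forces invertibility, and noting that the conuclear (resp.\ closure) image is \emph{literally} the subalgebra $\Grp{\m L}$ and not just a subset of $\grp{L}$ — which is immediate from the already-established fact that $\Grp{\m L}$ is a subalgebra.
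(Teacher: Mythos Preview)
Your proof is correct and follows essentially the same approach as the paper's: both use Lemma~\ref{l:double-l-auto} to verify the interior/closure axioms, the meet/join preservation, and the submultiplicativity directly from the defining formulas. The only cosmetic difference is that you spell out why $\g(a)\in\grp{L}$ via the reindexing argument, while the paper simply cites this fact from \cite{GJ}; your derivation of the conucleus inequality ($\g(a)\g(b)\le a^{[k]}b^{[k]}$ for each $k$) is a minor rephrasing of the paper's diagonal extraction from $\bigwedge_{i,j}a^{[i]}b^{[j]}$.
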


\begin{proof}
    (1) First note that it follows from Lemma~\ref{l:double-l-auto}, that $\g$ is order-preserving. Moreover,  if $a$ is invertible, then $a^{[k]}=a$, for all $k \in \ZZ$, so $\g(a)=a$; since the image of $\sigma$ is precisely $\grp{L}$, $\g$ is idempotent. Also, by definition, $\g(a) \leq a$; hence $\g$ is a  interior operator. Moreover, for $a,b\in L$, we have 
    \begin{align*}
        \g(a)\g(b) &= (a\mt a^{[1]} \mt \dots \mt a^{[n-1]})(b\mt b^{[1]} \mt \dots \mt b^{[n-1]}) \\
        &= \bigwedge_{i,j \in [0,n-1]} a^{[i]}b^{[j]} \\
        &\leq  \bigwedge_{i \in [0,n-1]} a^{[i]}b^{[i]}  \\
        &= \bigwedge_{i \in [0,n-1]} (ab)^{[i]} = \g(ab). 
    \end{align*}
    where we used Lemma~\ref{l:double-l-auto} for the  third equality. Thus $\g$ is a conucleus with image $\Grp{\m{L}}$. In particular, we get that for each $a\in L$, $\g(a) = \max \{b \in \grp{L} : b \leq a \}$. Finally we have for all $a,b\in L$, 
    \[
    \g(a\mt b) = \bigwedge_{i=0}^{n-1} (a \mt b)^{[i]} = \bigwedge_{i=0}^{n-1} a^{[i]} \mt b^{[i]} = \Big(\bigwedge_{i=0}^{n-1} a^{[i]} \Big) \mt \Big( \bigwedge_{i=0}^{n-1} \mt b^{[i]} \Big) =  \g(a)\mt \g(b),
    \]
    where we use Lemma~\ref{l:double-l-auto} for the second equality, so $\g$ is topological.
    
    The proof of (2) is similar to the one for (1).
\end{proof}

Note that the map $\h$ is a conucleus on the dual $\ell$-pregroup (the multiplication and the identity stay the same, but the order and the inverses are dualized).

\begin{remark}\label{r:grp-conucleus}
    Let $\m{L}$ be an $n$-periodic $\ell$-pregroup, $a\in L$, and $g\in \grp{L}$. Then 
    \begin{enumerate}[label = \textup{(\arabic*)}]
        \item $\g(ga) = g\g(a)$ and $\g(ag) = \g(a)g$.
        \item $\h(ga) = g\h(a)$ and $\h(ag) = \h(a)g$.
    \end{enumerate}
\end{remark}
\begin{proof}
    Immediate by the definition of $\g$ and $\h$ and Lemma~\ref{l:double-l-auto}, since $g^{[k]} = g$ for each $g\in \grp{L}$ and $k\in \ZZ$.
\end{proof}

\begin{lemma}\label{l:grp-ell-r}
    If $\m{L}$ is an $n$-periodic $\ell$-pregroup and $a\in L$, then 
    $\g(a)^{-1} = \h(a^{\ell}) = \h(a^r)$ and $\h(a) = \g(a^\ell)^{-1} = \g(a^r)^{-1}$.
\end{lemma}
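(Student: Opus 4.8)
The plan is to derive both identities from the De Morgan laws of Lemma~\ref{l: InRL}, from the fact that each map $a \mapsto a^{[k]}$ is an automorphism of $\m L$ (Lemma~\ref{l:double-l-auto}), and from the observation that $\g(a)$ lies in $\grp L$ and is therefore invertible, so that $\g(a)^\ell = \g(a)^r = \g(a)^{-1}$.

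First I would expand $\g(a)^\ell$. Applying the De Morgan law $(x \mt y)^\ell \approx x^\ell \jn y^\ell$ repeatedly gives
\[
\g(a)^\ell = \big(a \mt a^{[1]} \mt \dots \mt a^{[n-1]}\big)^\ell = a^\ell \jn (a^{[1]})^\ell \jn \dots \jn (a^{[n-1]})^\ell.
\]
Since $a \mapsto a^{[k]}$ is an automorphism of $\m L$, it commutes with ${}^\ell$, so $(a^{[k]})^\ell = (a^\ell)^{[k]}$; hence the right-hand side equals $a^\ell \jn (a^\ell)^{[1]} \jn \dots \jn (a^\ell)^{[n-1]} = \h(a^\ell)$. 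The identical computation using the De Morgan law for ${}^r$ gives $\g(a)^r = \h(a^r)$. As $\g(a)$ is invertible we have $\g(a)^{-1} = \g(a)^\ell = \g(a)^r$, which establishes $\g(a)^{-1} = \h(a^\ell) = \h(a^r)$.

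For the second chain of identities I would instantiate the one just proved at $a^\ell$ and at $a^r$ in place of $a$, using $a^{\ell r} = a = a^{r \ell}$ (Lemma~\ref{l: InRL}): this yields $\g(a^\ell)^{-1} = \h((a^\ell)^r) = \h(a)$ and $\g(a^r)^{-1} = \h((a^r)^\ell) = \h(a)$, so $\h(a) = \g(a^\ell)^{-1} = \g(a^r)^{-1}$.

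I do not expect a genuine obstacle here; the argument is essentially bookkeeping. The only points that need a little care are that ${}^\ell$ and ${}^r$ are mutually inverse bijections of $L$ (so that $(a^{[k]})^r = (a^r)^{[k]}$ follows from Lemma~\ref{l:double-l-auto} exactly as the ${}^\ell$ case does), and that applying an automorphism to the finite meet defining $\g$ acts termwise, leaving the indices in the resulting join defining $\h$ in the expected order.
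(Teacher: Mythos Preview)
Your proof is correct and follows essentially the same route as the paper's: expand $\g(a)^\ell$ via the De Morgan laws, commute ${}^\ell$ with ${}^{[k]}$, and use invertibility of $\g(a)$; the paper then simply notes that ``the proof for the other equalities is very similar,'' whereas you spell out the substitution of $a^\ell$ and $a^r$ for the second chain.
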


\begin{proof}
Note that, by Lemma~\ref{l:double-l-auto} and Lemma~\ref{l: InRL},
    \begin{align*}
    \g(a)^{-1} = \g(a)^\ell &= (a \mt a^{[1]} \mt \dots \mt a^{[n]})^\ell \\
    &= a^\ell \jn (a^{[1]})^\ell \jn \dots \jn (a^{[n]})^\ell \\
    &= a^\ell \jn (a^\ell)^{[1]} \jn \dots \jn (a^\ell)^{[n]} \\
    &= \h(a^\ell).
    \end{align*}
    The proof for the other equalities is very similar.
\end{proof}

\subsection{Convex subalgebras}

If $\m{L}$ is an $n$-periodic $\ell$-pregroup and $a\in L$, we define $\norm{a} = \g(a)^{-1} \jn \h(a)$. Note that if $a$ is invertible, then $\sigma(a)=\gamma(a) =a$, so $\norm{a}=a \jn a^{-1}$; i.e., $\norm{a}$ coincides with the usual absolute value as defined in $\ell$-groups. We establish some basic properties of the norm of an element that will be useful in describing convex subalgebras.

\begin{lemma}\label{l:norm}
     Let $\m L$ be an $n$-periodic $\ell$-pregroup. 
     \begin{enumerate}[label = \textup{(\arabic*)}]
         \item For $a \in L$, $\norm{a}^{-1} \leq a \leq \norm{a}$ and $1 \leq \norm{a}$.
         \item For $a \in L$, $\norm{a} = \norm{a^\ell} = \norm{a^r}$.
         \item For $a,b\in L$, $\norm{a \mt b} \leq \norm{a}\jn \norm{b}$.
         \item For $a,b\in L$, $\norm{a \jn b} \leq \norm{a}\jn \norm{b}$.
          \item For $a,b \in L$, $\norm{ab} \leq \norm{a}\norm{b}\norm{a}$.
     \end{enumerate}
\end{lemma}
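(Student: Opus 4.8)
The plan is to prove the five items in order, using throughout the two basic facts that $\g$ is a topological interior operator (meet-preserving conucleus) with $\g(a)\le a$, that $\h$ is a topological closure operator ($\h(ab)\le\h(a)\h(b)$, $a\le\h(a)$), and the identities of Lemma~\ref{l:grp-ell-r} relating $\g$, $\h$, $^\ell$ and $^r$. First, for (1): from $\g(a)\le a\le\h(a)$ and $\norm a=\g(a)^{-1}\jn\h(a)$ we get $a\le\h(a)\le\norm a$ directly; applying $^{-1}$ (order-reversing on the group skeleton) to $\g(a)\le a$ is not available since $a$ need not be invertible, so instead I would argue $\norm a^{-1}=(\g(a)^{-1}\jn\h(a))^{-1}=\g(a)\mt\h(a)^{-1}\le\g(a)\le a$, using that $\g(a),\h(a)$ are invertible and that inversion turns $\jn$ into $\mt$ in the group skeleton. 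Finally $1\le\norm a$ follows since $\g(a)^{-1}$ and $\h(a)$ are inverses of each other up to the chain position — more carefully, $1=\g(a)^{-1}\g(a)\le\g(a)^{-1}\h(a)\le(\g(a)^{-1}\jn\h(a))^2$, and since $\norm a\in\grp L$ is invertible, $w\le w^2$ forces $1\le w$; alternatively note $\g(a)\le\h(a)$ gives $1\le\g(a)^{-1}\h(a)\le\norm a$ after checking $\g(a)^{-1}\h(a)\le\norm a^2$ or directly that $\g(a)^{-1}\le\g(a)^{-1}\jn\h(a)$ and $\h(a)\ge 1$ is false in general — so the cleanest route is $\g(a)\le\h(a)\Rightarrow 1\le\g(a)^{-1}\h(a)$ and then bound $\g(a)^{-1}\h(a)\le\norm a\cdot\norm a$ won't immediately work either; I will instead use $\norm a\jn\norm a^{-1}$-style reasoning: since $\norm a$ is invertible and $\norm a^{-1}\le a\le\norm a$ (from the two displayed inequalities), we get $\norm a^{-1}\le\norm a$, hence $1\le\norm a$.

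For (2): by Lemma~\ref{l:grp-ell-r}, $\g(a^\ell)^{-1}=\h(a)$ and $\h(a^\ell)=\g(a)^{-1}$, so $\norm{a^\ell}=\g(a^\ell)^{-1}\jn\h(a^\ell)=\h(a)\jn\g(a)^{-1}=\norm a$, and symmetrically for $a^r$; this is a one-line substitution. For (3) and (4): $\g(a\mt b)=\g(a)\mt\g(b)$ since $\g$ is meet-preserving, so $\g(a\mt b)^{-1}=\g(a)^{-1}\jn\g(b)^{-1}$ in $\grp L$; also $\h(a\mt b)\le\h(a)\mt\h(b)\le\h(a)\jn\h(b)$ — actually I only need $\h(a\mt b)\le\h(a)\jn\h(b)$ which is weaker — hence $\norm{a\mt b}=\g(a\mt b)^{-1}\jn\h(a\mt b)\le\g(a)^{-1}\jn\g(b)^{-1}\jn\h(a)\jn\h(b)=\norm a\jn\norm b$. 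For (4) I would dualize: $\h(a\jn b)=\h(a)\jn\h(b)$ (join-preserving), and $\g(a\jn b)\ge\g(a)\jn\g(b)$ so $\g(a\jn b)^{-1}\le\g(a)^{-1}\mt\g(b)^{-1}\le\g(a)^{-1}\jn\g(b)^{-1}$, giving the same bound.

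For (5), the submultiplicativity $\norm{ab}\le\norm a\,\norm b\,\norm a$: here $\norm{ab}=\g(ab)^{-1}\jn\h(ab)$. For the join component, $\h(ab)\le\h(a)\h(b)\le\norm a\,\norm b\le\norm a\,\norm b\,\norm a$ since $1\le\norm a$ from (1) and multiplication is order-preserving. For the meet component I need $\g(ab)^{-1}\le\norm a\,\norm b\,\norm a$; applying Lemma~\ref{l:grp-ell-r} and Lemma~\ref{l: InRL}, $\g(ab)^{-1}=\g(ab)^\ell=\h((ab)^\ell)=\h(b^\ell a^\ell)\le\h(b^\ell)\h(a^\ell)=\norm{b}\text{-component}\cdots$ — more precisely $\h(b^\ell)\le\norm{b^\ell}=\norm b$ and $\h(a^\ell)\le\norm a$, so $\g(ab)^{-1}\le\norm b\,\norm a\le\norm a\,\norm b\,\norm a$, again using $1\le\norm a$. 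Taking the join of the two bounds gives $\norm{ab}\le\norm a\,\norm b\,\norm a$. I expect item (5) to be the main obstacle, not because it is deep but because one must be careful that $\g(ab)^{-1}$ and $\h(ab)$ are each bounded by a product lying \emph{between} the required expression, exploiting $1\le\norm a$ to pad the products symmetrically; the only subtlety in (1)–(4) is resisting the temptation to invert non-invertible elements and instead routing every inequality through the invertible elements $\g(a)$, $\h(a)$ where order-reversal under $^{-1}$ is legitimate.
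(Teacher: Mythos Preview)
Your proposal is correct and follows essentially the same route as the paper for items (2)--(5); in (5) you reach $\g(ab)^{-1}\le\g(b)^{-1}\g(a)^{-1}$ via $\h(b^\ell a^\ell)\le\h(b^\ell)\h(a^\ell)$ whereas the paper uses the conucleus inequality $\g(a)\g(b)\le\g(ab)$ directly, but these are the same bound. For (1), your final argument ``$\norm a^{-1}\le\norm a\Rightarrow 1\le\norm a$'' is valid (it is the $\ell$-group fact $g^2\ge 1\Rightarrow g\ge 1$, provable via $(g\mt 1)^2=g^2\mt g\mt 1=g\mt 1$ when $g^2\ge 1$, forcing $g\mt 1=1$), but the paper avoids this detour by observing $\g(a)\le\h(a)$ gives $\h(a)^{-1}\le\g(a)^{-1}$, whence $1\le\h(a)^{-1}\jn\h(a)\le\g(a)^{-1}\jn\h(a)=\norm a$ using only the standard $|g|\ge 1$.
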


\begin{proof}
    (1) We have 
    \[
    \norm{a}^{-1} = \g(a) \mt \h(a)^{-1} \leq \g(a) \leq a \leq \h(a) \leq \g(a)^{-1} \jn \h(a) \leq \norm{a}.
    \]
    Also,  $\g(a) \leq \h(a)$, so $1 \leq \h(a)^{-1} \jn \h(a) \leq \g(a)^{-1} \jn \h(a) = \norm{a}$.

    (2) By Lemma~\ref{l:grp-ell-r}, we have 
    \[
    \norm{a} = \g(a)^{-1} \jn \h(a) = \h(a^\ell) \jn \g(a^\ell)^{-1} = \norm{a^\ell},
    \]
    and similarly $\norm{a} = \norm{a^r}$.

    (3) Using Lemma~\ref{l: sigmagamma}(1), we have 
    \begin{align*}
        \norm{a\mt b} &= \g(a\mt b)^{-1} \jn \h(a \mt b) \\
                      &= (\g(a)\mt \g(b))^{-1} \jn \h(a \mt b)\\
                      &= \g(a)^{-1} \jn \g(b)^{-1} \jn \h(a\mt b) \\
                      &\leq \g(a)^{-1} \jn \g(b)^{-1} \jn \h(a) \jn \h(b) \\
                      &= \norm{a}\jn \norm{b}.
    \end{align*}    
    (4) Since $\g(a),\g(b) \leq \g(a\jn b)$,  we get $\g(a\jn b)^{-1} \leq \g(a)^{-1} \jn \g(b)^{-1}$. Thus, using Lemma~\ref{l: sigmagamma}(2), we have
    \begin{align*}
    \norm{a\jn b} &= \g(a\jn b)^{-1} \jn \h(a \jn b) \\
                      &= \g(a\jn b)^{-1} \jn \h(a) \jn \h(b)\\
                      &\leq \g(a)^{-1} \jn \g(b)^{-1} \jn \h(a) \jn \h(b) \\
                      &= \norm{a}\jn \norm{b}.
    \end{align*}    
     (5) First note that, since $\g(a)\g(b) \leq \g(ab)$, we have $\g(ab)^{-1} \leq (\g(a)\g(b))^{-1} = \g(b)^{-1}\g(a)^{-1}$. Thus, by (1), we get 
    \begin{align*}
    \norm{ab} &= \g(ab)^{-1} \jn \h(ab) \\
              &\leq \g(b)^{-1}\g(a)^{-1} \jn \h(a)\h(b) \\
              &\leq \norm{a}\g(b)^{-1}\norm{a} \jn \norm{a}\h(b)\norm{a} \\
              &= \norm{a}\norm{b}\norm{a}. \qedhere
    \end{align*}
\end{proof}

For $S\subseteq L$ we denote by $\Cv_{\m L}(S)$ the smallest convex subuniverse of $\m L$ that contains $S$ and we write $\Cv_{\m L}(a)$ for $\Cv_{\m L}(\{a\})$; if the algebra is clear from the context, we will omit the subscript.

\begin{lemma}\label{l:conv-subalg-gen}
    Let $\m L$ be an $n$-periodic $\ell$-pregroup and $S\subseteq L$. 
    Then 
    \[
    \Cv(S) = \{a \in L : \norm{a} \leq \norm{s_1}\cdots \norm{s_k} \text{ for some } s_1,\dots, s_k \in S\}.
    \]
\end{lemma}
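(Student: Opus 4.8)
The plan is to prove the two inclusions separately. Denote by $T$ the set on the right-hand side of the claimed identity (adopting the convention that the empty product of norms equals $1$, so that $1\in T$; in the degenerate case $S=\emptyset$ this gives $T=\{1\}=\Cv(\emptyset)$). The inclusion $T\subseteq\Cv(S)$ will follow by exploiting convexity of $\Cv(S)$ directly, while $\Cv(S)\subseteq T$ will follow once we verify that $T$ is itself a convex subuniverse of $\m L$ containing $S$, since $\Cv(S)$ is by definition the least such set.

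For $T\subseteq\Cv(S)$: fix $a\in T$, say $\norm a\leq\norm{s_1}\cdots\norm{s_k}$ with $s_1,\dots,s_k\in S$. Each $s_i$ lies in the subuniverse $\Cv(S)$, and $\norm{s_i}=\g(s_i)^{-1}\jn\h(s_i)$ is obtained from $s_i$ using $\mt$, $\jn$ and iterated $^\ell$ (note that $\g(s_i)$ and $\h(s_i)$ lie in $\grp L$, so that their inverses are their $^\ell$-images); hence each $\norm{s_i}$, and therefore the product $\norm{s_1}\cdots\norm{s_k}$, lies in $\Cv(S)$. By Lemma~\ref{l:norm}(1), $1\leq\norm a\leq\norm{s_1}\cdots\norm{s_k}$, so convexity of $\Cv(S)$ gives $\norm a\in\Cv(S)$; since $\norm a$ is a join of the group-skeleton elements $\g(a)^{-1}$ and $\h(a)$ it is invertible, so $\norm a^{-1}\in\Cv(S)$ as well. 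Finally $\norm a^{-1}\leq a\leq\norm a$ by Lemma~\ref{l:norm}(1), and a second use of convexity yields $a\in\Cv(S)$.

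For $\Cv(S)\subseteq T$: we check that $T$ is a convex subuniverse containing $S$. Membership of $S$ is immediate from $\norm s\leq\norm s$, and closure under $^\ell$ and $^r$ is Lemma~\ref{l:norm}(2). Closure under $\mt$, $\jn$ and $\cdot$ reduces to Lemma~\ref{l:norm}(3)--(5) together with two trivialities: every norm satisfies $1\leq\norm s$, so a join of products of norms lies below the single product obtained by concatenating their factors (for instance $\norm a\jn\norm b\leq\norm a\norm b$, and $P\jn Q\leq PQ$ for products $P,Q$ of norms, since all factors are $\geq 1$), and a product of products of norms is again a product of norms. Thus if $\norm a\leq P$ and $\norm b\leq Q$ for products $P,Q$ of norms of elements of $S$, then $\norm{a\mt b}\leq\norm a\jn\norm b\leq PQ$, $\norm{a\jn b}\leq\norm a\jn\norm b\leq PQ$, and $\norm{ab}\leq\norm a\norm b\norm a\leq PQP$, so $a\mt b,\ a\jn b,\ ab\in T$.

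It remains to prove convexity of $T$, which is the one step requiring structure beyond Lemma~\ref{l:norm}; this is where I expect the (modest) main obstacle. Suppose $a\leq b\leq c$ with $a,c\in T$, say $\norm a\leq P$ and $\norm c\leq Q$ for products $P,Q$ of norms of elements of $S$. From $\norm a^{-1}\leq a\leq b$ together with the fact that $\g$ is an order-preserving interior operator fixing the group element $\norm a^{-1}$ (Lemma~\ref{l: sigmagamma}(1)), we obtain $\norm a^{-1}=\g(\norm a^{-1})\leq\g(b)$, hence $\g(b)^{-1}\leq\norm a\leq P$. Dually, from $b\leq c\leq\norm c$ and the fact that $\h$ is an order-preserving closure operator fixing $\norm c$ (Lemma~\ref{l: sigmagamma}(2)), we obtain $\h(b)\leq\h(\norm c)=\norm c\leq Q$. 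Hence $\norm b=\g(b)^{-1}\jn\h(b)\leq P\jn Q\leq PQ$, so $b\in T$. Combining the two inclusions completes the argument; the only genuinely nontrivial ingredient is the interior/closure-operator description of $\g$ and $\h$ from Lemma~\ref{l: sigmagamma}, which is exactly what lets us pass from the sandwiching $\norm a^{-1}\leq b\leq\norm c$ to a bound on $\norm b$.
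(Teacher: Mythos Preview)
Your proof is correct and follows essentially the same approach as the paper: both arguments prove the two inclusions separately, both use Lemma~\ref{l:norm} for closure under the operations, and both handle convexity by bounding $\g(b)^{-1}$ and $\h(b)$ separately via monotonicity of $\g$ and $\h$. The only minor difference is that in the convexity step the paper applies $\g,\h$ directly to the sandwiching elements to get $\g(b)^{-1}\leq\g(a)^{-1}$ and $\h(b)\leq\h(c)$, whereas you first pass through $\norm a^{-1}\leq b\leq\norm c$ and use the fixpoint property of $\g,\h$ on group elements; this is a cosmetic variation leading to the same bound $\norm b\leq\norm a\jn\norm c$.
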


\begin{proof}
    Let $H = \{a \in L : \norm{a} \leq \norm{s_1}\cdots \norm{s_k} \text{ for some } s_1,\dots, s_k \in S\}$. Note that
    if $a \in H$, then 
    there exist $s_1,\dots, s_k\in S$ with $1 \leq \norm{a} \leq \norm{s_1}\cdots \norm{s_k}$, so since $\norm{s_1}\cdots\norm{s_k} \in  \Cv(S)$ and since $\Cv(S)$ is convex  we get $\norm{a} \in \Cv(S)$. Moreover, $\norm{a}^{-1} \leq a \leq \norm{a}$, by Lemma~\ref{l:norm}(1), so $a \in \Cv(S)$; thus  $H\subseteq \Cv(S)$.
    
    For the reverse inclusion it is enough to show that $H$ is a convex subuniverse, since $S\subseteq H$. 
    If $c,d \in H$, then there exist $s_1,\dots, s_k,t_1,\dots, t_m \in S$ such that  $\norm{c} \leq s$ and $\norm{d} \leq t$, where $s = \norm{s_1}\cdots \norm{s_k}$ and $t = \norm{t_1}\cdots \norm{t_m}$. So, by Lemma~\ref{l:norm}  we get 
    \[
    \norm{c^\ell} = \norm{c^r} =\norm{c} \leq s, 
    \]
    as well as
    \[
    \norm{cd} \leq \norm{c}\norm{d}\norm{c} \leq sts
    \]
    and 
    \[
    \norm{c \mt d},\norm{c \jn d} \leq \norm{c} \jn \norm{d} \leq st.
    \]
    Thus $c^\ell,c^r,cd,c\mt d, c\jn d\in H$. Finally suppose that $b \in L$ with $c\leq b\leq d$. Then $\h(c) \leq \h(b) \leq \h(d)$ and $\g(c) \leq \g(b) \leq \g(d)$, hence also $\g(d)^{-1} \leq \g(b)^{-1} \leq \g(c)^{-1}$. Therefore,
    \[
    \norm{b} = \g(b)^{-1} \jn \h(b) \leq \g(c)^{-1} \jn \h(d) \leq \norm{c} \jn \norm{d} \leq st.
    \]
    Thus $H$ is a convex subuniverse of $\m L$.
\end{proof}

\begin{corollary}
    If $\m L$ is an $n$-periodic $\ell$-pregroup and $a\in L$, then 
    \[
    \Cv(a) = \{b \in L : \norm{b} \leq \norm{a}^{k} \text{ for some } k\in \NN \}.
    \]
\end{corollary}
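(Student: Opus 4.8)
The plan is to read this off directly from Lemma~\ref{l:conv-subalg-gen} by taking the generating set to be the singleton $S = \{a\}$. Applying that lemma yields
\[
\Cv(a) = \Cv(\{a\}) = \{b \in L : \norm{b} \leq \norm{s_1}\cdots\norm{s_k} \text{ for some } s_1,\dots,s_k \in \{a\}\}.
\]
Since each $s_i$ is forced to equal $a$, every product $\norm{s_1}\cdots\norm{s_k}$ that occurs on the right-hand side is just the $k$-fold monoid power $\norm{a}^k$ (with the convention $\norm{a}^0 = 1$ when $k = 0$). Hence the right-hand set is precisely $\{b \in L : \norm{b} \leq \norm{a}^k \text{ for some } k \in \NN\}$, which is exactly the asserted formula.

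The only detail I would pause on is the degenerate index $k = 0$: there the condition reads $\norm{b} \leq 1$, and since $1 \leq \norm{b}$ by Lemma~\ref{l:norm}(1) this forces $\norm{b} = 1$, whence $1 \leq \g(b) \leq b \leq \h(b) \leq 1$ and so $b = 1$; as $1$ belongs to every convex subuniverse this causes no trouble, and in any case the element $1$ is already captured by $k = 1$ via $1 \leq \norm{a}$. So there is no genuine obstacle here: all the substantive content — that the displayed set is a convex subuniverse and that it is the smallest one containing the chosen generators — has already been established in Lemma~\ref{l:conv-subalg-gen}, and this corollary is a pure specialization of that result to one generator.
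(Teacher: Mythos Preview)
Your proof is correct and is exactly the intended specialization: the paper states this corollary without proof because it follows immediately from Lemma~\ref{l:conv-subalg-gen} by taking $S=\{a\}$. Your discussion of the $k=0$ case is a harmless extra observation, not something the paper spells out.
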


\begin{corollary}\label{c:norm-convsub}
    Let $\m L$ be an $n$-periodic $\ell$-pregroup, $H$ 
    a convex subuniverse of $\m L$, and $a\in L$. Then $a\in H$ if and only if $\norm{a} \in H$ if and only if $\g(a),\h(a) \in H$.
\end{corollary}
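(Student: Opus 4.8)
The plan is to prove the chain of equivalences $a \in H \Leftrightarrow \norm{a} \in H \Leftrightarrow \g(a),\h(a)\in H$ directly, using Lemma~\ref{l:norm}(1) together with the explicit description of $\Cv(S)$ from Lemma~\ref{l:conv-subalg-gen}; in fact the cleanest route is to observe that, since $H$ is a convex subuniverse, $H = \Cv(H)$, and then to read off the statement from the norm-characterization of $\Cv(H)$.

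First I would show $a \in H \Rightarrow \g(a),\h(a)\in H$. Since $\g$ and $\h$ are (term-definable) unary operations built from $\mt$, $\jn$, ${}^\ell$, ${}^r$ (as in the definitions of $\g_n,\h_n$), and $H$ is a subuniverse, it is closed under these term operations; hence $a\in H$ immediately gives $\g(a),\h(a)\in H$. Next, $\g(a),\h(a)\in H \Rightarrow \norm{a}\in H$: by definition $\norm{a} = \g(a)^{-1}\jn\h(a)$, and $H$ is a subuniverse so it is closed under ${}^\ell={}^r$ on invertible elements (equivalently under $x\mapsto x^\ell$) and under $\jn$; since $\g(a)\in\grp L\cap H$, its inverse $\g(a)^{-1}=\g(a)^\ell$ lies in $H$, and therefore $\g(a)^{-1}\jn\h(a)\in H$. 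Finally, $\norm{a}\in H \Rightarrow a\in H$: by Lemma~\ref{l:norm}(1) we have $\norm{a}^{-1}\le a\le\norm{a}$, and $\norm{a}^{-1}=\norm{a}^\ell\in H$ since $H$ is a subuniverse; as $H$ is convex and $a$ lies between two elements of $H$, we conclude $a\in H$. Stringing these three implications together closes the cycle, and $\g(a)\in H \Leftrightarrow \h(a)\in H$ follows since each is equivalent to $a\in H$ (alternatively one notes $\g(a)^{-1}\jn\h(a)=\norm a$ and $\g(a)\mt\h(a)^{-1}=\norm a^{-1}$ give each from the other via convexity and closure under ${}^\ell$).

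There is no real obstacle here; the corollary is essentially a bookkeeping consequence of Lemma~\ref{l:conv-subalg-gen} and Lemma~\ref{l:norm}(1). The only point requiring a moment's care is that $\g(a)^{-1}$ and $\norm{a}^{-1}$ genuinely lie in $H$: this uses that $\g(a),\h(a)\in\grp L$ (by Lemma~\ref{l: sigmagamma}), so their lattice-theoretic inverses coincide with the ${}^\ell$-images, and $H$, being a subuniverse, is closed under ${}^\ell$. With that observation in hand the argument is just the three short implications above.
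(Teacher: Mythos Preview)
Your proof is correct and follows the natural route the paper implicitly intends: the paper states this corollary without proof, as it is immediate from the fact that $\g,\h$ are term operations, that $\norm{a}=\g(a)^{-1}\jn\h(a)$ is built from them using subuniverse operations, and that Lemma~\ref{l:norm}(1) gives $\norm{a}^{-1}\le a\le\norm{a}$ for the convexity step. Your observation that $\norm{a}$ is invertible (as a join of two elements of the $\ell$-group $\Grp{\m L}$) so that $\norm{a}^{-1}=\norm{a}^\ell\in H$ is exactly the small point that needs noting, and you handle it correctly.
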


 We can now show that the convex subalgebras of a  periodic $\ell$-pregroup correspond to the ones of its skeleton. 
We denote by $\mathcal{C}(\m{L})$ the lattice  of convex subalgebras of a periodic $\ell$-pregroup $\m{L}$. 

\begin{theorem}\label{t:conv-sub-iso}
    For every periodic $\ell$-pregroup $\m{L}$  the lattices $\mathcal{C}(\m{L})$ and $\mathcal{C}(\Grp{\m L})$ are isomorphic under the inverse maps $H \mapsto H\cap \grp{L}$ and $K \mapsto \overline{K}$, where $\overline{K}$ is the convex closure of $K$ in $\m{L}$.
\end{theorem}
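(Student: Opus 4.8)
The plan is to show that the two maps $H \mapsto H \cap \grp{L}$ and $K \mapsto \overline{K}$ are well-defined, order-preserving, and mutually inverse; since both are monotone bijections between the two lattices, this gives a lattice isomorphism.

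First I would check that $H \mapsto H \cap \grp{L}$ is well-defined: if $H$ is a convex subalgebra of $\m L$, then $H \cap \grp{L}$ is a subalgebra of $\Grp{\m L}$ (the operations of $\Grp{\m L}$ are restrictions of those of $\m L$ by the skeleton lemma), and it is convex in $\Grp{\m L}$ because convexity is inherited by subsets with the induced order. Conversely, for $K \mapsto \overline{K}$ I need that the convex closure in $\m L$ of a convex subalgebra $K$ of $\Grp{\m L}$ is again a (convex) subalgebra; here I would invoke Lemma~\ref{l:conv-subalg-gen}, which describes $\Cv_{\m L}(K)$ explicitly as $\{a \in L : \norm{a} \le \norm{s_1}\cdots\norm{s_k},\ s_i \in K\}$, so this is automatic.

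Next comes the heart of the argument, the two round-trip identities. For $K \mapsto \overline{K} \mapsto \overline{K} \cap \grp{L} = K$: the inclusion $K \subseteq \overline{K}\cap\grp{L}$ is clear; for the reverse, take $g \in \overline{K}\cap\grp{L}$, so $\norm{g} \le \norm{s_1}\cdots\norm{s_k}$ with $s_i \in K$. Since $g$ is invertible, $\norm{g} = g \jn g^{-1}$, and since $K$ is a convex subalgebra of the $\ell$-group $\Grp{\m L}$ containing each $\norm{s_i} = s_i \jn s_i^{-1}$ and closed under products, $g$ sits below an element of $K$ and above its inverse, so $g \in K$ by convexity. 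For the other direction, $H \mapsto H \cap \grp{L} \mapsto \overline{H \cap \grp{L}} = H$: the inclusion $\overline{H\cap\grp{L}} \subseteq H$ holds since $H$ is convex and contains $H \cap \grp{L}$; for $H \subseteq \overline{H\cap\grp{L}}$, take $a \in H$ and use Corollary~\ref{c:norm-convsub} to get $\g(a), \h(a) \in H$, hence $\g(a), \h(a) \in H \cap \grp{L}$ (they are invertible by Lemma~\ref{l: sigmagamma}), and then $\norm{a} = \g(a)^{-1} \jn \h(a) \in \overline{H \cap \grp{L}}$, so $a \in \overline{H\cap\grp{L}}$ by convexity since $\norm{a}^{-1} \le a \le \norm{a}$. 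Finally, monotonicity of both maps is immediate from their definitions.

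The main obstacle is the round-trip $\overline{K}\cap\grp{L} = K$: one must be careful that the explicit description of the convex closure from Lemma~\ref{l:conv-subalg-gen} genuinely forces an invertible element lying in that closure back into $K$, which relies on $K$ being closed under the $\ell$-group operations and on $\norm{\cdot}$ restricting to the ordinary $\ell$-group absolute value on $\grp{L}$. The other identity and the well-definedness checks are routine given the lemmas already established.
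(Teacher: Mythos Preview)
Your proposal is correct and follows essentially the same approach as the paper. One organizational wrinkle: in checking that $K \mapsto \overline{K}$ is well-defined you invoke Lemma~\ref{l:conv-subalg-gen}, but that lemma describes $\Cv_{\m L}(K)$ (the convex \emph{subalgebra} generated by $K$), not the mere order-convex closure $\overline{K}$; the actual content here is the equality $\overline{K} = \Cv_{\m L}(K)$, which the paper proves explicitly (using that $\norm{s_i}\in K$ so the product lies in $K$, whence $a\in\overline{K}$ via $\norm{a}^{-1}\le a\le\norm{a}$). You essentially supply this argument later in your first round-trip, so the issue is placement rather than substance; note also that once $\overline{K}$ is identified with the convex closure, the round-trip $\overline{K}\cap\grp{L}=K$ follows immediately from convexity of $K$ in $\Grp{\m L}$, without re-invoking Lemma~\ref{l:conv-subalg-gen}.
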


\begin{proof}
 We set $\f\colon \mathcal{C}(\m{L}) \to  \mathcal{C}(\Grp{\m L})$, $\f(H) = H\cap \grp{L}$ and  $\psi \colon \mathcal{C}(\Grp{\m L}) \to \mathcal{C}(\m{L})$, $\psi(K) = \overline{K}$. 
    First note that $\f$ is well-defined and order-preserving. To see that $\psi$ is well-defined, let $K\in  \mathcal{C}(\Grp{\m L})$; we show that $\overline{K} = \Cv_{\m L}(K)$. Clearly $\overline{K} \subseteq \Cv_{\m L}(K)$. For the converse inclusion let  $a \in \Cv_{\m L}(K)$. By Lemma~\ref{l:conv-subalg-gen}, there exist $s\in K$ such that $1 \leq \norm{a} \leq s$, so $\norm{a} \in K$; since $\norm{a}^{-1} \leq a \leq \norm{a}$, we get $a \in \overline{K}$.
    Therefore $\psi$ is well-defined and clearly order-preserving.
    
    It remains to show that the maps are inverses of each other, so let $H\in \mathcal{C}(\m{L})$. Then for each $a \in L$,  $a \in \psi(\f(H))$ iff $\norm{a} \in \f(H) = H \cap \grp{L}$ iff $\norm{a}\in H$ iff $a\in H$, by Corollary~\ref{c:norm-convsub}; hence $\psi(\f(H)) = H$. For the other direction let $K \in \mathcal{C}(\Grp{\m L})$; then for each $f \in \grp{L}$, $f \in \f(\psi(K))$ iff $f \in \psi(K) = \overline{K}$ which is equivalent to $f\in K$, since $K$ is a convex subgroup of $\Grp{\m L}$. So $\f(\psi(K)) = K$.
\end{proof}

It is well-known that the lattice of convex $\ell$-subgroups of an $\ell$-group is a frame \cite{Lo}, i.e., a complete distributive lattice such that binary meets distribute over arbitrary joins. Hence, as a direct consequence of Theorem~\ref{t:conv-sub-iso}, we obtain the following result. 

\begin{corollary}
    If $\m L$ is a periodic $\ell$-pregroup, then $\mathcal{C}(\m L)$ is a frame. In particular, it is a distributive lattice.
\end{corollary}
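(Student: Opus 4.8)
The plan is to reduce the statement to the corresponding fact about $\ell$-groups via the isomorphism established in Theorem~\ref{t:conv-sub-iso}. Since $\m L$ is periodic, it is $n$-periodic for some $n \in \ZZ^+$, so Theorem~\ref{t:conv-sub-iso} applies and yields a lattice isomorphism $\mathcal{C}(\m L) \cong \mathcal{C}(\Grp{\m L})$, implemented by the mutually inverse maps $H \mapsto H \cap \grp{L}$ and $K \mapsto \overline{K}$. The group skeleton $\Grp{\m L}$ is an $\ell$-group, and by the result of \cite{Lo} the lattice of convex $\ell$-subgroups of any $\ell$-group is a frame; hence $\mathcal{C}(\Grp{\m L})$ is a frame.

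It then remains to observe that the property of being a frame is preserved under lattice isomorphism. A lattice isomorphism is in particular an order isomorphism, hence it preserves all existing joins and meets; so if $\mathcal{C}(\Grp{\m L})$ is a complete lattice, then so is $\mathcal{C}(\m L)$, and the distributivity law $a \wedge \bigvee_{i} b_i = \bigvee_{i} (a \wedge b_i)$ transfers verbatim along the isomorphism. Therefore $\mathcal{C}(\m L)$ is a frame, and in particular a distributive lattice.

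I expect essentially no obstacle here: the only point requiring a word of care is that the two maps of Theorem~\ref{t:conv-sub-iso} are genuine mutually inverse order isomorphisms (already verified there), which legitimizes the transfer of the frame property. One could alternatively give a direct argument, spelling out that the join in $\mathcal{C}(\m L)$ of a family of convex subalgebras is the convex subalgebra generated by their union and computing it via Lemma~\ref{l:conv-subalg-gen}, but this is unnecessary once Theorem~\ref{t:conv-sub-iso} and the cited frame property of $\ell$-groups are in hand.
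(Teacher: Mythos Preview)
Your proposal is correct and matches the paper's own argument exactly: the paper derives the corollary as a direct consequence of Theorem~\ref{t:conv-sub-iso} together with the cited result of \cite{Lo} that the lattice of convex $\ell$-subgroups of an $\ell$-group is a frame. Your additional remarks about transferring completeness and the frame law along the isomorphism are accurate but already implicit in the statement that the two lattices are isomorphic.
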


\subsection{Convex normal subalgebras}

Congruences play an important role in the study of algebraic structures and in some cases (groups, rings, Boolean algebras) they correspond to special subsets (normal subgroups, ideals, filters, respectively). The same is true in the context of (involutive) residuated lattices, hence also of $\ell$-pregroups as we detail below.

Recall that an algebra $\m{A}$ is \emph{subdirectly irreducible (SI)} if the trivial congruence $\Delta_A$ on $\m{A}$ is completely meet-irreducible in the congruence lattice of $\m{A}$, and it is \emph{finitely subdirectly irreducible (FSI)} if $\Delta_A$ is meet-irreducible; this is the internal description of SIs and FSIs, while the (equivalent) external description pertains to subdirect product representations. Note that every subdirectly irreducible algebra is finitely subdirectly irreducible. A variety of algebras $\mathcal{V}$ is generated by its (finitely) subdirectly irreducibles, so to show that two varieties are equal it is enough to show that they contain the same (finitely) subdirectly irreducibles. The correspondence between congruences and special subsets will simplify the study of SIs and FSIs in the context of $\ell$-pregroups.

By Lemma~\ref{l: InRL}, $\ell$-pregroups are special cases of involutive residuated lattices and thus the general theory of the latter applies, including the characterization of congruences. Specializing the general notions to our context, we define the \emph{left conjugate} of $a$ by $b$ to be $b^rab \mt 1$ and the \emph{right conjugate} of $a$ by $b$ to be $bab^\ell \mt 1$. 

\begin{remark}\label{r: conjLP}
Note that the right conjugate of $a$ by $b$ is equal to the left conjugate of $a$ by $b^\ell$, since  $bab^\ell \mt 1=  (b^\ell)^ra(b^\ell) \mt 1$, by Lemma~\ref{l: InRL}; likewise, left conjugates can be viewed as right conjugates, in the context of $\ell$-pregroups.
\emph{Iterated conjugates} are defined by successive application of conjugation. This general notion also simplifies in our context, since iterated conjugates can be viewed as usual conjugates; for example,  
$c^r(b^rab \mt 1)c \mt 1=c^rb^rabc \mt c^rc \mt 1=(bc)^ra(bc) \mt 1$, by Lemma~\ref{l: InRL}.
\end{remark}

 A subalgebra of an $\ell$-pregroup $\m L$ is called \emph{normal} if it is closed under conjugation (by arbitrary elements from $L$); by the above observation it is enough to check closure under left or under right conjugation. Specializing a result from residuated lattices (Theorem~3.47 of \cite{GJKO}), and using Remark~\ref{r: conjLP}, we get the following results. 

 \begin{lemma}\label{l: CNSforRL}
 The convex normal subalgebra $\Nc_\m{L}(S)$ of an $\ell$-pregroup  $\m L$ generated by a subset $S \subseteq L$ is the same as the one generated by 
 $
 T:=\{a\mt a^\ell \mt 1: a \in S\}
 $
 and it is equal to 
 \[
 \Nc_\m{L}(S)=\{b \in L : (\exists p\in P)(p \leq b \leq p^\ell)\},
 \]
 where $P$ is the set of all products of conjugates (by elements of $L$) of elements of $T$; in the above sets one can use $^r$ instead of ${\;}^\ell$ and the same characterization holds.
 \end{lemma}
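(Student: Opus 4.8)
The plan is to obtain Lemma~\ref{l: CNSforRL} by specializing the general description of convex normal subalgebras of residuated lattices (Theorem~3.47 of \cite{GJKO}) to the involutive case, rewriting everything in $\ell$-pregroup notation via Lemma~\ref{l: InRL} and simplifying conjugation via Remark~\ref{r: conjLP}. The first step is to fix the dictionary: in an involutive residuated lattice the residuals of multiplication are $a\backslash c = a^r c$ and $c/b = cb^\ell$, so in particular $a\backslash 1 = a^r$ and $1/a = a^\ell$; moreover a subalgebra of an $\ell$-pregroup is automatically closed under $^\ell$ and $^r$, which is what makes the absolute-value argument available. Under this dictionary, the general theorem says that $\Nc_\m{L}(S)$ is generated, as a convex normal subalgebra, by the negative absolute values of the elements of $S$, and that it equals the set of all $b$ that lie in an interval $[p,q]$, where $p$ ranges over products of iterated conjugates of these generators and $q$ is the corresponding two-sided residual of $p$ into $1$ (which, by adjunction, is $p^\ell\mt p^r$).

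Next I would carry out the simplifications. By Remark~\ref{r: conjLP}, a left conjugate by $b$ is a right conjugate by $b^\ell$, and an iterated conjugate is an ordinary conjugate by a product; hence ``iterated conjugates of elements of $T$'' may be replaced by ``conjugates of elements of $T$'' (by arbitrary elements of $L$, on either side), which turns the generic set of products from the general theorem into exactly the set $P$ in the statement. The interval upper bound $q=p^\ell\mt p^r$ then collapses to $p^\ell$: since each $p\in P$ is a product of meets with $1$, we have $p\leq 1$, and using that $P$ is closed enough under the operations (and that $H$ is closed under $^\ell$ and $^r$) one checks that $\bigcup_{p\in P}[p,p^\ell\mt p^r]=\bigcup_{p\in P}[p,p^\ell]$, giving the displayed formula $\Nc_\m{L}(S)=\{b : (\exists p\in P)(p\leq b\leq p^\ell)\}$. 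Finally, the reduction of the symmetric generator to $T=\{a\mt a^\ell\mt 1 : a\in S\}$, and the assertion that $^r$ may be used in place of $^\ell$ throughout, both follow from the order-reversing monoid anti-automorphism $x\mapsto x^\ell$ (with inverse $x\mapsto x^r$), which carries the $^\ell$-version of all the data to the $^r$-version and preserves the notion of convex normal subalgebra, so the two presentations generate the same subalgebra.

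The main obstacle is precisely this last bundle of reductions: verifying that only a one-sided absolute value ($a\mt a^\ell\mt 1$ rather than $a\mt a^\ell\mt a^r\mt 1$) is needed, that the interval upper bound may be taken to be the single term $p^\ell$, and that the whole description is invariant under interchanging $^\ell$ and $^r$. None of these is deep, but each requires a short argument combining convexity, normality, and closure of subalgebras under $^\ell$ and $^r$, together with the identities of Lemma~\ref{l: InRL}; they constitute the content of the word ``specializing'' in the statement. Everything else is a direct transcription of Theorem~3.47 of \cite{GJKO} into the present notation.
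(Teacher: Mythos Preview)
Your proposal is correct and follows exactly the approach indicated in the paper: the lemma is stated there without a detailed proof, merely as a specialization of Theorem~3.47 of \cite{GJKO} combined with Remark~\ref{r: conjLP}, which is precisely what you carry out. Your additional observations about the one-sided absolute value, the collapse of $p^\ell\mt p^r$ to $p^\ell$, and the $^\ell/^r$ symmetry via the anti-automorphism are the natural details one has to check when performing this specialization, and they go beyond what the paper makes explicit.
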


\begin{lemma}\label{l: CNSandCONforRL}
For any $\ell$-pregroup $\m L$ its congruence lattice is isomorphic to the lattice $\NC(\m{L})$ of convex normal subalgebras of $\m L$ {via the mutually inverse maps}
$\Theta \mapsto [1]_\Theta$ and $H \mapsto \{(a,b) \in L^2 :  ab^\ell \mt ba^\ell \mt 1 \in H\}$.
\end{lemma}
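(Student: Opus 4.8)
The plan is to derive this lemma as a specialization of the general correspondence between congruences and convex normal subalgebras that holds in any residuated lattice (Theorem~3.47 of \cite{GJKO}), using that $\ell$-pregroups are involutive residuated lattices by Lemma~\ref{l: InRL}. That general theorem provides, for an arbitrary residuated lattice $\m A$, a lattice isomorphism between $\mathrm{Con}(\m A)$ and the lattice of its convex normal subalgebras, implemented by $\Theta \mapsto [1]_\Theta$ with inverse sending a convex normal subalgebra $H$ to the relation $\Theta_H$ defined by $a\mathrel{\Theta_H}b$ iff the biresidual term $(a\backslash b \mt b\backslash a \mt 1)$ — equivalently, in the presence of normality, $(a/b \mt b/a \mt 1)$ — lies in $H$. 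So the first step is simply to recall this statement together with its two maps.

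The second step is to translate into $\ell$-pregroup notation. By the residuation law of Lemma~\ref{l: InRL} we have $a\backslash b = a^r b$ and $a/b = ab^\ell$, so the condition ``$a/b \mt b/a \mt 1 \in H$'' reads exactly ``$ab^\ell \mt ba^\ell \mt 1 \in H$'', which is the condition appearing in the statement; this identifies the inverse map claimed in the lemma. One then has to confirm the remaining pieces, all of which are instances of the cited general result once the residual terms are matched: that $[1]_\Theta$ is genuinely a convex normal subalgebra (convexity and closure under $\mt,\jn,\cdot,{}^\ell,{}^r$ being routine from compatibility of $\Theta$ with the operations, and closure under conjugation following by applying compatibility to $b^rab\mt 1$ together with $b^r b \mt 1 = 1$), that $\Theta_H$ is a congruence, and that the two maps are mutually inverse and order-preserving in both directions.

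The step I expect to be the main obstacle — and the only place where something specific to $\ell$-pregroups is genuinely used — is justifying that the single ``right-sided'' term $ab^\ell \mt ba^\ell \mt 1$ suffices in place of the two-sided biresidual of the general theory: one must show that for a convex normal subalgebra $H$ one has $ab^\ell \mt ba^\ell \mt 1 \in H$ if and only if $a^r b \mt b^r a \mt 1 \in H$. This is precisely where Remark~\ref{r: conjLP} enters: since in an $\ell$-pregroup left conjugates, right conjugates, and iterated conjugates all reduce to ordinary conjugates, the left- and right-sided versions of the biresidual are conjugate to one another (modulo meeting with $1$), so normality of $H$ forces membership of one to be equivalent to membership of the other. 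Feeding this equivalence back into the general theorem — and noting that $\Nc_{\m L}$ computed in Lemma~\ref{l: CNSforRL} is the corresponding closure operator — yields the isomorphism in the stated form.
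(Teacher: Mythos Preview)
Your proposal is correct and matches the paper's own approach exactly: the paper states this lemma without proof, prefacing it (together with Lemma~\ref{l: CNSforRL}) by ``Specializing a result from residuated lattices (Theorem~3.47 of \cite{GJKO}), and using Remark~\ref{r: conjLP}, we get the following results.'' Your write-up is simply a fleshed-out version of that one-line justification, correctly identifying the translation $a/b = ab^\ell$ via Lemma~\ref{l: InRL} and the role of Remark~\ref{r: conjLP} in reducing to the one-sided biresidual.
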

 
We first show that the isomorphism from $\mathcal{C}(\m{L})$ to $\mathcal{C}(\Grp{\m L})$ that we introduced above restricts to an injective map from $\NC(\m{L})$ into $\NC(\Grp{\m{L}})$.

\begin{lemma}\label{l:groupNC-pregroupNC}
If $\m{L}$ is a periodic $\ell$-pregroup and $H \in \NC(\m{L})$, then $H \cap \grp{L}\in \NC(\Grp{\m{L}})$ and $\Nc_\m{L}(H\cap \grp{L}) = \Cv_\m{L}(H\cap \grp{L}) =H$. 
\end{lemma}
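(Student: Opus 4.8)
The plan is to break the statement into three parts: (a) $H \cap \grp{L}$ is a convex normal subalgebra of $\Grp{\m L}$; (b) the convex subalgebra of $\m L$ generated by $H \cap \grp L$ is exactly $H$; (c) the convex \emph{normal} subalgebra generated by $H \cap \grp L$ is also exactly $H$. For (a), $H \cap \grp L$ is a convex subuniverse of $\Grp{\m L}$ because both $H$ and $\grp L$ are subuniverses closed under all operations and convexity is inherited; the only thing to check is normality, i.e.\ closure under conjugation by arbitrary group elements $g \in \grp L$. But for $g$ invertible the conjugate $g^{-1} a g \mt 1$ (resp.\ $g a g^{-1} \mt 1$) of an element $a \in H \cap \grp L$ lies in $H$ because $H$ is normal in $\m L$, and lies in $\grp L$ because $\grp L$ is a subalgebra and $g, a$ are invertible; hence it is in $H \cap \grp L$. (Note $\grp L$ need not be normal in $\m L$, but we only conjugate by elements of $\grp L$ here, so that is irrelevant.)

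For (b), first note $\Cv_\m L(H \cap \grp L) \subseteq H$ since $H$ is itself a convex subalgebra containing $H \cap \grp L$. For the reverse inclusion, take $b \in H$; by Corollary~\ref{c:norm-convsub} we have $\norm b \in H$, and of course $\norm b \in \grp L$ by definition of the norm, so $\norm b \in H \cap \grp L$. Since $\norm b^{-1} \leq b \leq \norm b$ by Lemma~\ref{l:norm}(1) and both $\norm b, \norm b^{-1}$ lie in $\Cv_\m L(H \cap \grp L)$, convexity gives $b \in \Cv_\m L(H \cap \grp L)$. This proves $\Cv_\m L(H \cap \grp L) = H$.

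For (c), the containment $\Cv_\m L(H \cap \grp L) \subseteq \Nc_\m L(H \cap \grp L)$ is automatic, so combined with (b) it suffices to show $\Nc_\m L(H \cap \grp L) \subseteq H$; but this is immediate because $H$ is a convex \emph{normal} subalgebra containing $H \cap \grp L$, and $\Nc_\m L(H \cap \grp L)$ is the smallest such. Chaining the equalities gives $\Nc_\m L(H \cap \grp L) = \Cv_\m L(H \cap \grp L) = H$, as claimed. I expect part (a) --- specifically verifying that $H \cap \grp L$ is closed under conjugation \emph{by group elements} --- to be the only place requiring genuine care, and even there the point is just to observe that the relevant conjugation stays inside both factors; the rest follows formally from Corollary~\ref{c:norm-convsub}, Lemma~\ref{l:norm}(1), and the minimality definitions of $\Cv$ and $\Nc$.
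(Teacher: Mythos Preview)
Your proof is correct and follows essentially the same route as the paper's. The only cosmetic difference is that for part (b) the paper simply invokes Theorem~\ref{t:conv-sub-iso} (whose proof that $\psi(\varphi(H))=H$ is exactly your norm argument via Corollary~\ref{c:norm-convsub} and Lemma~\ref{l:norm}(1)), whereas you reprove that step directly; part (c) is identical in both.
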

\begin{proof}

Since each of $H$ and $ \grp{L}$ is convex and closed under conjugation, we get that $H \cap \grp{L} \in \NC(\Grp{\m{L}})$.
Moreover, by Theorem~\ref{t:conv-sub-iso}, $\Cv_{\m{L}}(H \cap \grp{L}) = H$. Since $H$ is normal, we also have $\Cv_{\m{L}}(H \cap \grp{L}) \subseteq \Nc_{\m{L}}(H \cap \grp{L}) \subseteq H$.
\end{proof}

\begin{corollary}\label{c: FSI skeleton}
    For any periodic $\ell$-pregroup $\m{L}$, $\NC(\m{L})$ is isomorphic to a bounded infinitary meet-subsemilattice of $\NC(\Grp{\m L})$. In particular, if $\Grp{\m{L}}$ is (finitely) subdirectly irreducible or simple, then $\m{L}$ is (finitely) subdirectly irreducible or simple, respectively.
\end{corollary}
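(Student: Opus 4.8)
The plan is to deduce Corollary~\ref{c: FSI skeleton} directly from Lemma~\ref{l:groupNC-pregroupNC} and Lemma~\ref{l: CNSandCONforRL}. First I would observe that Lemma~\ref{l:groupNC-pregroupNC} provides a map $H \mapsto H \cap \grp{L}$ from $\NC(\m L)$ to $\NC(\Grp{\m L})$, and that this map is injective: if $H_1, H_2 \in \NC(\m L)$ with $H_1 \cap \grp L = H_2 \cap \grp L$, then by the last assertion of Lemma~\ref{l:groupNC-pregroupNC} we have $H_1 = \Nc_{\m L}(H_1 \cap \grp L) = \Nc_{\m L}(H_2 \cap \grp L) = H_2$. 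Next I would check that this injection is an order-embedding in both directions: it is clearly order-preserving, and if $H_1 \cap \grp L \subseteq H_2 \cap \grp L$ then, applying $\Cv_{\m L}(-)$ and using Theorem~\ref{t:conv-sub-iso} (which says $H \mapsto H \cap \grp L$ is a lattice isomorphism $\mathcal C(\m L) \to \mathcal C(\Grp{\m L})$, hence an order-embedding), we get $H_1 = \Cv_{\m L}(H_1 \cap \grp L) \subseteq \Cv_{\m L}(H_2 \cap \grp L) = H_2$. So $\NC(\m L)$ embeds as an ordered set (hence as a meet-subsemilattice, since meets in $\NC$ are just intersections — or at worst one argues the image is closed under the relevant meets) into $\NC(\Grp{\m L})$.

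Then I would verify the two "bounded" and "infinitary" qualifiers. Boundedness: the top element $L$ of $\NC(\m L)$ maps to $L \cap \grp L = \grp L$, the top of $\NC(\Grp{\m L})$; and the bottom $\{1\}$ maps to $\{1\}$. For the infinitary meet claim, given a family $\{H_i\}_{i \in I} \subseteq \NC(\m L)$, its meet in $\NC(\m L)$ is $\bigcap_i H_i$ (an intersection of convex normal subalgebras is again one), and $\bigl(\bigcap_i H_i\bigr) \cap \grp L = \bigcap_i (H_i \cap \grp L)$, which is the meet in $\NC(\Grp{\m L})$; so the embedding preserves arbitrary meets.

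Finally, for the "in particular" clause I would translate via Lemma~\ref{l: CNSandCONforRL}, which gives $\operatorname{Con}(\m L) \cong \NC(\m L)$ and $\operatorname{Con}(\Grp{\m L}) \cong \NC(\Grp{\m L})$. Subdirect irreducibility of $\m L$ means $\Delta = \{1\}$ is completely meet-irreducible in $\NC(\m L)$; finite subdirect irreducibility means it is meet-irreducible; simplicity means $\NC(\m L) = \{\{1\}, L\}$ is the two-element chain. Since $\NC(\m L)$ embeds into $\NC(\Grp{\m L})$ as a bounded meet-subsemilattice (preserving $\{1\}$, $L \mapsto \grp L$, and all meets), each of these three properties transfers from $\Grp{\m L}$ to $\m L$: if $\{1\}$ were the meet of two strictly larger convex normal subalgebras $H_1, H_2$ of $\m L$, then $H_1 \cap \grp L$ and $H_2 \cap \grp L$ would be strictly larger than $\{1\}$ in $\NC(\Grp{\m L})$ (using injectivity on the bottom) with meet $\{1\}$, contradicting (finite) subdirect irreducibility of $\Grp{\m L}$; similarly for complete meet-irreducibility with arbitrary families, and simplicity is immediate since a two-element image forces the domain to be two-element.

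I do not expect a serious obstacle here; the one point that needs a little care is making precise in which sense $\NC(\m L)$ sits inside $\NC(\Grp{\m L})$ — namely that it is closed under the meet operation of $\NC(\Grp{\m L})$ (arbitrary intersection), which is exactly the content of the preservation-of-meets computation above, together with the fact that the image of an intersection of $H_i$'s is the intersection of the images. Everything else is a routine unwinding of the internal characterizations of (F)SI and simple via the congruence–convex-normal-subalgebra correspondence.
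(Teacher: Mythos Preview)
Your proposal is correct and follows essentially the same approach as the paper: both use Lemma~\ref{l:groupNC-pregroupNC} to obtain the injective map $H \mapsto H \cap \grp{L}$ and then check that it preserves arbitrary intersections and the bounds. The paper's proof is terser (it simply asserts that preservation of intersections and bounds is ``clear'' and leaves the ``in particular'' clause implicit), whereas you spell out each verification explicitly; your appeal to Theorem~\ref{t:conv-sub-iso} for order-reflection is valid but not strictly needed, since an injective map preserving binary intersections is automatically an order-embedding.
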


\begin{proof}
    By Lemma~\ref{l:groupNC-pregroupNC},  $H \mapsto H\cap \grp{L}$ is an injective map from $\NC(\m{L})$ into $\NC(\Grp{\m L})$ and it is clear that it preserves arbitrary intersections and the bounds. 
\end{proof}

As we will see below,  $n$-periodic $\ell$-pregroups that satisfy  $\g_n(x)^n y \approx y \g_n(x)^n$ allow for a tighter control of the conjugates. {We use this to show that in this case the (mutually inverse) isomorphisms between $\mathcal{C}(\m{L})$ and $\mathcal{C}(\Grp{\m L})$ restrict to isomorphisms between $\NC(\m{L})$ and $\NC(\Grp{\m{L}})$.}

\begin{lemma}\label{l:n-comm-conjugation}
If an $n$-periodic $\ell$-pregroup  $\m{L}$  satisfies the equation $\g_n(x)^n y \approx y \g_n(x)^n$, then for each $a\in L$ and $g\in \grp{L}$ with $g\leq 1$, we have $g^n \leq a^r g a \mt 1$ (and $g^n \leq a g a^\ell \mt 1$). 
\end{lemma}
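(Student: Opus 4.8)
The goal is to show that in an $n$-periodic $\ell$-pregroup $\m L$ satisfying $\g_n(x)^n y \approx y\g_n(x)^n$, for every $a \in L$ and every $g \in \grp L$ with $g \leq 1$, we have $g^n \leq a^r g a \mt 1$. Since $g \leq 1$ we trivially have $a^r g a \leq a^r a \leq 1$ (using $a^r a \leq 1$) — wait, that gives the $\mt 1$ is automatically $a^rga$ only if $a^rga \leq 1$; but we still need $g^n \leq a^r g a$. So the real content is the inequality $g^n \leq a^r g a$, and then $g^n \leq 1$ (since $g \leq 1$ and $n \geq 1$) gives $g^n \leq a^r g a \mt 1$. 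Thus I will focus on proving $g^n \leq a^r g a$.

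The plan is to exploit that $g \in \grp L$ is invertible, so $g \leq 1$ means $1 \leq g^{-1} = g^\ell = g^r$, and to use the centrality hypothesis on a suitable element of the form $\g_n(-)^n$. The key observation should be that $g^{-1} = \g_n(g^{-1})$ because $g^{-1}$ is invertible (invertible elements are fixed by $\g_n$, as noted after Lemma~\ref{l: sigmagamma} / via Lemma~\ref{l:double-l-auto}), so the hypothesis applies to $(g^{-1})^n = g^{-n}$, giving that $g^{-n}$ (equivalently $g^n$) is central. Using residuation (Lemma~\ref{l: InRL}), $g^n \leq a^r g a$ is equivalent to $a g^n \leq g a$, i.e. to $a g^n a^\ell \leq g$ (since $^\ell$ residuates on the right), i.e. to $g^{-1} a g^n a^\ell \leq 1$. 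Now I would try to push $g^n$ past $a$ using centrality: but centrality is only asserted against arbitrary $y \in L$, and indeed $g^n y = y g^n$ for all $y$ once we know $g^n = \g_n(g^{-1})^{n}\cdot(\text{something})$... more carefully, the hypothesis says $\g_n(x)^n$ is central for every $x$; taking $x = g^{-1}$ gives $g^{-n}$ central, hence $g^n$ central. Then $g^{-1} a g^n a^\ell = g^{-1} g^n a a^\ell = g^{n-1} a a^\ell$. Since $n \geq 1$, $g^{n-1} \leq 1$ (because $g \leq 1$), and $a a^\ell \leq 1$ (an $\ell$-pregroup axiom), so $g^{-1} a g^n a^\ell = g^{n-1} a a^\ell \leq 1$, which is exactly what we needed. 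The parenthetical statement $g^n \leq a g a^\ell \mt 1$ follows by the same computation with the roles of $^\ell$ and $^r$ swapped, or by applying the first part in the dual $\ell$-pregroup, or simply by noting $aga^\ell \mt 1 = (a^\ell)^r g (a^\ell) \mt 1$ via Remark~\ref{r: conjLP}.

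The main (minor) obstacle I anticipate is bookkeeping around the direction of the inequalities and making sure the residuation step is applied in the correct orientation, together with justifying cleanly that $\g_n(g^{-1}) = g^{-1}$ and hence that $g^n$ (not just $g^{-n}$) is central. The centrality of $g^n$ from centrality of $g^{-n}$ is immediate: if $g^{-n}$ commutes with everything, then so does its inverse $g^n$. Once that is in place, the whole argument is a two-line monoid computation using only $g \leq 1 \Rightarrow g^{n-1} \leq 1$ and the pregroup inequality $aa^\ell \leq 1$. So the proof should be short; the only subtlety is citing the right earlier facts (Lemma~\ref{l:double-l-auto} for invertible elements being fixed by the $^{[k]}$ maps, hence $\g_n$ fixing invertibles; Lemma~\ref{l: InRL} for residuation and $aa^\ell \leq 1$).
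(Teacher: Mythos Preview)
Your approach is essentially the paper's: since $g$ is invertible it is fixed by $\g_n$, so the hypothesis makes $g^n$ central; then residuate. You even write down the key equivalence $g^n \leq a^r g a \iff a g^n \leq g a$. The paper finishes in one line from here: by centrality $a g^n = g^n a$, and since $g \leq 1$ gives $g^n \leq g$, we get $g^n a \leq g a$; residuation then yields $g^n \leq a^r g a$, and together with $g^n \leq 1$ this gives $g^n \leq a^r g a \mt 1$.

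Your detour past this point, however, contains a genuine error. You claim ``$a a^\ell \leq 1$ (an $\ell$-pregroup axiom)'', but the axiom is $1 \leq a a^\ell$; it is $a^\ell a$ and $a a^r$ that are $\leq 1$. Hence your final chain $g^{-1} a g^n a^\ell = g^{n-1} a a^\ell \leq 1$ does not follow (indeed $g^{n-1} a a^\ell \geq g^{n-1}$, which is useless). The same slip appears in your opening aside ``$a^r g a \leq a^r a \leq 1$'': in fact $a^r a \geq 1$. You correctly abandon that line, but the reversed inequality resurfaces at the end. Simply stop at $a g^n \leq g a$ and you have exactly the paper's proof. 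The parenthetical inequality $g^n \leq a g a^\ell \mt 1$ then follows as you say, via Remark~\ref{r: conjLP}.
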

\begin{proof}
By Lemma~\ref{l: sigmagamma}, the group elements of $L$ are exactly the images of $\sigma$, so since $\m L$ satisfies the equation we have that $g^n$ is central and negative; hence $a g^n = g^n a \leq ga$ and, by residuation, $g^n \leq a^r ga$. 
\end{proof}

\begin{proposition}\label{p:isoCN} 
If $\m{L}$ is an $n$-periodic $\ell$-pregroup that satisfies $\g_n(x)^n y \approx y \g_n(x)^n$, then the maps 
\begin{align*}
&\f \colon \NC(\m{L}) \to \NC(\Grp{\m{L}}),\, H \mapsto H \cap \grp{L} \\
&\ps \colon \NC(\Grp{\m{L}}) \to \NC(\m{L}), \,  G \mapsto \Nc_\m{L}(G).
\end{align*}
form a pair of mutually inverse lattice isomorphisms.  {In particular, they are the restrictions of the isomorphisms between $\mathcal{C}(\m{L})$ and $\mathcal{C}(\Grp{\m{L}})$ from Theorem~\ref{t:conv-sub-iso}.}
\end{proposition}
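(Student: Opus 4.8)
The plan is to show that $\f$ and $\ps$ are mutually inverse, using the already-established machinery. First I would check that $\ps$ is well-defined, i.e., that $\Nc_\m{L}(G)$ is a \emph{convex normal subalgebra} of $\m L$ for every $G \in \NC(\Grp{\m L})$; this is immediate since $\Nc_\m{L}(G)$ is by definition a convex normal subalgebra. The key point to verify is that $\Nc_\m{L}(G) = \Cv_\m{L}(G) = \overline{G}$, i.e., that the convex normal subalgebra generated by $G$ already coincides with the convex subalgebra generated by $G$ (which, by Theorem~\ref{t:conv-sub-iso}, is $\ps(G)$ in the sense of the map of that theorem). For this I would use Lemma~\ref{l: CNSforRL}: $\Nc_\m{L}(G)$ is described via products of conjugates of elements of $T = \{a \mt a^\ell \mt 1 : a \in G\}$, and I would show that each such iterated conjugate already lies in $\Cv_\m{L}(G)$. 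Here is where Lemma~\ref{l:n-comm-conjugation} does the work: for $g \in \grp{L}$ with $g \le 1$ and any $b \in L$, we have $g^n \le b^r g b \mt 1 \le 1$; on the other hand $b^r g b \mt 1 \le 1$ trivially and $b^r g b \mt 1 \ge \g(b)^{-1} g \,\g(b) \ge \g(b)^{-1} \norm{g}^{-1} \g(b)$ or, more simply, conjugation never increases the norm too much — I would instead argue directly that $\norm{b^r g b \mt 1}$ is bounded by a product of norms of elements of $G$. The cleanest route: since $G$ is a convex normal subalgebra of $\Grp{\m L}$ and $g \in G$, we have $g^n \in G$; and from $g^n \le b^r g b \mt 1 \le 1$ together with $g^{-n} \in G$, convexity in the sense of Lemma~\ref{l:conv-subalg-gen}/Corollary~\ref{c:norm-convsub} gives $b^r g b \mt 1 \in \Cv_\m{L}(G)$ because its norm is sandwiched: $\norm{b^r g b \mt 1} \le \norm{g^n}$ (as $1 \le $ that element's... ) — more carefully, from $g^n \le x \le 1$ with $x := b^r g b \mt 1$ we get $\norm{x} = \g(x)^{-1} \jn \h(x) \le \norm{g^n}$, so $x \in \Cv_\m{L}(G)$ by Corollary~\ref{c:norm-convsub}. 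Iterating (using Remark~\ref{r: conjLP} to see iterated conjugates as single conjugates, and the fact that a product of finitely many elements of $\Cv_\m{L}(G)$ has norm bounded by the product of their norms, Lemma~\ref{l:norm}(5)) shows every element of the set $P$ of Lemma~\ref{l: CNSforRL} lies in $\Cv_\m{L}(G)$, hence $\Nc_\m{L}(G) \subseteq \Cv_\m{L}(G)$. The reverse inclusion $\Cv_\m{L}(G) \subseteq \Nc_\m{L}(G)$ holds because $G$ itself is closed under conjugation (being normal in $\Grp{\m L}$), so its convex closure is already normal. Thus $\ps(G) = \overline{G}$, and in particular $\ps$ is the restriction of the map $\psi$ of Theorem~\ref{t:conv-sub-iso}.

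Next I would observe that $\f$ is the restriction of the map $\f$ of Theorem~\ref{t:conv-sub-iso} (this is literally the same formula $H \mapsto H \cap \grp{L}$), and that by Lemma~\ref{l:groupNC-pregroupNC} it indeed lands in $\NC(\Grp{\m L})$. Since both $\f$ and $\ps$ are restrictions of the mutually inverse, order-preserving lattice isomorphisms of Theorem~\ref{t:conv-sub-iso} to the sublattices $\NC(\m L) \subseteq \mathcal{C}(\m L)$ and $\NC(\Grp{\m L}) \subseteq \mathcal{C}(\Grp{\m L})$, and since we have just shown each maps its domain \emph{onto} (not merely into) the corresponding sublattice — $\f(\ps(G)) = G$ from Theorem~\ref{t:conv-sub-iso} applied to $\overline G$, and $\ps(\f(H)) = \Cv_\m{L}(H \cap \grp{L}) = H$ from Lemma~\ref{l:groupNC-pregroupNC} — they are mutually inverse bijections between $\NC(\m L)$ and $\NC(\Grp{\m L})$. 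Being restrictions of lattice isomorphisms between the ambient convex-subalgebra lattices, they preserve the lattice operations (meets are intersections in both, and joins in $\NC$ are computed as in $\mathcal{C}$ followed by nothing extra — or one invokes that a bijective order-isomorphism between lattices is a lattice isomorphism), so $\f$ and $\ps$ are mutually inverse lattice isomorphisms, and the final "in particular" clause is exactly the statement that they are restrictions of the Theorem~\ref{t:conv-sub-iso} isomorphisms.

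The main obstacle I expect is the containment $\Nc_\m{L}(G) \subseteq \Cv_\m{L}(G)$ — equivalently, showing that convex-normal generation does not reach beyond convex generation when the seed $G$ is a convex normal subgroup of the skeleton. This is precisely where $n$-periodicity and the central-$n$th-power equation $\g_n(x)^n y \approx y \g_n(x)^n$ are essential: without the equation, conjugating a negative group element $g$ by an arbitrary $b \in L$ can produce an element whose norm is not controlled by powers of $\norm{g}$, and Corollary~\ref{c: FSI skeleton} only gives an embedding of $\NC(\m L)$ into $\NC(\Grp{\m L})$, not a surjection. Lemma~\ref{l:n-comm-conjugation} pins the conjugate $b^r g b \mt 1$ between $g^n$ and $1$, which forces its norm into $\Cv_\m{L}(G)$; the remaining care is bookkeeping with iterated conjugates (handled by Remark~\ref{r: conjLP}) and with products (handled by Lemma~\ref{l:norm}(5)). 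I would also double-check the edge case where elements of $T$ of the form $a \mt a^\ell \mt 1$ for $a \in G$ are actually $\norm{a}^{-1} \wedge 1$-type negative group elements — since $a \in \grp{L}$, $a^\ell = a^{-1}$ and $a \mt a^{-1} \mt 1 = \norm{a}^{-1}$, a negative element of $G$, so Lemma~\ref{l:n-comm-conjugation} applies directly with $g = \norm{a}^{-1}$.
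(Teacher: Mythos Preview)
Your approach is correct and uses the same essential ingredient as the paper---Lemma~\ref{l:n-comm-conjugation} to trap the conjugate $b^r g b \mt 1$ between $g^n$ and $1$---but the packaging differs slightly. The paper proves $\f\circ\ps = id$ directly: for $h \in \Nc_\m{L}(G) \cap \grp{L}$ it sets $f = h \mt h^{-1} \mt 1$, uses Lemma~\ref{l: CNSforRL} and Lemma~\ref{l:n-comm-conjugation} to get $g_1^n\cdots g_k^n \le f \le 1$ with the $g_i \in G$, and concludes $f \in G$ (hence $h \in G$) by convexity of $G$ in $\Grp{\m L}$; the equality $\Nc_\m{L}(G) = \Cv_\m{L}(G)$ is then deduced \emph{a posteriori} from surjectivity of $\f$ and Lemma~\ref{l:groupNC-pregroupNC}. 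You instead prove $\Nc_\m{L}(G) = \Cv_\m{L}(G)$ first and then invoke Theorem~\ref{t:conv-sub-iso}. Both routes work; yours makes the ``restriction'' clause more transparent, while the paper's is marginally shorter.

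One small correction: your justification for $\Cv_\m{L}(G) \subseteq \Nc_\m{L}(G)$---that $G$ is normal in $\Grp{\m L}$, so its convex closure in $\m L$ is normal---does not go through (normality against $\grp{L}$ says nothing about conjugation by arbitrary $b \in L$). The inclusion is, however, immediate for a simpler reason: $\Nc_\m{L}(G)$ is in particular a convex subalgebra of $\m L$ containing $G$, hence contains $\Cv_\m{L}(G)$ by minimality. Also, once you have $g^n \le x \le 1$ with $g^n \in G$, you can skip the norm computation and conclude $x \in \Cv_\m{L}(G)$ directly from convexity, since $g^n, 1 \in \Cv_\m{L}(G)$.
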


\begin{proof}
It is clear that both maps are order-preserving and, by Lemma~\ref{l:groupNC-pregroupNC}, $\f$ is well defined and $\ps\circ \f = id$. To show that $\f \circ \ps = id$, let $G \in \NC(\Grp{\m{L}})$ and define $H= \Nc_\m{L}(G)$. For $h\in H \cap\grp{L}$, we have  $f:=h \mt h^{-1} \mt  1 \in  H$. 
Then, by Lemma~\ref{l:n-comm-conjugation}, and Lemma~\ref{l: CNSforRL}, there exist $g_1,\dots, g_k \in G$ such that  $g_1^n\cdots g_k^n \leq f \leq 1$, yielding $f \in G$. Since $f \leq h \leq f^{-1}$, by convexity, we get $h \in G$. Hence, $H \cap\grp{L} = G$. 

Finally, by Lemma~\ref{l:groupNC-pregroupNC} and the fact that $\f$ is surjective, we get that for each $G \in \NC(\Grp{\m{L}})$, $\Nc_\m{L}(G) = \Cv_\m{L}(G)$, so $\f$ and $\ps$ are the restrictions of the isomorphisms of  Theorem~\ref{t:conv-sub-iso}.
\end{proof}

For an $\ell$-group (or more generally for a $1$-cyclic residuated lattice) $\m{L}$, the lattice $\NC(\m{L})$ is a complete sublattice of $\mathcal{C}(\m{L})$; see \cite[Proposition 5.4]{Bo}. So Proposition~\ref{p:isoCN} yields the following corollary.

\begin{corollary}
    If $\m{L}$ is an $n$-periodic $\ell$-pregroup that satisfies $\g_n(x)^n y \approx y \g_n(x)^n$, then $\NC(\m{L})$ is a complete sublattice of $\mathcal{C}(\m{L})$.
\end{corollary}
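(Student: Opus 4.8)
The plan is to deduce the corollary directly from Proposition~\ref{p:isoCN} together with the cited fact from \cite{Bo}. By Proposition~\ref{p:isoCN}, the maps $\f\colon H \mapsto H\cap\grp{L}$ and $\ps\colon G \mapsto \Nc_{\m L}(G)$ are mutually inverse lattice isomorphisms between $\NC(\m L)$ and $\NC(\Grp{\m L})$, and moreover they are the restrictions of the isomorphisms between $\mathcal{C}(\m L)$ and $\mathcal{C}(\Grp{\m L})$ from Theorem~\ref{t:conv-sub-iso}. So the situation is a commuting square: the inclusion $\NC(\m L)\hookrightarrow\mathcal{C}(\m L)$, the inclusion $\NC(\Grp{\m L})\hookrightarrow\mathcal{C}(\Grp{\m L})$, and the horizontal isomorphisms $\f$ and its counterpart $H\cap\grp{L}$ on the convex-subalgebra level. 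Since $\Grp{\m L}$ is an $\ell$-group, \cite[Proposition 5.4]{Bo} tells us that $\NC(\Grp{\m L})$ is a \emph{complete} sublattice of $\mathcal{C}(\Grp{\m L})$, i.e. arbitrary joins and meets computed in $\mathcal{C}(\Grp{\m L})$ of families of convex normal subalgebras again lie in $\NC(\Grp{\m L})$ and agree with the joins/meets taken in $\NC(\Grp{\m L})$.

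The key step is then a transport-of-structure argument. Let $\{H_i\}_{i\in I}$ be a family in $\NC(\m L)$. Apply the convex-subalgebra-level isomorphism $\Phi\colon\mathcal{C}(\m L)\to\mathcal{C}(\Grp{\m L})$ of Theorem~\ref{t:conv-sub-iso}, which restricts to $\f$ on $\NC(\m L)$. Since $\Phi$ is a lattice isomorphism of $\mathcal{C}(\m L)$ with $\mathcal{C}(\Grp{\m L})$, it preserves arbitrary joins and meets; hence $\Phi(\bigvee_{\mathcal{C}(\m L)} H_i) = \bigvee_{\mathcal{C}(\Grp{\m L})}\Phi(H_i) = \bigvee_{\mathcal{C}(\Grp{\m L})}(H_i\cap\grp{L})$, and likewise for meets. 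By the completeness of $\NC(\Grp{\m L})$ in $\mathcal{C}(\Grp{\m L})$, the right-hand side lies in $\NC(\Grp{\m L})$ and equals $\bigvee_{\NC(\Grp{\m L})}(H_i\cap\grp{L})$. Pulling back along $\ps$ (the inverse of $\Phi$ restricted appropriately), and using that $\ps$ is a lattice isomorphism $\NC(\Grp{\m L})\to\NC(\m L)$ that agrees with $\Phi^{-1}$, we conclude that $\bigvee_{\mathcal{C}(\m L)} H_i$ lies in $\NC(\m L)$ and coincides with $\bigvee_{\NC(\m L)} H_i$; the same reasoning applies to meets. Therefore $\NC(\m L)$ is closed under arbitrary joins and meets taken in $\mathcal{C}(\m L)$, i.e. it is a complete sublattice of $\mathcal{C}(\m L)$.

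I do not anticipate a serious obstacle: the only subtlety is making sure the two sets of maps genuinely fit into a commuting diagram — that is, that $\f$ really is the restriction of $\Phi$ and $\ps$ the restriction of $\Phi^{-1}$ — but this is exactly the content of the final sentence of Proposition~\ref{p:isoCN}, so it can be invoked directly. The remaining points (that a lattice isomorphism between complete lattices preserves arbitrary joins and meets, and that the cited result gives completeness of $\NC$ inside $\mathcal{C}$ for the $\ell$-group $\Grp{\m L}$) are standard. Thus the proof is essentially a two-line diagram chase once the earlier results are in hand.

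\begin{proof}
By Proposition~\ref{p:isoCN}, the maps $\f$ and $\ps$ are mutually inverse lattice isomorphisms between $\NC(\m{L})$ and $\NC(\Grp{\m{L}})$, and they are the restrictions of the lattice isomorphisms $\Phi\colon\mathcal{C}(\m{L})\to\mathcal{C}(\Grp{\m L})$ and $\Phi^{-1}$ of Theorem~\ref{t:conv-sub-iso}. Since $\Grp{\m L}$ is an $\ell$-group, $\NC(\Grp{\m L})$ is a complete sublattice of $\mathcal{C}(\Grp{\m L})$ by \cite[Proposition 5.4]{Bo}. Let $\{H_i\}_{i\in I}\subseteq \NC(\m L)$ and let $K=\bigvee_{\mathcal{C}(\m L)} H_i$ be its join computed in $\mathcal{C}(\m L)$ (the argument for meets is identical). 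As $\Phi$ is an isomorphism of the complete lattices $\mathcal{C}(\m L)$ and $\mathcal{C}(\Grp{\m L})$, it preserves arbitrary joins, so $\Phi(K)=\bigvee_{\mathcal{C}(\Grp{\m L})}\Phi(H_i)=\bigvee_{\mathcal{C}(\Grp{\m L})}(H_i\cap\grp{L})$. Since each $H_i\cap\grp{L}\in\NC(\Grp{\m L})$ and $\NC(\Grp{\m L})$ is a complete sublattice of $\mathcal{C}(\Grp{\m L})$, this join lies in $\NC(\Grp{\m L})$ and equals $\bigvee_{\NC(\Grp{\m L})}(H_i\cap\grp{L})$. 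Applying $\Phi^{-1}$, which restricts to $\ps$ on $\NC(\Grp{\m L})$, we get $K=\Phi^{-1}\bigl(\bigvee_{\NC(\Grp{\m L})}(H_i\cap\grp{L})\bigr)=\ps\bigl(\bigvee_{\NC(\Grp{\m L})}\f(H_i)\bigr)=\bigvee_{\NC(\m L)}H_i\in\NC(\m L)$, where the last equality uses that $\ps$ is a lattice isomorphism and $\ps\circ\f=id$. Hence $\NC(\m L)$ is a complete sublattice of $\mathcal{C}(\m L)$.
\end{proof}
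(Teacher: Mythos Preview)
Your proof is correct and follows exactly the approach the paper intends: the paper simply states that the corollary follows from Proposition~\ref{p:isoCN} together with \cite[Proposition~5.4]{Bo}, and your argument spells out the transport-of-structure diagram chase that makes this explicit. There is nothing to add.
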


\begin{remark}
  As shown in Proposition~\ref{p:isoCN}, for any $n$-periodic $\ell$-pregroup $\m L$ that satisfies $\g_n(x)^ny \approx y\g(x)^n$, the isomorphisms between $\mathcal{C}(\m{L})$ and $\mathcal{C}(\Grp{\m{L}})$ of Theorem~\ref{t:conv-sub-iso} restrict to isomorphisms between $\NC(\m{L})$ and $\NC(\Grp{\m{\m{L}}})$. However, it is not clear if this restrictions yield isomorphisms for arbitrary $n$-periodic $\ell$-pregroups.
\end{remark}

An $\ell$-group is called \emph{representable} if it is a subdirect product of totally ordered $\ell$-groups. The class of representable $\ell$-groups forms a variety that is axiomatized by $1 \leq x \jn yx^{-1}y^{-1}$ (see, e.g., \cite{KM} for more details). The next lemma characterizes the representable $\ell$-groups that are finitely subdirectly irreducible. The result is well-known, but we prove it for completeness. 
\begin{lemma}\label{l:representable-FSI} 
    A non-trivial representable $\ell$-group is finitely subdirectly irreducible if and only if it is totally ordered. In particular, \ a non-trivial abelian $\ell$-group is finitely subdirectly irreducible if and only if it is totally ordered.
\end{lemma}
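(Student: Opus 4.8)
The plan is to prove the two directions of the biconditional for a non-trivial representable $\ell$-group $\m G$, and then deduce the abelian case as a corollary (since every abelian $\ell$-group is representable).

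\textbf{The easy direction.} Suppose $\m G$ is totally ordered and non-trivial; I want to show $\Delta_G$ is meet-irreducible in the congruence lattice, equivalently (via Lemma~\ref{l: CNSandCONforRL}, or its $\ell$-group analogue) that the trivial convex normal subgroup $\{1\}$ is meet-irreducible in $\NC(\m G)$. Equivalently, I show that whenever $H_1, H_2$ are convex normal subgroups with $H_1 \cap H_2 = \{1\}$, one of them is trivial. Here I would use the standard fact that in a totally ordered group the convex subgroups form a chain under inclusion: given $1 < a \in H_1$ and $1 < b \in H_2$, totality gives (WLOG) $a \le b$, and then convexity of $H_2$ forces $a \in H_2$, so $a \in H_1 \cap H_2$, contradiction. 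Hence one of $H_1, H_2$ is $\{1\}$, so $\{1\}$ is meet-irreducible and $\m G$ is FSI.

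\textbf{The harder direction.} Suppose $\m G$ is representable, non-trivial, and FSI; I want $\m G$ totally ordered. Since $\m G$ is representable, it is a subdirect product of totally ordered $\ell$-groups $\m G_i$, $i \in I$; let $\pi_i \colon \m G \to \m G_i$ be the projections, with kernels (as convex normal subgroups) $K_i$, so $\bigcap_{i} K_i = \{1\}$. Because $\m G$ is FSI, $\{1\}$ is meet-irreducible in $\NC(\m G)$; I would first reduce from the infinite intersection $\bigcap_i K_i = \{1\}$ to a claim about a single $K_i$. The cleanest route: use the characterization of the representable variety by the inequation $1 \le x \jn yx^{-1}y^{-1}$ to show directly that an FSI representable $\ell$-group has no two incomparable positive elements. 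Concretely, suppose $a, b \ge 1$ with $a \mt b = 1$ but $a \ne 1 \ne b$ (if $\m G$ is not totally ordered, such $a,b$ exist: take any $c$ incomparable to $1$ and set $a = c \jn 1$, $b = c^{-1} \jn 1$, which satisfy $a \mt b = 1$). Then the principal convex normal subgroups $\Nc(a)$ and $\Nc(b)$ are both non-trivial, and I claim $\Nc(a) \cap \Nc(b) = \{1\}$, contradicting FSI. To see the intersection is trivial, I would use Lemma~\ref{l: CNSforRL}: elements of $\Nc(a)$ are bounded by products of conjugates of $a$, elements of $\Nc(b)$ by products of conjugates of $b$; using representability — in each totally ordered quotient, conjugates of $a$ lie "below" powers of $a$ and remain orthogonal to conjugates of $b$ because in a totally ordered group $a \mt b = 1$ with both positive forces $a = 1$ or $b = 1$, so in each factor one of $\pi_i(a), \pi_i(b)$ is trivial — and conclude any common element maps to $1$ in every factor, hence is $1$.

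\textbf{Main obstacle.} The subtle point is the interaction between conjugation and orthogonality in the second direction: I need that if $a \mt b = 1$ with $a, b \ge 1$ in a representable $\ell$-group, then every conjugate $g^{-1}ag$ is still orthogonal to every conjugate $h^{-1}bh$, and that products of such conjugates of $a$ stay orthogonal to products of such conjugates of $b$. This is exactly where representability is essential (it fails in general $\ell$-groups), and the clean way to see it is pointwise in the totally ordered subdirect factors: in each factor $\m G_i$, from $\pi_i(a) \mt \pi_i(b) = 1$ and total order we get $\pi_i(a) = 1$ or $\pi_i(b) = 1$, and this property is preserved under conjugation and products within that factor. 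So the orthogonality of $\Nc(a)$ and $\Nc(b)$ reduces to a factor-by-factor check. I would make sure to phrase the argument so that it uses only the already-established machinery (Lemmas~\ref{l: CNSforRL} and~\ref{l: CNSandCONforRL} specialized to $\ell$-groups, plus the defining inequation of representability), and I would remark that the final sentence about abelian $\ell$-groups is immediate since abelian $\ell$-groups are representable.
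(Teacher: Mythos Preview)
Your proof is correct, and the easy direction (totally ordered $\Rightarrow$ FSI) matches the paper's argument essentially verbatim. For the harder direction, however, you take a genuinely different route. The paper invokes the result of Czelakowski--Dziobiak (\cite[Lemma~1.5]{CD}) that every FSI in a congruence-distributive quasivariety embeds into an ultraproduct of SIs; since the SIs in the representable variety are totally ordered and total order is preserved under ultraproducts and substructures, the conclusion follows in one line. Your argument is more elementary and self-contained: you produce orthogonal strictly positive elements $a,b$ from a failure of total order, and then check factor-by-factor in the subdirect decomposition that $\Nc(a)\cap\Nc(b)=\{1\}$, contradicting meet-irreducibility. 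Your approach avoids the external citation and makes the role of representability completely transparent (namely, that in each totally ordered factor one of $\pi_i(a),\pi_i(b)$ must vanish, and this propagates to conjugates and products); the paper's approach is slicker but black-boxes the universal-algebraic machinery. Either works perfectly well here.
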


\begin{proof}
    For the left-to-right implication note that the subdirectly irreducible representable $\ell$-groups are totally ordered, since by definition they are subdirect products of totally ordered $\ell$-groups; also, each finitely subdirectly irreducible embeds into an ultraproduct of subdirectly irreducible algebras, by \cite[Lemma~1.5]{CD}. 

    For the right-to-left implication note that if $\m{G}$ is a totally ordered $\ell$-group, then its lattice of convex normal $\ell$-subgroups is a chain (e.g., using Lemma~\ref{l: CNSforRL}). Hence the trivial convex normal $\ell$-subgroup of $\m{G}$ is  meet-irreducible, i.e., $\m{G}$ is finitely subdirectly irreducible.

    Finally it is well-known that abelian $\ell$-groups are representable (see e.g., \cite[Proposition~9.3.3]{KM}).
\end{proof}

\begin{proposition}[{\cite[Lemma~1.4]{Da2}}] 
\label{p: abelian}
    If an $\ell$-group satisfies $xy^n \approx y^n x$  for some $n>0$, then it is abelian.
\end{proposition}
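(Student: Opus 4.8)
The plan is to prove the result in two stages: first show that such an $\ell$-group is \emph{representable}, and then reduce to totally ordered $\ell$-groups. Let $\m G$ be an $\ell$-group satisfying $xy^n \approx y^n x$, with group operation written multiplicatively. I will freely use standard facts about $\ell$-groups (see, e.g., \cite{KM}): the underlying group is torsion-free; any two disjoint positive elements $a, b$ (i.e., $1 \leq a, b$ with $a \mt b = 1$) commute and satisfy $a^j \mt b^k = 1$ for all $j, k \geq 1$; and $\m G$ is representable if and only if it satisfies the quasi-equation ``$a \mt b = 1 \Rightarrow a \mt c^{-1}bc = 1$'', which together with $1 \leq x \jn yx^{-1}y^{-1}$ axiomatizes representability. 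One should note that it does \emph{not} suffice to observe that $x^{-1}yx$ and $y$ have equal $n$-th powers (immediate from centrality) and then cancel: general $\ell$-groups need not have unique $n$-th roots (for instance $\m{Aut}(\mathbb{R})$ has distinct elements with the same square), so the lattice order has to enter.

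\emph{Step 1: $\m G$ is representable.} Suppose $1 \leq a, b$ with $a \mt b = 1$, fix $c \in G$, and set $d := a \mt c^{-1}bc$. Since conjugation preserves the order and fixes $1$, we have $1 \leq c^{-1}bc$, hence $1 \leq d \leq a$ and $1 \leq d \leq c^{-1}bc$. As $u \mapsto u^n$ is order-preserving on positive elements, $d^n \leq a^n$ and $d^n \leq (c^{-1}bc)^n = c^{-1}b^n c = b^n$, the middle equality holding because $b^n$ is central. Therefore $d^n \leq a^n \mt b^n = 1$, so $d^n = 1$, and torsion-freeness yields $d = 1$. Thus $a$ is disjoint from every conjugate of $b$, so $\m G$ is representable.

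\emph{Step 2: reduction to the totally ordered case, and the main obstacle.} Being representable, $\m G$ is a subdirect product of totally ordered $\ell$-groups $\m G_i$, each of which, as a homomorphic image of $\m G$, again satisfies $xy^n \approx y^n x$. Fix $i$ and $a, b \in G_i$; centrality of $a^n$ gives $(b^{-1}ab)^n = b^{-1}a^n b = a^n$, and in the totally ordered group $\m G_i$ the map $u \mapsto u^n$ is strictly increasing (if $u < v$ then $u^{k+1} = u\cdot u^k < u\cdot v^k < v\cdot v^k = v^{k+1}$, so $u^n < v^n$ by induction), hence injective; therefore $b^{-1}ab = a$, i.e., $\m G_i$ is abelian, and so is $\m G \leq \prod_i \m G_i$. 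The crux is Step 1: the hypothesis must be used in the strong form that \emph{all} $n$-th powers are central, so that the candidate common part $d$ of $a$ and a conjugate of $b$ is squeezed below the meet $a^n \mt b^n$ of the disjoint powers of two disjoint positive elements, which is $1$; torsion-freeness then kills $d$.
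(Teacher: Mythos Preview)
The paper does not give a proof of this proposition; it simply cites \cite[Lemma~1.4]{Da2}. So there is no ``paper's own proof'' to compare against.

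Your argument is correct. The two-step strategy (representability, then uniqueness of $n$-th roots in chains) is the natural one and the details are in order: in Step~1 the key point that $(c^{-1}bc)^n = b^n$ uses centrality of $b^n$ exactly as you say, and $a^n \mt b^n = 1$ follows from disjointness of $a,b$; in Step~2 strict monotonicity of $u \mapsto u^n$ on a chain gives injectivity, so $(b^{-1}ab)^n = a^n$ forces $b^{-1}ab = a$. Your remark that one cannot simply cancel $n$-th powers in a general $\ell$-group is well taken and explains why the lattice structure must intervene via representability.
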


\begin{corollary}\label{c: Lg is abelian}
    If $\m{L}$ is an $n$-periodic $\ell$-pregroup that satisfies $x\g(y)^n \approx \g(y)^n x$,  then  $\Grp{\m L}$ is abelian.
\end{corollary}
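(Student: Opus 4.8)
The plan is to reduce the claim about the $\ell$-pregroup $\m L$ to a statement about its group skeleton $\Grp{\m L}$, which is an $\ell$-group (Lemma 2.6(1) of \cite{GG2}), and then invoke Proposition~\ref{p: abelian}. So the first step is to observe that, by Lemma~\ref{l: sigmagamma}(1), the elements of the form $\g(b)$, as $b$ ranges over $L$, are exactly the elements of the group skeleton $\grp{L}$: indeed $\g$ is a conucleus whose image is $\Grp{\m L}$, and in particular $\g$ restricted to $\grp{L}$ is the identity, so every $g \in \grp{L}$ equals $\g(g)$.

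Next I would specialize the hypothesis $x\g(y)^n \approx \g(y)^n x$ to elements of the skeleton. Since $\Grp{\m L}$ is a subalgebra of $\m L$ (closed under all operations, in particular under $^\ell$, $^r$, $\mt$, hence under the term $\g$), for any $g, h \in \grp{L}$ we may take $x = g$ and $y = h$ in the equation; using $\g(h) = h$ from the previous step, this gives $g h^n = h^n g$ for all $g, h \in \grp{L}$. In other words, the $\ell$-group $\Grp{\m L}$ satisfies the equation $xy^n \approx y^n x$.

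Finally, since $n \in \ZZ^+$, Proposition~\ref{p: abelian} applies directly to the $\ell$-group $\Grp{\m L}$ and yields that $\Grp{\m L}$ is abelian, which is the desired conclusion. The argument is essentially immediate once the identification of skeleton elements with $\g$-images is in place; there is no real obstacle, the only thing to be a little careful about is that the substitution instance $x=g$, $y=h$ of an equation that holds in $\m L$ automatically holds in the subalgebra $\Grp{\m L}$, and that the term $\g(y)$ evaluates to $y$ itself on skeleton elements (so that the instance of the $\m L$-equation becomes precisely the $\ell$-group equation $xy^n \approx y^n x$ read inside $\Grp{\m L}$).
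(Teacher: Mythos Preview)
Your proposal is correct and follows exactly the intended argument: the paper states this corollary without proof, treating it as an immediate consequence of Proposition~\ref{p: abelian} once one notes (via Lemma~\ref{l: sigmagamma}(1)) that $\g$ is the identity on $\grp{L}$, so the hypothesis restricts to $xy^n \approx y^n x$ on $\Grp{\m L}$. Your write-up simply makes explicit what the paper leaves to the reader.
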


We now obtain a characterization of the finitely subdirectly irreducibles in the subvariety of $\mathsf{LP}_n$ axiomatized by $x\g(y)^n \approx \g(y)^n x$.

\begin{corollary}\label{c:fsi-commutative}
    Let $\m{L}$ be a non-trivial $n$-periodic $\ell$-pregroup that satisfies the equation $x\g(y)^n \approx \g(y)^n x$. 
    Then $\m{L}$ is finitely subdirectly irreducible if and only if $\Grp{\m{L}}$ is totally ordered (and abelian). 
\end{corollary}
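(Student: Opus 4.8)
The plan is to reduce the question of finite subdirect irreducibility of $\m L$ to that of its group skeleton $\Grp{\m L}$, and then to apply the known characterization of finitely subdirectly irreducible abelian $\ell$-groups. Recall that $\m L$ is FSI exactly when the trivial congruence $\Delta_L$ is meet-irreducible in $\mathrm{Con}(\m L)$. By Lemma~\ref{l: CNSandCONforRL}, $\mathrm{Con}(\m L) \cong \NC(\m L)$, and this isomorphism sends $\Delta_L$ to $[1]_{\Delta_L} = \{1\}$, which is the least element of $\NC(\m L)$ (every convex normal subalgebra contains $1$, and $\{1\}$ is itself a convex normal subalgebra, since $1$ is idempotent, $1^\ell = 1^r = 1$, and the left conjugate of $1$ by any $b$ is $b^r b \mt 1 = 1$ as $1 \leq b^r b$). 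So $\m L$ is FSI iff $\{1\}$ is meet-irreducible in $\NC(\m L)$; the same reasoning applied to the $\ell$-group $\Grp{\m L}$ shows that $\Grp{\m L}$ is FSI iff $\{1\}$ is meet-irreducible in $\NC(\Grp{\m L})$.

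The crux is then Proposition~\ref{p:isoCN}: since $\m L$ satisfies $x\g(y)^n \approx \g(y)^n x$, equivalently $\g_n(x)^n y \approx y\g_n(x)^n$, the maps $H \mapsto H\cap \grp L$ and $G \mapsto \Nc_\m{L}(G)$ are mutually inverse lattice isomorphisms between $\NC(\m L)$ and $\NC(\Grp{\m L})$. This isomorphism carries the bottom element $\{1\}$ of $\NC(\m L)$ to $\{1\}\cap\grp L = \{1\}$, the bottom of $\NC(\Grp{\m L})$, and any lattice isomorphism preserves meet-irreducibility of the bottom element. Hence $\{1\}$ is meet-irreducible in $\NC(\m L)$ iff it is meet-irreducible in $\NC(\Grp{\m L})$, that is, $\m L$ is FSI iff $\Grp{\m L}$ is FSI. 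The same isomorphism also shows that $\m L$ is non-trivial iff $\Grp{\m L}$ is non-trivial, since a trivial algebra has a one-element congruence lattice; so the non-triviality hypothesis transfers correctly.

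To finish, note that by Corollary~\ref{c: Lg is abelian} the equation forces $\Grp{\m L}$ to be abelian, and by Lemma~\ref{l:representable-FSI} a non-trivial abelian $\ell$-group is FSI iff it is totally ordered. Chaining the two equivalences yields: $\m L$ is FSI iff $\Grp{\m L}$ is FSI iff $\Grp{\m L}$ is totally ordered (and it is automatically abelian), which is exactly the statement of the corollary.

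I do not expect a genuine obstacle at this stage: the corollary is essentially a bookkeeping consequence of results already in hand, the substantive work having been done in Proposition~\ref{p:isoCN} and, behind it, in Lemma~\ref{l:n-comm-conjugation}, where centrality of $n$th powers of group elements is used to tame conjugation. The only points needing a moment's care are verifying that the isomorphism of Proposition~\ref{p:isoCN} matches up the bottom elements, hence preserves ``$\Delta$ is meet-irreducible'', and that non-triviality passes between $\m L$ and $\Grp{\m L}$; both are immediate.
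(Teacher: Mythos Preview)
Your proof is correct and follows essentially the same approach as the paper: use Proposition~\ref{p:isoCN} to transfer FSI between $\m L$ and $\Grp{\m L}$, invoke Corollary~\ref{c: Lg is abelian} for abelianness, and then Lemma~\ref{l:representable-FSI} to characterize FSI abelian $\ell$-groups as the totally ordered ones. You spell out in more detail why the lattice isomorphism of Proposition~\ref{p:isoCN} preserves meet-irreducibility of the bottom and why non-triviality transfers, points the paper leaves implicit.
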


\begin{proof}

By Proposition~\ref{p:isoCN}, $\m{L}$ is finitely subdirectly irreducible if and only if $\Grp{\m{L}}$ is. 
Since $\m{L}$  satisfies $x\g(y)^n \approx \g(y)^n x$,  $\Grp{\m{L}}$ is abelian, by Corollary~\ref{c: Lg is abelian}. So, by Lemma~\ref{l:representable-FSI}, $\Grp{\m{L}}$ is finitely subdirectly irreducible if and only if it is totally ordered.
\end{proof}

\section{On the structure of \texorpdfstring{$\m{F}_n(\ZZ)$}{Fₙ(ℤ)}}\label{s: FnZ}

In this section, we  prove some results about the structure of $\m{F}_n(\ZZ)$ that will be used later in the axiomatization proof, but we note that they also  are of independent interest.
For example, we define the notion of periodicity of an element and show that every element of $\m{F}_n(\ZZ)$ of periodicity $k \mid n$ generates all of $\m{F}_k(\ZZ)$ and that elements of incomparable periodicities generate an element with periodicity the least common multiple of the periodicities. Also, we define the height of an element of $\m{F}_n(\ZZ)$ and show that every  strictly positive idempotent $a$ generates a  positive idempotent,  $\slt(a)$, with height $1$. We start by defining the operators $\m F$ and $\m F_n$.

\medskip

Given functions $f\colon \mathbf{P}\rightarrow\mathbf{Q}$ and $g\colon \mathbf{Q}\rightarrow\mathbf{P}$ between posets, we say that $g$ is a \emph{residual} for $f$, or that $f$ is a \emph{dual residual} for $g$, or that $(f,g)$ forms a \emph{residuated pair}, if 
\[
f(p)\leq q\Leftrightarrow p\leq g(q)\text{, for all }p\in P,q\in Q.
\]
The residual and the dual residual of $f$ are unique when they exist and we denote them by $f^r$ and $f^{\ell}$, respectively; if they exist, they are given by
\[
f^{\ell}(q)=\min \{ p\in P : q\leq f (p) \}
\quad \text{ and } \quad 
f^{r}(q)=\max \{ p\in P : f(p)\leq q \}
\]
and $f$ is called \emph{residuated} or \emph{dually residuated}, respectively. \emph{Residuals  and dual residuals of order $n$} are defined, when they exist, by $f^{(-n)}:=f^{r^n}$ and $f^{(n)}:=f^{\ell^n}$; i.e.,  $f^{(0)}=f$, $f^{(n+1)}=(f^{(n)})^\ell$ and $f^{(-n-1)}=(f^{(-n)})^r$, for all $n \in \ZZ^+$.

The set of all maps on a chain $\mathbf{\Omega}$ that have residuals and dual residuals of all orders supports an $\ell$-pregroup $\m F(\m{\Omega})$, under composition, identity, the pointwise order, ${}^r$ and ${}^\ell$ \cite{GJKO}. In fact  $\m F(\m{\Omega})$ is a \emph{distributive} $\ell$-pregroup (i.e., its underlying lattice is distributive); we denote the variety of distributive $\ell$-pregroups by $\mathsf{DLP}$. In \cite{GH} it is shown that every distributive $\ell$-pregroup can be embedded into $\m F(\m{\Omega})$  for some  chain $\m \Omega$ and in \cite{GG1} it is proved that $\m \Omega$ can be taken to be a lexicographic product $\m{J} \overrightarrow{\times}\mathbb{Z}$ for some chain $\m J$. 

 We denote by $\mathsf{LP}_n$ the variety of all $n$-periodic $\ell$-pregroups. It is proved in \cite{GJ} that $\mathsf{LP}_n \subseteq \mathsf{DLP}$, for all $n$; and we will call an element $a$ of an $\ell$-pregroup $n$-periodic if it satisfies $a = a^{[n]}$.
 In \cite{GG2} it is  shown that, for every $n \in \ZZ^+$, the $n$-periodic elements of an $\ell$-pregroup form a subalgebra (which is an $n$-periodic $\ell$-pregroup); 
 for $n=1$, we obtain the  \emph{group skeleton} as discussed above. 
 
 We denote by $\m F_n(\m{\Omega})$ the subalgebra of  $n$-periodic elements of $\m F(\m{\Omega})$; in particular $\m F_1(\m{\Omega})=\m {Aut}(\m{\Omega})$, the $\ell$-group of order-automorphisms on $\m \Omega$. 
  In \cite{GG2} it is shown that every $n$-periodic $\ell$-pregroup can be embedded in $\m F_n(\m{J} \overrightarrow{\times}\mathbb{Z})$  for some  chain  $\m 
 J$. Therefore, the operators $\m F$ and $\m F_n$ play, for distributive and $n$-periodic $\ell$-pregroups respectively, the role that $\m {Aut}$ plays for $\ell$-groups in Holland's representation theorem and the role that the group of permutations operator plays for groups in Cayley's representation theorem. 

 It is easy to see \cite{GJKO, GG1} that $\m F(\ZZ)$ consists precisely of the order-preserving maps on $\ZZ$ that are finite-to-one (i.e., the preimage of every singleton is a finite set/interval). Also, in \cite{GG2} it is observed that, for all $n$, $\m F_n(\ZZ)$ consists precisely of the order-preserving maps on $\ZZ$ that are $n$-periodic as functions. In the following we will be denoting by $\cn{s}$ the function $x\mapsto x+1$ on $\ZZ$  when viewing it as an element of $\m F(\ZZ)$ (or of $\m F_n(\ZZ)$). Note that $\Grp{\m{F}(\ZZ)} = \langle \cn{s} \rangle$ and that this algebra is isomorphic to the abelian $\ell$-group based on $\ZZ$, where for a subset $X$ of an algebra, we denote by $\langle X \rangle$ the subalgebra generated by $X$. 
 So $\m F_n(\ZZ)$ is simple, by Corollary~\ref{c: FSI skeleton}; this was already observed directly in \cite{GG2}.

\begin{lemma}[\cite{GG1,GG2}]\label{l:basic-per}
    Let $f$ be a map on $\ZZ$.
    \begin{enumerate}[label = \textup{(\arabic*)}]
        \item $f\in \mathrm{F}(\ZZ)$ iff $f$ is order-preserving and $f^{-1}[x]$ is finite for all $x\in \ZZ$.
        \item If $f\in \mathrm{F}(\ZZ)$, then $f \in \mathrm{F}_n(\ZZ)$ iff $f(x +kn) = f(x) +kn$ for all  $x,k\in \ZZ$.
    \end{enumerate}
\end{lemma}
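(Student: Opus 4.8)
The plan is to treat the two parts separately. Throughout, write $\mathcal{G}$ for the set of order-preserving maps $f$ on $\ZZ$ with $f^{-1}[x]$ finite for every $x\in\ZZ$, and recall that on $\mathrm{F}(\ZZ)$ multiplication is composition and that, when they exist, $f^r(q)=\max\{p : f(p)\leq q\}$ and $f^\ell(q)=\min\{p : q\leq f(p)\}$.

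For (1) I would prove $\mathrm{F}(\ZZ)=\mathcal{G}$ by the two inclusions. The inclusion $\mathrm{F}(\ZZ)\subseteq\mathcal{G}$ is easy: if $f\in\mathrm{F}(\ZZ)$ then $f^r$ and $f^\ell$ exist, so $f$ is order-preserving (a residuated map preserves order) and $f^{-1}[x]\subseteq[f^\ell(x),f^r(x)]$ is a finite interval. For $\mathcal{G}\subseteq\mathrm{F}(\ZZ)$ the first thing to establish is that any $f\in\mathcal{G}$ is unbounded above and below: if $f(p)\leq M$ for all $p$, then every half-line $[N,\infty)$ would map into the finite set $[f(N),M]$ and hence lie in a finite union of finite fibers, which is absurd; the lower bound case is symmetric. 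From unboundedness it follows that for each $q$ the downset $\{p : f(p)\leq q\}$ is nonempty and bounded above, so $f^r(q)$ exists, and dually $f^\ell(q)$ exists. I would then compute the fibers of these: $f^r(q)=p$ iff $f(p)\leq q\leq f(p+1)-1$, so $(f^r)^{-1}[p]=[f(p),f(p+1)-1]$ is finite, and likewise $(f^\ell)^{-1}[p]=[f(p-1)+1,f(p)]$ is finite. Since $f^r,f^\ell$ are plainly order-preserving, $\mathcal{G}$ is closed under $^r$ and $^\ell$, and an easy induction shows every $f\in\mathcal{G}$ has residuals and dual residuals of all orders; hence $\mathcal{G}\subseteq\mathrm{F}(\ZZ)$.

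For (2) the engine is the shift identity $f^\ell(q)=f^r(q-1)+1$, valid for all $f\in\mathrm{F}(\ZZ)$ and $q\in\ZZ$: over $\ZZ$ the set $\{p : f(p)\leq q-1\}$ is a downset and $\{p : q\leq f(p)\}$ is its complementary upset, both nonempty, so the minimum of the latter is one more than the maximum of the former. Applying this identity with $f^\ell$ in place of $f$ and using $f^{\ell r}=f$ (Lemma~\ref{l: InRL}) gives $f^{[1]}(x)=f^{\ell\ell}(x)=f(x-1)+1$; symmetrically $f^{[-1]}(x)=f(x+1)-1$, and an induction on $k$ then gives $f^{[k]}(x)=f(x-k)+k$ for every $k\in\ZZ$. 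Since $\mathrm{F}_n(\ZZ)=\{f\in\mathrm{F}(\ZZ) : f=f^{[n]}\}$, we conclude that $f\in\mathrm{F}_n(\ZZ)$ iff $f(x)=f(x-n)+n$ for all $x$, equivalently $f(x+n)=f(x)+n$ for all $x$; a final induction on $|k|$ upgrades this to $f(x+kn)=f(x)+kn$ for all $x,k\in\ZZ$.

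I expect the only load-bearing step to be the unboundedness argument in part (1): it is precisely what forces the residual and dual residual of an order-preserving finite-to-one map on $\ZZ$ to exist as total functions, and without it the equivalence in (1) fails. After that, everything is bookkeeping, the one place to be careful being the direction of the shift in $f^\ell(q)=f^r(q-1)+1$, on which all of part (2) rests.
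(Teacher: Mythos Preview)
Your proof is correct. Note, however, that the paper does not supply its own proof of this lemma: it is stated with a citation to \cite{GG1,GG2} and treated as a known fact (the surrounding text even says ``it is easy to see'' and ``it is observed''). Your argument is the natural one and matches what one would expect to find in those references; in particular, the shift identity $f^\ell(q)=f^r(q-1)+1$ and the resulting formula $f^{[k]}(x)=f(x-k)+k$ are exactly what the paper later records as Lemma~\ref{l:b-conjugation}(1), so your derivation is consistent with the paper's internal use of the result.
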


\begin{corollary}\label{c:interval-leq}
    Let $f,g\in \mathrm{F}_n(\ZZ)$. Then $f \leq g$ if and only if for some $n$-element interval $I\subseteq \ZZ$, $f(x) \leq g(x)$ for all $x \in I$. 
\end{corollary}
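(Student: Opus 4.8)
The forward direction is immediate, since if $f \leq g$ holds pointwise on all of $\ZZ$, it holds in particular on any $n$-element interval.

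For the converse, suppose $f(x) \leq g(x)$ for all $x$ in some $n$-element interval $I = \{a, a+1, \dots, a+n-1\}$. The plan is to leverage the $n$-periodicity of $f$ and $g$ as functions, i.e., Lemma~\ref{l:basic-per}(2). Given an arbitrary $y \in \ZZ$, I would write $y = x + kn$ for suitable $x \in I$ and $k \in \ZZ$; this is possible (and the representation is unique) because the $n$ consecutive integers in $I$ form a complete set of residues modulo $n$. Then, applying Lemma~\ref{l:basic-per}(2) to both $f$ and $g$, together with the hypothesis $f(x) \leq g(x)$, gives
\[
f(y) = f(x + kn) = f(x) + kn \leq g(x) + kn = g(x + kn) = g(y).
\]
Since $y$ was arbitrary, $f \leq g$.

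There is no real obstacle here; the only thing to be careful about is that $I$ being an \emph{interval} of exactly $n$ elements is what guarantees it meets every residue class mod $n$ exactly once, which is precisely what lets us reduce the comparison at an arbitrary point to a comparison at a point of $I$. The result is essentially just the observation that an $n$-periodic order-preserving map on $\ZZ$ is determined, and comparisons between two such maps are determined, by the behavior on a single period.
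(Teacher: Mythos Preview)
Your proof is correct and is exactly the argument the paper intends: the corollary is stated without proof as an immediate consequence of Lemma~\ref{l:basic-per}(2), and your reduction of an arbitrary $y$ to a representative $x\in I$ via $y=x+kn$ is precisely how one unpacks that.
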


\begin{lemma}\label{l:b-conjugation}
    If $a \in \mathrm{F}_n(\ZZ)$ and $k, l,x\in \ZZ$, then 
    \begin{enumerate}[label = \textup{(\arabic*)}]
    \item  $a^{[k]}(x) = a(x-k) + k = \cn{s}^ka\cn{s}^{-k}(x)$.
        \item     $a^{[k-l]}(x) = a^{[k]}(x+l)-l$.
        \item $a\cn{s}^{n}=\cn{s}^n a$.
    \end{enumerate}
\end{lemma}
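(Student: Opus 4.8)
The plan is to prove part (1) first, since parts (2) and (3) follow from it by short computations. The second equality in (1), $a(x-k)+k = \cn{s}^k a \cn{s}^{-k}(x)$, is immediate: $\cn{s}$ is invertible (so $\cn{s}^r=\cn{s}^\ell=\cn{s}^{-1}$) and $\cn{s}^{-k}$ is the map $x\mapsto x-k$, so composing gives $\cn{s}^k(a(x-k))=a(x-k)+k$. The content is the first equality, $a^{[k]}(x)=a(x-k)+k$, i.e. $a^{[k]}=\cn{s}^k a \cn{s}^{-k}$ as maps on $\ZZ$. By Lemma~\ref{l:double-l-auto} the map $b\mapsto b^{[1]}=b^{\ell\ell}$ is an automorphism of $\m L$ whose $k$-th iterate is $b\mapsto b^{[k]}$ (with $b^{[-1]}=b^{rr}$ its inverse), and conjugation by $\cn{s}$ is likewise an automorphism of the monoid reduct whose $k$-th iterate is conjugation by $\cn{s}^k$; hence it suffices to prove the case $k=1$, namely $a^{\ell\ell}=\cn{s}a\cn{s}^{-1}$, and then iterate (using the symmetric identity $a^{rr}=\cn{s}^{-1}a\cn{s}$ for negative $k$).

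For the base case I would compute directly with the formulas $f^\ell(q)=\min\{p: q\leq f(p)\}$ and $f^r(q)=\max\{p: f(p)\leq q\}$ recalled in the preamble. Since $a$ is order-preserving, for $z\in\ZZ$ one has $z\leq a^\ell(q)$ iff $z$ is a lower bound of $\{p: q\leq a(p)\}$, iff no $p<z$ satisfies $q\leq a(p)$, iff $a(z-1)<q$, iff $a(z-1)+1\leq q$ (here discreteness of $\ZZ$ is used). Substituting into the formula for $a^{\ell\ell}$ gives $a^{\ell\ell}(z)=\min\{q: a(z-1)+1\leq q\}=a(z-1)+1=(\cn{s}a\cn{s}^{-1})(z)$; a symmetric computation with ${}^r$ gives $a^{rr}(z)=a(z+1)-1=(\cn{s}^{-1}a\cn{s})(z)$. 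An induction on $|k|$, using $a^{[k+1]}=(a^{[k]})^{[1]}$, then yields $a^{[k]}(x)=a(x-k)+k$ for all $k\in\ZZ$.

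Part (2) is then immediate from part (1): $a^{[k-l]}(x)=a(x-(k-l))+(k-l)=a((x+l)-k)+k-l=a^{[k]}(x+l)-l$. For part (3), apply part (1) with $k=n$ to get $a^{[n]}=\cn{s}^n a \cn{s}^{-n}$; since $a\in\mathrm{F}_n(\ZZ)$ is $n$-periodic we have $a^{[n]}=a$, so $a=\cn{s}^n a \cn{s}^{-n}$, i.e. $a\cn{s}^n=\cn{s}^n a$. (Alternatively, $n$-periodicity is exactly $a(x+n)=a(x)+n$ by Lemma~\ref{l:basic-per}(2), whence $a\cn{s}^n(x)=a(x+n)=a(x)+n=\cn{s}^n a(x)$.)

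The only nonroutine step is the base-case double-residual computation $a^{\ell\ell}(z)=a(z-1)+1$, and even there the sole ingredients are that $a$ is order-preserving and that $\ZZ$ is discrete (so that ``$a(z-1)<q$'' is equivalent to ``$a(z-1)+1\leq q$''). Everything else is bookkeeping with the automorphism and conjugation identities.
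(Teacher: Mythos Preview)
Your proof is correct and follows the same approach as the paper. The only difference is in part (1): the paper simply cites Lemma~2.3 of \cite{GG2} for the identity $a^{[k]}(x)=a(x-k)+k$, whereas you supply a self-contained derivation via the base case $a^{\ell\ell}(z)=a(z-1)+1$ and induction; your arguments for (2) and (3) are identical to the paper's.
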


\begin{proof}
(1) The first equality is Lemma~2.3 of \cite{GG2} and the second equality holds, by definition.

(2) By (1), we have $a^{[k-l]}(x) =  a(x-(k-l)) + k-l
=  a(x+l-k) + k-l= a^{[k]}(x+l)-l$.

(3) By (1) we have $a^{[n]}\cn{s}^n =\cn{s}^{n}a$ and, by $n$-periodicity, we have $a^{[n]}=a$.
\end{proof}

The \emph{height} of an element  $a \in \mathrm{F}(\ZZ)$ \emph{at a point} $k \in \ZZ$ is defined to be the quantity $a(k)-k$, and the 
 \emph{set of all heights} of  $a$ is
$\{a(k)-k : k \in \ZZ\}$; note that when $a$ is periodic, this set is finite. The \emph{maximal height}, $\hg^+(a)$, of $a$ is defined to be the maximum of this set, when it exists, and the \emph{minimal height}, $\hg^-(a)$, of $a$ is defined to be the minimum of this set, when it exists. 

\begin{lemma}\label{l:grp-hg}
    For $f \in \mathrm{F}_n(\ZZ)$,  $\g(f) = \cn{s}^{\hg^-(f)}$ and $\h(f) = \cn{s}^{\hg^+(f)}$, i.e., $\g(f)(x) = x + \hg^-(f)$ and  $\h(f)(x) =x + \hg^+(f)$, for every $x \in \ZZ$. In particular, for every $f \in \mathrm{F}_n(\ZZ)$ there exists $k \in \NN$ such that $\cn{s}^{-k} \leq f \leq \cn{s}^{k}$.
\end{lemma}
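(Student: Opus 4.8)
The plan is to compute the two auxiliary quantities $\sigma(f)$ and $\gamma(f)$ directly from the definitions of the terms $\sigma$ and $\gamma$ together with the structural description of $\mathrm{F}_n(\ZZ)$ provided earlier. Recall from Lemma~\ref{l: sigmagamma}(1) that $\g(f) = \max\{b \in \grp{\mathrm{F}_n(\ZZ)} : b \leq f\}$ and dually $\h(f) = \min\{b \in \grp{\mathrm{F}_n(\ZZ)} : f \leq b\}$. Since $\Grp{\m F_n(\ZZ)}=\Grp{\m F(\ZZ)}=\langle \cn{s}\rangle$ is the set of translations $\{\cn{s}^m : m \in \ZZ\}$ (as noted in the text just before Lemma~\ref{l:basic-per}), and since the set of heights of $f$ is finite because $f$ is $n$-periodic, the maximal height $\hg^+(f)$ and the minimal height $\hg^-(f)$ both exist.

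First I would show $\cn{s}^{\hg^-(f)} \leq f$: for every $x \in \ZZ$ we have $\cn{s}^{\hg^-(f)}(x) = x + \hg^-(f) \leq x + (f(x)-x) = f(x)$ by the very definition of $\hg^-(f)$ as the minimum of $\{f(k)-k : k \in \ZZ\}$. Next, for any translation $\cn{s}^m \leq f$ we must have $x + m \leq f(x)$ for all $x$, i.e.\ $m \leq f(x)-x$ for all $x$, hence $m \leq \hg^-(f)$ and so $\cn{s}^m \leq \cn{s}^{\hg^-(f)}$. This exhibits $\cn{s}^{\hg^-(f)}$ as the largest translation below $f$, so $\g(f) = \cn{s}^{\hg^-(f)}$ by Lemma~\ref{l: sigmagamma}(1). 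The argument for $\h(f) = \cn{s}^{\hg^+(f)}$ is the exact order-dual, using Lemma~\ref{l: sigmagamma}(2) and the fact that $\hg^+(f)$ is the least $m$ with $f \leq \cn{s}^m$. (One could alternatively argue even more directly from $\sigma(f) = f \mt f^{[1]} \mt \cdots \mt f^{[n-1]}$ using Lemma~\ref{l:b-conjugation}(1), which gives $f^{[k]}(x) = f(x-k)+k$; evaluating the meet pointwise at a fixed $x$ and letting $k$ range over a full residue system mod $n$ shows $\sigma(f)(x) = \min_{k}\big(f(x-k)+k\big)$, and $n$-periodicity makes this minimum equal to $\hg^-(f)$ for every $x$. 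Either route works; I would present the cleaner one via Lemma~\ref{l: sigmagamma}.)

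For the final assertion, set $k := \max\{|\hg^-(f)|, |\hg^+(f)|\} \in \NN$. Since $\hg^-(f) \geq -k$ and $\hg^+(f) \leq k$, and since translations are ordered by their exponents, we get $\cn{s}^{-k} \leq \cn{s}^{\hg^-(f)} = \g(f) \leq f \leq \h(f) = \cn{s}^{\hg^+(f)} \leq \cn{s}^{k}$, where the middle inequalities $\g(f) \leq f \leq \h(f)$ are the interior/closure operator properties from Lemma~\ref{l: sigmagamma}. This is the desired bound.

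\textbf{Main obstacle.} There is essentially no obstacle here; the only thing to be careful about is the translation between the two available descriptions of $\sigma$ and $\gamma$ — the explicit term definition versus the $\max$/$\min$ characterization of Lemma~\ref{l: sigmagamma} — and making sure that the finiteness of the height set (needed for the extrema to exist) is invoked, which follows from $n$-periodicity via Lemma~\ref{l:basic-per}(2). I expect the whole proof to be a few lines once the right characterization is chosen.
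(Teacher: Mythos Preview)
Your proposal is correct and follows essentially the same approach as the paper: both arguments use that the invertible elements of $\m{F}_n(\ZZ)$ are exactly the translations $\cn{s}^m$ and then invoke the $\max/\min$ characterization of $\g$ and $\h$ from Lemma~\ref{l: sigmagamma}. The paper's proof is simply a two-line sketch of what you have written out in full, and your ``In particular'' argument via $k = \max\{|\hg^-(f)|,|\hg^+(f)|\}$ is exactly what the paper means when it defers to Lemma~\ref{l: sigmagamma}.
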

\begin{proof}
    The first claim follows immediately from the fact that any invertible element of $\mathrm{F}_n(\ZZ)$ is of the form $\cn{s}^k$ for some $k\in \ZZ$. The second claim follows by the first claim and Lemma~\ref{l: sigmagamma}.
\end{proof}

If $\m{L}$ is an $\ell$-pregroup and $a\in L$, the \emph{periodicity} $\per(a)$ of 
 $a$ is defined to be the minimum  $k\geq 1$ such that $a$ is $k$-periodic; we write $\per(a)=\infty$ if no such minimum exists. 
We call an element $a\in \mathrm{F}(\ZZ)$ \emph{positive (negative)} if $1 \leq a$ ($a \leq 1$) and \emph{strictly} positive (negative) if $1<a$ ($a < 1$).

The goal of this section is to show that $\m{F}_n(\ZZ)$ is generated by any of its elements of periodicity $n$. To this end we first show that it is generated by any of its minimal strictly positive idempotents. Then we show that more and more general types of elements of $\m{F}_n(\ZZ)$ can be reduced to minimal strictly positive idempotents.

\begin{lemma}\label{l:k-interval}
 If $a\in \mathrm{F}_n(\ZZ)$ has periodicity $n$ and  $k \mid n$ with $k\neq n$, then there exists an $l \in \ZZ$ such that $a(l+k) < a(l) + k$. 
\end{lemma}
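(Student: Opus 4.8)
The plan is to argue by contradiction: suppose that for \emph{every} $l \in \ZZ$ we have $a(l+k) \geq a(l) + k$, and show that this forces $a$ to be $k$-periodic, contradicting $\per(a) = n$ (since $k \neq n$ and $k \mid n$, a $k$-periodic map cannot have periodicity exactly $n$). First I would recall from Lemma~\ref{l:basic-per}(2) that $a \in \mathrm{F}_n(\ZZ)$ means $a(x + n) = a(x) + n$ for all $x$, and that being $k$-periodic amounts to $a(x+k) = a(x) + k$ for all $x$. So the hypothesis already gives one of the two inequalities needed for $k$-periodicity at every point; the task is to derive the reverse inequality $a(l+k) \leq a(l) + k$ for all $l$ from the hypothesis together with $n$-periodicity.

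The key step is to iterate. Writing $n = km$ for some $m \geq 2$, apply the assumed inequality $a(x+k) \geq a(x) + k$ successively at $x = l, l+k, l+2k, \dots, l+(m-1)k$ and telescope: this yields
\[
a(l + mk) \;\geq\; a(l) + mk, \quad\text{i.e.,}\quad a(l + n) \;\geq\; a(l) + n.
\]
But by $n$-periodicity we have equality, $a(l+n) = a(l) + n$. Hence every inequality in the telescoping chain must be an equality; in particular $a(l+k) = a(l) + k$ for the given $l$. Since $l$ was arbitrary, $a$ is $k$-periodic, so $\per(a) \mid k$, and since $k \mid n$ with $k \neq n$ we get $\per(a) \leq k < n$, contradicting $\per(a) = n$.

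I do not expect a serious obstacle here; the only thing to be careful about is the direction of the telescoping argument and the use of $n$-periodicity as the "closing" equality that collapses the chain of inequalities — this is what converts a one-sided inequality hypothesis into a rigid equality and thereby into genuine $k$-periodicity. One should also note at the outset that the statement is vacuous-free only because $k \neq n$: if $k = n$ then $n$-periodicity gives $a(l+k) = a(l)+k$ for all $l$ and no such $l$ exists, which is exactly why the hypothesis $k \neq n$ is imposed.
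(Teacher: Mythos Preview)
Your proof is correct and follows essentially the same approach as the paper: assume the inequality fails everywhere, telescope it $n/k$ times to reach $a(l+n)\geq a(l)+n$, and use $n$-periodicity to force equality at every step, contradicting $\per(a)=n$. The paper's argument is slightly terser but structurally identical.
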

\begin{proof}
Suppose for a contradiction that  $a(l) + k\leq a(l+k)$ for each $l\in \ZZ$, and let $q\in \NN$ with $n = kq$. Then for each $l \in \ZZ$ we have
\[
    a(l) + n = a(l) + kq\leq a(l+k) + k(q-1)\leq \dots \leq a(l+kq) = a(l+n),
\]
and, since $a$ is $n$-periodic, $a(l) + n = a(l+n)$, yielding $a(l+k) = a(l) + k$, contradicting the assumption that $a$ has periodicity $n > k$.
\end{proof}

\begin{remark}\label{r:posidem}
Note that the positive idempotents of $\m{F}(\ZZ)$ are exactly the closure operators on $\ZZ$.
In particular a positive idempotent $a \in \mathrm{F}(\ZZ)$ is completely determined by its fixed points, that is, 
    \[
    a(k) = \min \{l \in \ZZ : a(l) = l \text{ and } l \geq k \}.
    \]
\end{remark}
We call an element $a$ of an $\ell$-pregroup $\m{L}$ $\emph{flat}$ if there exist idempotents $b,c \in L$ such that $b\leq a \leq c$. 
For $f \in \mathrm{F}_n(\ZZ)$ we call an $n$-element interval $I \subseteq \ZZ$ a \emph{flat} interval  for $f$ if $f[I] \subseteq I$.

\begin{remark}\label{r: flat via automorohism}
    If $\m{L}$ is an $\ell$-pregroup and $a\in L$ is flat then, for each $k\in \ZZ$, $a^{[k]}$ is also  flat, since $x\mapsto x^{[k]}$ is an automorphism of $\m{L}$.
\end{remark}

In what follows we will often talk about intervals of $\ZZ$. For $x,y\in \ZZ$ we define $[x,y] = \{z\in \ZZ : x\leq z \leq y \}$. An interval of $\ZZ$ that is of particular importance for the study of $\m{F}_n(\ZZ)$ is the interval $[0,n-1]$ which we will also denote by $\ZZ_n$.

\begin{lemma}\label{l:flat-Fn}
For an element $f \in \mathrm{F}_n(\ZZ)$ the following are equivalent:
    \begin{enumerate}[label = \textup{(\alph*)}]
        \item $f$ is flat.
        \item $f$ has a fixed point.
        \item $f^{n} = f^{n-1}$.
        \item There exists a flat interval for $f$.
    \end{enumerate} 
If, furthermore, $f$ is positive, then the previous four items are equivalent to:
    \begin{enumerate}[label = \textup{(\alph*)}]
        \setcounter{enumi}{4}
        \item $\g(f) = 1$.
    \end{enumerate}
\end{lemma}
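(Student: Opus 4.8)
The plan is to prove the cycle of equivalences (a) $\Rightarrow$ (b) $\Rightarrow$ (d) $\Rightarrow$ (c) $\Rightarrow$ (a) for a general $f \in \mathrm{F}_n(\ZZ)$, and then handle the extra equivalence with (e) under the positivity assumption. First, for (a) $\Rightarrow$ (b): if $b \leq f \leq c$ with $b,c$ idempotent, then (working inside $\m F_n(\ZZ) \leq \m F(\ZZ)$) $b$ and $c$ are order-preserving idempotent maps on $\ZZ$, so $b$ is an interior operator and $c$ is a closure operator; hence $b(x) \leq x$ fails to be automatic, but actually idempotent order-preserving maps on a chain need not be comparable to the identity in general — so instead I would argue via fixed points directly: an idempotent $e \in \mathrm{F}_n(\ZZ)$ has a fixed point (its image is nonempty and every point of the image is fixed since $e(e(y)) = e(y)$), and if $b \leq f \leq c$ then pick a fixed point $p$ of $b$ and a fixed point $q$ of $c$; using periodicity and Corollary~\ref{c:interval-leq} together with the fact that $b \leq c$ forces the fixed-point sets to "interleave", one extracts a point $r$ with $b(r) = r = c(r)$, whence $r \leq f(r) \leq r$. (If this interleaving argument is awkward, an alternative is to use Lemma~\ref{l:grp-hg}: flatness of $f$ gives $\sigma(f) \leq 1 \leq \gamma(f)$ via the sandwiching idempotents having height $0$, and then $\hg^-(f) \leq 0 \leq \hg^+(f)$ yields a point where the height is $0$ by discreteness of the height set — this is essentially the route to (e) as well.)

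Next, (b) $\Rightarrow$ (d): if $f(m) = m$, then by $n$-periodicity $f$ fixes $m + kn$ for all $k$, and $f$ maps the interval $[m, m+n-1]$ into itself (an order-preserving map fixing both endpoints of an $n$-block, once we note $f(m+n-1) \leq f(m+n) = m+n$ so $f(m+n-1) \leq m+n-1$, and $f(m+n-1) \geq f(m) = m$); so $I = [m,m+n-1]$ is a flat interval. For (d) $\Rightarrow$ (c): if $f[I] \subseteq I$ for an $n$-element interval $I$, then $f^{n-1}[I] \subseteq I$ and, since $f$ restricted to the finite chain $I$ is order-preserving, the iterates $f|_I, f^2|_I, \dots$ stabilize after at most $|I| - 1 = n-1$ steps (standard fact: an order-preserving self-map of an $n$-chain reaches an idempotent power by step $n-1$), so $f^n|_I = f^{n-1}|_I$; by Corollary~\ref{c:interval-leq} this equality on an $n$-element interval lifts to $f^n = f^{n-1}$ on all of $\ZZ$ (both sides are in $\mathrm{F}_n(\ZZ)$ and agree on $I$). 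For (c) $\Rightarrow$ (a): from $f^n = f^{n-1}$, set $e := f^{n-1}$; then $e^2 = f^{2n-2} = f^{n-1}\cdot f^{n-1} = f^{n-1} = e$ (using $f^n = f^{n-1}$ repeatedly to collapse exponents down to $n-1$), so $e$ is idempotent. Now one needs idempotents bracketing $f$; take $b := e \wedge f$ and $c := e \vee f$ — but these need not be idempotent. Instead I would produce bracketing idempotents from $e$ directly: $e$ has a fixed point (being idempotent), hence a flat interval $I$, and one checks $f$ maps $I$ into $I$ as well (since $f^{n-1}[I] = e[I] \subseteq I$ and the $f$-orbit of a point of $I$ stays bounded inside $I$ using order-preservation and periodicity); then the "closure" idempotent $c$ determined by declaring its fixed points to be $\{x : f^k(x) \leq x \text{ for the stabilized } k\} \cup (I\text{-endpoints shifted})$ — more cleanly, let $c$ be the positive idempotent whose fixed points are exactly the points $x$ with $x \in f^\omega(\ZZ)$-reachable bounds, and dually $b$; the existence of such idempotents with $b \leq f \leq c$ follows because a flat interval localizes $f$ and within each block the finite-chain picture manifestly has $\min \leq f \leq \max$ of the appropriate sub-idempotents.

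I expect the main obstacle to be precisely the construction in (c) $\Rightarrow$ (a) (and the analogous sandwiching in (a) $\Rightarrow$ (b) if I take the fixed-point route): once a flat interval $I = [m, m+n-1]$ for $f$ is in hand, one wants explicit idempotents $b \leq f \leq c$ in $\mathrm{F}_n(\ZZ)$, and the natural candidates come from Remark~\ref{r:posidem} — define $c$ to be the positive idempotent of $\mathrm{F}_n(\ZZ)$ whose fixed-point set is $\{x \in \ZZ : x \geq f(x)\} \cup \{m + kn : k \in \ZZ\}$ suitably closed up to an $n$-periodic set, and $b$ dually. One must verify (i) these fixed-point sets are $n$-periodic and unbounded above and below (so the idempotents exist in $\mathrm{F}_n(\ZZ)$), (ii) $b \leq f$: if $b(x) = x' $ is the least $b$-fixed point $\geq x$, then $f(x) \geq f(b\text{-fixed points below})\dots$ needs the flat-interval structure; and (iii) $f \leq c$, symmetrically. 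This bookkeeping is the crux — everything else is either a finite-chain combinatorial fact or an immediate consequence of Lemma~\ref{l:basic-per}, Corollary~\ref{c:interval-leq}, and Lemma~\ref{l:grp-hg}.

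Finally, for the positive case: (e) $\Leftrightarrow$ (b) is clean via Lemma~\ref{l:grp-hg}. If $f \geq 1$ then $\hg^-(f) \geq 0$, so $\sigma(f) = \cn{s}^{\hg^-(f)} = 1$ iff $\hg^-(f) = 0$ iff $f(x) = x$ for some $x$ iff (b) holds. Conversely (b) immediately gives $\hg^-(f) \leq 0$, and combined with $f \geq 1$ gives $\hg^-(f) = 0$, i.e. (e). So I would close the loop by stating that under positivity, (b) $\Leftrightarrow$ (e) follows directly from $\sigma(f) = \cn{s}^{\hg^-(f)}$ and the definition of the minimal height.
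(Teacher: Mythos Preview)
Your (b)$\Rightarrow$(d) step contains a genuine error. You claim that if $f(m)=m$ then $[m,m+n-1]$ is a flat interval, arguing ``$f(m+n-1)\le f(m+n)=m+n$ so $f(m+n-1)\le m+n-1$''. The ``so'' is false: from $f(m+n-1)\le m+n$ you cannot conclude $f(m+n-1)\le m+n-1$. Concretely, take $n=2$ and $f\in\mathrm{F}_2(\ZZ)$ given by $f(0)=0$, $f(1)=2$ (extended $2$-periodically); then $0$ is a fixed point but $f[0,1]=\{0,2\}\not\subseteq[0,1]$. The paper avoids this by proving (b)$\Rightarrow$(c)$\Rightarrow$(d): from a fixed point one gets $f^n=f^{n-1}$ by pigeonhole on the block $(x+ln,x+(l+1)n]$, and then the flat interval is built as $[m,m+n-1]$ where $m=\min(f^{n-1})^{-1}[x]$ for a fixed point $x$ of $f^{n-1}$ --- the key extra step being that minimality of $m$ forces $f(m-1)\le m-1$, whence $f(m+n-1)\le m+n-1$ by periodicity.

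Your (c)$\Rightarrow$(a) is also not a proof as written: you correctly observe $e:=f^{n-1}$ is idempotent, but the bracketing idempotents $b\le f\le c$ are never actually produced, only gestured at. The paper's route is much cleaner: once a flat interval $I=[k,k+n-1]$ is available (from (d)), take $b$ to be the $n$-periodic extension of the constant map $x\mapsto k$ on $I$ and $c$ the extension of $x\mapsto k+n-1$; both are idempotent and bracket $f$ by Corollary~\ref{c:interval-leq}. Your (a)$\Rightarrow$(b) via heights is salvageable but incomplete --- you would need the (true but unstated) fact that the height set $\{f(k)-k:k\in\ZZ\}$ is an interval of integers; the paper instead just iterates $f$ from an arbitrary point and uses the idempotent bound to force stabilization at a fixed point. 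Your treatment of (e) is correct and matches the paper.
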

\begin{proof}
    (a)$\Rightarrow$(b) Suppose that $f$ is flat, i.e., there exist idempotents $b,c \in \mathrm{F}_n(\ZZ)$ such that $b \leq f \leq c$. 
    Let $x \in \ZZ$ and assume without loss of generality that $f(x) \leq x$. Then by monotonicity it follows that $f^2(x) \leq f(x)$ and inductively $f^{k+1}(x) \leq f^k(x)$ for each $k\geq 1$. Also  $b(x) = b^k(x) \leq f^k(x)$ for each $k\geq 1$, by assumption. Hence there exists a $k\geq 1$ with $f^{k+1}(x) = f^k(x)$, i.e., $f^k(x)$ is a fixed point of $f$.

    (b)$\Rightarrow$(c) Suppose that $f$ has a fixed point, i.e., there exists an $x\in \ZZ$ with $f(x) = x$. Since $f$ is $n$-periodic, by Lemma~\ref{l:basic-per}(1), $f(x + kn) = f(x) +kn = x +k n$ for each $k\in \ZZ$. We want to show that $f^{n}(y) = f^{n-1}(y)$ for all $y\in \ZZ$; so let $y \in \ZZ$. Then there exists an $l\in \ZZ$ with $x + ln < y \leq x + (l+1)n$, so,  
    \[
    x + ln = f^m(x+ln) \leq f^m(y) \leq f^m(x+ (l+1)n) = x+ (l+1)n,
    \]
    for each $m\geq 1$, by monotonicity and the calculation above. Since the interval $(x + ln,x + (l+1)n]$ contains only $n$ elements, it follows from monotonicity that $f^n(y) = f^{n-1}(y)$.

    (c)$\Rightarrow$(d) 
    If  $f^n = f^{n-1}$, then for all $y \in \ZZ$ we have  $f(f^{n-1}(y))=f^n (y)= f^{n-1}(y)$, so $f^{n-1}(y)$ is a fixed point of $f$, hence also of $f^{n-1}$; let $x\in \ZZ$ be one of the fixed points of $f^{n-1}$. Let $m = \min (f^{n-1})^{-1}[x]$ and define $I = [m,m+n-1]$. 
    By definition and monotonicity we have $m \leq x$, 
    Moreover, we have that $f(m-1) \leq m-1$, for otherwise  $m \leq f(m-1)$ would imply 
    \[
    x= f^{n-1}(m) \leq  f^{n-1}(f(m-1))=f^{n-1}(m-1) \leq f^{n-1}(m)= x,
    \]
    contradicting the minimality of $m$. So, by Lemma~\ref{l:basic-per}(1), $f(m+n-1) = f(m-1) + n \leq m-1 + n$. Hence $m \leq x=f(m)\leq f(m+n-1)  \leq m-1 + n$, by the monotonicity of $f$; so $f[I] \subseteq I$.

    (d)$\Rightarrow$(a) Suppose that there exists a flat interval $I\subseteq \ZZ$ for $f$, i.e., $I = [k, k+n-1]$ for some $k \in \ZZ$. Let $b \in \mathrm{F}_n(\ZZ)$ be defined by $b(x) = k$ for $x\in [k,k+n-1]$ and $n$-periodically extended, and let $c \in \mathrm{F}_n(\ZZ)$ be  defined as the $n$-periodic extension of $c(x) = k+n-1$ for $x\in [k,k+n-1]$. Note that $b$ and $c$ are idempotent and for all $x \in I$, $b(x) \leq f(x) \leq c(x)$. Thus, by Corollary~\ref{c:interval-leq}, $b \leq f \leq c$, i.e., $f$ is flat.

    Finally, suppose that $f$ is positive; then $\hg^-(f) \geq 0$.
    Now if $f$ has a fixed point $x\in \ZZ$, then $f(x) -x = 0$ and $0 = \hg^-(f)$; so Lemma~\ref{l:grp-hg} yields $\g(f) = \cn{s}^0 = 1$. Conversely if $\g(f) = 1$, then by Lemma~\ref{l:grp-hg}, there exists an $x\in \ZZ$ with $f(x) -x = 0$, i.e., $f$ has a fixed point. This shows that (e) is equivalent to (b).
\end{proof}

Note that $\g(x) \leq x$, by definition, thus $1 \leq \g(x)^{-1}x$, for every element $x$ in an $n$-periodic $\ell$-pregroup.

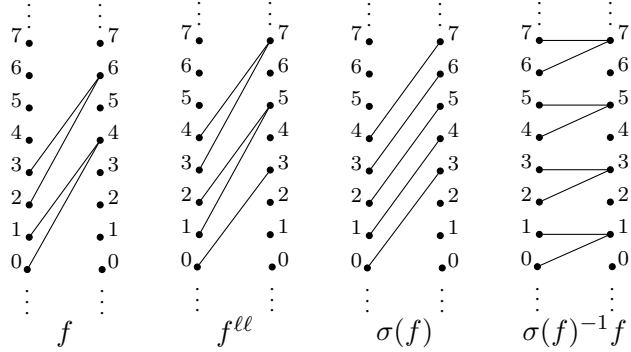
\begin{figure}[ht]\def\eq{=}
\centering
{\scriptsize
\begin{tikzpicture}
[scale=0.43]
\node[fill,draw,circle,scale=0.3,left](9) at (0,9){};
\node[right](9.1,0) at (-.9,9.25){$7$};
\node[fill,draw,circle,scale=0.3,right](9r) at (2,9){};
\node[right](9.1) at (2.1,9.25){$7$};

\node[fill,draw,circle,scale=0.3,left](8) at (0,8){};
\node[right](8.1,0) at (-.9,8.25){$6$};
\node[fill,draw,circle,scale=0.3,right](8r) at (2,8){};
\node[right](8.1) at (2.1,8.25){$6$};

\node[fill,draw,circle,scale=0.3,left](7) at (0,7){};
\node[right](7.1,0) at (-.9,7.25){$5$};
\node[fill,draw,circle,scale=0.3,right](7r) at (2,7){};
\node[right](7.1) at (2.1,7.25){$5$};

\node[fill,draw,circle,scale=0.3,left](6) at (0,6){};
\node[right](6.1,0) at (-.9,6.25){$4$};
\node[fill,draw,circle,scale=0.3,right](6r) at (2,6){};
\node[right](6.1) at (2.1,6.25){$4$};

\node[fill,draw,circle,scale=0.3,left](5) at (0,5){};
\node[right](9.1,0) at (-.9,5.25){$3$};
\node[fill,draw,circle,scale=0.3,right](5r) at (2,5){};
\node[right](5.1) at (2.1,5.25){$3$};

\node[fill,draw,circle,scale=0.3,left](4) at (0,4){};
\node[right](8.1,0) at (-.9,4.25){$2$};
\node[fill,draw,circle,scale=0.3,right](4r) at (2,4){};
\node[right](4.1) at (2.1,4.25){$2$};

\node[fill,draw,circle,scale=0.3,left](3) at (0,3){};
\node[right](3.1,0) at (-.9,3.25){$1$};
\node[fill,draw,circle,scale=0.3,right](3r) at (2,3){};
\node[right](3.1) at (2.1,3.25){$1$};

\node[fill,draw,circle,scale=0.3,left](2) at (-0.06,2){};
\node[right](2.1,0) at (-.9,2.25){$0$};
\node[fill,draw,circle,scale=0.3,right](2r) at (2.06,2){};
\node[right](2.1) at (2.1,2.25){$0$};
\node at (9)[above=3pt]{$\vdots$};
\node at (9r)[above=3pt]{$\vdots$};
\node at (2)[below=-1pt]{$\vdots$};
\node at (2r)[below=-1pt]{$\vdots$};
\draw[-](2)--(6r);
\draw[-](3)--(6r);
\draw[-](4)--(8r);
\draw[-](5)--(8r);

\node at (1,0){\small $f$};
\end{tikzpicture}
\qquad
\begin{tikzpicture}
[scale=0.43]
\node[fill,draw,circle,scale=0.3,left](9) at (0,9){};
\node[right](9.1,0) at (-.9,9.25){$7$};
\node[fill,draw,circle,scale=0.3,right](9r) at (2,9){};
\node[right](9.1) at (2.1,9.25){$7$};

\node[fill,draw,circle,scale=0.3,left](8) at (0,8){};
\node[right](8.1,0) at (-.9,8.25){$6$};
\node[fill,draw,circle,scale=0.3,right](8r) at (2,8){};
\node[right](8.1) at (2.1,8.25){$6$};

\node[fill,draw,circle,scale=0.3,left](7) at (0,7){};
\node[right](7.1,0) at (-.9,7.25){$5$};
\node[fill,draw,circle,scale=0.3,right](7r) at (2,7){};
\node[right](7.1) at (2.1,7.25){$5$};

\node[fill,draw,circle,scale=0.3,left](6) at (0,6){};
\node[right](6.1,0) at (-.9,6.25){$4$};
\node[fill,draw,circle,scale=0.3,right](6r) at (2,6){};
\node[right](6.1) at (2.1,6.25){$4$};

\node[fill,draw,circle,scale=0.3,left](5) at (0,5){};
\node[right](9.1,0) at (-.9,5.25){$3$};
\node[fill,draw,circle,scale=0.3,right](5r) at (2,5){};
\node[right](5.1) at (2.1,5.25){$3$};

\node[fill,draw,circle,scale=0.3,left](4) at (0,4){};
\node[right](8.1,0) at (-.9,4.25){$2$};
\node[fill,draw,circle,scale=0.3,right](4r) at (2,4){};
\node[right](4.1) at (2.1,4.25){$2$};

\node[fill,draw,circle,scale=0.3,left](3) at (0,3){};
\node[right](3.1,0) at (-.9,3.25){$1$};
\node[fill,draw,circle,scale=0.3,right](3r) at (2,3){};
\node[right](3.1) at (2.1,3.25){$1$};

\node[fill,draw,circle,scale=0.3,left](2) at (-0.06,2){};
\node[right](2.1,0) at (-.9,2.25){$0$};
\node[fill,draw,circle,scale=0.3,right](2r) at (2.06,2){};
\node[right](2.1) at (2.1,2.25){$0$};
\node at (9)[above=3pt]{$\vdots$};
\node at (9r)[above=3pt]{$\vdots$};
\node at (2)[below=-1pt]{$\vdots$};
\node at (2r)[below=-1pt]{$\vdots$};
\draw[-](2)--(5r);
\draw[-](3)--(7r);
\draw[-](4)--(7r);
\draw[-](5)--(9r);
\draw[-](6)--(9r);
\node at (1,0){\small $f^{\ell\ell}$};
\end{tikzpicture}
\qquad
\begin{tikzpicture}
[scale=0.43]
\node[fill,draw,circle,scale=0.3,left](9) at (0,9){};
\node[right](9.1,0) at (-.9,9.25){$7$};
\node[fill,draw,circle,scale=0.3,right](9r) at (2,9){};
\node[right](9.1) at (2.1,9.25){$7$};

\node[fill,draw,circle,scale=0.3,left](8) at (0,8){};
\node[right](8.1,0) at (-.9,8.25){$6$};
\node[fill,draw,circle,scale=0.3,right](8r) at (2,8){};
\node[right](8.1) at (2.1,8.25){$6$};

\node[fill,draw,circle,scale=0.3,left](7) at (0,7){};
\node[right](7.1,0) at (-.9,7.25){$5$};
\node[fill,draw,circle,scale=0.3,right](7r) at (2,7){};
\node[right](7.1) at (2.1,7.25){$5$};

\node[fill,draw,circle,scale=0.3,left](6) at (0,6){};
\node[right](6.1,0) at (-.9,6.25){$4$};
\node[fill,draw,circle,scale=0.3,right](6r) at (2,6){};
\node[right](6.1) at (2.1,6.25){$4$};

\node[fill,draw,circle,scale=0.3,left](5) at (0,5){};
\node[right](9.1,0) at (-.9,5.25){$3$};
\node[fill,draw,circle,scale=0.3,right](5r) at (2,5){};
\node[right](5.1) at (2.1,5.25){$3$};

\node[fill,draw,circle,scale=0.3,left](4) at (0,4){};
\node[right](8.1,0) at (-.9,4.25){$2$};
\node[fill,draw,circle,scale=0.3,right](4r) at (2,4){};
\node[right](4.1) at (2.1,4.25){$2$};

\node[fill,draw,circle,scale=0.3,left](3) at (0,3){};
\node[right](3.1,0) at (-.9,3.25){$1$};
\node[fill,draw,circle,scale=0.3,right](3r) at (2,3){};
\node[right](3.1) at (2.1,3.25){$1$};

\node[fill,draw,circle,scale=0.3,left](2) at (-0.06,2){};
\node[right](2.1,0) at (-.9,2.25){$0$};
\node[fill,draw,circle,scale=0.3,right](2r) at (2.06,2){};
\node[right](2.1) at (2.1,2.25){$0$};
\node at (9)[above=3pt]{$\vdots$};
\node at (9r)[above=3pt]{$\vdots$};
\node at (2)[below=-1pt]{$\vdots$};
\node at (2r)[below=-1pt]{$\vdots$};
\draw[-](2)--(5r);
\draw[-](3)--(6r);
\draw[-](4)--(7r);
\draw[-](5)--(8r);
\draw[-](6)--(9r);
\node at (1,0){\small $\g(f)$};
\end{tikzpicture}
\qquad
\begin{tikzpicture}
[scale=0.43]
\node[fill,draw,circle,scale=0.3,left](9) at (0,9){};
\node[right](9.1,0) at (-.9,9.25){$7$};
\node[fill,draw,circle,scale=0.3,right](9r) at (2,9){};
\node[right](9.1) at (2.1,9.25){$7$};

\node[fill,draw,circle,scale=0.3,left](8) at (0,8){};
\node[right](8.1,0) at (-.9,8.25){$6$};
\node[fill,draw,circle,scale=0.3,right](8r) at (2,8){};
\node[right](8.1) at (2.1,8.25){$6$};

\node[fill,draw,circle,scale=0.3,left](7) at (0,7){};
\node[right](7.1,0) at (-.9,7.25){$5$};
\node[fill,draw,circle,scale=0.3,right](7r) at (2,7){};
\node[right](7.1) at (2.1,7.25){$5$};

\node[fill,draw,circle,scale=0.3,left](6) at (0,6){};
\node[right](6.1,0) at (-.9,6.25){$4$};
\node[fill,draw,circle,scale=0.3,right](6r) at (2,6){};
\node[right](6.1) at (2.1,6.25){$4$};

\node[fill,draw,circle,scale=0.3,left](5) at (0,5){};
\node[right](9.1,0) at (-.9,5.25){$3$};
\node[fill,draw,circle,scale=0.3,right](5r) at (2,5){};
\node[right](5.1) at (2.1,5.25){$3$};

\node[fill,draw,circle,scale=0.3,left](4) at (0,4){};
\node[right](8.1,0) at (-.9,4.25){$2$};
\node[fill,draw,circle,scale=0.3,right](4r) at (2,4){};
\node[right](4.1) at (2.1,4.25){$2$};

\node[fill,draw,circle,scale=0.3,left](3) at (0,3){};
\node[right](3.1,0) at (-.9,3.25){$1$};
\node[fill,draw,circle,scale=0.3,right](3r) at (2,3){};
\node[right](3.1) at (2.1,3.25){$1$};

\node[fill,draw,circle,scale=0.3,left](2) at (-0.06,2){};
\node[right](2.1,0) at (-.9,2.25){$0$};
\node[fill,draw,circle,scale=0.3,right](2r) at (2.06,2){};
\node[right](2.1) at (2.1,2.25){$0$};
\node at (9)[above=3pt]{$\vdots$};
\node at (9r)[above=3pt]{$\vdots$};
\node at (2)[below=-1pt]{$\vdots$};
\node at (2r)[below=-1pt]{$\vdots$};
\draw[-](2)--(3r);
\draw[-](3)--(3r);
\draw[-](4)--(5r);
\draw[-](5)--(5r);
\draw[-](6)--(7r);
\draw[-](7)--(7r);
\draw[-](8)--(9r);
\draw[-](9)--(9r);
\node at (1,0){\small $\g(f)^{-1}f$};
\end{tikzpicture}
}
\caption{
An illustration of a function $f$ in $\m F_2(\ZZ)$, of $\g_2(f)=f \mt f^{\ell \ell}$ (the largest invertible element below $f$) and of $\g(f)^{-1}f$ (the associated strictly positive flat element). 
}
\label{f: spif}
\end{figure}

\begin{corollary}\label{c:flat-g}
    For every $f\in \mathrm{F}_n(\ZZ)$,  $\g(f)^{-1}f$ is flat and of the same periodicity as $f$. If $f$ is not invertible, then $\g(f)^{-1}f$ is strictly positive.
\end{corollary}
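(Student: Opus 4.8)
The corollary has two parts: (i) $\g(f)^{-1}f$ is flat and has the same periodicity as $f$, and (ii) if $f$ is not invertible then $\g(f)^{-1}f$ is strictly positive. My plan is to reduce everything to the already-established equivalences of Lemma~\ref{l:flat-Fn}, using that $\g(f) = \cn{s}^{\hg^-(f)}$ by Lemma~\ref{l:grp-hg}.

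\emph{Flatness.} First I would show $\g(f)^{-1}f$ is positive: by Lemma~\ref{l:norm}(1) (or directly from the definition of $\g$) we have $\g(f) \leq f$, hence $1 \leq \g(f)^{-1}f$. Since a positive flat element of $\mathrm{F}_n(\ZZ)$ is characterized, via Lemma~\ref{l:flat-Fn}(e), by having $\g$-value equal to $1$, it suffices to compute $\g\bigl(\g(f)^{-1}f\bigr)$. Now $\g(f)^{-1}$ is an invertible (group) element, so by Remark~\ref{r:grp-conucleus}(1) we get $\g\bigl(\g(f)^{-1}f\bigr) = \g(f)^{-1}\g(f) = 1$. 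By the equivalence (a)$\Leftrightarrow$(e) of Lemma~\ref{l:flat-Fn} (applicable since $\g(f)^{-1}f$ is positive), $\g(f)^{-1}f$ is flat.

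\emph{Same periodicity.} Since multiplication by the invertible element $\g(f)^{-1}$ is realized concretely as left-translation — indeed $\g(f) = \cn{s}^{\hg^-(f)}$ by Lemma~\ref{l:grp-hg}, so $\bigl(\g(f)^{-1}f\bigr)(x) = f(x) - \hg^-(f)$ — one checks directly from Lemma~\ref{l:basic-per}(2) that $\g(f)^{-1}f$ is $k$-periodic if and only if $f$ is $k$-periodic: the condition $g(x+kn) = g(x)+kn$ is clearly invariant under adding a constant to $g$. (Alternatively, and more slickly, multiplication by a fixed invertible element is an automorphism of $\m F_n(\ZZ)$ composed with nothing harmful, but the direct translation argument is cleanest.) Hence $\per(\g(f)^{-1}f) = \per(f)$.

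\emph{Strict positivity when $f$ is not invertible.} Suppose $\g(f)^{-1}f = 1$; then $f = \g(f) \in \grp{\mathrm{F}_n(\ZZ)}$, so $f$ is invertible, contrary to hypothesis. Thus $1 < \g(f)^{-1}f$, i.e.\ it is strictly positive. The only mild subtlety worth spelling out is making sure Lemma~\ref{l:flat-Fn}(e) is invoked legitimately — it requires the element to be positive, which we established at the outset — so there is no real obstacle; the argument is essentially a bookkeeping exercise assembling Lemma~\ref{l:grp-hg}, Remark~\ref{r:grp-conucleus}, and Lemma~\ref{l:flat-Fn}.
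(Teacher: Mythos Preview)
Your proof is correct and follows essentially the same approach as the paper: both establish positivity from $\g(f)\leq f$, compute $\g(\g(f)^{-1}f)=1$ via Remark~\ref{r:grp-conucleus}, invoke Lemma~\ref{l:flat-Fn}(e) for flatness, and obtain strict positivity by contraposition. The only minor difference is in the periodicity step: you verify it concretely (adding a constant to a function preserves $k$-periodicity as a function on $\ZZ$), whereas the paper argues algebraically that $(\g(f)^{-1}f)^{[k]}=\g(f)^{-1}f^{[k]}$ because invertible elements are fixed by $^{[k]}$, hence this equals $\g(f)^{-1}f$ iff $f^{[k]}=f$. Both are immediate; your parenthetical about ``multiplication by an invertible element is an automorphism'' is not quite right (left multiplication is not an algebra automorphism), but the translation argument you actually use is fine.
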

\begin{proof}
    By Remark~\ref{r:grp-conucleus}, $\g(\g(f)^{-1}f) = \g(f)^{-1}\g(f) = 1$ and we also have $1 \leq \g(f)^{-1}f$; so Lemma~\ref{l:flat-Fn} yields that $f$ is flat. Moreover,  for all $k \in \ZZ$, we have: $(\g(f)^{-1}f)^{[k]}=\g(f)^{-1}f$ iff  $(\g(f)^{-1})^{[k]}f^{[k]}=\g(f)^{-1}f$ iff  $\g(f)^{-1} f^{[k]}=\g(f)^{-1}f$ iff  $f^{[k]}=f$, since for every invertible element $g$ we have $g^{[k]}=g$. Finally, if $f$ is not invertible, then $\g(f)^{-1}f$ is not the identity, so it is strictly positive; see Figure~\ref{f: spif}.
\end{proof}

An element $a\in \mathrm{F}(\ZZ)$ is called an \emph{$n$-atom} if $a\in \mathrm{F}_n(\ZZ)$ and  $a$ is a cover of $1$ in $\m{F}_n(\ZZ)$, i.e.,  an  atom of the positive cone $\m{F}_n(\ZZ)^+:=\{b \in {F}_n(\ZZ) : 1 \leq b\}$.

\begin{proposition}\label{p:n-cover}
   For $n>1$, an element of $\mathrm{F}_n(\ZZ)$ is an $n$-atom if and only if it is positive idempotent with set of fixed points of the form $\ZZ \setminus (n\ZZ + k)$ for some $k \in \ZZ_n$.
\end{proposition}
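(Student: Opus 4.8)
The plan is to characterize $n$-atoms directly in terms of the description of positive idempotents via their fixed-point sets (Remark~\ref{r:posidem}), and then analyze covers of $1$ in $\m F_n(\ZZ)^+$. First I would establish that an $n$-atom must be idempotent: given a cover $a$ of $1$, consider $a^2$, which satisfies $a \leq a^2$ (since $1 \leq a$); if $a^2 \neq a$ then $a < a^2$, so either $a$ fails to be a cover or $a^2 = a$, and a short argument (or appeal to $\Cv$-type reasoning together with $1 < a \leq a^2$) forces $a^2 = a$, i.e. $a$ is a positive idempotent. Actually the cleanest route is: a positive non-idempotent $a$ has $1 < \g(a)^{-1}$-type structure or $1 < a^{\ell\ell} \wedge a$ strictly between $1$ and $a$... but more simply, if $1 < a$ is not idempotent then $a^2 \neq a$, and since an $n$-atom $a$ satisfies: any $b$ with $1 \leq b \leq a$ is $1$ or $a$; one checks $a \wedge$ (something) lands strictly between. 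I would instead argue that for a cover $a$ of $1$, the element $a \cdot a^{[1]} \cdots a^{[n-1]} = \g(a)$ is invertible and $\leq a$, hence $\g(a) \in \{1, a\}$; but $\g(a)$ invertible and $a$ a proper cover forces $\g(a) = 1$, so by Lemma~\ref{l:flat-Fn}(e) the element $a$ is flat. Now a flat cover $a$ of $1$: I would show it is idempotent, because if not, $a < a^2$, and one can exhibit a positive element strictly below $a^2$ and not below $a$... rather, use that $a^2 \leq c$ for an idempotent $c \geq a$, and $a \leq c$ with $c$ idempotent positive, $c \neq 1$ (as $a \neq 1$), so $c = a$ by the cover property applied to $c$; hence $a$ is idempotent.

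Having reduced to: $a$ is a positive idempotent that is a cover of $1$ in $\m F_n(\ZZ)^+$, I would use Remark~\ref{r:posidem}: $a$ is determined by its set $\mathrm{Fix}(a)$ of fixed points, which (by $n$-periodicity) is an $n$-periodic subset of $\ZZ$, i.e. $\mathrm{Fix}(a) = \ZZ \setminus (S + n\ZZ)$ for some $S \subseteq \ZZ_n$, and the non-fixed points must come in "blocks" that $a$ pushes up to the next fixed point. The order on positive idempotents corresponds to reverse inclusion of fixed-point sets: $a \leq b$ iff $\mathrm{Fix}(b) \subseteq \mathrm{Fix}(a)$. The minimal non-identity such $a$ (covers of $1$, which has $\mathrm{Fix} = \ZZ$) should therefore correspond to fixed-point sets that are maximal proper $n$-periodic subsets of $\ZZ$, i.e. $\mathrm{Fix}(a) = \ZZ \setminus (n\ZZ + k)$ for a single $k \in \ZZ_n$: removing just one residue class modulo $n$. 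I would verify both directions: (i) if $\mathrm{Fix}(a) = \ZZ \setminus (n\ZZ + k)$, then $a$ is the positive idempotent with $a(n\ZZ+k) \subseteq n\ZZ + (k+1)$ sending only the points $\equiv k$ up by one; any positive idempotent $b$ with $1 \leq b \leq a$, $b \neq 1$, has $\emptyset \neq \ZZ \setminus \mathrm{Fix}(b) \subseteq n\ZZ + k$, and since $\ZZ \setminus \mathrm{Fix}(b)$ is $n$-periodic and contained in the single class $n\ZZ+k$, it equals $n\ZZ+k$, giving $b = a$; combined with the fact that any positive non-idempotent $c$ with $c \leq a$ is below the idempotent $a$ hence below... wait, I must also rule out non-idempotent $c$ with $1 < c < a$ — but if $c \leq a$ and $a$ is flat/idempotent then $\g(c) = 1$ too, so $c$ flat, and $c \leq a$ with $\mathrm{Fix}(a)$ having complement a single class means $c$ can only move points in $n\ZZ+k$, and by flatness $c$ fixes at least one point; if $c$ moves some point in $n\ZZ+k$ then by $n$-periodicity it moves all of them, and $c \leq a$ forces $c(k) \leq a(k) = k+1$, so $c(k) = k+1 = a(k)$, giving $c = a$. (ii) Conversely, if $\ZZ \setminus \mathrm{Fix}(a)$ meets two distinct residue classes mod $n$, say contains $n\ZZ + k$ and $n\ZZ + k'$, then the positive idempotent $b$ with $\mathrm{Fix}(b) = \ZZ \setminus (n\ZZ + k)$ satisfies $1 < b < a$ (using Lemma~\ref{l:k-interval}-type reasoning or a direct fixed-point comparison), so $a$ is not a cover.

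\emph{Main obstacle.} I expect the delicate step to be the reduction from "cover of $1$" to "positive idempotent" — i.e. ruling out that a cover could be a non-idempotent flat element. The subtlety is that $\m F_n(\ZZ)^+$ is a lattice but not distributive-as-a-sublattice in an obviously helpful way, and one cannot just "average" or "truncate" an element to get something strictly in between, so the argument must exploit the structure of $\m F_n(\ZZ)$ concretely: any positive $a$ with $\g(a) = 1$ that is not idempotent sits strictly below the idempotent $c$ it generates (the closure operator with the same fixed points as $a^\infty$), and $1 < c$ since $a$ moves some point; then $c$ being an idempotent cover would need $c = a$, contradicting non-idempotency, so in fact $a$ fails the cover property because $1 < a < c$ only if $a \neq c$, and one needs $a < c$ genuinely, which holds precisely because $a$ non-idempotent means $a < a^2 \leq c$. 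Making this airtight — that $a^2 \le c$ where $c$ is the least idempotent above $a$, and that $c \in \m F_n(\ZZ)^+$, and that $a \neq c$ — requires carefully invoking Lemma~\ref{l:flat-Fn} and the concrete description of idempotents as closure operators. Once that reduction is done, the rest is the combinatorics of $n$-periodic subsets of $\ZZ$ and is routine.
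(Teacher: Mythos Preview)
Your overall strategy---reduce to positive idempotents, then do fixed-point combinatorics---is viable, and your right-to-left direction and the ``two residue classes'' half of the converse are essentially correct. But the step you flag as the main obstacle really is a gap as written: every argument you sketch for ``$n$-atom $\Rightarrow$ idempotent'' tries to apply the cover property to an element \emph{above} $a$ (namely $a^2$ or the least idempotent $c \geq a$), whereas the cover property only constrains elements $b$ with $1 \leq b \leq a$. Knowing $a < a^2 \leq c$ tells you nothing about whether some $b$ lies strictly between $1$ and $a$. (There is a clean fix along your lines: $1 \leq a^r a \leq a$ since $a^r \leq 1$, and $a^r a$ is idempotent; the cover property forces $a^r a \in \{1,a\}$, and $a^r a = 1$ would make $a$ injective hence invertible, which for $n>1$ is impossible since $\cn{s}$ is not an atom. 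So $a = a^r a$ is idempotent.)

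The paper sidesteps this entire reduction. For the left-to-right direction it simply picks any $k$ with $a(k) > k$, writes down the explicit map $b$ sending $x \mapsto x+1$ on $n\ZZ + k$ and $x \mapsto x$ elsewhere, checks $1 < b \leq a$ directly (using only $a \geq 1$ and $n$-periodicity), and concludes $a = b$ from the cover property. Idempotency and the shape of the fixed-point set fall out simultaneously; no separate argument is needed. For the right-to-left direction the paper is equally direct: if $1 \leq b < a$ with $\mathrm{Fix}(a) = \ZZ \setminus n\ZZ$, then $b(x) = x$ on $[1,n-1]$ (squeezed between $1$ and $a$) and $b(0) < a(0) = 1$ forces $b(0)=0$, so $b = 1$---no case split on whether $b$ is idempotent. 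Your route works once the gap is patched, but the paper's argument is a few lines and avoids the detour entirely.
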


\begin{proof}
    For the right-to-left direction we may assume without loss of generality that the set of fixed points of $a\in \mathrm{F}(\ZZ)$ is of the form $\ZZ \setminus n\ZZ$. By Remark~\ref{r:posidem}, we get $a(0) = 1$ and $a(x) = x$ for each $x \in [1,n-1]$. Thus if $b\in \mathrm{F}_n(\ZZ)$ with $1\leq b < a$, then $0 = 1(0) \leq b(0) < a(0) = 1$ and $x = 1(x) \leq b(x) \leq a(x) = x$ for $x\in [1,n-1]$. So $b(x) = x$ for all $x \in [0,n-1]$ and by $n$-periodicity $b = 1$, yielding that $a$ is an $n$-atom. 

    For the left-to-right direction assume that $a$ is an $n$-atom. Then there exists a $k\in \ZZ$ such that $a(k) > k$, i.e., $a(k) \geq k+1$. Let $b \in \mathrm{F}_n(\ZZ)$ be defined by 
    \[
    b(x) = \begin{cases}
        x + 1 &\text{if } x \in n\ZZ + k, \\
        x    &\text{otherwise.}
    \end{cases}
    \]
    Then, by $n$-periodicity, for each $x \in n\ZZ + k$, $b(x) = x+1 \leq a(x)$ and for each $x \in \ZZ\setminus(n\ZZ + k)$, $b(x) = x \leq a(x)$. Hence $1 < b \leq a$, yielding $b = a$, since $a$ is an $n$-atom.
\end{proof}
As an immediate corollary we obtain that all $n$-atoms are conjugates in view of Lemma~\ref{l:b-conjugation}(1).

\begin{corollary}\label{c:n-cover-conj}
If $a,b \in \mathrm{F}(\ZZ)$ are $n$-atoms, then there exists a $k\in \ZZ_n$ such that $b = a^{[k]}$, i.e.,  $b = \cn{s}^ka\cn{s}^{-k}$.
\end{corollary}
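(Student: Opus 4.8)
The plan is to read this off directly from the classification of $n$-atoms in Proposition~\ref{p:n-cover}, after disposing of the degenerate case $n=1$ separately (where Proposition~\ref{p:n-cover} does not apply). When $n=1$ we have $\m{F}_1(\ZZ)=\m{Aut}(\ZZ)\cong\ZZ$, whose positive cone has a unique atom, namely $\cn{s}$; hence $a=b=\cn{s}=a^{[0]}$ and we are done with $k=0\in\ZZ_1$. So assume $n>1$.

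By Proposition~\ref{p:n-cover}, $a$ and $b$ are positive idempotents whose sets of fixed points are $\ZZ\setminus(n\ZZ+i)$ and $\ZZ\setminus(n\ZZ+j)$, respectively, for some $i,j\in\ZZ_n$. I would take $k\in\ZZ_n$ to be the residue of $j-i$ modulo $n$ and show $a^{[k]}=b$. First observe that $a^{[k]}$ is again a positive idempotent: by Lemma~\ref{l:double-l-auto} the map $x\mapsto x^{[k]}$ is an automorphism of $\m{F}_n(\ZZ)$, so it preserves idempotency and the order, hence sends the positive idempotent $a$ to a positive idempotent. Since, by Remark~\ref{r:posidem}, a positive idempotent of $\m{F}(\ZZ)$ is completely determined by its set of fixed points, it suffices to check that $a^{[k]}$ and $b$ have the same fixed points.

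This last check is a one-line computation from Lemma~\ref{l:b-conjugation}(1), which gives $a^{[k]}(x)=a(x-k)+k$: we have $a^{[k]}(x)=x$ iff $a(x-k)=x-k$, i.e.\ iff $x-k$ is a fixed point of $a$, i.e.\ iff $x-k\notin n\ZZ+i$, which is equivalent to $x\notin n\ZZ+(i+k)=n\ZZ+j$ by the choice of $k$. Thus the fixed point set of $a^{[k]}$ is $\ZZ\setminus(n\ZZ+j)$, which equals the fixed point set of $b$, so $a^{[k]}=b$; and by Lemma~\ref{l:b-conjugation}(1) this function is exactly $\cn{s}^k a\cn{s}^{-k}$.

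There is no genuine obstacle here, as the statement is essentially immediate from Proposition~\ref{p:n-cover}; the only things requiring a little care are the $n=1$ boundary case and the residue bookkeeping needed to ensure that the conjugating exponent $k$ is chosen in $\ZZ_n=[0,n-1]$.
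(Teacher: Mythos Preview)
Your proof is correct and follows exactly the approach the paper intends: the paper states this as an immediate consequence of Proposition~\ref{p:n-cover} and Lemma~\ref{l:b-conjugation}(1) without giving details, and you have simply spelled out that reasoning, including the $n=1$ boundary case.
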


\begin{proposition}\label{p: n-cover generates}
    If $a \in \mathrm{F}_n(\ZZ)$ is an $n$-atom, then $\langle a\rangle = \m{F}_n(\ZZ)$.
\end{proposition}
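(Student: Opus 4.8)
The plan is to prove that the subalgebra $\langle a\rangle$ contains the shift $\cn{s}$ together with every positive flat element of $\m F_n(\ZZ)$, and then to observe that this already forces $\langle a\rangle=\m F_n(\ZZ)$: for any $f\in\mathrm F_n(\ZZ)$ one has the factorization $f=\cn{s}^{\hg^-(f)}\cdot(\g(f)^{-1}f)$, where $\g(f)=\cn{s}^{\hg^-(f)}$ by Lemma~\ref{l:grp-hg} and $\g(f)^{-1}f$ is a positive flat element by Corollary~\ref{c:flat-g}. Assume $n\ge 2$ (for $n=1$ the claim is immediate, since $\m F_1(\ZZ)\cong\ZZ$ is generated by its atom $\cn{s}$). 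First I would reduce to a convenient $n$-atom: by Proposition~\ref{p:n-cover} and Corollary~\ref{c:n-cover-conj} every $n$-atom has the form $a_0^{[k]}$ for the $n$-atom $a_0$ with fixed-point set $\ZZ\setminus n\ZZ$, and $x\mapsto x^{[k]}$ is an automorphism of $\m F_n(\ZZ)$ (Lemma~\ref{l:double-l-auto}); hence it is enough to show $\langle a_0\rangle=\m F_n(\ZZ)$. Since $\hg^+(a_0)=1$, Lemma~\ref{l:grp-hg} gives $\h(a_0)=\cn{s}$, so $\cn{s}\in\langle a_0\rangle$ and therefore $\cn{s}^k\in\langle a_0\rangle$ for every $k\in\ZZ$; moreover each $n$-atom $e_i:=a_0^{[i]}=\cn{s}^ia_0\cn{s}^{-i}$, namely the one with non-fixed set $n\ZZ+i$, lies in $\langle a_0\rangle$.

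The core of the argument is the claim that \emph{every positive flat element $g$ of $\m F_n(\ZZ)$ is a finite product of $n$-atoms}, and hence lies in $\langle a_0\rangle$. By Lemma~\ref{l:flat-Fn} such a $g$ has a fixed point, so by $n$-periodicity its fixed-point set is $n$-periodic and nonempty; its set of non-fixed points is therefore a disjoint union of finite intervals (``runs''), each of length at most $n-1$, and on a run $[p,q]$ the function $g$ is monotone and strictly inflationary with $g(q)=q+1$. In particular all values of $g$ on $[p,q]$, as well as all intermediate values below them, lie in the window $[p,q+1]$, which consists of at most $n$ consecutive integers, so no two of these values are congruent modulo $n$.

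Given this no-collision property I would realize $g$ one run at a time. Fixing a run $[p,q]$, process its points in the order $q,q-1,\dots,p$: to handle $x$, lift the point currently sitting at $x$ up to $g(x)$ by successively composing with $e_{x},e_{x+1},\dots,e_{g(x)-1}$ (indices mod $n$). Since each $e_i$ moves only the points of $n\ZZ+i$, and since — by the no-collision property together with monotonicity and the top-down processing order — at every stage the only point occupying the relevant value is the one being pushed, this composite realizes $g$ on $[p,q]$ and fixes everything else; in particular it fixes all fixed points of $g$, as the only $e_i$ used have indices among the non-fixed residues of the run. As distinct runs use disjoint sets of residues, the atom products realizing them commute, and their composite is $g$; thus $g\in\langle a_0\rangle$. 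Putting things together, for arbitrary $f\in\mathrm F_n(\ZZ)$ we obtain $\g(f)\in\langle a_0\rangle$ and $\g(f)^{-1}f\in\langle a_0\rangle$, whence $f=\g(f)\cdot(\g(f)^{-1}f)\in\langle a_0\rangle$, as desired.

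I expect the only genuine obstacle to be the bookkeeping inside the boxed claim: one must verify carefully that the prescribed sequence of atoms never disturbs a point already placed, a point of another run, or a fixed point of $g$. This is precisely where the bound ``a run has length at most $n-1$'' — equivalently, that each run's value window spans at most $n$ consecutive integers and so has pairwise incongruent residues — is doing the work, and it is also what makes the contributions of different runs commute. Everything else (the identity $\h(a_0)=\cn{s}$, the reduction to $a_0$ via the automorphisms $x\mapsto x^{[k]}$, the run decomposition of a flat element, and the final factorization) is routine given the results already established.
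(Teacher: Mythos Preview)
Your approach is correct and shares the paper's overall plan: obtain $\cn{s}$ and all $n$-atoms, then write the relevant positive elements as finite products of atoms. The organization differs: the paper reduces via $f=(f\vee 1)(f\wedge 1)$ to arbitrary positive elements and then conjugates by a suitable power of $\cn{s}$ to force the image into the single window $[0,n-1]$, after which it decomposes by the image values $k$ with non-singleton preimage (setting $g_k=c_k\cdots c_{l_k}$ where $l_k=\min g^{-1}[k]+1$); you instead reduce directly to positive \emph{flat} elements via $f=\g(f)\cdot(\g(f)^{-1}f)$, use the natural decomposition into runs, and process domain points top-down within each run. Your route avoids the conjugation step and makes the periodic structure more explicit. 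One point worth tightening: the inference ``disjoint residue sets $\Rightarrow$ the run products commute'' is not valid as stated (for instance $e_0e_1\neq e_1e_0$ even though their index sets are disjoint); what actually makes it work here is that distinct runs are separated by at least one fixed residue, so the image residues of one run's product --- namely the run's own residues together with the fixed residue $q+1$ just above it --- are disjoint from the residues of every other run, and hence each run product fixes every point that any other run product touches.
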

\begin{proof}
Since $a$ is $n$-periodic, every element of $\langle a \rangle$ is also $n$-periodic, so the forward inclusion holds.

For the backward direction,  first note that by Corollary~\ref{c:n-cover-conj}, since $a$ is an $n$-atom, $\langle a \rangle$ contains all of the $n$-atoms. For $i \in [1, n]$, we define $c_i$,  by: $c_i(i-1 + tn)=i+tn$, for all $t\in \ZZ$, and $c_i(x)=x$ otherwise. By Proposition~\ref{p:n-cover}, the maps $c_1,\dots, c_n$ are exactly the $n$-atoms, so $c_1,\dots, c_n \in \langle a \rangle$. Moreover, $\hg^+(a) = 1$, so by Lemma~\ref{l:grp-hg}, $\cn{s} = \h(a) \in \langle a \rangle$. Finally note that it suffices to show that $a$ generates all the positive elements of $\m{F}_n(\ZZ)$, since for each $f\in \mathrm{F}_n(\ZZ)$ we have $f = (f \jn 1)(f \mt 1)=(f \jn 1)(f^\ell \jn 1)^r$.

 Let $f \in \mathrm{F}_n(\ZZ)$ be positive. We will first construct a suitable double shift $g$ of $f$ that maps the interval $[0,n-1]$ into itself. For $k:=\max f^{-1}[f(0)]$, note that $f(k+1) > f(0)$, since $k+1 \notin f^{-1}[f(0)]$, and $0 \leq k$, since $0 \in f^{-1}[f(0)]$. Also, by Lemma~\ref{l:basic-per}(2), $f(n)=f(0)+n\neq f(0)$, so $n  \notin f^{-1}[f(0)]$; hence $k<n$. By the same lemma we get $f(k+n)=f(k)+n=f(0)+n$. So, the interval $I:=[k+1, k+n]$ has $n$ elements and 
 \[
 f(k+n)-f(k+1)=f(0)+n-f(k+1)<f(0)+n-f(0)=n,
 \]
 hence $f[I]$ is also contained in an $n$-element interval. For $g:=\cn{s}^{-f(k+1)}f\cn{s}^{k+1}$ we have $g(0)=\cn{k}^{-f(k+1)}f\cn{s}^{k+1}(0)=\cn{s}^{-f(k+1)}f(k+1)=-f(k+1)+f(k+1)=0$ and 
$g(n-1)=\cn{s}^{-f(k+1)}f\cn{s}^{k+1}(n-1)=\cn{s}^{-f(k+1)}f(k+n)=-f(k+1)+f(k+n)<n$, from the calculation above. As a result, $g$ maps the interval $[0,n-1]$ into itself and  $f=\cn{s}^{f(k+1)}g\cn{s}^{-(k+1)}$.

 We denote by $\{k_1 <k_2< \ldots< k_{r}\}$ the set of all $k\in [0,n-1]$ that are in the image of $g$ and  $g^{-1}[k]\neq \{k\}$. For every such $k$,  we set $l_k=\min g^{-1}[k]+1$ and 
 note that because $g$ is positive and $g^{-1}[k]\neq \{k\}$ we have $\min g^{-1}[k]<k$, so $l_k\leq k$; we define the product
 $g_k:=c_{k}\cdots c_{l_k}$. 
 We will verify that $g=g_{k_1}g_{k_2}\cdots g_{k_r}$, where the empty product is the identity function; it is enough to check this equality on $\ZZ_n$. 
 Indeed, if $g(x)=y$ for some $x \in \ZZ_n$, 
 then in case $g^{-1}[y]= \{y\}$, we get $y=x$ and $g_{k_1}g_{k_2}\cdots g_{k_r}(x)=x=y$; in case $g^{-1}[y] \neq \{y\}$,
 then $y=k_i$, for some $i$ and $x \in g^{-1}[k_i]$. Also $g_{k_i}(x)=k_i$, while $g_{k_j}(x)=x$, for $j \neq i$. So, $g_{k_1}g_{k_2}\cdots g_{k_r}(x)=g_{k_1}g_{k_2}\cdots g_{k_i}(x)=g_{k_1}g_{k_2}\cdots g_{k_{i-1}}(k_i)=k_i=y$, because $ g_{k_j}(k_i)=k_i$ for $j<i$. Therefore, $g \in \langle a \rangle$, hence $f \in \langle a \rangle$.
\end{proof}

For $a \in \mathrm{F}_n(\ZZ)$ we define the set $P_a := \{l \in \ZZ_n : a^{[l]}(0) > 0\}$ and we write  $+_n$ and $-_n$ for addition and subtraction modulo $n$ in $\ZZ_n $. Moreover, if $P_a\neq \emptyset$, we define $\core(a) = \bigwedge_{l\in P_a} a^{[l]}$; see Figure~\ref{f: core}.

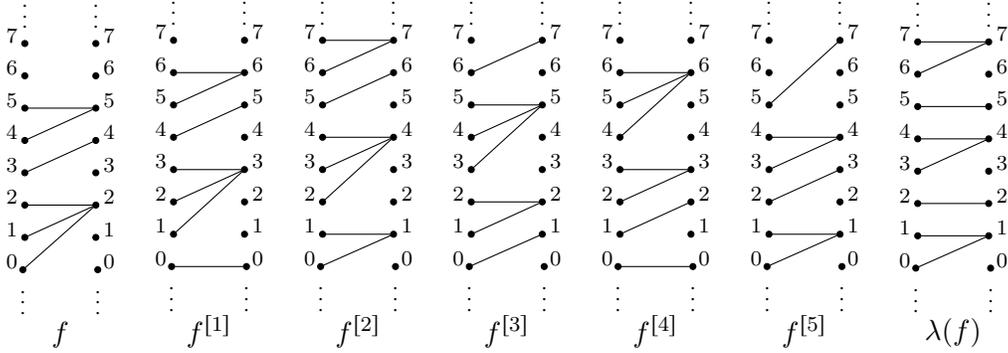
\begin{figure}[ht]\def\eq{=}
 \centering
{\scriptsize
\begin{tikzpicture}
[scale=0.43]
\node[fill,draw,circle,scale=0.3,left](9) at (0,9){};
\node[right](9.1,0) at (-.9,9.25){$7$};
\node[fill,draw,circle,scale=0.3,right](9r) at (2,9){};
\node[right](9.1) at (2.1,9.25){$7$};

\node[fill,draw,circle,scale=0.3,left](8) at (0,8){};
\node[right](8.1,0) at (-.9,8.25){$6$};
\node[fill,draw,circle,scale=0.3,right](8r) at (2,8){};
\node[right](8.1) at (2.1,8.25){$6$};

\node[fill,draw,circle,scale=0.3,left](7) at (0,7){};
\node[right](7.1,0) at (-.9,7.25){$5$};
\node[fill,draw,circle,scale=0.3,right](7r) at (2,7){};
\node[right](7.1) at (2.1,7.25){$5$};

\node[fill,draw,circle,scale=0.3,left](6) at (0,6){};
\node[right](6.1,0) at (-.9,6.25){$4$};
\node[fill,draw,circle,scale=0.3,right](6r) at (2,6){};
\node[right](6.1) at (2.1,6.25){$4$};

\node[fill,draw,circle,scale=0.3,left](5) at (0,5){};
\node[right](9.1,0) at (-.9,5.25){$3$};
\node[fill,draw,circle,scale=0.3,right](5r) at (2,5){};
\node[right](5.1) at (2.1,5.25){$3$};

\node[fill,draw,circle,scale=0.3,left](4) at (0,4){};
\node[right](8.1,0) at (-.9,4.25){$2$};
\node[fill,draw,circle,scale=0.3,right](4r) at (2,4){};
\node[right](4.1) at (2.1,4.25){$2$};

\node[fill,draw,circle,scale=0.3,left](3) at (0,3){};
\node[right](3.1,0) at (-.9,3.25){$1$};
\node[fill,draw,circle,scale=0.3,right](3r) at (2,3){};
\node[right](3.1) at (2.1,3.25){$1$};

\node[fill,draw,circle,scale=0.3,left](2) at (-0.06,2){};
\node[right](2.1,0) at (-.9,2.25){$0$};
\node[fill,draw,circle,scale=0.3,right](2r) at (2.06,2){};
\node[right](2.1) at (2.1,2.25){$0$};
\node at (9)[above=3pt]{$\vdots$};
\node at (9r)[above=3pt]{$\vdots$};
\node at (2)[below=-1pt]{$\vdots$};
\node at (2r)[below=-1pt]{$\vdots$};
\draw[-](2)--(4r);
\draw[-](3)--(4r);
\draw[-](4)--(4r);
\draw[-](5)--(6r);
\draw[-](6)--(7r);
\draw[-](7)--(7r);

\node at (1,0){\small $f$};
\end{tikzpicture}
\quad
\begin{tikzpicture}
[scale=0.43]
\node[fill,draw,circle,scale=0.3,left](9) at (0,9){};
\node[right](9.1,0) at (-.9,9.25){$7$};
\node[fill,draw,circle,scale=0.3,right](9r) at (2,9){};
\node[right](9.1) at (2.1,9.25){$7$};

\node[fill,draw,circle,scale=0.3,left](8) at (0,8){};
\node[right](8.1,0) at (-.9,8.25){$6$};
\node[fill,draw,circle,scale=0.3,right](8r) at (2,8){};
\node[right](8.1) at (2.1,8.25){$6$};

\node[fill,draw,circle,scale=0.3,left](7) at (0,7){};
\node[right](7.1,0) at (-.9,7.25){$5$};
\node[fill,draw,circle,scale=0.3,right](7r) at (2,7){};
\node[right](7.1) at (2.1,7.25){$5$};

\node[fill,draw,circle,scale=0.3,left](6) at (0,6){};
\node[right](6.1,0) at (-.9,6.25){$4$};
\node[fill,draw,circle,scale=0.3,right](6r) at (2,6){};
\node[right](6.1) at (2.1,6.25){$4$};

\node[fill,draw,circle,scale=0.3,left](5) at (0,5){};
\node[right](9.1,0) at (-.9,5.25){$3$};
\node[fill,draw,circle,scale=0.3,right](5r) at (2,5){};
\node[right](5.1) at (2.1,5.25){$3$};

\node[fill,draw,circle,scale=0.3,left](4) at (0,4){};
\node[right](8.1,0) at (-.9,4.25){$2$};
\node[fill,draw,circle,scale=0.3,right](4r) at (2,4){};
\node[right](4.1) at (2.1,4.25){$2$};

\node[fill,draw,circle,scale=0.3,left](3) at (0,3){};
\node[right](3.1,0) at (-.9,3.25){$1$};
\node[fill,draw,circle,scale=0.3,right](3r) at (2,3){};
\node[right](3.1) at (2.1,3.25){$1$};

\node[fill,draw,circle,scale=0.3,left](2) at (-0.06,2){};
\node[right](2.1,0) at (-.9,2.25){$0$};
\node[fill,draw,circle,scale=0.3,right](2r) at (2.06,2){};
\node[right](2.1) at (2.1,2.25){$0$};
\node at (9)[above=3pt]{$\vdots$};
\node at (9r)[above=3pt]{$\vdots$};
\node at (2)[below=-1pt]{$\vdots$};
\node at (2r)[below=-1pt]{$\vdots$};
\draw[-](2)--(2r);
\draw[-](3)--(5r);
\draw[-](4)--(5r);
\draw[-](5)--(5r);
\draw[-](6)--(7r);
\draw[-](7)--(8r);
\draw[-](8)--(8r);
\node at (1,0){\small $f^{[1]}$};
\end{tikzpicture}
\quad
\begin{tikzpicture}
[scale=0.43]
\node[fill,draw,circle,scale=0.3,left](9) at (0,9){};
\node[right](9.1,0) at (-.9,9.25){$7$};
\node[fill,draw,circle,scale=0.3,right](9r) at (2,9){};
\node[right](9.1) at (2.1,9.25){$7$};

\node[fill,draw,circle,scale=0.3,left](8) at (0,8){};
\node[right](8.1,0) at (-.9,8.25){$6$};
\node[fill,draw,circle,scale=0.3,right](8r) at (2,8){};
\node[right](8.1) at (2.1,8.25){$6$};

\node[fill,draw,circle,scale=0.3,left](7) at (0,7){};
\node[right](7.1,0) at (-.9,7.25){$5$};
\node[fill,draw,circle,scale=0.3,right](7r) at (2,7){};
\node[right](7.1) at (2.1,7.25){$5$};

\node[fill,draw,circle,scale=0.3,left](6) at (0,6){};
\node[right](6.1,0) at (-.9,6.25){$4$};
\node[fill,draw,circle,scale=0.3,right](6r) at (2,6){};
\node[right](6.1) at (2.1,6.25){$4$};

\node[fill,draw,circle,scale=0.3,left](5) at (0,5){};
\node[right](9.1,0) at (-.9,5.25){$3$};
\node[fill,draw,circle,scale=0.3,right](5r) at (2,5){};
\node[right](5.1) at (2.1,5.25){$3$};

\node[fill,draw,circle,scale=0.3,left](4) at (0,4){};
\node[right](8.1,0) at (-.9,4.25){$2$};
\node[fill,draw,circle,scale=0.3,right](4r) at (2,4){};
\node[right](4.1) at (2.1,4.25){$2$};

\node[fill,draw,circle,scale=0.3,left](3) at (0,3){};
\node[right](3.1,0) at (-.9,3.25){$1$};
\node[fill,draw,circle,scale=0.3,right](3r) at (2,3){};
\node[right](3.1) at (2.1,3.25){$1$};

\node[fill,draw,circle,scale=0.3,left](2) at (-0.06,2){};
\node[right](2.1,0) at (-.9,2.25){$0$};
\node[fill,draw,circle,scale=0.3,right](2r) at (2.06,2){};
\node[right](2.1) at (2.1,2.25){$0$};
\node at (9)[above=3pt]{$\vdots$};
\node at (9r)[above=3pt]{$\vdots$};
\node at (2)[below=-1pt]{$\vdots$};
\node at (2r)[below=-1pt]{$\vdots$};
\draw[-](2)--(3r);
\draw[-](3)--(3r);
\draw[-](4)--(6r);
\draw[-](5)--(6r);
\draw[-](6)--(6r);
\draw[-](7)--(8r);
\draw[-](8)--(9r);
\draw[-](9)--(9r);
\node at (1,0){\small $f^{[2]}$};
\end{tikzpicture}
\quad
\begin{tikzpicture}
[scale=0.43]
\node[fill,draw,circle,scale=0.3,left](9) at (0,9){};
\node[right](9.1,0) at (-.9,9.25){$7$};
\node[fill,draw,circle,scale=0.3,right](9r) at (2,9){};
\node[right](9.1) at (2.1,9.25){$7$};

\node[fill,draw,circle,scale=0.3,left](8) at (0,8){};
\node[right](8.1,0) at (-.9,8.25){$6$};
\node[fill,draw,circle,scale=0.3,right](8r) at (2,8){};
\node[right](8.1) at (2.1,8.25){$6$};

\node[fill,draw,circle,scale=0.3,left](7) at (0,7){};
\node[right](7.1,0) at (-.9,7.25){$5$};
\node[fill,draw,circle,scale=0.3,right](7r) at (2,7){};
\node[right](7.1) at (2.1,7.25){$5$};

\node[fill,draw,circle,scale=0.3,left](6) at (0,6){};
\node[right](6.1,0) at (-.9,6.25){$4$};
\node[fill,draw,circle,scale=0.3,right](6r) at (2,6){};
\node[right](6.1) at (2.1,6.25){$4$};

\node[fill,draw,circle,scale=0.3,left](5) at (0,5){};
\node[right](9.1,0) at (-.9,5.25){$3$};
\node[fill,draw,circle,scale=0.3,right](5r) at (2,5){};
\node[right](5.1) at (2.1,5.25){$3$};

\node[fill,draw,circle,scale=0.3,left](4) at (0,4){};
\node[right](8.1,0) at (-.9,4.25){$2$};
\node[fill,draw,circle,scale=0.3,right](4r) at (2,4){};
\node[right](4.1) at (2.1,4.25){$2$};

\node[fill,draw,circle,scale=0.3,left](3) at (0,3){};
\node[right](3.1,0) at (-.9,3.25){$1$};
\node[fill,draw,circle,scale=0.3,right](3r) at (2,3){};
\node[right](3.1) at (2.1,3.25){$1$};

\node[fill,draw,circle,scale=0.3,left](2) at (-0.06,2){};
\node[right](2.1,0) at (-.9,2.25){$0$};
\node[fill,draw,circle,scale=0.3,right](2r) at (2.06,2){};
\node[right](2.1) at (2.1,2.25){$0$};
\node at (9)[above=3pt]{$\vdots$};
\node at (9r)[above=3pt]{$\vdots$};
\node at (2)[below=-1pt]{$\vdots$};
\node at (2r)[below=-1pt]{$\vdots$};
\draw[-](2)--(3r);
\draw[-](3)--(4r);
\draw[-](4)--(4r);
\draw[-](5)--(7r);
\draw[-](6)--(7r);
\draw[-](7)--(7r);
\draw[-](8)--(9r);
\node at (1,0){\small $f^{[3]}$};
\end{tikzpicture}
\quad
\begin{tikzpicture}
[scale=0.43]
\node[fill,draw,circle,scale=0.3,left](9) at (0,9){};
\node[right](9.1,0) at (-.9,9.25){$7$};
\node[fill,draw,circle,scale=0.3,right](9r) at (2,9){};
\node[right](9.1) at (2.1,9.25){$7$};

\node[fill,draw,circle,scale=0.3,left](8) at (0,8){};
\node[right](8.1,0) at (-.9,8.25){$6$};
\node[fill,draw,circle,scale=0.3,right](8r) at (2,8){};
\node[right](8.1) at (2.1,8.25){$6$};

\node[fill,draw,circle,scale=0.3,left](7) at (0,7){};
\node[right](7.1,0) at (-.9,7.25){$5$};
\node[fill,draw,circle,scale=0.3,right](7r) at (2,7){};
\node[right](7.1) at (2.1,7.25){$5$};

\node[fill,draw,circle,scale=0.3,left](6) at (0,6){};
\node[right](6.1,0) at (-.9,6.25){$4$};
\node[fill,draw,circle,scale=0.3,right](6r) at (2,6){};
\node[right](6.1) at (2.1,6.25){$4$};

\node[fill,draw,circle,scale=0.3,left](5) at (0,5){};
\node[right](9.1,0) at (-.9,5.25){$3$};
\node[fill,draw,circle,scale=0.3,right](5r) at (2,5){};
\node[right](5.1) at (2.1,5.25){$3$};

\node[fill,draw,circle,scale=0.3,left](4) at (0,4){};
\node[right](8.1,0) at (-.9,4.25){$2$};
\node[fill,draw,circle,scale=0.3,right](4r) at (2,4){};
\node[right](4.1) at (2.1,4.25){$2$};

\node[fill,draw,circle,scale=0.3,left](3) at (0,3){};
\node[right](3.1,0) at (-.9,3.25){$1$};
\node[fill,draw,circle,scale=0.3,right](3r) at (2,3){};
\node[right](3.1) at (2.1,3.25){$1$};

\node[fill,draw,circle,scale=0.3,left](2) at (-0.06,2){};
\node[right](2.1,0) at (-.9,2.25){$0$};
\node[fill,draw,circle,scale=0.3,right](2r) at (2.06,2){};
\node[right](2.1) at (2.1,2.25){$0$};
\node at (9)[above=3pt]{$\vdots$};
\node at (9r)[above=3pt]{$\vdots$};
\node at (2)[below=-1pt]{$\vdots$};
\node at (2r)[below=-1pt]{$\vdots$};
\draw[-](2)--(2r);
\draw[-](3)--(4r);
\draw[-](4)--(5r);
\draw[-](5)--(5r);
\draw[-](6)--(8r);
\draw[-](7)--(8r);
\draw[-](8)--(8r);
\node at (1,0){\small $f^{[4]}$};
\end{tikzpicture}
\quad
\begin{tikzpicture}
[scale=0.43]
\node[fill,draw,circle,scale=0.3,left](9) at (0,9){};
\node[right](9.1,0) at (-.9,9.25){$7$};
\node[fill,draw,circle,scale=0.3,right](9r) at (2,9){};
\node[right](9.1) at (2.1,9.25){$7$};

\node[fill,draw,circle,scale=0.3,left](8) at (0,8){};
\node[right](8.1,0) at (-.9,8.25){$6$};
\node[fill,draw,circle,scale=0.3,right](8r) at (2,8){};
\node[right](8.1) at (2.1,8.25){$6$};

\node[fill,draw,circle,scale=0.3,left](7) at (0,7){};
\node[right](7.1,0) at (-.9,7.25){$5$};
\node[fill,draw,circle,scale=0.3,right](7r) at (2,7){};
\node[right](7.1) at (2.1,7.25){$5$};

\node[fill,draw,circle,scale=0.3,left](6) at (0,6){};
\node[right](6.1,0) at (-.9,6.25){$4$};
\node[fill,draw,circle,scale=0.3,right](6r) at (2,6){};
\node[right](6.1) at (2.1,6.25){$4$};

\node[fill,draw,circle,scale=0.3,left](5) at (0,5){};
\node[right](9.1,0) at (-.9,5.25){$3$};
\node[fill,draw,circle,scale=0.3,right](5r) at (2,5){};
\node[right](5.1) at (2.1,5.25){$3$};

\node[fill,draw,circle,scale=0.3,left](4) at (0,4){};
\node[right](8.1,0) at (-.9,4.25){$2$};
\node[fill,draw,circle,scale=0.3,right](4r) at (2,4){};
\node[right](4.1) at (2.1,4.25){$2$};

\node[fill,draw,circle,scale=0.3,left](3) at (0,3){};
\node[right](3.1,0) at (-.9,3.25){$1$};
\node[fill,draw,circle,scale=0.3,right](3r) at (2,3){};
\node[right](3.1) at (2.1,3.25){$1$};

\node[fill,draw,circle,scale=0.3,left](2) at (-0.06,2){};
\node[right](2.1,0) at (-.9,2.25){$0$};
\node[fill,draw,circle,scale=0.3,right](2r) at (2.06,2){};
\node[right](2.1) at (2.1,2.25){$0$};
\node at (9)[above=3pt]{$\vdots$};
\node at (9r)[above=3pt]{$\vdots$};
\node at (2)[below=-1pt]{$\vdots$};
\node at (2r)[below=-1pt]{$\vdots$};
\draw[-](2)--(3r);
\draw[-](3)--(3r);
\draw[-](4)--(5r);
\draw[-](5)--(6r);
\draw[-](6)--(6r);
\draw[-](7)--(9r);
\node at (1,0){\small $f^{[5]}$};
\end{tikzpicture}
\quad
\begin{tikzpicture}
[scale=0.43]
\node[fill,draw,circle,scale=0.3,left](9) at (0,9){};
\node[right](9.1,0) at (-.9,9.25){$7$};
\node[fill,draw,circle,scale=0.3,right](9r) at (2,9){};
\node[right](9.1) at (2.1,9.25){$7$};

\node[fill,draw,circle,scale=0.3,left](8) at (0,8){};
\node[right](8.1,0) at (-.9,8.25){$6$};
\node[fill,draw,circle,scale=0.3,right](8r) at (2,8){};
\node[right](8.1) at (2.1,8.25){$6$};

\node[fill,draw,circle,scale=0.3,left](7) at (0,7){};
\node[right](7.1,0) at (-.9,7.25){$5$};
\node[fill,draw,circle,scale=0.3,right](7r) at (2,7){};
\node[right](7.1) at (2.1,7.25){$5$};

\node[fill,draw,circle,scale=0.3,left](6) at (0,6){};
\node[right](6.1,0) at (-.9,6.25){$4$};
\node[fill,draw,circle,scale=0.3,right](6r) at (2,6){};
\node[right](6.1) at (2.1,6.25){$4$};

\node[fill,draw,circle,scale=0.3,left](5) at (0,5){};
\node[right](9.1,0) at (-.9,5.25){$3$};
\node[fill,draw,circle,scale=0.3,right](5r) at (2,5){};
\node[right](5.1) at (2.1,5.25){$3$};

\node[fill,draw,circle,scale=0.3,left](4) at (0,4){};
\node[right](8.1,0) at (-.9,4.25){$2$};
\node[fill,draw,circle,scale=0.3,right](4r) at (2,4){};
\node[right](4.1) at (2.1,4.25){$2$};

\node[fill,draw,circle,scale=0.3,left](3) at (0,3){};
\node[right](3.1,0) at (-.9,3.25){$1$};
\node[fill,draw,circle,scale=0.3,right](3r) at (2,3){};
\node[right](3.1) at (2.1,3.25){$1$};

\node[fill,draw,circle,scale=0.3,left](2) at (-0.06,2){};
\node[right](2.1,0) at (-.9,2.25){$0$};
\node[fill,draw,circle,scale=0.3,right](2r) at (2.06,2){};
\node[right](2.1) at (2.1,2.25){$0$};
\node at (9)[above=3pt]{$\vdots$};
\node at (9r)[above=3pt]{$\vdots$};
\node at (2)[below=-1pt]{$\vdots$};
\node at (2r)[below=-1pt]{$\vdots$};
\draw[-](2)--(3r);
\draw[-](3)--(3r);
\draw[-](4)--(4r);
\draw[-](5)--(6r);
\draw[-](6)--(6r);
\draw[-](7)--(7r);
\draw[-](8)--(9r);
\draw[-](9)--(9r);
\node at (1,0){\small $\core(f)$};
\end{tikzpicture}
}
\caption{For $f \in \m F_6(\ZZ)$, we have $f^{[l]}(0) > 0$ iff $l \in \{0,2,3,5\}=P_f$, so 
$\core(f) = \bigwedge_{l\in P_f} f^{[l]}$ is shown in the figure.}
\label{f: core}
\end{figure}

\begin{remark}\label{r:core}
Let $a \in \mathrm{F}_n(\ZZ)$ with $P_a \neq \emptyset$.
  \begin{enumerate}[label = \textup{(\arabic*)}]
      \item By Lemma~\ref{l:b-conjugation}, we have $P_a = \{l \in \ZZ_n : a(-l) > -l\}$.
      \item   For
      \(
      Q_a = \{l \in \ZZ : a^{[l]}(0) > 0 \} = \{-l \in \ZZ : a(l) > l \}
      \)
      we have $\core(a) = \bigwedge_{l \in Q_a} a^{[l]}$, by the $n$-periodicity of $a$.
      \item For all $l,x \in \ZZ$, if $\core(a)(x)>x$ and $a^{[l]}(0) >0$, then $a^{[l]}(x) >x$. 
  \end{enumerate}
\end{remark}

Part (1) of the following lemma will contribute toward the main result of this section, while part (3) will be useful in later sections in understanding the structure of $n$-periodic $\ell$-groups with linearly ordered group skeleton.

\begin{lemma}\label{l:core-lemma} 
    Let $a\in \mathrm{F}_n(\ZZ)$ be a strictly  positive idempotent.
    \begin{enumerate}[label = \textup{(\arabic*)}]
        \item The map $\core(a)$ is a strictly positive idempotent  with flat interval $[0,n-1]$.
        \item The set $P_a$ uniquely determines $a$ as a strictly  positive idempotent,  i.e., for each strictly positive idempotent $b \in \mathrm{F}_n(\ZZ)$: $a = b^{[k]}$ iff $P_a = P_b -_n k$. 
        \item  Let $a,b\in \mathrm{F}_n(\ZZ)$ be strictly positive idempotents such that for all $k\in \ZZ_n$, $a\neq b^{[k]}$. Then
    $\bigwedge_{l\in P_a} a^{[l]} > \bigwedge_{l\in P_a} b^{[l]} = 1$ or $\bigwedge_{l\in P_b} b^{[l]}  > \bigwedge_{l\in P_b} a^{[l]} = 1$.
    \end{enumerate} 
\end{lemma}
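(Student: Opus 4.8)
The plan is to argue concretely inside $\mathrm{F}_n(\ZZ)$, using throughout that a positive idempotent $c\in\mathrm{F}_n(\ZZ)$ is a closure operator on $\ZZ$ and is thus determined (Remark~\ref{r:posidem}) by its set of fixed points $\mathrm{Fix}(c)$, which is nonempty and $n$-periodic; equivalently $c$ is determined by the set $S_c:=\{x\in\ZZ_n: c(x)>x\}$ of its non-fixed residues. By Lemma~\ref{l:b-conjugation}(1), $a^{[l]}$ is the positive idempotent with $S_{a^{[l]}}=S_a+_n l$. A finite meet of positive idempotents is again a positive idempotent, with fixed-point set the union of those of the factors: if $e,f$ are positive idempotents then $1\le e\mt f$ and $(e\mt f)^2\le ee\mt ff=e\mt f\le(e\mt f)1\le(e\mt f)^2$, and $(\bigwedge c_i)(x)=x$ iff $x\in\mathrm{Fix}(c_j)$ for some $j$. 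Finally, by Remark~\ref{r:core}(1), $P_a=\{l\in\ZZ_n:a(-l)>-l\}$, so the involution $l\mapsto(-l)\bmod n$ of $\ZZ_n$ restricts to a bijection $P_a\to S_a$; in particular $P_a$ and $a$ determine one another among strictly positive idempotents, and $P_a\neq\emptyset$ precisely because $a>1$ (the case $n=1$ is vacuous).

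\textbf{(1).} Since $a>1$, $P_a\neq\emptyset$, so $\core(a)=\bigwedge_{l\in P_a}a^{[l]}$ is a positive idempotent with $\mathrm{Fix}(\core(a))=\bigcup_{l\in P_a}(\mathrm{Fix}(a)+l)$. For each $l\in P_a$ we have $a^{[l]}(0)>0$, hence $\core(a)(0)=\min_{l\in P_a}a^{[l]}(0)>0$ and so $\core(a)>1$. As $\core(a)\ge 1$, it remains to see that $\core(a)(n-1)\le n-1$, i.e.\ $n-1-l\in\mathrm{Fix}(a)$ for some $l\in P_a$; via the bijection above this amounts to finding $j\in\ZZ_n$ with $j\in\mathrm{Fix}(a)$ and $j+1\notin\mathrm{Fix}(a)$ (then take $l=n-1-j$). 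Such a $j$ exists because $\mathrm{Fix}(a)\cap\ZZ_n$ is a nonempty proper subset of the cycle $\ZZ/n\ZZ$ (nonempty since a closure operator has a fixed point, proper since $a>1$), so some element of it has its cyclic successor outside it.

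\textbf{(2) and (3).} For (2): $a=b^{[k]}$ gives $a^{[l]}(0)=b^{[k+l]}(0)$ for all $l$, and reducing $k+l$ modulo $n$ yields $l\in P_a\iff(l+k)\bmod n\in P_b$, i.e.\ $P_a=P_b-_n k$; conversely, applying this to the strictly positive idempotent $b^{[k]}$ gives $P_{b^{[k]}}=P_a$, whence $a=b^{[k]}$ since $P$ determines a strictly positive idempotent. For (3): by (1), $\bigwedge_{l\in P_a}a^{[l]}=\core(a)>1$ and $\bigwedge_{l\in P_b}b^{[l]}=\core(b)>1$, so it suffices to prove that $\bigwedge_{l\in P_a}b^{[l]}=1$ or $\bigwedge_{l\in P_b}a^{[l]}=1$. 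If both fail, then the positive idempotent $\bigwedge_{l\in P_a}b^{[l]}$ has a non-fixed point $x_0$, i.e.\ $b(x_0-l)>x_0-l$ for every $l\in P_a$; since $\{-l:l\in P_a\}$ is a complete set of residues modulo $n$ of the non-fixed points of $a$, $n$-periodicity of $a$ and $b$ gives $S_a+_n x_0\subseteq S_b$, and symmetrically $S_b+_n x_1\subseteq S_a$ for some $x_1$. Comparing cardinalities in the finite set $\ZZ_n$ forces $|S_a|=|S_b|$, hence $S_b=S_a+_n x_0=S_{a^{[x_0]}}$; so $b$ and $a^{[x_0]}$ are positive idempotents with the same fixed points, whence $b=a^{[x_0]}$ and $a=b^{[(-x_0)\bmod n]}$, contradicting the hypothesis.

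\textbf{Main obstacle.} The formal bookkeeping in (1) and (3) is the delicate part: one must keep $P_a\subseteq\ZZ_n$ aligned with the $n$-periodic fixed-point sets, handle the sign in $l\mapsto -l$, and translate each ``for all $l\in P_a$'' statement into a containment of $n$-periodic subsets of $\ZZ$. The one genuinely non-routine step is the cardinality argument in (3): recognizing that the two support-containments, read modulo $n$, pin the supports of $a$ and $b$ down to a single common translate, which is exactly what the hypothesis $a\neq b^{[k]}$ forbids.
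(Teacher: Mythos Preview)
Your proof is correct and follows essentially the same strategy as the paper's: both arguments rest on identifying a positive idempotent with its set of (non-)fixed residues, and part~(3) is the same cardinality/translate argument on $\ZZ_n$. Your packaging via $S_a$ and the general fact that a meet of positive idempotents is a positive idempotent with fixed-point set the union is a bit cleaner than the paper's direct verification, and in (3) you correctly invoke part~(1) for the strict positivity of $\core(a)$ and $\core(b)$, whereas the paper's reference to Theorem~\ref{t:minimal-idemp} at that point is unnecessary (part~(1) already suffices).
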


\begin{proof}
    (1) Since $a$ is strictly positive, there is a $k\in \ZZ$ such that $a(k) > k$, so we get $a^{[-k]}(0) = a(k) - k >0$ and, since $a$ is $n$-periodic, there is also such a $k\in \ZZ_n$; thus $P_a\neq \emptyset$ and $\core(a)$ exists. Moreover, since $a$ is positive, $a^{[l]}$ is positive for each $l$, so $\core(a)(s)=\bigwedge_{l\in P_a} a^{[l]}(s)\geq s$, for all $s$, hence $\core(a)$ is positive.  Indeed, $\core(a)$ is strictly positive, since, by definition, $\core(a)(0) > 0$. 
    We set $a' = \core(a)$ and let $k\in \ZZ$. Then $a'(k) = a^{[l]}(k)$ for some $l\in P_a$, so, since $a$ is idempotent and $x\mapsto{x}^{[l]}$ is an automorphism, by Lemma~\ref{l:double-l-auto}, we get $a^{[l]}(a^{[l]}(k)) = a^{[l]}(k)$.
    But also for each $m\in P_a$, $a^{[m]}(a^{[l]}(k)) \geq  a^{[l]}(k)$, by positivity. Hence
    \[
    a'(a'(k)) = \bigwedge_{m\in P_a} a^{[m]}(a^{[l]}(k)) = a^{[l]}(k) = a'(k),
    \]
    i.e., $a'$ is idempotent. Moreover, since $a$ is strictly positive, there exists a $k\in \ZZ$ with $a(k) = k$ and $a(k+1) > k+1$, thus by Lemma~\ref{l:b-conjugation}, $a^{[-k-1]}(0) = a(k+1) -k-1 > 0$ and $a^{[-k-1]}(-1) = a(k) - k-1 = -1$. Therefore $k +1 \in Q_a$ and, by Remark~\ref{r:core}(2), we obtain $a'(-1) = -1$. So $n$-periodicity yields $a'(n-1) = n-1$ and $[0,n-1]$ is a flat interval for $a'$. 

    (2) If $a,b\in \mathrm{F}_n(\ZZ)$ are strictly positive  idempotents with $P_b = P_a$, then for all $l\in \ZZ_n$, $a(l) > l$ iff $a^{[-l]}(0) > 0$ iff $b^{[-l]}(0) > 0$ iff $b(l) > l$; in other words: $a(l) = l$ iff $b(l) = l$. Since, by Remark~\ref{r:posidem}, positive idempotents are fully determined by their fixed points we get $a = b$. 

    (3)     
    Recall that  $P_c := \{l \in \ZZ_n : c^{[l]}(0) > 0\}$ and note that  $\bigwedge_{l\in P_a} b^{[l]}$ and $\bigwedge_{l\in P_b} a^{[l]}$ are positive, since $a,b$ are positive. If both $\bigwedge_{l\in P_a} b^{[l]}$ and $\bigwedge_{l\in P_b} a^{[l]}$ are 
    strictly-positive,
    then there exist $k_1,k_2 \in \ZZ$ such that $(\bigwedge_{l\in P_a} b^{[l]})(k_1) > k_1$ and $(\bigwedge_{l\in P_b} a^{[l]})(k_2) > k_2$, i.e., $(\bigwedge_{l\in P_a} b^{[l-k_1]})(0) > 0$ and $(\bigwedge_{l\in P_b} a^{[l-k_2]})(0) > 0$, by Lemma~\ref{l:b-conjugation}(2). Thus  
    $P_b -_n k_2 \subseteq P_a$ and $P_a -_n k_1 \subseteq P_b$. 
    Since $P_a$ and $P_b$ are finite, we get $P_a = P_b -_n k_2$, 
    so by (2), $a =  b^{[k_2]}$, a contradiction.
    Hence either $\bigwedge_{l\in P_a} b^{[l]}$ or $\bigwedge_{l\in P_b} a^{[l]}$  is not  strictly positive. Moreover, by (1), we know that $\bigwedge_{l\in P_a} a^{[l]}$ and $\bigwedge_{l\in P_b} b^{[l]}$ are strictly positive,  so  $\bigwedge_{l\in P_a} a^{[l]} > \bigwedge_{l\in P_a} b^{[l]} = 1$ or $\bigwedge_{l\in P_b} b^{[l]}  > \bigwedge_{l\in P_b} a^{[l]} = 1$. 
\end{proof}

\begin{figure}[ht]\def\eq{=}
\centering
{\scriptsize
\begin{tikzpicture}
[scale=0.5]

\node[fill,draw,circle,scale=0.3,left](10) at (0,10){};
\node[right](10.1,0) at (-2.25,10.25){$p+1$};
\node[fill,draw,circle,scale=0.3,right](10r) at (2,10){};
\node[right](10.1) at (2.1,10.25){$p+1$};

\node[fill,draw,circle,scale=0.3,left](9) at (0,9){};
\node[right](9.1,0) at (-.9,9.25){$p$};
\node[fill,draw,circle,scale=0.3,right](9r) at (2,9){};
\node[right](9.1) at (2.1,9.25){$p$};

\node[fill,draw,circle,scale=0.3,left](8) at (0,8){};
\node[right](8.1,0) at (-2.15,8.25){$p-1$};
\node[fill,draw,circle,scale=0.3,right](8r) at (2,8){};
\node[right](8.1) at (2.1,8.25){$p-1$};


\node[fill,draw,circle,scale=0.3,left](6) at (0,6){};
\node[right](6.1,0) at (-.9,6.25){$1$};
\node[fill,draw,circle,scale=0.3,right](6r) at (2,6){};
\node[right](6.1) at (2.1,6.25){$1$};

\node[fill,draw,circle,scale=0.3,left](5) at (0,5){};
\node[right](9.1,0) at (-.9,5.25){$0$};
\node[fill,draw,circle,scale=0.3,right](5r) at (2,5){};
\node[right](5.1) at (2.1,5.25){$0$};

\node[fill,draw,circle,scale=0.3,left](4) at (0,4){};
\node[right](8.1,0) at (-1.44,4.25){$-1$};
\node[fill,draw,circle,scale=0.3,right](4r) at (2,4){};
\node[right](4.1) at (1.55,4.25){$-1$};


\node[fill,draw,circle,scale=0.3,left](2) at (0,2){};
\node[right](2.1,0) at (-2.15,2.25){$q+1$};
\node[fill,draw,circle,scale=0.3,right](2r) at (2,2){};
\node[right](2.1) at (2.1,2.25){$q+1$};

\node[fill,draw,circle,scale=0.3,left](1) at (0,1){};
\node[right](2.1,0) at (-.9,1.25){$q$};
\node[fill,draw,circle,scale=0.3,right](1r) at (2,1){};
\node[right](2.1) at (2.1,1.25){$q$};
\node at (10)[above=3pt]{$\vdots$};
\node at (10r)[above=3pt]{$\vdots$};
\node at (8)[below=-1pt]{$\vdots$};
\node at (8r)[below=-1pt]{$\vdots$};
\node at (4)[below=-1pt]{$\vdots$};
\node at (4r)[below=-1pt]{$\vdots$};
\node at (1)[below=-1pt]{$\vdots$};
\node at (1r)[below=-1pt]{$\vdots$};
\draw[-](1)--(2r);
\draw[-](2)--(2r);
\draw[-](4)--(4r);
\draw[-](5)--(6r); 
\draw[-](6)--(6r);
\draw[-](8)--(8r);
\draw[-](9)--(10r);
\draw[-](10)--(10r);

\node at (1,0){\small $a$};
\end{tikzpicture}
}
\caption{The points $p$ and $q$ for $\core(a)$.}
\label{f: p-q}
\end{figure}

\begin{theorem}\label{t:minimal-idemp} 
    If $a\in \mathrm{F}_n(\ZZ)$ is a strictly positive idempotent, then $\core(a)$ is an $m$-atom, where $m = \per(a)$. 
\end{theorem}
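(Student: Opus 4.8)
The plan is to verify, via Proposition~\ref{p:n-cover}, that $\core(a)$ is a positive idempotent whose set of fixed points misses exactly one residue class. I would first reduce to the case $\per(a)=n$. Since $a$ is a strictly positive idempotent it cannot be invertible (an invertible idempotent equals $1$), so $m:=\per(a)\ge 2$; and since $m\mid n$ we have $a\in\mathrm{F}_m(\ZZ)$. By Remark~\ref{r:core}(2), $\core(a)=\bigwedge_{l\in Q_a}a^{[l]}$ where $Q_a=\{l\in\ZZ:a^{[l]}(0)>0\}$ depends only on $a$ as a map on $\ZZ$ and not on the ambient algebra; since being an $m$-atom is likewise an intrinsic property of a map on $\ZZ$, we may regard $a$ as an element of $\mathrm{F}_m(\ZZ)$ and so assume $n=m=\per(a)$.

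Now set $f:=\core(a)$. By Lemma~\ref{l:core-lemma}(1), $f$ is a strictly positive idempotent, hence a closure operator on $\ZZ$ (Remark~\ref{r:posidem}), and $f(0)=\bigwedge_{l\in P_a}a^{[l]}(0)>0$ straight from the definition of $P_a$. Write $N:=\{x\in\ZZ:f(x)>x\}$ and $N_a:=\{x\in\ZZ:a(x)>x\}$ for the sets of non-fixed points; both are $n$-periodic (as $f,a\in\mathrm{F}_n(\ZZ)$) and $0\in N$. It suffices to prove $N=n\ZZ$, for then the fixed-point set of $f$ is $\ZZ\setminus n\ZZ$ and Proposition~\ref{p:n-cover} gives that $\core(a)$ is an $n$-atom, that is, an $m$-atom.

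Since $N$ is $n$-periodic and contains $0$, only the inclusion $N\subseteq n\ZZ$ needs work. Fix $x\in N$. By Remark~\ref{r:core}(3), $a^{[l]}(x)>x$ for every $l$ with $a^{[l]}(0)>0$. Rewriting via Lemma~\ref{l:b-conjugation}(1) (so $a^{[l]}(x)=a(x-l)+l$ and $a^{[l]}(0)=a(-l)+l$), this says $x-l\in N_a$ whenever $-l\in N_a$; substituting $j:=-l$ this is exactly $x+N_a\subseteq N_a$. As $N_a$ is $n$-periodic with finitely many elements per period, the inclusion forces $x+N_a=N_a$, so $x$ lies in the subgroup $D=\{y\in\ZZ:y+N_a=N_a\}=d\ZZ$ of periods of $N_a$, where $d\mid n$ since $N_a$ is $n$-periodic. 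The decisive point is that $a$, being a positive idempotent, is completely determined by its fixed-point set $\ZZ\setminus N_a$ (Remark~\ref{r:posidem}); hence $d$-periodicity of $N_a$ forces $d$-periodicity of $a$, and therefore $n=\per(a)\le d\le n$, so $d=n$ and $x\in n\ZZ$. Thus $N=n\ZZ$, completing the proof.

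The main obstacle is this last step: deducing $D=n\ZZ$ from the fact that $x+N_a=N_a$ for all $x\in N$. This is the only place where the reduced hypothesis $\per(a)=n$ is used, and it rests on the elementary but essential observation that a closure operator on $\ZZ$ whose set of fixed points is $d$-periodic is itself $d$-periodic — an instance of the fact, recorded in Remark~\ref{r:posidem}, that a positive idempotent of $\mathrm{F}(\ZZ)$ is recovered from its fixed points.
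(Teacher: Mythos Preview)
Your proof is correct and arrives at the same endpoint as the paper's --- every non-fixed point of $\core(a)$ is a period of the non-fixed-point set $N_a$ of $a$, and since a positive idempotent is determined by its fixed points (Remark~\ref{r:posidem}) such a period must be a multiple of $n=\per(a)$ --- but your route to that endpoint is cleaner. The paper singles out the specific points $p=\min\{x\in[1,n]:\core(a)(x)>x\}$ and $q=\max\{x\in[-n,-1]:\core(a)(x)>x\}$, first proves $p=-q$ by a two-case contradiction argument (Claim~1), and then proves the biconditional $a(k)>k\iff a(k+p)>k+p$ (Claim~2) one direction at a time, the backward direction relying on Claim~1. You bypass the choice of $p,q$ and Claim~1 altogether: from Remark~\ref{r:core}(3) you get only the one-sided inclusion $x+N_a\subseteq N_a$ for every $x\in N$, and then the pigeonhole observation that a translate of an $n$-periodic subset of $\ZZ$ contained in that subset must equal it (both have the same number of elements in $\ZZ_n$) upgrades this to $x+N_a=N_a$. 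What your approach buys is the elimination of the case analysis in Claim~1 and the symmetric use of $q$; what the paper's explicit choice of $p$ buys is perhaps a more concrete picture of where the unique non-fixed residue class sits, but this is not needed for the statement.
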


\begin{proof}
    We may assume that $a$ has periodicity $n$, since if it has periodicity $m<n$, then it is an element of $ \mathrm{F}_m(\ZZ)\leq \mathrm{F}_n(\ZZ)$, so we can apply the  theorem to $ \mathrm{F}_m(\ZZ)$. Let $a' = \core(a)$.
    By Lemma~\ref{l:core-lemma}(1), $a'$ is idempotent and strictly positive with flat interval $[0,n-1]$.
    We will show that $a'$ is minimal among the strictly positive elements; since $a'(0)>0$, this amounts to showing that $a'$ is the identity on all elements that are not multiples of $n$. 
    Let $p \in [1,n]$ be minimal with $a'(p) > p$ and $q \in [-n, -1]$ be maximal with $a'(q) > q$; {see Figure~\ref{f: p-q}.}

    \underline{Claim 1.}   $p = -q$.
    
    If $p< -q$, then $q < p+q < 0$, and $a'(p+q) = p+q$, by the maximality of $q$, i.e., there exists an $l\in P_a$ such that $a^{[l]}(p+q) = p+q$. Since $l\in P_a$, we get $a^{[l]}(0) >0$, and since  $a'(q)>q$, Remark~\ref{r:core}(3) yields $a^{[l]}(q) > q$. So, by Lemma~\ref{l:b-conjugation}(2), we get $a^{[l-q]}(0)=a^{[l]}(0+q)-q > 0$ and $a^{[l-q]}(p) =a^{[l]}(p+q)-q= p+q-q= p$. Hence $l-q \in P_a$, yielding $a'(p) = p$,  a contradiction. 

    If $p > -q$, then $p > p + q > 0$, and $a'(p+q)=p+q$, by the minimality of $p$, i.e., there exists an $l\in P_a$ such that $a^{[l]}(p+q) = p+q$ and $a^{[l]}(p) > p$. So $a^{[l-p]}(0) > 0$ and $a^{[l-p]}(q) = q$. Hence $l-p \in P_a$, yielding $a'(q) = q$,  a contradiction.

    \underline{Claim 2.}    For all $k\in \ZZ$, $a(k) > k$ iff $a(k+p) > k+p$ or, equivalently, $a(k) = k$ iff $a(k+p) = k+p$.

    If $a(k)>k$, then $a^{[-k]}(0) = a(k) -k > 0$. Since we also have $a'(p) > p$, by Remark~\ref{r:core}(3), we get  $a(p+k) - k = a^{[-k]}(p) >p$, 
    yielding $a(p+k) > k+p$.
    Conversely, if $a(k+p) > k+p$, then $a^{[-k-p]}(0) = a(k+p) -k-p > 0$, so, by Claim 1, $a(k)- k-p = a^{[-k-p]}(-p) =a^{[-k-p]}(q) > q = -p$, yielding $a(k) > k$. Thus Claim~2 is shown.
    
    By Remark~\ref{r:posidem}, positive idempotents are uniquely determined by their fixed points, so Claim~2 yields that $a$ is $p$-periodic. Therefore, $p=n$, so $a'(k) = k$ for each $k \in [1,n-1]$ and $a'(0) = 1$, yielding that $a'$ is an $n$-atom.
\end{proof}

By Theorem~\ref{t:minimal-idemp} and  Proposition~\ref{p: n-cover generates} we obtain the following corollary, which gets us closer to the generation result of this section.

\begin{corollary}\label{c:posidem-gen}
    If $a\in \mathrm{F}_n(\ZZ)$ is a strictly positive idempotent of periodicity $k$, then $\langle a \rangle = \mathrm{F}_k(\ZZ)$.
\end{corollary}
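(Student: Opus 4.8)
The plan is to prove the two inclusions $\langle a\rangle \subseteq \mathrm{F}_k(\ZZ)$ and $\mathrm{F}_k(\ZZ) \subseteq \langle a \rangle$ separately, feeding Theorem~\ref{t:minimal-idemp} into Proposition~\ref{p: n-cover generates}.

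For the inclusion $\langle a\rangle \subseteq \mathrm{F}_k(\ZZ)$, note that since $a$ has periodicity $k$ it is in particular $k$-periodic, so $a\in \mathrm{F}_k(\ZZ)$; moreover $k$ divides $n$ (the periodicity of an element divides every period of that element, since the set of periods of a fixed element is closed under $\gcd$), so $\mathrm{F}_k(\ZZ)$ is a subalgebra of $\mathrm{F}_n(\ZZ)$ and hence the subalgebra generated by $a$ stays inside $\mathrm{F}_k(\ZZ)$.

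For the reverse inclusion, I would observe that $\core(a) = \bigwedge_{l\in P_a} a^{[l]}$ is a finite meet of the elements $a^{[l]} = a^{\ell^{2l}}$, each of which is obtained from $a$ by a unary term; here $P_a \subseteq \ZZ_n$ is finite and nonempty because $a$ is strictly positive. Hence $\core(a)$ is a term function evaluated at $a$, so $\core(a)\in \langle a\rangle$. By Theorem~\ref{t:minimal-idemp}, $\core(a)$ is an $m$-atom with $m = \per(a) = k$, i.e.\ a $k$-atom. Applying Proposition~\ref{p: n-cover generates} with $k$ in place of $n$ yields $\langle \core(a)\rangle = \mathrm{F}_k(\ZZ)$, and since $\core(a)\in \langle a\rangle$ we get $\mathrm{F}_k(\ZZ) = \langle \core(a)\rangle \subseteq \langle a\rangle$, completing the proof.

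There is no substantial obstacle here; the argument is just the assembly of the two cited results. The only points that genuinely need a line of justification are that $\core(a)$ is a term function of $a$ (so that it lies in the generated subalgebra), that $P_a \neq \emptyset$ (from strict positivity, as already noted in the proof of Theorem~\ref{t:minimal-idemp}), and that invoking Proposition~\ref{p: n-cover generates} inside $\mathrm{F}_k(\ZZ)$ is legitimate — which it is, since the subalgebra generated by a single element is the same whether computed in $\mathrm{F}_k(\ZZ)$ or in the larger algebra $\mathrm{F}_n(\ZZ)$.
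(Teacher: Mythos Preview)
Your proof is correct and follows exactly the approach the paper intends: the corollary is obtained directly by feeding Theorem~\ref{t:minimal-idemp} into Proposition~\ref{p: n-cover generates}, with the observation that $\core(a)\in\langle a\rangle$. The extra care you take with the forward inclusion and with $P_a\neq\emptyset$ is fine but not strictly needed beyond what the cited results already provide.
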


Note that Theorem~\ref{t:minimal-idemp} does not generalize to arbitrary strictly positive flat elements. Indeed, the map $f$ in Figure~\ref{f: core} is strictly positive and flat of periodicity $6$, but $\core(f)$ has periodicity $3$. The reason for this is that the fixed points of $f$ are ``distributed'' $3$-periodically forcing $f^2 = f^{3}$ to be $3$-periodic.
In general, if $a\in \mathrm{F}_n(\ZZ)$ is a positive flat element, then, by Lemma~\ref{l:flat-Fn}, {we know that} $a^{n-1} = a^{n}$, so the map $a^{n-1}$ is idempotent 
(actually $a^{n-1}$ is  the least idempotent above $a$); we call the periodicity of $a^{n-1}$ the \emph{final periodicity} of $a$.

The next lemma shows that every element of  periodicity $n$ and final periodicity $k<n$, generates an element of strictly larger final periodicity $k'$ ($k<k'\leq n$); see Figure~\ref{f: final-per} for the idea of the proof.

\begin{figure}[ht]\def\eq{=}
\centering
{\scriptsize
\begin{tikzpicture}
[scale=0.43]
\node[fill,draw,circle,scale=0.3,left](9) at (0,9){};
\node[right](9.1,0) at (-.9,9.25){$7$};
\node[fill,draw,circle,scale=0.3,right](9r) at (2,9){};
\node[right](9.1) at (2.1,9.25){$7$};

\node[fill,draw,circle,scale=0.3,left](8) at (0,8){};
\node[right](8.1,0) at (-.9,8.25){$6$};
\node[fill,draw,circle,scale=0.3,right](8r) at (2,8){};
\node[right](8.1) at (2.1,8.25){$6$};

\node[fill,draw,circle,scale=0.3,left](7) at (0,7){};
\node[right](7.1,0) at (-.9,7.25){$5$};
\node[fill,draw,circle,scale=0.3,right](7r) at (2,7){};
\node[right](7.1) at (2.1,7.25){$5$};

\node[fill,draw,circle,scale=0.3,left](6) at (0,6){};
\node[right](6.1,0) at (-.9,6.25){$4$};
\node[fill,draw,circle,scale=0.3,right](6r) at (2,6){};
\node[right](6.1) at (2.1,6.25){$4$};

\node[fill,draw,circle,scale=0.3,left](5) at (0,5){};
\node[right](9.1,0) at (-.9,5.25){$3$};
\node[fill,draw,circle,scale=0.3,right](5r) at (2,5){};
\node[right](5.1) at (2.1,5.25){$3$};

\node[fill,draw,circle,scale=0.3,left](4) at (0,4){};
\node[right](8.1,0) at (-.9,4.25){$2$};
\node[fill,draw,circle,scale=0.3,right](4r) at (2,4){};
\node[right](4.1) at (2.1,4.25){$2$};

\node[fill,draw,circle,scale=0.3,left](3) at (0,3){};
\node[right](3.1,0) at (-.9,3.25){$1$};
\node[fill,draw,circle,scale=0.3,right](3r) at (2,3){};
\node[right](3.1) at (2.1,3.25){$1$};

\node[fill,draw,circle,scale=0.3,left](2) at (-0.06,2){};
\node[right](2.1,0) at (-.9,2.25){$0$};
\node[fill,draw,circle,scale=0.3,right](2r) at (2.06,2){};
\node[right](2.1) at (2.1,2.25){$0$};
\node at (9)[above=3pt]{$\vdots$};
\node at (9r)[above=3pt]{$\vdots$};
\node at (2)[below=-1pt]{$\vdots$};
\node at (2r)[below=-1pt]{$\vdots$};
\draw[-](2)--(4r);
\draw[-](3)--(4r);
\draw[-](4)--(4r);
\draw[-](5)--(6r);
\draw[-](6)--(7r);
\draw[-](7)--(7r);

\node at (1,-0.1){\small $a$};
\end{tikzpicture}
\quad
\begin{tikzpicture}
[scale=0.43]
\node[fill,draw,circle,scale=0.3,left](9) at (0,9){};
\node[right](9.1,0) at (-.9,9.25){$7$};
\node[fill,draw,circle,scale=0.3,right](9r) at (2,9){};
\node[right](9.1) at (2.1,9.25){$7$};

\node[fill,draw,circle,scale=0.3,left](8) at (0,8){};
\node[right](8.1,0) at (-.9,8.25){$6$};
\node[fill,draw,circle,scale=0.3,right](8r) at (2,8){};
\node[right](8.1) at (2.1,8.25){$6$};

\node[fill,draw,circle,scale=0.3,left](7) at (0,7){};
\node[right](7.1,0) at (-.9,7.25){$5$};
\node[fill,draw,circle,scale=0.3,right](7r) at (2,7){};
\node[right](7.1) at (2.1,7.25){$5$};

\node[fill,draw,circle,scale=0.3,left](6) at (0,6){};
\node[right](6.1,0) at (-.9,6.25){$4$};
\node[fill,draw,circle,scale=0.3,right](6r) at (2,6){};
\node[right](6.1) at (2.1,6.25){$4$};

\node[fill,draw,circle,scale=0.3,left](5) at (0,5){};
\node[right](9.1,0) at (-.9,5.25){$3$};
\node[fill,draw,circle,scale=0.3,right](5r) at (2,5){};
\node[right](5.1) at (2.1,5.25){$3$};

\node[fill,draw,circle,scale=0.3,left](4) at (0,4){};
\node[right](8.1,0) at (-.9,4.25){$2$};
\node[fill,draw,circle,scale=0.3,right](4r) at (2,4){};
\node[right](4.1) at (2.1,4.25){$2$};

\node[fill,draw,circle,scale=0.3,left](3) at (0,3){};
\node[right](3.1,0) at (-.9,3.25){$1$};
\node[fill,draw,circle,scale=0.3,right](3r) at (2,3){};
\node[right](3.1) at (2.1,3.25){$1$};

\node[fill,draw,circle,scale=0.3,left](2) at (-0.06,2){};
\node[right](2.1,0) at (-.9,2.25){$0$};
\node[fill,draw,circle,scale=0.3,right](2r) at (2.06,2){};
\node[right](2.1) at (2.1,2.25){$0$};
\node at (9)[above=3pt]{$\vdots$};
\node at (9r)[above=3pt]{$\vdots$};
\node at (2)[below=-1pt]{$\vdots$};
\node at (2r)[below=-1pt]{$\vdots$};
\draw[-](2)--(4r);
\draw[-](3)--(4r);
\draw[-](4)--(4r);
\draw[-](5)--(7r);
\draw[-](6)--(7r);
\draw[-](7)--(7r);

\node at (1,0.07){\small $a^2$};
\end{tikzpicture}
\quad
\begin{tikzpicture}
[scale=0.43]
\node[fill,draw,circle,scale=0.3,left](9) at (0,9){};
\node[right](9.1,0) at (-.9,9.25){$7$};
\node[fill,draw,circle,scale=0.3,right](9r) at (2,9){};
\node[right](9.1) at (2.1,9.25){$7$};

\node[fill,draw,circle,scale=0.3,left](8) at (0,8){};
\node[right](8.1,0) at (-.9,8.25){$6$};
\node[fill,draw,circle,scale=0.3,right](8r) at (2,8){};
\node[right](8.1) at (2.1,8.25){$6$};

\node[fill,draw,circle,scale=0.3,left](7) at (0,7){};
\node[right](7.1,0) at (-.9,7.25){$5$};
\node[fill,draw,circle,scale=0.3,right](7r) at (2,7){};
\node[right](7.1) at (2.1,7.25){$5$};

\node[fill,draw,circle,scale=0.3,left](6) at (0,6){};
\node[right](6.1,0) at (-.9,6.25){$4$};
\node[fill,draw,circle,scale=0.3,right](6r) at (2,6){};
\node[right](6.1) at (2.1,6.25){$4$};

\node[fill,draw,circle,scale=0.3,left](5) at (0,5){};
\node[right](9.1,0) at (-.9,5.25){$3$};
\node[fill,draw,circle,scale=0.3,right](5r) at (2,5){};
\node[right](5.1) at (2.1,5.25){$3$};

\node[fill,draw,circle,scale=0.3,left](4) at (0,4){};
\node[right](8.1,0) at (-.9,4.25){$2$};
\node[fill,draw,circle,scale=0.3,right](4r) at (2,4){};
\node[right](4.1) at (2.1,4.25){$2$};

\node[fill,draw,circle,scale=0.3,left](3) at (0,3){};
\node[right](3.1,0) at (-.9,3.25){$1$};
\node[fill,draw,circle,scale=0.3,right](3r) at (2,3){};
\node[right](3.1) at (2.1,3.25){$1$};

\node[fill,draw,circle,scale=0.3,left](2) at (-0.06,2){};
\node[right](2.1,0) at (-.9,2.25){$0$};
\node[fill,draw,circle,scale=0.3,right](2r) at (2.06,2){};
\node[right](2.1) at (2.1,2.25){$0$};
\node at (9)[above=3pt]{$\vdots$};
\node at (9r)[above=3pt]{$\vdots$};
\node at (2)[below=-1pt]{$\vdots$};
\node at (2r)[below=-1pt]{$\vdots$};
\draw[-](2)--(3r);
\draw[-](3)--(3r);
\draw[-](4)--(6r);
\draw[-](5)--(6r);
\draw[-](6)--(6r);
\draw[-](7)--(9r);
\draw[-](8)--(9r);
\draw[-](9)--(9r);
\node at (1,0.2){\small $\top :=(a^2)^{[2]}$};
\end{tikzpicture}
\quad
\begin{tikzpicture}
[scale=0.43]
\node[fill,draw,circle,scale=0.3,left](9) at (0,9){};
\node[right](9.1,0) at (-.9,9.25){$7$};
\node[fill,draw,circle,scale=0.3,right](9r) at (2,9){};
\node[right](9.1) at (2.1,9.25){$7$};

\node[fill,draw,circle,scale=0.3,left](8) at (0,8){};
\node[right](8.1,0) at (-.9,8.25){$6$};
\node[fill,draw,circle,scale=0.3,right](8r) at (2,8){};
\node[right](8.1) at (2.1,8.25){$6$};

\node[fill,draw,circle,scale=0.3,left](7) at (0,7){};
\node[right](7.1,0) at (-.9,7.25){$5$};
\node[fill,draw,circle,scale=0.3,right](7r) at (2,7){};
\node[right](7.1) at (2.1,7.25){$5$};

\node[fill,draw,circle,scale=0.3,left](6) at (0,6){};
\node[right](6.1,0) at (-.9,6.25){$4$};
\node[fill,draw,circle,scale=0.3,right](6r) at (2,6){};
\node[right](6.1) at (2.1,6.25){$4$};

\node[fill,draw,circle,scale=0.3,left](5) at (0,5){};
\node[right](9.1,0) at (-.9,5.25){$3$};
\node[fill,draw,circle,scale=0.3,right](5r) at (2,5){};
\node[right](5.1) at (2.1,5.25){$3$};

\node[fill,draw,circle,scale=0.3,left](4) at (0,4){};
\node[right](8.1,0) at (-.9,4.25){$2$};
\node[fill,draw,circle,scale=0.3,right](4r) at (2,4){};
\node[right](4.1) at (2.1,4.25){$2$};

\node[fill,draw,circle,scale=0.3,left](3) at (0,3){};
\node[right](3.1,0) at (-.9,3.25){$1$};
\node[fill,draw,circle,scale=0.3,right](3r) at (2,3){};
\node[right](3.1) at (2.1,3.25){$1$};

\node[fill,draw,circle,scale=0.3,left](2) at (-0.06,2){};
\node[right](2.1,0) at (-.9,2.25){$0$};
\node[fill,draw,circle,scale=0.3,right](2r) at (2.06,2){};
\node[right](2.1) at (2.1,2.25){$0$};
\node at (9)[above=3pt]{$\vdots$};
\node at (9r)[above=3pt]{$\vdots$};
\node at (2)[below=-1pt]{$\vdots$};
\node at (2r)[below=-1pt]{$\vdots$};
\draw[-](2)--(6r);
\draw[-](3)--(6r);
\draw[-](4)--(6r);
\draw[-](5)--(6r);
\draw[-](6)--(9r);
\draw[-](7)--(9r);
\node at (1,-0.01){\small $d:=\top a$};
\end{tikzpicture}
\quad
\begin{tikzpicture}
[scale=0.43]
\node[fill,draw,circle,scale=0.3,left](9) at (0,9){};
\node[right](9.1,0) at (-.9,9.25){$7$};
\node[fill,draw,circle,scale=0.3,right](9r) at (2,9){};
\node[right](9.1) at (2.1,9.25){$7$};

\node[fill,draw,circle,scale=0.3,left](8) at (0,8){};
\node[right](8.1,0) at (-.9,8.25){$6$};
\node[fill,draw,circle,scale=0.3,right](8r) at (2,8){};
\node[right](8.1) at (2.1,8.25){$6$};

\node[fill,draw,circle,scale=0.3,left](7) at (0,7){};
\node[right](7.1,0) at (-.9,7.25){$5$};
\node[fill,draw,circle,scale=0.3,right](7r) at (2,7){};
\node[right](7.1) at (2.1,7.25){$5$};

\node[fill,draw,circle,scale=0.3,left](6) at (0,6){};
\node[right](6.1,0) at (-.9,6.25){$4$};
\node[fill,draw,circle,scale=0.3,right](6r) at (2,6){};
\node[right](6.1) at (2.1,6.25){$4$};

\node[fill,draw,circle,scale=0.3,left](5) at (0,5){};
\node[right](9.1,0) at (-.9,5.25){$3$};
\node[fill,draw,circle,scale=0.3,right](5r) at (2,5){};
\node[right](5.1) at (2.1,5.25){$3$};

\node[fill,draw,circle,scale=0.3,left](4) at (0,4){};
\node[right](8.1,0) at (-.9,4.25){$2$};
\node[fill,draw,circle,scale=0.3,right](4r) at (2,4){};
\node[right](4.1) at (2.1,4.25){$2$};

\node[fill,draw,circle,scale=0.3,left](3) at (0,3){};
\node[right](3.1,0) at (-.9,3.25){$1$};
\node[fill,draw,circle,scale=0.3,right](3r) at (2,3){};
\node[right](3.1) at (2.1,3.25){$1$};

\node[fill,draw,circle,scale=0.3,left](2) at (-0.06,2){};
\node[right](2.1,0) at (-.9,2.25){$0$};
\node[fill,draw,circle,scale=0.3,right](2r) at (2.06,2){};
\node[right](2.1) at (2.1,2.25){$0$};
\node at (9)[above=3pt]{$\vdots$};
\node at (9r)[above=3pt]{$\vdots$};
\node at (2)[below=-1pt]{$\vdots$};
\node at (2r)[below=-1pt]{$\vdots$};
\draw[-](2)--(5r);
\draw[-](3)--(5r);
\draw[-](4)--(5r);
\draw[-](5)--(5r);
\draw[-](6)--(8r);
\draw[-](7)--(8r);
\node at (1,0.17){\small $c:= \g(d)^{-1}d$};
\end{tikzpicture}
}
\caption{The map $a\in \mathrm{F}_6(\ZZ)$ has final periodicity $3$, but we have $a(3) < a(0)+3$. Therefore we can generate the positive flat map $c$ with final periodicity $6$.}
\label{f: final-per}
\end{figure}
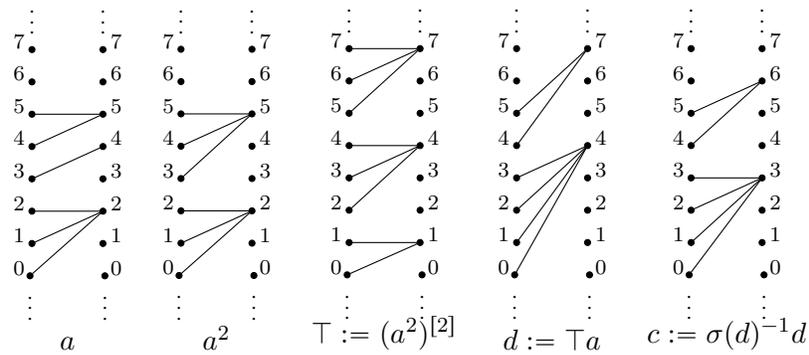

\begin{lemma}\label{l:final-period}
    Let $a\in \mathrm{F}_n(\ZZ)$ be a positive flat element of periodicity $n$ and $b \in \langle a \rangle$ a positive flat element of final periodicity $k\neq n$. Then there exists a positive flat element $c\in \langle a\rangle$ of final periodicity $k'>k$ (and $k'\leq n$). 
\end{lemma}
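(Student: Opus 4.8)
\emph{Proof proposal.} The plan is to mimic Figure~\ref{f: final-per}: build $c$ by composing a suitable shift of the least idempotent above $b$ with $a$. Put $e:=b^{n-1}$; by Lemma~\ref{l:flat-Fn} this is a positive idempotent, it lies in $\langle a\rangle$ (which is closed under products), and its periodicity equals the final periodicity $k$ of $b$. Since $e$ is $n$-periodic, $k\mid n$, and $k<n$ by hypothesis. I will use freely that $\langle a\rangle$ is closed under $\mt,\jn,\cdot$ and, being closed under $^\ell$ and $^r$, under every automorphism $x\mapsto x^{[j]}$, so all the elements constructed below remain in $\langle a\rangle$. If $k=1$, then $e\in\mathrm{F}_1(\ZZ)$ is an idempotent automorphism, hence $e=1=b$; in this case $c:=a$ works, because the final periodicity of $a$ divides $n$ and is not $1$ (otherwise $a^{n-1}=1$, forcing $a=1$, contradicting $\per(a)=n>1$). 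So from now on $e$ is strictly positive and $2\le k<n$.

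Since $\per(a)=n$, $k\mid n$ and $k\neq n$, Lemma~\ref{l:k-interval} provides an $l$ with $a(l+k)<a(l)+k$. Replacing $a$ by $a^{[-l]}$ --- still a positive flat element of periodicity $n$ generating the same subalgebra --- I may assume $a(k)<a(0)+k$; as $a$ is order-preserving, $0\le a(k)-a(0)\le k-1$. Next I would pick a shift $t$ so that the positive idempotent $e^{[t]}$ identifies $a(0)$ and $a(k)$ as tightly as possible, that is, so that $e^{[t]}(a(0))=e^{[t]}(a(k))$ with no fixed point of $e^{[t]}$ strictly between $a(0)$ and $a(k)$. Such a $t$ exists because the fixed points of $e$ --- or, after first replacing $e$ by the $k$-atom $\core(e)$ supplied by Theorem~\ref{t:minimal-idemp}, the fixed points of that sparser idempotent --- are spaced periodically with period $k>a(k)-a(0)$, so some translate of the pattern misses the short interval between $a(0)$ and $a(k)$. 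Then set $d:=e^{[t]}\cdot a$ and $c:=\g(d)^{-1}d$, both in $\langle a\rangle$.

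By Corollary~\ref{c:flat-g}, $c$ is positive and flat, of periodicity $\per(d)\mid n$. Being positive and flat, $c$ has $c^{n-1}$ as its least idempotent above, and by positivity ($x\le c(x)\le\cdots\le c^{n-1}(x)$) the set $F_c:=\{x:c(x)=x\}$ of fixed points of $c$ equals that of $c^{n-1}$; writing $c=\cn{s}^{-\hg^-(d)}d$ via Lemma~\ref{l:grp-hg} this gives
\[
F_{c^{n-1}}=F_c=\{x\in\ZZ: d(x)-x=\hg^-(d)\}.
\]
Since a positive idempotent is determined by its fixed points, the final periodicity $k'$ of $c$ is precisely the least period of this set, and it remains to prove $k<k'$.

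This verification is the main obstacle. Writing $d(x)-x=\bigl(e^{[t]}(a(x))-a(x)\bigr)+\bigl(a(x)-x\bigr)$ as a sum of two nonnegative ``height'' contributions, the choice of $t$ together with the defect $a(k)<a(0)+k$ should force the minimum $\hg^-(d)$ to be attained at a point $x_0$ where $a$ has fallen behind by exactly $k$ across one $k$-step while $e^{[t]}$ no longer compensates, whereas at $x_0+k$ the height is strictly larger --- so $x_0\in F_{c^{n-1}}$ but $x_0+k\notin F_{c^{n-1}}$, hence $k$ is not a period of $F_{c^{n-1}}$. Away from that one defective residue the height of $d$ still inherits the $k$-periodicity of $e^{[t]}$ and of $a$, and this should be used to conclude $k\mid k'$; combined with $k'\mid n$ this yields $k<k'\le n$, as required. (In the sub-case where the final periodicity $k_a$ of $a$ does not divide $k$, the defect can be bypassed entirely: $c:=a^{n-1}\jn e^{[t]}$ for a suitable $t$ is positive and flat, and its least idempotent has fixed-point set $F_{a^{n-1}}\cap(F_e+t)$ of period $\lcm(k_a,k)>k$.)
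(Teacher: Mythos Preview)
Your overall plan --- compose $a$ with a $k$-periodic positive idempotent from $\langle a\rangle$, then normalise via $\g$ --- matches the paper's, and the $k=1$ case is handled correctly. The gaps are in the construction of $t$ and in the verification that $k'>k$. For the construction: the fixed-point set of $e$ (or of the $k$-atom $\core(e)$) is $k$-periodic \emph{as a set}, but consecutive fixed points need not be $k$ apart --- a $k$-atom has $k-1$ fixed points in every $k$-window --- so ``some translate of the pattern misses the interval $(a(0),a(k))$'' is false in general. For the verification: even granting your $d$, showing that $k$ is not a period of $F_{c^{n-1}}$ only yields $k'\nmid k$, not $k'>k$ (think $n=6$, $k=3$, $k'=2$). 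You then need $k\mid k'$, but the proposed reason --- that the height of $d$ ``inherits the $k$-periodicity of $e^{[t]}$ and of $a$'' --- fails because $a$ has periodicity $n$, not $k$, so $x\mapsto a(x)-x$ is not $k$-periodic.

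The paper sidesteps all of this by using Corollary~\ref{c:posidem-gen} more aggressively: since $b^{n-1}$ is a strictly positive idempotent of periodicity $k$, it generates \emph{all} of $\m{F}_k(\ZZ)$, so in particular the \emph{maximal} $k$-periodic idempotent $\top$ with flat interval $[a(l),a(l)+k-1]$ already lies in $\langle a\rangle$. With $d:=\top a$, the entire interval $[l,l+k]$ collapses to the single value $a(l)+k-1$ (because $a[l,l+k]\subseteq[a(l),a(l)+k-1]$ and $\top$ is constant there), so $c:=\g(d)^{-1}d$ has a fibre of size at least $k+1$. Since any $m$-periodic map on $\ZZ$ has fibres of size at most $m$, and $c^{-1}[y]\subseteq(c^{n-1})^{-1}[c^{n-2}(y)]$, the final periodicity satisfies $k'\ge k+1$ in one line. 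You were one idea away: replace $e$ (or $\core(e)$) by $\top$.
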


\begin{proof}
     By Lemma~\ref{l:k-interval},  there exists an $l\in \ZZ$ such that $a(l+k) < a(l) + k$.
     Let $\top$ be the function such that $\top(x)=a(l) + k-1$, for all $a(l)\leq x \leq a(l) + k-1$, and extended $k$-periodically on all of $\ZZ$; note that $\top$ is positive, flat  and a
     maximal idempotent of periodicity $k$, hence $\top \in \mathrm{F}_k(\ZZ)$. 
      Since $a(l+k) \leq a(l)+k -1 $, for $d := \top a$ we have $d(l) =\top(a(l)) = a(l) + k-1$ and 
      \[
      d(l) \leq d(l+k)=\top (a(l+k))\leq \top (a(l)+k -1)=a(l) + k-1,
      \]
      so $\lvert d^{-1}[a(l) + k-1]\rvert \geq k+1$. By Corollary~\ref{c:flat-g} the element $c := \g(d)^{-1}d$ is positive and flat, and $\lvert c^{-1}[\g(d)^{-1}(a(l) + k-1)]\rvert \geq k+1$; hence $c$ has final periodicity $k'>k$, since for a $k$-periodic map on $\ZZ$ the size of the inverse image of a point is bounded by $k$. Moreover, since $b$ has final periodicity $k$, by Corollary~\ref{c:posidem-gen} we get $\mathrm{F}_k(\ZZ) \subseteq \langle b^{n-1} \rangle \subseteq  \langle b\rangle$, so $\top \in \mathrm{F}_k(\ZZ) \subseteq \langle b \rangle \subseteq \langle a \rangle$, thus $c = \g(\top a)^{-1} \top a\in \langle a \rangle$.
\end{proof}

We are now ready to prove the main result of this section, which will also be used later in the proof of Theorem~\ref{t:Fn-FSI-local}.

\begin{theorem}\label{t:Fn-generation}
    If $n\neq 1$ and $a\in \mathrm{F}_n(\ZZ)$ is of periodicity $n$, then $\langle a \rangle = \m{F}_n(\ZZ)$. 
\end{theorem}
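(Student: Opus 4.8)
The plan is to first reduce $a$ to a strictly positive flat element of periodicity $n$, then raise its \emph{final periodicity} all the way to $n$ by repeated use of Lemma~\ref{l:final-period}, and finally apply Corollary~\ref{c:posidem-gen} to the idempotent that results. Throughout, note that $\langle a \rangle$ is a subalgebra of $\m{F}_n(\ZZ)$, so it suffices to prove the reverse inclusion $\m{F}_n(\ZZ) \subseteq \langle a \rangle$.

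\emph{Reduction to the strictly positive flat case.} Since the invertible elements of $\m{F}_n(\ZZ)$ all lie in $\Grp{\m{F}(\ZZ)} = \langle \cn{s} \rangle$ and hence are $1$-periodic, while $\per(a) = n \neq 1$, the element $a$ is not invertible. The element $\g(a) = a \mt a^{[1]} \mt \cdots \mt a^{[n-1]}$ is a term in $a$ and is invertible by Lemma~\ref{l: sigmagamma}, so $\g(a)^{-1} = \g(a)^{\ell} \in \langle a \rangle$, and therefore $a' := \g(a)^{-1}a \in \langle a \rangle$. By Corollary~\ref{c:flat-g}, $a'$ is flat, of periodicity $n$, and strictly positive (as $a$ is not invertible). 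Since $\langle a' \rangle \subseteq \langle a \rangle$, it is enough to show $\m{F}_n(\ZZ) \subseteq \langle a' \rangle$, so we may assume henceforth that $a$ is itself a strictly positive flat element of periodicity $n$.

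\emph{Pushing the final periodicity to $n$.} If $b \in \m{F}_n(\ZZ)$ is positive and flat, then $b^{n} = b^{n-1}$ by Lemma~\ref{l:flat-Fn}, so $b^{n-1}$ is idempotent and its periodicity --- the final periodicity of $b$ --- divides $n$. Let $k$ be the maximum of the final periodicities of the positive flat elements of $\langle a \rangle$; this maximum exists because the set of these final periodicities is a nonempty (it contains the final periodicity of $a$) set of divisors of $n$. I claim $k = n$. Otherwise, choosing a positive flat $b \in \langle a \rangle$ of final periodicity $k \neq n$ and applying Lemma~\ref{l:final-period} to our fixed $a$ (which has periodicity $n$) and to $b$, we obtain a positive flat $c \in \langle a \rangle$ of final periodicity strictly greater than $k$, contradicting the maximality of $k$. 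Hence there is a positive flat $b \in \langle a \rangle$ with $\per(b^{n-1}) = n$.

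\emph{Conclusion.} Since $n \neq 1$, such a $b$ cannot equal $1$ (otherwise its final periodicity would be $1$), so $1 < b \leq b^{n-1}$ and $b^{n-1}$ is a strictly positive idempotent of $\m{F}_n(\ZZ)$ of periodicity $n$ lying in $\langle a \rangle$. By Corollary~\ref{c:posidem-gen}, $\langle b^{n-1} \rangle = \m{F}_n(\ZZ)$, and hence $\m{F}_n(\ZZ) \subseteq \langle a \rangle$, which finishes the proof. The only step carrying real content is the increase of the final periodicity, which is precisely Lemma~\ref{l:final-period}; everything else is assembly, though one should check the positivity bookkeeping and the harmless-looking case $n = 2$ (where $b^{n-1} = b$).
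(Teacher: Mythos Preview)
Your proof is correct and follows essentially the same approach as the paper: reduce to a strictly positive flat element via Corollary~\ref{c:flat-g}, use Lemma~\ref{l:final-period} to reach final periodicity $n$, and finish with Corollary~\ref{c:posidem-gen}. The only cosmetic difference is that you phrase the middle step as a maximality argument, whereas the paper builds an explicit increasing sequence $c_0, c_1, \ldots$ of positive flat elements with strictly increasing final periodicities until termination; both arguments invoke Lemma~\ref{l:final-period} with the fixed $a$ in the same way.
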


\begin{proof}
    In view of Corollary~\ref{c:flat-g}, by considering $a\g(a)^{-1}$ we may assume that $a$ is strictly positive and flat, and let $k>1$ be the final periodicity of $a$. 
    We set $c_0 :=a$ and $k_0 := k$, and we  define inductively elements $c_i \in \langle a \rangle$, for $i \in \NN$, with increasing final periodicity $k_i$, provided that $k_{i-1}<n$. If $c_i$ and $k_i$ are defined and $k_i < n$, then, by Lemma~\ref{l:final-period}, there is a positive flat $c_{i+1}\in \langle a \rangle$ with final periodicity $k_{i+1}>k_i$ (where $k_{i+1}\leq n$). 
    Since there are only finitely many divisors of $n$, this process terminates 
    and we get a (strictly) positive flat element $c\in \langle a \rangle$ of final periodicity $n$.
     Thus  $c^{n-1}$ is a strictly positive idempotent and $\per(c^{n-1})=n$, so  $ \langle c^{n-1} \rangle = \m{F}_n(\ZZ)$, by Corollary~\ref{c:posidem-gen}. Since $a\in \mathrm{F}_n(\ZZ)$, we get $\langle a \rangle \subseteq \m{F}_n(\ZZ) = \langle c^{n-1} \rangle\subseteq \langle c \rangle \subseteq \langle a \rangle$.
\end{proof}

We now show how elements of different periodicities interact to produce elements of smaller periodicities.

\begin{proposition} \label{p: maxper}
Let $a,b \in \mathrm{F}(\ZZ)$ such that $a$ is an $n$-atom and $b$ is an $m$-atom with $m \nmid n$ and $n \nmid m$. Then there exist $s, t \in \ZZ$ such that $a^{[s]} \jn b^{[t]}$ has periodicity $\lcm\{n,m\}$.  
\end{proposition}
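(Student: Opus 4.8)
Write $L:=\lcm\{n,m\}$, and note that $n\nmid m$ and $m\nmid n$ force $n,m\geq 2$ and make $L$ a proper common multiple of $n$ and of $m$. The plan is to first move $a$ and $b$ into ``standard'' positions by a suitable choice of $s,t$, then compute the join $a^{[s]}\jn b^{[t]}$ pointwise, and finally reduce the statement ``$\per(a^{[s]}\jn b^{[t]})=L$'' to a short computation with subgroups of $\ZZ/L\ZZ$.

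For the first step: by Proposition~\ref{p:n-cover} and Remark~\ref{r:posidem} there is an $n$-atom $\alpha\in\mathrm F_n(\ZZ)$ whose set of fixed points is $\ZZ\setminus n\ZZ$, so that $\alpha(x)=x+1$ for $x\in n\ZZ$ and $\alpha(x)=x$ otherwise; let $\beta$ be the analogous $m$-atom (fixed points $\ZZ\setminus m\ZZ$). By Corollary~\ref{c:n-cover-conj} I can pick $s,t\in\ZZ$ with $a^{[s]}=\alpha$ and $b^{[t]}=\beta$. Since the join in $\m F(\ZZ)$ is computed pointwise and $\alpha(x),\beta(x)\in\{x,x+1\}$ for all $x$, the map $f:=a^{[s]}\jn b^{[t]}=\alpha\jn\beta$ then satisfies $f(x)=x+1$ for $x\in T:=n\ZZ\cup m\ZZ$ and $f(x)=x$ otherwise; in particular $f\in\mathrm F(\ZZ)$. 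By Lemma~\ref{l:basic-per}(2), $f$ is $\ell$-periodic exactly when $T+\ell=T$, so $\per(f)$ equals the positive generator of the group $\{\ell\in\ZZ: T+\ell=T\}$ of periods of the set $T$. As $n\mid L$ and $m\mid L$, this period group contains $L\ZZ$, hence equals $e\ZZ$ for a positive divisor $e$ of $L$; what remains is to show $e=L$.

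For that, reduce modulo $L$ and set $A:=n(\ZZ/L\ZZ)$, $M:=m(\ZZ/L\ZZ)$; these are subgroups with $A\cap M=L(\ZZ/L\ZZ)=\{0\}$ (since $\lcm\{n,m\}=L$), and $\bar T:=T+L\ZZ=A\cup M$. I want the translation-stabilizer of $\bar T$ in $\ZZ/L\ZZ$ to be trivial. Suppose $\bar e+(A\cup M)=A\cup M$ with $\bar e\neq 0$. As $0\in A$, we get $\bar e\in A\cup M$; assume $\bar e\in A$ (the case $\bar e\in M$ is symmetric, with $n$ and $m$ interchanged). Then $A\cup(M+\bar e)=(A+\bar e)\cup(M+\bar e)=A\cup M$, so the coset $M+\bar e$ is contained in $A\cup M$. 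If $M+\bar e=M$ then $\bar e\in M$, whence $\bar e\in A\cap M=\{0\}$, a contradiction; otherwise $M+\bar e$ is disjoint from $M$, so $M+\bar e\subseteq A$, and then $M=(M+\bar e)-\bar e\subseteq A$ (using $\bar e\in A$), i.e.\ $m(\ZZ/L\ZZ)\subseteq n(\ZZ/L\ZZ)$, which forces $n\mid m$ --- contradicting the hypothesis. Thus $e=L$, i.e.\ $\per(f)=L=\lcm\{n,m\}$.

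I expect the main obstacle to be exactly this last paragraph: showing that the union $n\ZZ\cup m\ZZ$ has no period strictly smaller than $L$, i.e., that passing to the union does not create ``new'' periods beyond those that each arithmetic progression has on its own. The coset argument above is where the hypotheses $n\nmid m$, $m\nmid n$ genuinely enter (they are equivalent to $A\not\subseteq M$ and $M\not\subseteq A$). The two earlier steps --- conjugating the atoms into standard position and reading off the join pointwise --- are routine.
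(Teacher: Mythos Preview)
Your proof is correct and follows essentially the same approach as the paper: conjugate the atoms so that their non-fixed points are exactly $n\ZZ$ and $m\ZZ$, compute the join pointwise, and then argue that the set $T=n\ZZ\cup m\ZZ$ has no period properly dividing $L=\lcm\{n,m\}$. Your final coset argument in $\ZZ/L\ZZ$ is a slightly cleaner packaging of the paper's direct computation (which picks the specific point $-n$ and tracks where it lands), but the logical content is the same.
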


\begin{proof}
Let $k:=\lcm\{n,m\}$. Note that by Proposition~\ref{p:n-cover} the heights of $a$ and of $b$ are either $0$ or $1$ and in each $n$-element interval there exists exactly one point where $a$ has height $1$, and likewise for $b$.
 Since 
 $a\neq 1$, 
 there exists $s$ such that $a^{[s]}(0)=1$ and the height of $a^{[s]}$ is $0$ in all of $[1,n-1]$.
 Also,  since $b\neq 1$, there exists $t$ such that $b^{[t]}(0)=1$ and the height of $b^{[t]}$ is $0$ in all of $[1,m-1]$.
 Moreover, $a^{[s]}$ has period $n$ and $b^{[t]}$ has period $m$.
 Hence, the  height of the element $c:=(a^{[s]} \jn b^{[t]})$ is equal to $1$ at precisely the multiples of $n$ and the multiples of $m$. Therefore, the only possible periods of $c$ are the multiples of $n$ and the multiples of $m$. In particular, $c$ is $k$-periodic.
 If the period is $pm$, then $-n+1=c(-n)=c^{[pm]}(-n)=c(-n-pm)+pm$, so $c(-n-pm)=-n-pm+1$.  Hence, $-n-pm$ is either divisible by $m$ or by $n$. Since $m \nmid n$, we get $n \mid pm$,  yielding $k \mid pm$, so $pm=k$. Likewise, if the period is $pn$, then we get $pn=k$. 
\end{proof}

\begin{corollary}
If $a,b \in \mathrm{F}(\ZZ)$  are elements with periodicities $n$ and $m$, respectively, 
then they generate an element of periodicity $\lcm\{n,m\}$.     
\end{corollary}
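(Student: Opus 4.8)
The plan is to reduce the statement to Proposition~\ref{p: maxper} by way of the generation result Theorem~\ref{t:Fn-generation}. Write $\langle a,b\rangle$ for the subalgebra of $\m F(\ZZ)$ generated by $\{a,b\}$. Since $\per(a)=n$ and $\per(b)=m$, we have $a\in \m F_n(\ZZ)$ and $b\in \m F_m(\ZZ)$; and because $\m F_n(\ZZ)$ and $\m F_m(\ZZ)$ are subalgebras of $\m F(\ZZ)$ closed under all term operations, the subalgebra of $\m F(\ZZ)$ generated by $a$ coincides with the one generated by $a$ inside $\m F_n(\ZZ)$, and likewise for $b$.

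First I would dispose of the easy cases. If $n\mid m$, then $b$ itself, which lies in $\langle a,b\rangle$, has periodicity $m=\lcm\{n,m\}$; symmetrically, if $m\mid n$ then $a$ does the job. In particular this settles the cases $n=1$ and $m=1$. So from now on assume $n\nmid m$ and $m\nmid n$, which forces $n,m>1$ (as $1$ divides everything). By Theorem~\ref{t:Fn-generation} we then have $\langle a\rangle=\m F_n(\ZZ)$ and $\langle b\rangle=\m F_m(\ZZ)$, so $\langle a,b\rangle$ contains both of these algebras; and by Proposition~\ref{p:n-cover}, which describes the $n$-atoms and $m$-atoms (they exist since $n,m>1$), it therefore contains an $n$-atom $a'$ and an $m$-atom $b'$.

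Now Proposition~\ref{p: maxper} applies to $a'$ and $b'$: it produces $s,t\in\ZZ$ such that $(a')^{[s]}\jn (b')^{[t]}$ has periodicity $\lcm\{n,m\}$. Since the map $x\mapsto x^{[k]}$ is a term operation (an iterate of ${}^{\ell}$ or of ${}^{r}$), the elements $(a')^{[s]}$ and $(b')^{[t]}$ both lie in $\langle a,b\rangle$, and hence so does their join, which is the required element. The argument is essentially bookkeeping once Proposition~\ref{p: maxper} and Theorem~\ref{t:Fn-generation} are available; the only point that needs care is that Theorem~\ref{t:Fn-generation} requires the period to differ from $1$, which is exactly why the cases $\min\{n,m\}=1$ (absorbed into $n\mid m$ or $m\mid n$) must be handled separately by exhibiting $a$ or $b$ directly, rather than by the atom argument.
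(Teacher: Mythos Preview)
Your proof is correct and follows essentially the same route as the paper: dispose of the divisibility cases by exhibiting $a$ or $b$ directly, then in the remaining case use Theorem~\ref{t:Fn-generation} to get $\m F_n(\ZZ)$ and $\m F_m(\ZZ)$ inside $\langle a,b\rangle$, pick an $n$-atom and an $m$-atom there, and apply Proposition~\ref{p: maxper}. Your extra remarks about why $n,m>1$ is needed for Theorem~\ref{t:Fn-generation} and why $x\mapsto x^{[k]}$ is a term operation are welcome clarifications but do not change the argument.
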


\begin{proof}
 If $m \mid n$ or $n \mid m$, then $\lcm\{n,m\}$ is $n$ or $m$, so the desired element of periodicity $\lcm\{n,m\}$ can be taken to be $a$ or $b$, respectively.
 Otherwise, $n \nmid m$ and $m \nmid n$. By Theorem~\ref{t:Fn-generation}, $a$ generates $\m F_n(\ZZ)$ and $b$ generates $\m{F}_m(\ZZ)$; let $c \in \mathrm{F}_n(\ZZ) = \langle a \rangle$ be an $n$-atom and $d \in \mathrm{F}_m(\ZZ) = \langle b \rangle$ be an $m$-atom. Then, by Proposition~\ref{p: maxper}, there exist $s, t$ such that $c^{[s]} \jn d^{[t]}$ has periodicity $\lcm\{n,m\}$. 
\end{proof}

We conclude the section by showing how from a positive idempotent $a$ we can generate a positive idempotent $\slt(a)$  that has maximal height equal to $1$. This lemma will be useful in analyzing $n$-periodic $\ell$-pregroups with a totally ordered skeleton. We define the term $\slt(x) := x^{\ell\ell\ell}x \jn 1$; note that, in the next lemma, the strictly positive idempotents produced are given uniformly by the single term $\slt$.

\begin{lemma}\label{l:idempotent-slope1}
 If $a \in \mathrm{F}(\ZZ)$ is a strictly positive idempotent, then $\slt(a)$ is a strictly positive idempotent with  maximal height $1$. If additionally $a \in \mathrm{F}_n(\ZZ)$ for some $n\geq 1$, then $\h(\slt(a)) = \cn{s}$. 
\end{lemma}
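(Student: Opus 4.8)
The plan is to realize $a$ as a closure operator on $\ZZ$ and to compute $\slt(a)=a^{\ell\ell\ell}a\jn 1$ explicitly, pointwise. By Remark~\ref{r:posidem} a strictly positive idempotent $a\in\mathrm{F}(\ZZ)$ is a closure operator, determined by its set of fixed points $F:=\{x\in\ZZ:a(x)=x\}=a[\ZZ]$, so $a(x)=\min\{f\in F:f\geq x\}$. Since $a>1$ we have $F\neq\ZZ$, and since $a$ is finite-to-one, $F$ is unbounded below (otherwise $a^{-1}[\min F]$ is infinite) and unbounded above (as $a(x)\in F$ and $a(x)\geq x$).

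First I would pin down $a^{\ell\ell\ell}$. A short verification with residuated pairs, valid throughout $\mathrm{F}(\ZZ)$ (as in Lemma~\ref{l:b-conjugation}(1)), gives $f^{\ell\ell}(x)=f(x-1)+1$ for all $f\in\mathrm{F}(\ZZ)$; so $a^{\ell\ell}=a^{[1]}$ is again a closure operator, with fixed-point set $F+1$. Next, directly from the definition of $^\ell$, the dual residual of a closure operator $c$ with fixed-point set $G$ is $c^\ell(q)=1+\max\{g\in G:g<q\}$; applying this to $c=a^{\ell\ell}$ shows $a^{\ell\ell\ell}$ is the interior operator with fixed-point set $F+2$, i.e.\ $a^{\ell\ell\ell}(y)=2+\max\{f\in F:f\leq y-2\}$. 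Substituting $y=a(x)$ and running a short case analysis (on whether $x$ and $x-1$ lie in $F$, taking care when $F$ contains consecutive integers) should give $(a^{\ell\ell\ell}a)(x)\leq x$ unless $x-1\in F$ and $x\notin F$, in which case $(a^{\ell\ell\ell}a)(x)=x+1$; joining with the identity then yields
\[
\slt(a)(x)=\begin{cases} x+1,& x-1\in F\text{ and }x\notin F,\\ x,&\text{otherwise.}\end{cases}
\]

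From this formula everything follows quickly. Clearly $\slt(a)\geq 1$. Also $F$ is not closed under the successor map: being unbounded below, were it closed under successor it would be all of $\ZZ$, contradicting $F\neq\ZZ$. Hence some $f\in F$ has $f+1\notin F$, so $\slt(a)(f+1)=f+2>f+1$ and $\slt(a)\neq 1$. The map $\slt(a)$ is idempotent because whenever $\slt(a)(x)=x+1$ one has $x\notin F$, which forces $\slt(a)(x+1)=x+1$. Finally $\slt(a)(x)-x\in\{0,1\}$ for every $x$ and the value $1$ is attained, so $\hg^+(\slt(a))=1$. This gives the first assertion, and the second then follows at once: if $a\in\mathrm{F}_n(\ZZ)$ then $\slt(a)\in\mathrm{F}_n(\ZZ)$ (a subalgebra of $\mathrm{F}(\ZZ)$), so $\h(\slt(a))=\cn{s}^{\hg^+(\slt(a))}=\cn{s}$ by Lemma~\ref{l:grp-hg}. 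The only real work is the pointwise evaluation of $a^{\ell\ell\ell}a$, in particular the edge cases where $F$ contains consecutive integers, so that $\max\{f\in F:f\leq a(x)-2\}$ jumps back past the fixed point $a(x)-1$; once the displayed formula for $\slt(a)$ is established, the remaining checks are routine.
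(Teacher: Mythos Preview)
Your proof is correct and follows essentially the same approach as the paper: both compute $a^{\ell\ell\ell}a$ pointwise via the identity $f^{\ell\ell}(x)=f(x-1)+1$ and a case analysis, then read off positivity, idempotency, and maximal height~$1$. Your organization is slightly cleaner—by identifying $a^{\ell\ell\ell}$ as the interior operator with fixed-point set $F+2$ you obtain the explicit formula $\slt(a)(x)=x+1$ iff $x-1\in F$ and $x\notin F$, which makes the idempotency check immediate—whereas the paper splits on whether $a(x)=a^\ell(x)$ and works with the bound $(a^{\ell\ell\ell}a)(x)=a^\ell(x)+1$ instead of a closed form.
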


\begin{proof}
  Since $a$ is positive and idempotent, we have $a a^\ell=a$ and $a^\ell a=a^\ell$, by Remark~\ref{r: xxl}; so for all $x \in \ZZ$, we have $a^\ell(x)= a^\ell(a(x)) =\min a^{-1}[a(x)]$, hence also $a^\ell(x)\leq x \leq a(x)$.
   We will establish two auxiliary facts:
        \begin{enumerate}[label = \textup{(\roman*)}]
            \item If $a(x) \neq a^\ell(x)$, then $(a^{\ell\ell\ell}a)(x) = a^\ell(x) + 1$, so  $\slt(a)(x)\leq x+1$.
            \item If $a(x) = a^\ell(x)$, then $(a^{\ell\ell\ell}a)(x) \leq x$, so $\slt(a)(x)=x$.
            \end{enumerate}
    For (i), if $a(x) \neq a^\ell(x)$ then
    $a^\ell(x) < a(x)$, hence $a^\ell(x) \leq  a(x)-1$; 
    so $a(x) =a(a^\ell(x))\leq a(a(x)- 1) \leq a(a(x))=a(x)$, thus $a(a(x)- 1) = a(x)$.
    Therefore, 
    \[
    a^{\ell\ell\ell}(a(x)) = a^\ell(a(x)-1) + 1 = a^\ell a(a(x)-1) + 1 = a^\ell a(x) + 1= a^\ell(x) + 1.
    \]
    Finally, since $a^\ell(x)\leq x$, we get $(a^{\ell\ell\ell}a)(x) = a^\ell(x) + 1\leq x +1$.
        
    For (ii), note that if  $a(x) = a^\ell(x)$, then $a^\ell(x)\leq x \leq a(x)$ yields $a(x) = x$.
   Hence, since $a^\ell \leq 1$,
    \[
    a^{\ell\ell\ell}(a(x)) = a^{\ell\ell\ell}(x) = a^{\ell}(x-1) + 1 \leq x-1 + 1 = x.
    \]
    From (i) and (ii), it follows that $\slt(a)$ has height no bigger than $1$. Moreover, since  $a$ is strictly positive and idempotent, we have $a \neq a^\ell$, so there is an $x \in \ZZ$ with $a(x) \neq a^\ell(x)$. Since, $a^\ell(x)\leq x \leq a(x)$, for $y:=a^\ell(x)$, we have $a(y)=a(x)$ and $a^\ell(y)=a^\ell(x)$, hence $a(y) \neq a^\ell(y)$. So, by (i), we get $(a^{\ell\ell\ell}a)(y) =(a^{\ell\ell\ell}a)(x)= a^\ell(x) + 1=y+1$, hence  $\slt(a)$ has height $1$ at $y$.
    It follows that $\slt(a)$ is strictly positive and has maximal height 1, 
    So, if $a$ is $n$-periodic, then  $\h(\slt(a)) = \cn{s}$. 
    
    Now to see that $\slt(a)$ is idempotent, let $x\in \ZZ$. If $\slt(a)(x) \neq x$, then, by (ii), we have $a(x) \neq a^\ell(x)$, so by (i) we get $(a^{\ell\ell\ell}a)(x)=a^\ell(x) + 1$.
    So $x<\slt(a)(x)=(a^{\ell\ell\ell}a)(x) \jn x= (a^\ell(x) + 1) \jn x$, hence $x < a^\ell(x) + 1$. Since $a^\ell(x) \leq x$, we get $x = a^\ell(x)$.
    We will now show that $\slt(a)(a^\ell(x)+1) = a^\ell(x)+1$ and
    therefore conclude that $\slt(a)$ is idempotent.
    Let $y:=a^\ell(x)+1$ and note that $a(y)=a(x)$ and $a^\ell(y)=a^\ell(x)$. So, by (i) applied to $y$ we get  $(a^{\ell\ell\ell}a)(y)  = a^\ell(y) + 1= a^\ell(x) + 1=y$, thus $\slt(a)(y) = y$, as desired.
\end{proof}

\section{Lexicographic products}\label{s: lex}
In this section we define lexicographic products in the setting of $\ell$-pregroups and show that $\vr(\m{F}_n(\ZZ))$ is closed under lexicographic products with totally ordered abelian $\ell$-groups, a fact that will be important in showing that algebras satisfying the proposed axiomatization are in $\vr(\m{F}_n(\ZZ))$.

Let $\m{H}$ be a totally ordered $\ell$-group and $\m{L}$ an $\ell$-pregroup. The \emph{lexicographic product} of $\m{H}$ and $\m{L}$ is the algebra $\m{H}\overrightarrow{\times} \m{L} = ( H\times L, \mt, \jn,\cdot, (1,1), {}^\ell, {}^r) $, where $\cdot$, ${}^\ell$, and ${}^r$ are defined coordinatewise and the lattice-order is defined by
\[
(h_1,a_1) \leq (h_2,a_2) \iff h_1 < h_2 \text{ or } (h_1=h_2 \text{ and } a_1 \leq a_2).
\]
If $\m L$ is an $\ell$-group, then we obtain the usual lexicographic product in the setting of $\ell$-groups.

\begin{lemma}
    If $\m H$ is a totally ordered $\ell$-group and $\m{L}$ is an ($n$-periodic) $\ell$-pregroup, then the lexicographic product $\m{H}\overrightarrow{\times} \m{L}$ is an ($n$-periodic) $\ell$-pregroup. 
\end{lemma}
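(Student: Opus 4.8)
The plan is to verify the defining conditions of an $\ell$-pregroup directly for $\m{H}\overrightarrow{\times}\m{L}$, using throughout that $\m H$ is a chain and that in an $\ell$-group the operations $^\ell$ and $^r$ both coincide with the group inverse.

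First I would check that the lexicographic order on $H\times L$ is a lattice order. Since $\m H$ is totally ordered, given $(h_1,a_1),(h_2,a_2)$ there are two cases: if $h_1\neq h_2$ the two pairs are comparable, with the pair having the smaller (resp.\ larger) first coordinate being the meet (resp.\ join); if $h_1=h_2$ then $(h_1,a_1)\mt(h_1,a_2)=(h_1,a_1\mt a_2)$ and $(h_1,a_1)\jn(h_1,a_2)=(h_1,a_1\jn a_2)$. Thus all binary meets and joins exist, and the lattice identities follow from those of $\m H$ (a chain) and $\m L$. That $(H\times L,\cdot,(1,1))$ is a monoid is immediate from the coordinatewise definitions.

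Next I would show that multiplication preserves the order. Suppose $(h_1,a_1)\leq(h_2,a_2)$ and let $(h,a)\in H\times L$. If $h_1<h_2$, then since $\m H$ is an ordered group the translations are strictly order-preserving, so $hh_1<hh_2$ and $h_1h<h_2h$, whence $(h,a)(h_1,a_1)\leq(h,a)(h_2,a_2)$ and $(h_1,a_1)(h,a)\leq(h_2,a_2)(h,a)$ no matter what the second coordinates are. If instead $h_1=h_2$ and $a_1\leq a_2$, the products have equal first coordinates and, since multiplication preserves the order in $\m L$, ordered second coordinates. Then I would check the pregroup inequalities: because $\m H$ is an $\ell$-group we have $h^\ell=h^r=h^{-1}$, so $(h,a)^\ell=(h^{-1},a^\ell)$ and $(h,a)^r=(h^{-1},a^r)$; hence $(h,a)^\ell(h,a)=(1,a^\ell a)\leq(1,1)$ since $a^\ell a\leq 1$ in $\m L$, and $(h,a)(h,a)^\ell=(1,aa^\ell)\geq(1,1)$, and the two inequalities for $^r$ follow identically. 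This establishes that $\m{H}\overrightarrow{\times}\m{L}$ is an $\ell$-pregroup.

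For the parenthetical $n$-periodic statement, note that iterating $^\ell$ (equivalently $^r$) on an element $h$ of the $\ell$-group $\m H$ produces the sequence $h^{-1},h,h^{-1},h,\dots$, so $h^{\ell^n}=h^{r^n}$ for every $h\in H$; combined with $a^{\ell^n}=a^{r^n}$ in $\m L$ this gives $(h,a)^{\ell^n}=(h^{\ell^n},a^{\ell^n})=(h^{r^n},a^{r^n})=(h,a)^{r^n}$. The only step requiring genuine care is the order-preservation of multiplication, and it is precisely there that the hypothesis that $\m H$ is \emph{totally} ordered (not merely an $\ell$-group) is used — to guarantee comparability of first coordinates and that a strict inequality in the first coordinate is preserved by translations; this reduces to the elementary fact that left and right translations of a totally ordered group are order automorphisms, so no real obstacle arises.
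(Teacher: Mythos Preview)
Your proof is correct and follows essentially the same approach as the paper's: both verify the lattice structure via case analysis on whether the first coordinates agree, check order-preservation of multiplication by the same two cases, and verify the pregroup inequalities by direct computation using $h^\ell=h^r=h^{-1}$ in $\m H$. Your treatment of $n$-periodicity via $(h,a)^{\ell^n}=(h,a)^{r^n}$ is a trivial rephrasing of the paper's argument via $(h,a)^{[n]}=(h,a)$.
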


\begin{proof}
Since $\m H$ is totally ordered, the following formulas show that the join and the meet always exist, so  $\m{H}\overrightarrow{\times} \m{L}$ is a lattice. 
\[
(a,b) \jn (c,d) = \left\{ \begin{array}{ll}
   (a, b \jn d) & \textrm{if } a=c \\
   (a , b ) & \textrm{if } a > c \\
   (c , d ) & \textrm{if } a < c \\
   \end{array} \right. \; \; (a,b) \mt (c,d) = \left\{ \begin{array}{ll}
   (a, b \mt d) & \textrm{if } a=c \\
   (c , d) & \textrm{if } a > c \\
   (a , b) & \textrm{if } a < c \\
   \end{array} \right.
\]
To show that multiplication is order-preserving, let $(h_1,a_1), (h_2,a_2), (h,a)$ be elements of $\m{H}\overrightarrow{\times} \m{L}$ with
$(h_1,a_1) \leq (h_2,a_2)$, i.e., $h_1 < h_2$ or $(h_1=h_2 \text{ and } a_1 \leq a_2)$; we want to show that $(h_1h,a_1a) \leq (h_2h,a_2a)$ and $(hh_1,aa_1) \leq (hh_2,aa_2)$.
If $h_1 < h_2$, then $h_1h < h_2h$ and $hh_1 < hh_2$, so both inequalities hold. If $(h_1=h_2 \text{ and } a_1 \leq a_2)$, then  $h_1h = h_2h$ and $hh_1 = hh_2$; also, $a_1a \leq a_2a$ and $aa_1 \leq aa_2$, so the inequalities hold in this case, as well.

Finally, for $(h,a)$  in $\m{H}\overrightarrow{\times} \m{L}$, we have 
\[
(h,a)^\ell(h,a)=(h^{-1},a^\ell)(h,a)=(1,a^\ell a) \leq (1, 1)\leq (1,a a^\ell)
=(h,a)(h,a)^\ell
\]
and
\[
(h,a)(h,a)^r=(h,a)(h^{-1},a^r)=(1,a a^r)\leq (1,1) \leq (1,a^r a) 
=(h,a)^r(h,a).
\]
So,  $\m{H}\overrightarrow{\times} \m{L}$ is an $\ell$-pregroup. Moreover, since $(h,a)^{\ell\ell}=((h^{-1})^{-1},a^{\ell\ell})=(h,a^{\ell\ell})$, if $\m{L}$ is an $n$-periodic then $(h,a)^{[n]}=(h,a^{[n]})=(h,a)$, for all $(h,a)$, hence $\m{H}\overrightarrow{\times} \m{L}$ is $n$-periodic.
\end{proof}

The next lemma gives a characterization of inner lexicographic products.

\begin{lemma}\label{l:inner-lex}
    Let $\m{L}$ be an $\ell$-pregroup and $\m{H},\m{M} \leq \m{L}$ such that $\m{H}$ is a totally ordered $\ell$-group. Then $\f \colon \m{H} \overrightarrow{\times} \m{M} \to \m{L}$, where $\f(h,m) = hm$, is an isomorphism if and only if the following hold:
    \begin{enumerate}[label = \textup{(\roman*)}]
        \item For all $h\in H$, $m\in M$, $hm = mh$.
        \item For every $a \in L$, there exist unique $h\in H$, $m\in M$ such that $a = hm$.
        \item For all $h_1,h_2\in H$ and $m_1,m_2\in M$, we have $h_1m_1 < h_2m_2$ iff $h_1 < h_2$ or ($h_1 = h_2$ and $m_1 < m_2$).
    \end{enumerate}
\end{lemma}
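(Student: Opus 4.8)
The plan is to treat the two implications separately; both reduce to short computations once the relevant coordinatewise identities in the lexicographic product $\m{H}\overrightarrow{\times}\m{M}$ are isolated. Throughout I will use that $\m{H}\overrightarrow{\times}\m{M}$ is itself an $\ell$-pregroup (the previous lemma) and that $\m H$, being an $\ell$-group, sits inside the group skeleton of $\m L$, so $h^\ell = h^r = h^{-1}$ for $h \in H$.

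First suppose $\f$ is an isomorphism. Then $\f$ is a bijection, which is precisely (ii): surjectivity gives the existence and injectivity the uniqueness of a decomposition $a = hm$. For (i), the point is that multiplication in $\m{H}\overrightarrow{\times}\m{M}$ is coordinatewise, so $(1,m)\cdot(h,1) = (h,m) = (h,1)\cdot(1,m)$; applying the multiplicative homomorphism $\f$ to the first of these equalities and using $\f(1,m) = m$, $\f(h,1) = h$, $\f(h,m) = hm$ yields $mh = hm$. For (iii), a lattice isomorphism preserves and reflects the order, hence is an order isomorphism; since $\f(h_i,m_i) = h_im_i$, we get $h_1m_1 < h_2m_2$ iff $(h_1,m_1) < (h_2,m_2)$, and unwinding the definition of the lexicographic order this is exactly ``$h_1 < h_2$, or $h_1 = h_2$ and $m_1 < m_2$''.

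Conversely, assume (i)--(iii). By (ii), $\f$ is a bijection. It is a monoid homomorphism: $\f\bigl((h_1,m_1)(h_2,m_2)\bigr) = h_1h_2m_1m_2$ while $\f(h_1,m_1)\f(h_2,m_2) = h_1m_1h_2m_2$, and these coincide because (i) permits swapping $m_1$ and $h_2$; also $\f(1,1) = 1$. It commutes with ${}^\ell$: since $h^\ell = h^{-1}$ we have $(h,m)^\ell = (h^{-1},m^\ell)$, so $\f\bigl((h,m)^\ell\bigr) = h^{-1}m^\ell$, whereas $\f(h,m)^\ell = (hm)^\ell = m^\ell h^{-1}$ by Lemma~\ref{l: InRL}; and $h^{-1}m^\ell = m^\ell h^{-1}$ by (i), using that $h^{-1}\in H$ and $m^\ell \in M$ because $\m H,\m M$ are subalgebras. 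The computation for ${}^r$ is identical. Finally, since $\f$ is already a bijection, to see it is a lattice isomorphism it suffices to check it is an order isomorphism, i.e. $(h_1,m_1) \leq (h_2,m_2)$ iff $h_1m_1 \leq h_2m_2$; this follows by combining (iii) for the strict inequalities with (ii) for the equality case (if $h_1m_1 = h_2m_2$ then $h_1 = h_2$ and $m_1 = m_2$ by uniqueness), which together reproduce the definition of the lexicographic order.

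I do not anticipate a genuine obstacle. The two points that need a little care are: that multiplication and the involutions in $\m{H}\overrightarrow{\times}\m{M}$ are literally coordinatewise, which is what makes the internal copies of $\m H$ and $\m M$ commute there and makes (i) exactly the condition that must transfer; and that a bijective order isomorphism between the two (already known to be) lattices is automatically a lattice isomorphism, so one never has to verify preservation of $\mt$ and $\jn$ directly.
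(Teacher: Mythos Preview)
Your proof is correct and follows essentially the same approach as the paper's. The paper treats the left-to-right direction as trivial and only writes out the right-to-left direction, using exactly your computations: (i) for the monoid homomorphism, (ii) for bijectivity, (iii) for the order, and the identity $(hm)^\ell = m^\ell h^{-1} = h^{-1}m^\ell$ for the involutions; your version is simply more explicit, in particular in separating the strict-inequality content of (iii) from the equality case handled by (ii).
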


\begin{proof}
    For the non-trivial (right-to-left) direction note that  $\f(1,1) = 1$ and for all $h_1,h_2 \in H$ and $m_1,m_2 \in M$, by (i) we have
    \[
    \f(h_1,m_1)\f(h_2,m_2) = h_1m_1h_2m_2 = h_1h_2m_1m_2 = \f(h_1h_2,m_1m_2).
    \]
    Moreover, by (ii), $\f$ is bijective, and, by (iii), it preserves the  lattice-order. Thus $\f$ is an isomorphism of lattice-ordered monoids. To show that $\f$ preserves the inverses, note that for $h\in H$, $m\in M$,  by (i) we get
    \[
    \f((h,m)^{\ell}) = \f(h^{-1},m^{\ell}) = h^{-1}m^{\ell} = m^\ell h^{-1} = (hm)^\ell = \f(h,m)^\ell,
    \]
    and similarly for ${}^r$.
\end{proof}
Note that if $\m{M}$ is also an $\ell$-group in Lemma~\ref{l:inner-lex}, then part (ii) is equivalent to demanding that $H\cap M = \{1\}$.

Recall that a (positive) element $g$ in an $\ell$-pregroup $\m L$ is called a \emph{strong order unit} if  for each $a\in L$ there exists $n \geq 1$ such that $a \leq g^n$. In the next two lemmas we generalize results of \cite{Han} about lexicographic products in varieties of $\ell$-groups to varieties of $\ell$-pregroups.

\begin{lemma}\label{l:closure-lexZ}
    If an $\ell$-pregroup $\m{L}$ contains a central invertible element $g$ that is a strong order unit, then $\vr(\m{L})$ is closed under lexicographic products with $\ZZ$.
\end{lemma}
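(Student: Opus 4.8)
The goal is to show that if an $\ell$-pregroup $\m L$ has a central invertible element $g$ that is a strong order unit, then $\vr(\m L)$ is closed under lexicographic products with $\ZZ$, i.e. $\ZZ \overrightarrow{\times} \m L \in \vr(\m L)$. (The one-element case $\m L$ trivial is degenerate, so assume $\m L$ nontrivial, whence $g$ has infinite order.) The natural strategy is to realize $\ZZ \overrightarrow{\times} \m L$ as a subalgebra of an ultrapower of $\m L$: since $\vr(\m L) = \mathsf{HSP}(\m L)$ is closed under $\hspu$, and ultrapowers embed $\m L$ into a structure with "infinitely large" copies of $g$, one should be able to build a copy of $\ZZ \overrightarrow{\times} \m L$ inside $\m L^I/\mathcal U$ for a suitable index set $I$ and ultrafilter $\mathcal U$. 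First I would fix a nonprincipal ultrafilter $\mathcal U$ on $I = \NN$ and work in $\m L^* := \m L^\NN/\mathcal U$, which contains $\m L$ as the diagonal subalgebra and contains the element $\bar g := [\,(g^k)_{k\in\NN}\,]_{\mathcal U}$, an "infinite" central invertible element with $g^m < \bar g$ for every $m \in \ZZ$ (by Łoś, since $g^m < g^k$ in $\m L$ for all large $k$).

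**The embedding.** Define $\f \colon \ZZ \overrightarrow{\times} \m L \to \m L^*$ by $\f(j,a) = \bar g^{\,j} a$ (using the diagonal embedding of $a$). I would then verify the three conditions of Lemma~\ref{l:inner-lex} with $\m H = \langle \bar g\rangle \cong \ZZ$ (a totally ordered $\ell$-group, being an infinite cyclic group generated by a positive element) and $\m M = \m L$ (diagonal copy). Condition (i), $\bar g^{\,j} a = a\bar g^{\,j}$, is immediate since $g$ is central in $\m L$, hence $\bar g$ is central in $\m L^*$. Condition (ii), unique factorization, needs $\langle \bar g\rangle \cap \m L = \{1\}$ in $\m L^*$: if $\bar g^{\,j} = a$ for some $a\in L$ and $j \neq 0$, then (taking $j>0$ without loss) $g^{|a| + 1} \leq \bar g^{\,j}$ would force, via Łoś, $g^{k(|a|+1)} \le g^{jk}$ eventually — using the strong order unit hypothesis to bound $a$ by a power of $g$ and derive a contradiction with $j$ fixed while $k\to\infty$; the point is that $\bar g$ is strictly greater than every power of $g$, so no nonzero power of $\bar g$ can equal a "standard" element. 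Condition (iii) is the crucial order condition: I must show $\bar g^{\,j_1} a_1 < \bar g^{\,j_2} a_2$ iff $j_1 < j_2$, or ($j_1 = j_2$ and $a_1 < a_2$). When $j_1 = j_2$ this follows by cancelling $\bar g^{\,j_1}$ (an invertible element, so order-preserving both ways). When $j_1 < j_2$, after cancelling I need $a_1 < \bar g^{\,j_2 - j_1} a_2$ for all $a_1, a_2 \in L$; since $g$ is a strong order unit, $a_1 \le g^m$ for some $m$, and $g^m < \bar g \le \bar g^{\,j_2-j_1}\le \bar g^{\,j_2-j_1}a_2$ provided I also control $\bar g^{\,j_2-j_1}a_2$ from below by something like $\bar g / (\text{something standard})$ — here I would use $|a_2|^{-1} \le a_2$ and $g^{m'} \le \bar g^{\,j_2-j_1}|a_2|^{-1}$ for suitable $m'$, or more cleanly argue that $\bar g^{\,j_2-j_1}a_2 \ge \bar g \cdot (\text{negative standard correction})$ still dominates every standard element. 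Finally, since $\f$ preserves $\mt,\jn$ (checking the case split in the lexicographic meet/join against the formulas in the proof of the preceding lemma), $\f$ is an embedding, so $\ZZ \overrightarrow{\times}\m L \in \mathsf{S}(\m L^*) \subseteq \hspu(\m L) \subseteq \vr(\m L)$.

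**The main obstacle.** The delicate point is condition (iii) in the direction $j_1 < j_2$: I need that $\bar g^{\,j_2-j_1}a_2$ dominates $a_1$ regardless of how negative $a_2$ might be and how positive $a_1$ might be. The strong order unit hypothesis handles $a_1$ (bounding it above by $g^m$), but I also need a \emph{lower} bound on $\bar g^{\,d}a_2$ ($d := j_2 - j_1 \ge 1$) of the form "$>$ every standard element". The cleanest route: $|a_2| := a_2 \jn a_2^{-1} \geq 1$ (for the group skeleton absolute value; but here $a_2$ need not be invertible, so use $\norm{a_2}$ from Lemma~\ref{l:norm}) satisfies $\norm{a_2}^{-1} \le a_2$, hence $\bar g^{\,d}a_2 \ge \bar g^{\,d}\norm{a_2}^{-1} = \bar g^{\,d-1}(\bar g\norm{a_2}^{-1})$, and $\bar g > g^{m'} \ge \norm{a_2}$ for some $m'$ (strong order unit applied to $\norm{a_2}$), so $\bar g\norm{a_2}^{-1} \ge 1$, giving $\bar g^{\,d}a_2 \ge \bar g^{\,d-1} \ge \bar g^{0} = 1$ when $d \ge 1$; wait, I actually need strictly more than $g^m$, so I sharpen to $\bar g > g^{m'+1} \ge g\cdot g^{m'}\ge g\norm{a_2}$, whence $\bar g^{\,d}a_2 \ge \bar g^{\,d-1}\cdot \bar g\norm{a_2}^{-1} \ge \bar g^{\,d-1}g \ge g > $ well, I want $> g^m \ge a_1$, so I instead bound $\bar g$ below by $g^{m+m'+1}$ and propagate. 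These are routine manipulations with the strong order unit and the norm inequalities of Lemma~\ref{l:norm}; the conceptual content is entirely in the ultrapower construction and the observation that $\bar g$ is a central invertible element lying strictly above the standard copy of $\m L$ while every standard element lies below some power of $g$. I would write these bound chains carefully but not belabor them.
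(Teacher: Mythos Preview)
Your ultrapower construction for showing $\ZZ \overrightarrow{\times}\m L \in \vr(\m L)$ is essentially the paper's approach (the paper writes it slightly more slickly, reducing condition~(iii) to the single residuation $\overline{a}\hat g^{\,j_1}\le \overline{b}\hat g^{\,j_2}$ iff $\overline{b^\ell a}\le \hat g^{\,j_2-j_1}$, which is then handled by one application of the strong order unit; your chain of bounds via $\norm{\,\cdot\,}$ is unnecessarily elaborate, and in any case Lemma~\ref{l:norm} is only stated for $n$-periodic $\ell$-pregroups, so you cannot invoke it here --- but the repair is easy, since the strong order unit gives $b^\ell a \le g^m$ for some $m$ directly).

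The genuine gap is that you have misread the statement. ``$\vr(\m L)$ is closed under lexicographic products with $\ZZ$'' does \emph{not} mean merely $\ZZ\overrightarrow{\times}\m L\in\vr(\m L)$; it means that for \emph{every} $\m M\in\vr(\m L)$ one has $\ZZ\overrightarrow{\times}\m M\in\vr(\m L)$. An arbitrary $\m M\in\vr(\m L)$ need not possess any central invertible strong order unit, so you cannot simply rerun your ultrapower argument inside $\m M$. The paper supplies the missing step: write $\m M$ as a quotient of some $\m A\le \prod_{i\in I}\m L$; observe that $\ZZ\overrightarrow{\times}\m A$ embeds (via constant $\ZZ$-coordinate) into $\prod_{i\in I}(\ZZ\overrightarrow{\times}\m L)\in\vr(\m L)$; and check that the coordinatewise map $(z,a)\mapsto(z,\varphi(a))$ is a surjective homomorphism $\ZZ\overrightarrow{\times}\m A\twoheadrightarrow \ZZ\overrightarrow{\times}\m M$. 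This second half is not hard, but it is not automatic and your proposal omits it entirely.
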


\begin{proof}
    Note that  for any non-principal ultrafilter $U$ over $\NN$, the subalgebra of the ultrapower $\m{L}^\NN/U$ generated by all the constant sequences $\overline{a}:=[(a, a, a,\dots)]$, for each $a\in L$, together with the element $\hat{g}:=[(g,g^2,g^3,\dots)]$ is isomorphic to $\ZZ \overrightarrow{\times}\m{L}$. 
    Indeed, the constants generate a subalgebra isomorphic to $\m L$, while the element  $\hat{g}$ is greater than all constants and generates a subalgebra isomorphic to $\ZZ$. Moreover, since $g$ is central, $\hat{g}$ is also central, so the set of all products of the form $\overline{a}\hat{g}^n$, where $a \in L$ and $n \in \ZZ$, is a submonoid of $\m{L}^\NN/U$ and is closed under $^\ell$ and $^r$. Also, for two products, using Lemma~\ref{l: InRL}, we have $\overline{a}\hat{g}^n \leq \overline{b}\hat{g}^m$ iff  $\overline{b^\ell a} \leq \hat{g}^{m-n}$ iff $m-n>0$ or ($m-n=0$ and $b^\ell a \leq 1$) iff $\hat{g}^{m-n}>1$ or ($\hat{g}^{m-n}=1$ and $\overline{b^\ell a} \leq 1$) iff $\hat{g}^{n}<\hat{g}^{m}$ or ($\hat{g}^{n}=\hat{g}^{m}$ and $\overline{a} \leq \overline{b}$). As a result, the set of all such products is a subalgebra and its order is the lexicographic order. Thus, by Lemma~\ref{l:inner-lex}, it follows that this subalgebra is isomorphic to the lexicographic product $\ZZ \overrightarrow{\times}\m{L}$.
    Therefore, $\ZZ \overrightarrow{\times}\m{L} \in \vr(\m{L})$.
    
    Now, for $\m{M}\in \vr(\m{L})$, there exists a set $I$, an algebra $\m{A} \leq \prod_{i\in I} \m{L}$, and a surjective homomorphism $\f\colon \m{A} \to \m{M}$.
    Since $\ZZ \overrightarrow{\times}\m{L} \in \vr(\m{L})$, also $\prod_{i\in I} (\ZZ \overrightarrow{\times}\m{L}) \in \vr(\m{L})$. It is straightforward to check that the subalgebra  of $\prod_{i\in I} (\ZZ \overrightarrow{\times}\m{L})$  with universe   
     $\{(z,a_i)_{i\in I} : (a_i)_{i\in I} \in A \text{ and }z \in \ZZ \}$  is isomorphic to $\ZZ \overrightarrow{\times}\m{A}$, so $\ZZ \overrightarrow{\times}\m{A} \in \vr(\m{L})$. It is also easy to verify that the map $\psi\colon \ZZ \overrightarrow{\times}\m{A} \to \ZZ \overrightarrow{\times}\m{M}$, where $\psi(z,a) = (z,\f(a))$, is a surjective homomorphism, hence  $\ZZ \overrightarrow{\times}\m{M} \in \vr(\m{L})$.
\end{proof}

\begin{lemma}\label{l:closure-lex}
     If an $\ell$-pregroup  $\m{L}$ contains a central invertible element $g$ that is a strong order unit, then $\vr(\m{L})$ is closed under lexicographic products with totally ordered abelian $\ell$-groups.
\end{lemma}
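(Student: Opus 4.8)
The plan is to bootstrap from Lemma~\ref{l:closure-lexZ}, which already settles the case $\m H=\ZZ$. Three ingredients are needed: a reduction to finitely generated $\m H$, an embedding of finitely generated totally ordered abelian $\ell$-groups into ultrapowers of $\ZZ$, and a compatibility between lexicographic products and ultrapowers. So fix $\m M\in\vr(\m L)$ and a totally ordered abelian $\ell$-group $\m H$; the goal is $\m H\overrightarrow{\times}\m M\in\vr(\m L)$.

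First I would reduce to finitely generated $\m H$. Since $\m H$ is totally ordered, its lattice operations are $\max$ and $\min$, so the $\ell$-subgroup generated by a finite set coincides with the subgroup it generates; hence $\m H$ is the directed union of its finitely generated $\ell$-subgroups $\m H_i$, each again a finitely generated totally ordered abelian $\ell$-group. Then $\m H\overrightarrow{\times}\m M$ is the directed union of the subalgebras $\m H_i\overrightarrow{\times}\m M$ (the induced order on $H_i\times M$ is exactly the lexicographic order of $\m H_i\overrightarrow{\times}\m M$, since $\m H_i\leq\m H$ is an order-embedding), and varieties are closed under directed unions. So it suffices to treat finitely generated $\m H$.

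Next I claim every finitely generated totally ordered abelian $\ell$-group $\m H$ embeds, as an $\ell$-group, into an ultrapower $\ZZ^\NN/U$. As a group $\m H\cong\ZZ^n$, and after extending scalars the order is described by finitely many $\mathbb{R}$-linear functionals $\phi_1,\dots,\phi_r$ on $\mathbb{R}^n$, separating points of $\mathbb{Q}^n$, via $x>0\iff(\phi_1(x),\dots,\phi_r(x))>_{\mathrm{lex}}0$. Choosing integers $N^{(1)}_j\gg\cdots\gg N^{(r)}_j\gg 1$ with $N^{(s)}_j/N^{(s+1)}_j\to\infty$ and $N^{(r)}_j\to\infty$, the group homomorphisms $f_j\colon\ZZ^n\to\ZZ$ obtained by rounding $\sum_s N^{(s)}_j\phi_s$ on a basis satisfy $f_j(x)=\sum_s N^{(s)}_j\phi_s(x)+\delta_j(x)$ with $\delta_j(x)$ bounded independently of $j$; hence for every nonzero $x$ the sign of $f_j(x)$ agrees with the sign of $x$ in $\m H$ once $j$ is large enough. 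Enumerating $\ZZ^n$ and taking any nonprincipal ultrafilter $U$ on $\NN$, the map $x\mapsto[(f_j(x))_j]$ is then an $\ell$-group embedding $\m H\hookrightarrow\ZZ^\NN/U$. (If preferred, this embedding fact may instead be quoted from the literature on the universal theory of $\ZZ$.)

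Finally I would combine: by Lemma~\ref{l:closure-lexZ}, $\ZZ\overrightarrow{\times}\m M\in\vr(\m L)$, hence $(\ZZ\overrightarrow{\times}\m M)^\NN/U\in\vr(\m L)$ since varieties are closed under ultrapowers. One checks that $([(z_j)_j],x)\mapsto[((z_j,x))_j]$ is an embedding of $\ell$-pregroups of $(\ZZ^\NN/U)\overrightarrow{\times}\m M$ into $(\ZZ\overrightarrow{\times}\m M)^\NN/U$ (the verification that it preserves and reflects the order, and preserves $\mt,\jn$, uses that $\ZZ$ and $\ZZ^\NN/U$ are totally ordered). Composing with the evident embedding $\m H\overrightarrow{\times}\m M\hookrightarrow(\ZZ^\NN/U)\overrightarrow{\times}\m M$ coming from the previous step yields $\m H\overrightarrow{\times}\m M\in\vr(\m L)$ for finitely generated $\m H$, and the directed-union reduction finishes the proof. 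I expect the second ingredient to be the main obstacle: producing integer-valued homomorphisms that faithfully realize an arbitrary total order on $\ZZ^n$ inside an ultrapower of $\ZZ$; the rest is routine bookkeeping parallel to the proof of Lemma~\ref{l:closure-lexZ}. An alternative closer to that proof is to realize $\m H\overrightarrow{\times}\m L$ directly inside $\m L^\NN/U$ using central elements $\hat g_1,\dots,\hat g_n$ defined by $\hat g_i=[(g^{f_j(e_i)})_j]$, verifying the hypotheses of Lemma~\ref{l:inner-lex}, and then repeating verbatim the homomorphic-image argument of Lemma~\ref{l:closure-lexZ}.
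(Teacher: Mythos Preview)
Your approach is essentially the same as the paper's: bootstrap from Lemma~\ref{l:closure-lexZ}, embed $\m H$ into an ultrapower of $\ZZ$, and then embed $\m H\overrightarrow{\times}\m M$ into the corresponding ultrapower $(\ZZ\overrightarrow{\times}\m M)^I/U$ via $(h,m)\mapsto[(n_i,m)_{i\in I}]$ where $[(n_i)_{i\in I}]$ is the image of $h$.

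The paper's execution is more direct in two respects. First, it skips your reduction to finitely generated $\m H$: the embedding of an \emph{arbitrary} totally ordered abelian $\ell$-group into an ultrapower of $\ZZ$ follows immediately from the standard fact that $\ZZ$ generates the variety of abelian $\ell$-groups as a quasivariety (so every abelian $\ell$-group lies in $\mathsf{ISP_U}(\ZZ)$). This is exactly the literature fact you allude to as an alternative, and invoking it removes both your first step and the need for your explicit linear-functional construction. Second, the paper writes down the single composite embedding $\m G\overrightarrow{\times}\m M\to(\ZZ\overrightarrow{\times}\m M)^I/U$ directly rather than passing through the intermediate algebra $(\ZZ^I/U)\overrightarrow{\times}\m M$. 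Your route is correct, just longer; the detour through finitely generated $\m H$ and the hand-built approximation by integer functionals are unnecessary once you cite the quasivariety generation result.
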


\begin{proof}
    Let $\m{G}$ be a totally ordered abelian $\ell$-group and let $\m{M} \in \vr(\m{L})$. Then, by Lemma~\ref{l:closure-lexZ}, $\ZZ \overrightarrow{\times} \m{M} \in \vr(\m{L})$. Now since $\ZZ$ generates the variety of abelian $\ell$-groups as a quasivariety, there exists an ultrapower $\ZZ^I/U$ and an embedding $f\colon \m{G} \to \ZZ^I/U$. We consider the ultrapower $(\ZZ \overrightarrow{\times} \m{M})^I/U$ and we define the map $\f \colon \m{G}\overrightarrow{\times} \m{M} \to (\ZZ \overrightarrow{\times} \m{M})^I/U$,  by 
    $\f(g,m) = [(n_i,m)_{i\in I}]$, where $f(g)=[(n_i)_{i\in I}]$. It is straightforward to check
    that this is a well-defined map and an embedding, hence $\m{G}\overrightarrow{\times} \m{M} \in \vr(\m{L})$. 
\end{proof}

\begin{proposition}\label{p:Fn-closed-lexprod}
    For each $n>1$ the variety $\vr(\m{F}_n(\ZZ))$ is closed under lexicographic products with totally ordered abelian $\ell$-groups.
\end{proposition}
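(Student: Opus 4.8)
The plan is to reduce everything to Lemma~\ref{l:closure-lex}, which already establishes closure of $\vr(\m L)$ under lexicographic products with totally ordered abelian $\ell$-groups for any $\ell$-pregroup $\m L$ possessing a central invertible element that is simultaneously a strong order unit. Thus it suffices to exhibit such an element inside $\m F_n(\ZZ)$, and the natural candidate is $g := \cn{s}^n$.

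I would verify the three required properties of $g$ in turn. First, $g$ is invertible: by Lemma~\ref{l:grp-hg} the invertible elements of $\mathrm F_n(\ZZ)$ are exactly the shifts $\cn{s}^k$, $k\in\ZZ$, so $\cn{s}^n \in \Grp{\m F_n(\ZZ)}$. Second, $g$ is central: by Lemma~\ref{l:b-conjugation}(3) we have $a\cn{s}^n = \cn{s}^n a$ for every $a\in \mathrm F_n(\ZZ)$. Third, $g$ is a strong order unit: clearly $\cn{s}^n \geq 1$, and by Lemma~\ref{l:grp-hg} every $f\in \mathrm F_n(\ZZ)$ satisfies $f \leq \cn{s}^k$ for some $k\in\NN$, hence $f \leq \cn{s}^{nk} = (\cn{s}^n)^k$. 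Feeding $\m L = \m F_n(\ZZ)$ and this $g$ into Lemma~\ref{l:closure-lex} then yields the proposition.

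There is no genuine technical obstacle; the only point requiring care is conceptual, namely the choice of the \emph{$n$-th} power of $\cn{s}$ rather than $\cn{s}$ itself. Indeed $\cn{s}$ is a strong order unit but fails to be central for $n>1$, since conjugation by $\cn{s}$ realizes the nontrivial automorphism $a\mapsto a^{[1]}$ of $\m F_n(\ZZ)$; it is precisely the $n$-periodicity of $\m F_n(\ZZ)$ that forces $\cn{s}^n$ to be central. This is exactly the local manifestation of the centrality phenomenon exploited by the main axiomatizing equation $x\g(y)^n \approx \g(y)^n x$, so identifying $\cn{s}^n$ as the right witness is the only substantive step.
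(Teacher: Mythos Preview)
Your proof is correct and follows exactly the same route as the paper: exhibit $\cn{s}^n$ as a central invertible strong order unit in $\m F_n(\ZZ)$, citing Lemma~\ref{l:grp-hg} for the strong-order-unit property and Lemma~\ref{l:b-conjugation}(3) for centrality, and then invoke Lemma~\ref{l:closure-lex}. Your additional commentary on why $\cn{s}$ itself does not work is accurate but not needed for the argument.
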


\begin{proof}
    By Lemma~\ref{l:grp-hg}, $\cn{s}$ is a strong order unit of $\m{F}_n(\ZZ)$, hence $\cn{s}^n$ is one, as well. Moreover, by Lemma~\ref{l:b-conjugation}(3),  $\cn{s}^n$ is central, so it satisfies the assumptions of Lemma~\ref{l:closure-lex}.
\end{proof}

The next proposition will be important in characterizing the finitely generated finitely subdirectly irreducibles of $\vr(\m{F}_n(\ZZ))$.
\begin{proposition}\label{p:Zconv-lex}
     Let $\m{G}$ be a finitely generated totally ordered abelian $\ell$-group with convex subgroup $\ZZ$. Then $\m{G} \cong \m{H} \overrightarrow{\times}\ZZ$ for some finitely generated totally ordered abelian $\ell$-group $\m{H}$. Actually, $\m{G} \cong (\m{G}/\ZZ) \overrightarrow{\times} \ZZ$. 
 \end{proposition}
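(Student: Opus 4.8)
The plan is to realize $\m{G}$ as an inner lexicographic product $\m{H}\overrightarrow{\times}\ZZ$ by exhibiting an appropriate complementary sublattice-subgroup $\m{H}$ to the convex subgroup $\ZZ$, and then to verify the three conditions of Lemma~\ref{l:inner-lex}. Since $\m{G}$ is abelian, condition (i) of that lemma (commutativity of the two factors) is automatic, so the real content is: (a) produce $\m{H}\leq\m{G}$ with $\m{G}=\m{H}+\ZZ$ and $\m{H}\cap\ZZ=\{1\}$ (written multiplicatively, or $\m{H}\cap\ZZ=\{0\}$ additively), (b) check the lexicographic order condition (iii), and (c) check that $\m{H}$ is again finitely generated and totally ordered, and that $\m{H}\cong\m{G}/\ZZ$.

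First I would set up the algebra. Write $\m G$ additively (it is an abelian $\ell$-group), let $\ZZ = \langle u\rangle$ be the given convex subgroup generated by a positive element $u$, and note that since $\ZZ$ is convex and $\m G$ is totally ordered, $u$ is a minimal positive element of $\m G$ that is not below any smaller positive element outside $\langle u\rangle$ — more precisely, $\ZZ$ convex means for every $g\in G$ with $0\le g\le u$ we have $g\in\langle u\rangle$, so $0$ and $u$ are consecutive in the order restricted to... actually the key consequence I want is: the quotient group $G/\ZZ$ is torsion-free (as a subgroup-quotient of a torsion-free group it need not be, but $\m G$ finitely generated abelian torsion-free means $\m G\cong\ZZ^m$ as a group, and $\ZZ$ is a pure subgroup because it is convex — any $g$ with $kg\in\ZZ$ lies in $\ZZ$ by convexity, since $\ZZ$ convex in a totally ordered group is a pure, hence direct, summand of the underlying group). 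This gives a group-theoretic splitting $G = H_0\oplus\ZZ$ for some subgroup $H_0\cong G/\ZZ\cong\ZZ^{m-1}$, finitely generated. Now I would \emph{order} $H_0$: declare $h\in H_0$ positive iff $h+z\ge 0$ in $\m G$ for some/all large... no — define the order on $H_0$ to be the restriction of the order of $\m G/\ZZ$ via the iso $H_0\cong G/\ZZ$, equivalently $h>0$ in $\m H$ iff the image $\bar h$ of $h$ in $G/\ZZ$ satisfies $\bar h>\bar 0$. Since $G/\ZZ$ is a totally ordered abelian $\ell$-group (quotient of the totally ordered $\m G$ by a convex subgroup), $\m H:=(H_0,\le_{H_0})$ is a totally ordered abelian $\ell$-group isomorphic to $\m G/\ZZ$, and it is finitely generated.

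Next I would verify the three conditions of Lemma~\ref{l:inner-lex} for $\f\colon \m H\overrightarrow{\times}\ZZ\to\m G$, $\f(h,z)=h+z$. Condition (i): trivial, $\m G$ abelian. Condition (ii): the group splitting $G=H_0\oplus\ZZ$ gives existence and uniqueness of the decomposition $g=h+z$. Condition (iii): I must show $h_1+z_1<h_2+z_2$ in $\m G$ iff $h_1<h_2$ in $\m H$, or ($h_1=h_2$ and $z_1<z_2$). Reducing modulo $\ZZ$: if $\bar h_1<\bar h_2$ in $G/\ZZ$, then $h_1+z_1 < h_2+z_2$ in $\m G$ — this is exactly where convexity of $\ZZ$ is used: $\bar h_1<\bar h_2$ means $h_2-h_1+ (\ZZ\text{-stuff})>0$ with the difference \emph{not} in $\ZZ$, and since $\ZZ$ is convex, an element strictly above all of the coset $h_1+\ZZ$ dominates the whole coset, so in particular $h_2+z_2>h_1+z_1$. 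If $\bar h_1=\bar h_2$, i.e. $h_1=h_2$ (they're the same element of $H_0$), then $h_1+z_1<h_2+z_2$ iff $z_1<z_2$, since $\m G$ restricted to a single coset of $\ZZ$ is order-isomorphic to $\ZZ$. The converse directions are symmetric. This establishes the iso $\m G\cong\m H\overrightarrow{\times}\ZZ\cong(\m G/\ZZ)\overrightarrow{\times}\ZZ$.

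The main obstacle is part (c): one must be careful that $\ZZ$ being \emph{convex} — not merely a subgroup — is what makes everything work, both for the group-theoretic splitting (convexity $\Rightarrow$ purity $\Rightarrow$, for finitely generated torsion-free abelian groups, a direct summand) and for the order condition (iii) (an element exceeding one representative of a $\ZZ$-coset must, by convexity of $\ZZ$, exceed \emph{every} element of that coset below it is impossible, i.e. cosets are ``convex blocks'' linearly ordered like $G/\ZZ$). I would make sure to state explicitly the lemma that a convex subgroup of a finitely generated torsion-free abelian group is a direct summand (via: $G/\ZZ$ is torsion-free by convexity, hence free, hence the extension splits), and the observation that each coset $h+\ZZ$ is order-isomorphic to $\ZZ$ and these blocks are ordered exactly as $G/\ZZ$. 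Everything else is the routine verification of Lemma~\ref{l:inner-lex}'s hypotheses, and the ``finitely generated'' claim for $\m H$ is immediate since $\m H\cong\m G/\ZZ$ is a quotient of a finitely generated algebra.
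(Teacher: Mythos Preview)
Your proposal is correct and takes essentially the same approach as the paper. Both arguments split the underlying abelian group as $H\oplus\ZZ$ (the paper by lifting a free basis of $\m G/\ZZ$, you via convexity $\Rightarrow$ purity $\Rightarrow$ direct summand) and then verify condition~(iii) of Lemma~\ref{l:inner-lex} by passing to the quotient $\m G/\ZZ$ and using convexity of $\ZZ$.
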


\begin{proof}
    Since $\ZZ$ is a subgroup of $\m{G}$, we will write $0$ for the neutral element of $\m{G}$, but we will still write the operation of $\m{G}$ multiplicatively. 
    As $\m{G}$ is finitely generated, $\m{G}/\ZZ$ is finitely generated, as well. Since an $\ell$-group cannot contain an element of finite non-zero order, by the fundamental theorem of finitely generated abelian groups, we get that $\m{G}/\ZZ$ is isomorphic, as a group, to a direct power of $\ZZ$. As a result,  $\m{G}/\ZZ$ is  freely generated. Let $[f_1],\dots,[f_n] \in \m{G}/\ZZ$ be free generators of $\m{G}/\ZZ$, where $f_1,\dots, f_n \in G$ and let $\m{H} = \langle f_1,\dots,f_n \rangle$; we claim that $\m{G} \cong \m{H}\overrightarrow{\times}\ZZ$. First note that $H \cdot \ZZ = G$, using a standard group-theory argument on cosets. 
    Moreover, if $g \in H \cap \ZZ$, then $f_1^{k_1}\cdots f_n^{k_n} = g \in \ZZ$ for some $k_1,\dots, k_n \in \ZZ$, i.e., $[f_1]^{k_1} \cdots [f_n]^{k_n} = [g] = [0]$. Since $[f_1],\dots,[f_n]$ freely generate $\m{G}/\ZZ$ we get $k_1 = \dots =k_n = 0$, i.e., $g = 0$. Thus $H \cap \ZZ = \{0\}$. So it follows that the group reduct of $\m{G}$ is isomorphic to the group reduct of $\m{H} \times \ZZ$. 
    It remains to check part (iii) of Lemma~\ref{l:inner-lex}.
    For $h_1m_1, h_2m_2 \in G$ with $m_1,m_2 \in \ZZ$ and $h_1,h_2 \in H$, if $h_1m_1 < h_2m_2$, then $[h_1] = [h_1m_1] \leq [h_2m_2] = [h_2]$, so $h_2 \not< h_1$, i.e., $h_1 \leq h_2$, and if additionally $h_1 = h_2$, then $m_1 < m_2$. Conversely if $h_1 < h_2$, then $[h_1] \leq [h_2]$ and $[h_1] \neq [h_2]$, for otherwise $h_1h_2^{-1} \in \ZZ$, yielding $h_1h_2^{-1} = 0$, i.e., $h_1 = h_2$, a contradiction. Hence $[h_1] < [h_2]$, i.e., $h_1m_1 < h_2m_2$. Finally if $h_1 = h_2$ and $m_1 < m_2$, then $h_1m_1 < h_2m_2$. Summarizing we get for all $h_1,h_2 \in H$, $m_1,m_2 \in \ZZ$,
    \[
    h_1m_1 < h_2m_2 \iff h_1 < h_2 \text{ or } (h_1 = h_2 \text{ and } m_1 < m_2).
    \]
    Hence, by Lemma~\ref{l:inner-lex}, $\m{G} \cong \m{H} \overrightarrow{\times} \ZZ$.
    \end{proof}

\section{The structure of \texorpdfstring{$n$}{n}-periodic \texorpdfstring{$\ell$}{ℓ}-pregroups}\label{s: n-per l-pr}

In this section we first recall the wreath product representation of $n$-periodic $\ell$-pregroups and we provide internal descriptions (inside the representation) of various attributes of elements. Moreover, we define the local subalgebra of an $n$-periodic $\ell$-pregroup, relative to a given representation, and showcase its interaction with the group skeleton.

\subsection{Representation and consequences}

We will make use of the two representation theorems for  $n$-periodic $\ell$-pregroups involving wreath products, established in \cite{GG2}, in order to describe idempotent and flat elements in these algebras, as well as the periodicities of the elements. 
We will use this representation mainly when the periodic $\ell$-pregroup is not itself an $\ell$-group (we will call such $\ell$-pregroups proper).

\begin{proposition}[\cite{GG2}]\label{p: representation1}
    For every $n$-periodic $\ell$-pregroup $\m{L}$ there exists a chain $\m{J}$, such that $\m{L}$ embeds into $\m{F}_n(\m{J} \overrightarrow{\times} \ZZ)$.
\end{proposition}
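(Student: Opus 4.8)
The plan is to bootstrap from the Holland-style representation theorem for distributive $\ell$-pregroups and then corestrict along the periodicity identity. Since $\m{L}$ is $n$-periodic, it is distributive by the result of \cite{GJ} quoted above, so the embedding theorem of \cite{GH} applies: $\m{L}$ embeds into $\m{F}(\m{\Omega})$ for some chain $\m{\Omega}$, and by the refinement of \cite{GG1} the chain $\m{\Omega}$ may be taken of the form $\m{J}\overrightarrow{\times}\ZZ$ for a suitable chain $\m{J}$. Fix such an embedding $\iota\colon \m{L}\hookrightarrow \m{F}(\m{J}\overrightarrow{\times}\ZZ)$.

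It then remains to check that $\iota[L]$ is contained in the subalgebra $\m{F}_n(\m{J}\overrightarrow{\times}\ZZ)$ of $n$-periodic elements. The assignment $x\mapsto x^{[n]}=x^{\ell^{2n}}$ is a term operation of $\ell$-pregroups, hence is preserved by the homomorphism $\iota$. As $\m{L}$ satisfies the $n$-periodicity identity, which in the form $x^{[n]}\approx x$ says exactly that this term operation is the identity map, we get $\iota(a)^{[n]}=\iota\bigl(a^{[n]}\bigr)=\iota(a)$ for every $a\in L$; that is, every element of $\iota[L]$ is $n$-periodic in $\m{F}(\m{J}\overrightarrow{\times}\ZZ)$. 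Since $\m{F}_n(\m{J}\overrightarrow{\times}\ZZ)$ is by definition the subalgebra of $\m{F}(\m{J}\overrightarrow{\times}\ZZ)$ consisting of its $n$-periodic elements, $\iota$ corestricts to an embedding $\m{L}\hookrightarrow \m{F}_n(\m{J}\overrightarrow{\times}\ZZ)$, as desired.

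All the substance is imported from \cite{GH} and \cite{GG1}: the genuine difficulty, were one to reprove those, lies in constructing the representing chain $\m{\Omega}$ via a Holland-style argument and then arranging it to be a lexicographic product over $\ZZ$. Granting those results, the present proposition is just the observation that $\ell$-pregroup homomorphisms preserve the identity $x^{[n]}\approx x$, so an $n$-periodic algebra can only land inside the $n$-periodic part of $\m{F}(\m{J}\overrightarrow{\times}\ZZ)$.
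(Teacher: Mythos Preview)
Your argument is correct. The paper does not supply its own proof of this proposition but simply cites it from \cite{GG2}; your derivation---embed via the distributive representation theorem of \cite{GG1} into $\m{F}(\m{J}\overrightarrow{\times}\ZZ)$ and observe that any $\ell$-pregroup embedding preserves the identity $x^{[n]}\approx x$, so the image lies in the $n$-periodic subalgebra $\m{F}_n(\m{J}\overrightarrow{\times}\ZZ)$---is exactly the natural way to extract the statement from the background results already quoted in the paper.
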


In many cases to prove properties of $\m L$ we will rely on the isomorphism between $\m L$ and its image under this embedding, hence we will often  only consider the case where $\m{L} \leq \m{F}_n(\m J \overrightarrow{\times} \ZZ)$.

The second representation theorem uses the extension of the wreath product construction to the $\ell$-pregroup setting, given in \cite{GG2}; here we present the definition in the special  case where the $\ell$-group is an automorphism $\ell$-group.

For a chain $\m{J}$ and an $\ell$-pregroup $\m{L}$ we define the \emph{wreath product} $\m{Aut}(\m{J}) \wr \m{L} = ( \mathrm{Aut}(\m{J}) \times {L}^J, \mt, \jn,  \cdot, (id, \overline{1}), {}^{\ell}, {}^r )$, where we write $\overline{1}$ for the sequence $(1)_{i\in J}$, and for $f = (\widetilde{f}, \overline{f}), g = (\widetilde{g},\overline{g}) \in \mathrm{Aut}(\m{J}) \times {L}^J$ we let 
\begin{align*}
    (\widetilde{f},\overline{f}) \cdot (\widetilde{g},\overline{g}) &= (\widetilde{f}\circ \widetilde{g}, (\overline{f}\otimes \widetilde{g}) \cdot \overline{g}) \\
    (\widetilde{f},\overline{f})^\ell &= (\widetilde{f}^{-1}, \overline{f}^\ell \otimes \widetilde{f}^{-1}) \\
    (\widetilde{f},\overline{f})^r &= (\widetilde{f}^{-1}, \overline{f}^r \otimes \widetilde{f}^{-1}),
\end{align*}
where for $h\in \mathrm{Aut}(\m{J})$ and $a\in {L}^{J}$ we define the element $a \otimes h \in {L}^{J}$ by $(a \otimes h)_i = a_{h(i)}$; the order of $\m{Aut}(\m{J}) \wr \m{L}$ is defined by 
\[
(\widetilde{f},\overline{f}) \leq (\widetilde{g},\overline{g}) \quad \text{iff} \quad \widetilde{f} \leq \widetilde{g} \quad \text{and} \quad \widetilde{f}(i) = \widetilde{g}(i) \implies \overline{f}_i \leq \overline{g}_i, \text{ for all } i\in J.
\]
In particular, we have $\widetilde{f \mt g} = \widetilde{f} \mt \widetilde{g}$, $\widetilde{f \jn g} = \widetilde{f} \jn \widetilde{g}$, and for each $i\in J$
\[
\overline{(f \mt g)}_i = 
\begin{cases}
    \overline{f}_i &\text{if } \widetilde{f}(i) \leq \widetilde{g}(i) \\
    \overline{g}_i &\text{if } \widetilde{f}(i) \geq \widetilde{g}(i) \\
    \overline{f}_i \mt \overline{g}_i &\text{if } \widetilde{f}(i) = \widetilde{g}(i).
\end{cases} 
\quad
\overline{(f\jn g)}_i =
\begin{cases}
    \overline{f}_i &\text{if } \widetilde{f}(i) \geq \widetilde{g}(i) \\
    \overline{g}_i &\text{if } \widetilde{f}(i) \leq \widetilde{g}(i) \\
    \overline{f}_i \jn \overline{g}_i &\text{if } \widetilde{f}(i) = \widetilde{g}(i).
\end{cases} 
\]
For $f =(\widetilde{f},\overline{f}) \in \mathrm{Aut}(\m{J}) \times L^J$ we call $\widetilde{f}$ the \emph{global component} and $\overline{f}$ the \emph{local component} of $f$.

\begin{lemma}[\cite{GG2}]
   If $\m{J}$ is a chain and $\m{L}$ is an $\ell$-pregroup, then $\m{Aut}(\m{J})\wr \m{L}$ is an $\ell$-pregroup. If in addition, $\m{L}$ is $n$-periodic, then so is $\m{Aut}(\m{J})\wr \m{L}$.
\end{lemma}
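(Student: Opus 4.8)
This is the specialization to automorphism $\ell$-groups of the general wreath-product theorem of \cite{GG2}, and I would prove it by a direct check of the $\ell$-pregroup axioms, organizing all the bookkeeping around the reindexing operation $a\otimes h$. The first step is to record that, directly from $(a\otimes h)_i=a_{h(i)}$, for all $a,b\in L^J$ and $h,k\in \m{Aut}(\m{J})$: (i) $a\otimes \mathrm{id}=a$ and $(a\otimes h)\otimes k=a\otimes(h\circ k)$; (ii) $(ab)\otimes h=(a\otimes h)(b\otimes h)$, $\overline 1\otimes h=\overline 1$, $(a^{\ell})\otimes h=(a\otimes h)^{\ell}$, and $(a^{r})\otimes h=(a\otimes h)^{r}$; (iii) $a\le b$ implies $a\otimes h\le b\otimes h$, and $\otimes h$ commutes with the pointwise join and meet. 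Every remaining verification is then a mechanical consequence of (i)--(iii).

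\emph{Monoid and lattice reducts.} The global component of a product is a composition of automorphisms of $\m{J}$, so associativity there is immediate; for the local component, associativity reduces via (i) and (ii) to the identity $\big((\overline f\otimes\widetilde g)\,\overline g\big)\otimes\widetilde h\cdot\overline h=\big(\overline f\otimes(\widetilde g\circ\widetilde h)\big)\big((\overline g\otimes\widetilde h)\,\overline h\big)$, and $(\mathrm{id},\overline 1)$ is a two-sided unit since $\overline 1\otimes h=\overline 1$ and $\overline f\otimes\mathrm{id}=\overline f$. Next I would check that $\le$ is a partial order (antisymmetry and transitivity being coordinatewise, using that $\m{J}$ is a chain) and that, since $\m{Aut}(\m{J})$ is an $\ell$-group under the pointwise order, the displayed piecewise formulas for $\overline{(f\jn g)}$ and $\overline{(f\mt g)}$ do define the join and the meet — this via a three-way case analysis on $\widetilde f(i)<\widetilde g(i)$, $\widetilde f(i)=\widetilde g(i)$, $\widetilde f(i)>\widetilde g(i)$ at each $i\in J$.

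\emph{Monotonicity, pregroup inequalities, periodicity.} That multiplication preserves the order reduces coordinatewise to monotonicity in $\m{L}$ together with (iii), using that the global components are order-automorphisms (with a short case split when two global components differ). For the defining inequalities the action law (i) cancels the nested $\otimes$'s, giving
\[
(\widetilde f,\overline f)^{\ell}(\widetilde f,\overline f)=\big(\mathrm{id},\,\overline f^{\ell}\,\overline f\big)\le(\mathrm{id},\overline 1)
\quad\text{and}\quad
(\widetilde f,\overline f)(\widetilde f,\overline f)^{\ell}=\big(\mathrm{id},\,(\overline f\,\overline f^{\ell})\otimes\widetilde f^{-1}\big)\ge(\mathrm{id},\overline 1),
\]
because $\overline f^{\ell}_i\overline f_i\le 1\le\overline f_i\overline f^{\ell}_i$ in $\m{L}$ and $\otimes$ is order-preserving; the two inequalities for ${}^{r}$ are obtained identically. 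Finally, (i) and (ii) give $(\widetilde f,\overline f)^{\ell\ell}=(\widetilde f,\overline f^{\ell\ell})$ — the $\otimes\widetilde f^{-1}$ and $\otimes\widetilde f$ cancel — hence $(\widetilde f,\overline f)^{[k]}=(\widetilde f,\overline f^{[k]})$ with $(\overline f^{[k]})_i=\overline f_i^{[k]}$; so if $\m{L}$ is $n$-periodic then $(\widetilde f,\overline f)^{[n]}=(\widetilde f,\overline f)$, and since $n$-periodicity of an $\ell$-pregroup is exactly the identity $x^{[n]}\approx x$, $\m{Aut}(\m{J})\wr\m{L}$ is $n$-periodic.

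I expect the genuine content to sit entirely in the lattice reduct: the classical fact (implicit already in calling $\m{Aut}(\m{J})$ an $\ell$-group) that the pointwise join and meet of two order-automorphisms of a chain are again order-automorphisms, the case analysis confirming the piecewise formulas for $\overline{(f\jn g)}$ and $\overline{(f\mt g)}$, and the matching monotonicity argument when global components differ. The rest is symbol-pushing with (i)--(iii).
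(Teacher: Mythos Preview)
The paper does not give its own proof of this lemma; it is simply quoted from \cite{GG2}. Your direct verification via the reindexing identities (i)--(iii) is correct and is exactly the kind of elementary check one expects: the computations of $(\widetilde f,\overline f)^\ell(\widetilde f,\overline f)$, $(\widetilde f,\overline f)(\widetilde f,\overline f)^\ell$, and $(\widetilde f,\overline f)^{\ell\ell}=(\widetilde f,\overline f^{\ell\ell})$ are all right, and the rest is indeed routine bookkeeping.
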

\begin{proposition}[\cite{GG2}]
    If $\m{J}$ is a chain and $n\geq 1$, then $\m{F}_n(\m{J}\overrightarrow{\times} \ZZ)$ is isomorphic to the wreath product $\m{Aut}(\m{J}) \wr \m{F}_n(\ZZ)$. In particular, for $f\in \mathrm{F}_n(\m{J}\overrightarrow{\times} \ZZ)$ and corresponding  $(\widetilde{f},\overline{f}) \in \mathrm{Aut}(\m{J}) \times \mathrm{F}_n(\ZZ)^J$, we have that for all $(i,m) \in J\times \ZZ$: 
    \[
    f(i,m) = (\widetilde{f}(i), \overline{f}_i(m)).
    \]
\end{proposition}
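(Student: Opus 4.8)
\emph{Proof plan.} The plan is to exhibit the isomorphism explicitly, sending $f\in\mathrm{F}_n(\m J\overrightarrow{\times}\ZZ)$ to the pair $(\widetilde f,\overline f)$ determined by $f(i,m)=(\widetilde f(i),\overline f_i(m))$. The first task is to see that $f$ maps each fibre $\{i\}\times\ZZ$ into a single fibre. The key ingredient is that the shift automorphism $\cn s$ of $\m J\overrightarrow{\times}\ZZ$ given by $\cn s(i,m)=(i,m+1)$ is invertible and satisfies $a^{[k]}=\cn s^k a\cn s^{-k}$ for all $a$ and $k$ (the analogue, for the chain $\m J\overrightarrow{\times}\ZZ$, of Lemma~\ref{l:b-conjugation}(1), established in \cite{GG2}). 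Hence the $n$-periodicity $f^{[n]}=f$ is equivalent to $\cn s^n f=f\cn s^n$, which unwound says $f(i,m+kn)=\cn s^{kn}\!\left(f(i,m)\right)$ for all $k$; in particular $f(i,m)$ and $f(i,m+n)$ lie in the same fibre. Since $f$ is order-preserving and the interval from $m$ to $m+n$ in the $\ZZ$-coordinate is finite, $f(i,\cdot)$ stays between two points of that common fibre, so $f[\{i\}\times\ZZ]\subseteq\{j\}\times\ZZ$ for a unique $j=:\widetilde f(i)$, and we define $\overline f_i\colon\ZZ\to\ZZ$ by $f(i,m)=(\widetilde f(i),\overline f_i(m))$. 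Each $\overline f_i$ is order-preserving and $n$-periodic as a function (from the displayed identity), hence finite-to-one, hence in $\mathrm F_n(\ZZ)$ by Lemma~\ref{l:basic-per}.

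Next I would check that $\widetilde f\in\mathrm{Aut}(\m J)$. It is order-preserving since $i_1<i_2$ forces $f(i_1,m)\le f(i_2,m)$ for all $m$, whence $\widetilde f(i_1)\le\widetilde f(i_2)$. It is injective: if $\widetilde f(i_1)=\widetilde f(i_2)$ with $i_1<i_2$, then $(i_1,m)<(i_2,0)$ gives $\overline f_{i_1}(m)\le\overline f_{i_2}(0)$ for every $m$, contradicting $\overline f_{i_1}(m)\to\infty$. It is surjective: $f$ is dually residuated, so for each $j\in J$ the set $\{x:(j,0)\le f(x)\}$ is nonempty and has a minimum $f^\ell(j,0)$; but if $j\notin\mathrm{im}(\widetilde f)$ this set equals $S\times\ZZ$ for some $S\subseteq J$, which has no minimum, a contradiction. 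Thus $\Phi\colon f\mapsto(\widetilde f,\overline f)$ maps into $\mathrm{Aut}(\m J)\times\mathrm F_n(\ZZ)^J$, and it is injective since the defining formula recovers $f$ from the pair. For surjectivity, given $(\widetilde g,\overline g)$ I would put $g(i,m):=(\widetilde g(i),\overline g_i(m))$, write down the candidate residual $g^r(j,m)=(\widetilde g^{-1}(j),\overline g^r_{\widetilde g^{-1}(j)}(m))$ and dual residual $g^\ell(j,m)=(\widetilde g^{-1}(j),\overline g^\ell_{\widetilde g^{-1}(j)}(m))$, verify the adjunctions using that $\widetilde g$ is an order-isomorphism, iterate to obtain residuals and dual residuals of all orders, and note $g^{[n]}=g$ because $\overline g_i^{[n]}=\overline g_i$ for each $i$; hence $g\in\mathrm F_n(\m J\overrightarrow{\times}\ZZ)$ with $\Phi(g)=(\widetilde g,\overline g)$.

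Finally I would verify that $\Phi$ intertwines the operations with those of the wreath product defined above. Composition gives $(f\circ g)(i,m)=(\widetilde f\widetilde g(i),\overline f_{\widetilde g(i)}(\overline g_i(m)))$, so $\widetilde{f\circ g}=\widetilde f\circ\widetilde g$ and $\overline{f\circ g}_i=\overline f_{\widetilde g(i)}\circ\overline g_i=((\overline f\otimes\widetilde g)\cdot\overline g)_i$, matching the wreath-product multiplication; the identity maps to $(id,\overline 1)$; the formulas for $g^\ell$ and $g^r$ above are exactly $(\widetilde g^{-1},\overline g^\ell\otimes\widetilde g^{-1})$ and $(\widetilde g^{-1},\overline g^r\otimes\widetilde g^{-1})$; and $f\le g$ pointwise unwinds to ``$\widetilde f\le\widetilde g$, and $\widetilde f(i)=\widetilde g(i)$ implies $\overline f_i\le\overline g_i$'', the wreath-product order. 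The ``in particular'' clause is just the defining identity $f(i,m)=(\widetilde f(i),\overline f_i(m))$. The main obstacle is the first paragraph: exploiting $n$-periodicity correctly to rule out $f$ spreading one fibre over infinitely many target fibres or collapsing infinitely many fibres to one, since without it the global component need not even be a well-defined function, let alone an automorphism. The remaining steps are careful but routine bookkeeping, and the statement is in any case due to \cite{GG2}.
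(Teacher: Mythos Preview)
The paper does not prove this proposition; it is quoted from \cite{GG2} without argument. Your proposal supplies a correct direct verification, and the key step you single out---using $n$-periodicity in the form $\cn{s}^n f=f\cn{s}^n$ to force each fibre $\{i\}\times\ZZ$ into a single target fibre---is exactly the substantive point. The remaining checks (that $\widetilde f$ is a bijection on $\m J$, that the inverse construction lands in $\mathrm F_n(\m J\overrightarrow{\times}\ZZ)$, and that $\Phi$ respects the wreath-product operations and order) are routine and your sketches are accurate. One small addition worth making explicit in the surjectivity-of-$\widetilde f$ argument: the set $\{x:(j,0)\le f(x)\}$ is nonempty because $f^\ell$ is everywhere defined, so the contradiction (no minimum in $S\times\ZZ$) is genuine.
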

For $f\in \mathrm{F}_n(\m{J}\overrightarrow{\times} \ZZ)$ and   $(\widetilde{f},\overline{f}) \in \mathrm{Aut}(\m{J}) \times \mathrm{F}_n(\ZZ)^J$ we will be writing $f \equiv (\widetilde{f},\overline{f})$ to indicate that the two elements correspond; see Figure~\ref{f: global-local} for an example.

\begin{figure}[ht]\def\eq{=}
\centering
{\scriptsize
\begin{tikzpicture}
[scale=0.4]
\node[fill,draw,circle,scale=0.3,left](9) at (0,9){};
\node[right](9.1,0) at (-.9,9.25){$3$};
\node[fill,draw,circle,scale=0.3,right](9r) at (2,9){};
\node[right](9.1) at (2.1,9.25){$3$};

\node[fill,draw,circle,scale=0.3,left](8) at (0,8){};
\node[right](8.1,0) at (-.9,8.25){$2$};
\node[fill,draw,circle,scale=0.3,right](8r) at (2,8){};
\node[right](8.1) at (2.1,8.25){$2$};

\node[fill,draw,circle,scale=0.3,left](7) at (0,7){};
\node[right](7.1,0) at (-.9,7.25){$1$};
\node[fill,draw,circle,scale=0.3,right](7r) at (2,7){};
\node[right](7.1) at (2.1,7.25){$1$};

\node[fill,draw,circle,scale=0.3,left](6) at (0,6){};
\node[right](6.1,0) at (-.9,6.25){$0$};
\node[fill,draw,circle,scale=0.3,right](6r) at (2,6){};
\node[right](6.1) at (2.1,6.25){$0$};

\node[fill,draw,circle,scale=0.3,left](4) at (0,4){};
\node[right](9.1,0) at (-.9,4.25){$7$};
\node[fill,draw,circle,scale=0.3,right](4r) at (2,4){};
\node[right](4.1) at (2.1,4.25){$7$};

\node[fill,draw,circle,scale=0.3,left](3) at (0,3){};
\node[right](3.1,0) at (-.9,3.25){$6$};
\node[fill,draw,circle,scale=0.3,right](3r) at (2,3){};
\node[right](3.1) at (2.1,3.25){$6$};

\node[fill,draw,circle,scale=0.3,left](2) at (0,2){};
\node[right](2.1,0) at (-.9,2.25){$5$};
\node[fill,draw,circle,scale=0.3,right](2r) at (2,2){};
\node[right](2.1) at (2.1,2.25){$5$};

\node[fill,draw,circle,scale=0.3,left](1) at (0,1){};
\node[right](1.1,0) at (-.9,1.25){$4$};
\node[fill,draw,circle,scale=0.3,right](1r) at (2,1){};
\node[right](1.1) at (2.1,1.25){$4$};


\node[fill,draw,circle,scale=0.3,left](-1) at (0,-1){};
\node[right](-1.1,0) at (-.9,-1.25){$4$};
\node[fill,draw,circle,scale=0.3,right](-1r) at (2,-1){};
\node[right](9.1) at (2.1,-1.25){$4$};

\node[fill,draw,circle,scale=0.3,left](-1) at (0,-2){};
\node[right](-1.1,0) at (-.9,-2.25){$3$};
\node[fill,draw,circle,scale=0.3,right](-1r) at (2,-2){};
\node[right](9.1) at (2.1,-2.25){$3$};

\node[fill,draw,circle,scale=0.3,left](-2) at (0,-3){};
\node[right](-2.1,0) at (-.9,-3.25){$2$};
\node[fill,draw,circle,scale=0.3,right](-2r) at (2,-3){};
\node[right](-2.1) at (2.1,-3.25){$2$};

\node[fill,draw,circle,scale=0.3,left](-3) at (0,-4){};
\node[right](-3.1,0) at (-.9,-4.25){$1$};
\node[fill,draw,circle,scale=0.3,right](-3r) at (2,-4){};
\node[right](-3.1) at (2.1,-4.25){$1$};

\node[fill,draw,circle,scale=0.3,left](-4) at (0,-5){};
\node[right](-4.1,0) at (-.9,-5.25){$0$};
\node[fill,draw,circle,scale=0.3,right](-4r) at (2,-5){};
\node[right](-4.1) at (2.1,-5.25){$0$};
\node at (1)[below= 1pt]{$\vdots$};
\node at (1r)[below= 1pt]{$\vdots$};
\node at (6)[below=1pt]{$\vdots$};
\node at (6r)[below=1pt]{$\vdots$};
\node at (9)[above=3pt]{$\vdots$};
\node at (9r)[above=3pt]{$\vdots$};
\node at (-4)[below=-1pt]{$\vdots$};
\node at (-4r)[below=-1pt]{$\vdots$};

\draw[-](1)--(7r);
\draw[-](2)--(7r);
\draw[-](3)--(9r);
\draw[-](4)--(9r);

\draw[-](-1)--(3r);
\draw[-](-2)--(3r);
\draw[-](-3)--(1r);
\draw[-](-4)--(1r);

\node at (1,-6){\normalsize$f$};
\end{tikzpicture}
\qquad \qquad \qquad
\begin{tikzpicture}
[scale=0.4]



\node[fill,draw,circle,scale=0.3,left](6) at (0,7){};
\node[fill,draw,circle,scale=0.3,right](6r) at (2,7){};


\node[fill,draw,circle,scale=0.3,left](4) at (0,1.5){};
\node[fill,draw,circle,scale=0.3,right](4r) at (2,1.5){};


\node[fill,draw,circle,scale=0.3,left](2) at (0,-4){};
\node[fill,draw,circle,scale=0.3,right](2r) at (2,-4){};
\node at (6)[above=3pt]{$\vdots$};
\node at (6r)[above=3pt]{$\vdots$};
\node at (2)[below=-1pt]{$\vdots$};
\node at (2r)[below=-1pt]{$\vdots$};
\draw[-](2)--(4r);
\draw[-](4)--(6r);

\node at (1,-5){\normalsize $\tilde{f}$};
\end{tikzpicture}
\qquad \qquad \qquad
\begin{tikzpicture}
[scale=0.4]
\node[fill,draw,circle,scale=0.3,left](9) at (0,9){};
\node[right](9.1,0) at (-.9,9.25){$7$};
\node[fill,draw,circle,scale=0.3,right](9r) at (2,9){};
\node[right](9.1) at (2.1,9.25){$7$};

\node[fill,draw,circle,scale=0.3,left](8) at (0,8){};
\node[right](8.1,0) at (-.9,8.25){$6$};
\node[fill,draw,circle,scale=0.3,right](8r) at (2,8){};
\node[right](8.1) at (2.1,8.25){$6$};

\node[fill,draw,circle,scale=0.3,left](7) at (0,7){};
\node[right](7.1,0) at (-.9,7.25){$5$};
\node[fill,draw,circle,scale=0.3,right](7r) at (2,7){};
\node[right](7.1) at (2.1,7.25){$5$};

\node[fill,draw,circle,scale=0.3,left](6) at (0,6){};
\node[right](6.1,0) at (-.9,6.25){$4$};
\node[fill,draw,circle,scale=0.3,right](6r) at (2,6){};
\node[right](6.1) at (2.1,6.25){$4$};

\node[fill,draw,circle,scale=0.3,left](5) at (0,5){};
\node[right](9.1,0) at (-.9,5.25){$3$};
\node[fill,draw,circle,scale=0.3,right](5r) at (2,5){};
\node[right](5.1) at (2.1,5.25){$3$};

\node[fill,draw,circle,scale=0.3,left](4) at (0,4){};
\node[right](8.1,0) at (-.9,4.25){$2$};
\node[fill,draw,circle,scale=0.3,right](4r) at (2,4){};
\node[right](4.1) at (2.1,4.25){$2$};

\node[fill,draw,circle,scale=0.3,left](3) at (0,3){};
\node[right](3.1,0) at (-.9,3.25){$1$};
\node[fill,draw,circle,scale=0.3,right](3r) at (2,3){};
\node[right](3.1) at (2.1,3.25){$1$};

\node[fill,draw,circle,scale=0.3,left](2) at (0,2){};
\node[right](2.1,0) at (-.9,2.25){$0$};
\node[fill,draw,circle,scale=0.3,right](2r) at (2,2){};
\node[right](2.1) at (2.1,2.25){$0$};
\node at (9)[above=3pt]{$\vdots$};
\node at (9r)[above=3pt]{$\vdots$};
\node at (2)[below=-1pt]{$\vdots$};
\node at (2r)[below=-1pt]{$\vdots$};
\draw[-](2)--(6r);
\draw[-](3)--(6r);
\draw[-](4)--(8r);
\draw[-](5)--(8r);

\node at (1,1.25){\normalsize $\overline{f}_0$};
\end{tikzpicture}
\qquad \qquad \qquad
\begin{tikzpicture}
[scale=0.4]
\node[fill,draw,circle,scale=0.3,left](9) at (0,9){};
\node[right](9.1,0) at (-.9,9.25){$7$};
\node[fill,draw,circle,scale=0.3,right](9r) at (2,9){};
\node[right](9.1) at (2.1,9.25){$7$};

\node[fill,draw,circle,scale=0.3,left](8) at (0,8){};
\node[right](8.1,0) at (-.9,8.25){$6$};
\node[fill,draw,circle,scale=0.3,right](8r) at (2,8){};
\node[right](8.1) at (2.1,8.25){$6$};

\node[fill,draw,circle,scale=0.3,left](7) at (0,7){};
\node[right](7.1,0) at (-.9,7.25){$5$};
\node[fill,draw,circle,scale=0.3,right](7r) at (2,7){};
\node[right](7.1) at (2.1,7.25){$5$};

\node[fill,draw,circle,scale=0.3,left](6) at (0,6){};
\node[right](6.1,0) at (-.9,6.25){$4$};
\node[fill,draw,circle,scale=0.3,right](6r) at (2,6){};
\node[right](6.1) at (2.1,6.25){$4$};

\node[fill,draw,circle,scale=0.3,left](5) at (0,5){};
\node[right](9.1,0) at (-.9,5.25){$3$};
\node[fill,draw,circle,scale=0.3,right](5r) at (2,5){};
\node[right](5.1) at (2.1,5.25){$3$};

\node[fill,draw,circle,scale=0.3,left](4) at (0,4){};
\node[right](8.1,0) at (-.9,4.25){$2$};
\node[fill,draw,circle,scale=0.3,right](4r) at (2,4){};
\node[right](4.1) at (2.1,4.25){$2$};

\node[fill,draw,circle,scale=0.3,left](3) at (0,3){};
\node[right](3.1,0) at (-.9,3.25){$1$};
\node[fill,draw,circle,scale=0.3,right](3r) at (2,3){};
\node[right](3.1) at (2.1,3.25){$1$};

\node[fill,draw,circle,scale=0.3,left](2) at (0,2){};
\node[right](2.1,0) at (-.9,2.25){$0$};
\node[fill,draw,circle,scale=0.3,right](2r) at (2,2){};
\node[right](2.1) at (2.1,2.25){$0$};
\node at (9)[above=3pt]{$\vdots$};
\node at (9r)[above=3pt]{$\vdots$};
\node at (2)[below=-1pt]{$\vdots$};
\node at (2r)[below=-1pt]{$\vdots$};
\draw[-](6)--(3r);
\draw[-](7)--(3r);
\draw[-](8)--(5r);
\draw[-](9)--(5r);

\node at (1,1.25){\normalsize $\overline{f}_1$};
\end{tikzpicture}
}
\caption{An element $f$ of $\m F_2(\mathbb{Q} \overrightarrow{\times} \mathbb{Z})$, its global component $\widetilde{f} \in \m F_1(\mathbb{Z})$, and two of its local components  $\overline{f}_0, \overline{f}_1 \in \m F_2(\mathbb{Z})$.
}
\label{f: global-local}
\end{figure}
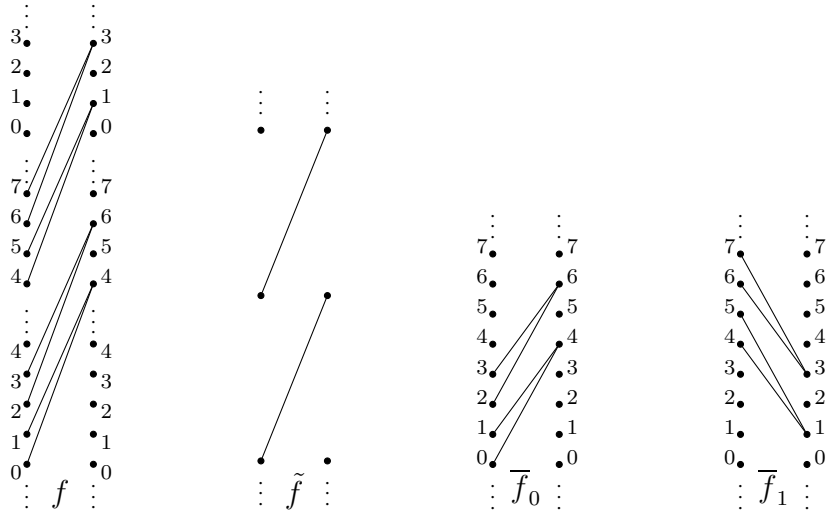

\begin{lemma}
  If $(\widetilde{f},\overline{f}) \equiv f\in \mathrm{F}_n(\m{J}\overrightarrow{\times} \ZZ)$, where $\m{J}$ is a chain, then we have $\g(f) \equiv (\widetilde{f},\g(\overline{f}))$ and $\h(f) \equiv (\widetilde{f},\h(\overline{f}))$, where $\g(\overline{f})$ and $\h(\overline{f})$ are computed coordinate-wise. 
\end{lemma}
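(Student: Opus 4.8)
The plan is to unwind the definitions of $\g$ and $\h$ as finite meets/joins of double-residual images and to transfer these computations through the isomorphism $\m{F}_n(\m{J}\overrightarrow{\times}\ZZ)\cong \m{Aut}(\m{J})\wr\m{F}_n(\ZZ)$ coordinate-wise. Recall that $\g(f)=f\mt f^{[1]}\mt\dots\mt f^{[n-1]}$ and $\h(f)=f\jn f^{[1]}\jn\dots\jn f^{[n-1]}$. First I would record how the map $a\mapsto a^{[k]}$ acts in the wreath product representation: since ${}^\ell$ and ${}^r$ are defined in $\m{Aut}(\m{J})\wr\m{F}_n(\ZZ)$ by $(\widetilde{f},\overline{f})^\ell=(\widetilde{f}^{-1},\overline{f}^\ell\otimes\widetilde{f}^{-1})$ and dually for ${}^r$, iterating twice gives $(\widetilde{f},\overline{f})^{[1]}=(\widetilde{f},\overline{f}^{[1]}\otimes\mathrm{id})=(\widetilde{f},(\overline{f}_i^{[1]})_{i\in J})$ because $\widetilde{f}\in\mathrm{Aut}(\m{J})$ is invertible so $\widetilde{f}^{-1}\widetilde{f}^{-1}$ composed appropriately returns $\widetilde{f}$ and the global twist $\otimes\widetilde{f}^{-1}$ applied twice cancels (the global component of any $a^{[k]}$ is $\widetilde{a}$, consistent with Lemma~\ref{l:double-l-auto} fixing the group skeleton, here the automorphisms $\mathrm{Aut}(\m{J})$). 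Hence $f^{[k]}\equiv(\widetilde{f},(\overline{f}_i^{[k]})_{i\in J})$ for every $k$, with the \emph{same} global component $\widetilde{f}$ throughout.

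The key point is then that all the terms $f,f^{[1]},\dots,f^{[n-1]}$ share the global component $\widetilde{f}$, so when we take their meet (resp.\ join) in $\m{Aut}(\m{J})\wr\m{F}_n(\ZZ)$ we always land in the ``$\widetilde{f}(i)=\widetilde{g}(i)$'' case of the formulas for $\overline{(f\mt g)}_i$ and $\overline{(f\jn g)}_i$ given in the excerpt. Concretely, the formula for the meet says that when the global components agree at $i$, the local component at $i$ is the meet of the local components; iterating over the $n$ terms, the global component of $\g(f)$ is $\widetilde{f}\mt\widetilde{f}\mt\dots\mt\widetilde{f}=\widetilde{f}$ and its local component at $i$ is $\overline{f}_i\mt\overline{f}_i^{[1]}\mt\dots\mt\overline{f}_i^{[n-1]}=\g(\overline{f}_i)$, i.e.\ $\g(f)\equiv(\widetilde{f},(\g(\overline{f}_i))_i)$, which is exactly $(\widetilde{f},\g(\overline{f}))$ with $\g(\overline{f})$ computed coordinate-wise. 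The argument for $\h$ is identical, using the join formula and the ``$\widetilde{f}(i)=\widetilde{g}(i)$'' clause there.

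I expect the only mild obstacle to be the bookkeeping in the first paragraph: verifying cleanly that the $\otimes\widetilde{f}^{\pm1}$ twists and the global inversions cancel after an even number of ${}^\ell/{}^r$ applications, so that $f^{[k]}$ really does have global component $\widetilde{f}$ and local components $(\overline{f}_i^{[k]})_i$. This is routine once one writes out $(\widetilde{f},\overline{f})^{\ell\ell}$ explicitly: $(\widetilde{f},\overline{f})^\ell=(\widetilde{f}^{-1},\overline{f}^\ell\otimes\widetilde{f}^{-1})$, and applying ${}^\ell$ again yields global component $(\widetilde{f}^{-1})^{-1}=\widetilde{f}$ and local component $(\overline{f}^\ell\otimes\widetilde{f}^{-1})^\ell\otimes\widetilde{f}$, which simplifies coordinate-wise to $(\overline{f}_i^{\ell\ell})_i=(\overline{f}_i^{[1]})_i$ after cancelling the $\widetilde{f}^{-1}$ twist against the $\widetilde{f}$ twist; induction on $k$ finishes it. Everything else is a direct unwinding of the displayed lattice formulas for the wreath product, so the proof is short.
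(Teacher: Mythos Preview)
Your proposal is correct and follows essentially the same approach as the paper: both compute $(\widetilde{f},\overline{f})^{\ell\ell}=(\widetilde{f},\overline{f}^{\ell\ell})$ explicitly by unwinding the wreath-product definition of ${}^\ell$ and cancelling the $\otimes\widetilde{f}^{\pm1}$ twists, then observe that all the $f^{[k]}$ share the global component $\widetilde{f}$ so that the meet/join is computed coordinate-wise on the local components. The paper's proof is slightly terser but the logic is identical.
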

\begin{proof}
    Let $(\widetilde{f},\overline{f}) \equiv f \in \mathrm{F}_n(\m{J}\overrightarrow{\times} \ZZ)$. First note that 
       \begin{align*}
        f^{\ell\ell} \equiv (\widetilde{f}^{-1},\overline{f}^{\ell} \otimes \widetilde{f}^{-1})^\ell &= (\widetilde{f},(\overline{f}^{\ell} \otimes \widetilde{f}^{-1})^\ell \otimes \widetilde{f}) \\
                      &= (\widetilde{f},(\overline{f}^{\ell\ell} \otimes \widetilde{f}^{-1}) \otimes \widetilde{f}) = (\widetilde{f},\overline{f}^{\ell\ell}),
    \end{align*}
    where the third equality holds because $^{\ell}$ is computed coordinate-wise. Thus we get 
    \begin{align*}
        \g(f) &= f \mt f^{[1]} \mt \dots \mt f^{[n-1]} \\
              &\equiv (\widetilde{f},\overline{f}) \mt (\widetilde{f},\overline{f}^{[1]}) \mt \dots \mt (\widetilde{f},\overline{f}^{[n-1]}) \\
              &= (\widetilde{f},\overline{f} \mt \overline{f}^{[1]} \mt \dots \mt \overline{f}^{[n-1]}) \\
              &= (\widetilde{f},\g(\overline{f})),
    \end{align*}
    where the third equality holds, since all the meetands have the same global component. Similarly it follows that $\h(f) \equiv (\widetilde{f},\h(\overline{f}))$.
\end{proof}

The next lemma shows how the periodicity of an element is calculated in a representation. 

\begin{lemma} \label{l: per}
  If $\m L \leq  \m{F}_n(\m{J}\overrightarrow{\times} \ZZ)$, for some chain $\m J$, then $\per(a)=\per(\overline{a})=\lcm\{\per(\overline{a}_i) : i \in J\}$. 
 \end{lemma}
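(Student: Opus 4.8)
The plan is to work with the wreath-product representation $\m{F}_n(\m{J}\overrightarrow{\times}\ZZ) \cong \m{Aut}(\m J)\wr \m{F}_n(\ZZ)$, so that each $a \in L$ corresponds to a pair $(\widetilde a, \overline a)$ with $\overline a = (\overline a_i)_{i\in J} \in \mathrm{F}_n(\ZZ)^J$. The key observation is that $a$ being $k$-periodic means $a = a^{[k]}$, and since the automorphism $x\mapsto x^{[k]}$ (iterated $^{\ell\ell}$) acts on pairs by $a^{[k]} \equiv (\widetilde a, \overline a^{[k]})$ — as extracted from the computation in the lemma just before Lemma~\ref{l: per}, where $^{\ell\ell}$ is computed coordinate-wise and does not disturb the global component — we get $a = a^{[k]}$ iff $\overline a = \overline a^{[k]}$, i.e. iff each $\overline a_i$ is $k$-periodic in $\m{F}_n(\ZZ)$. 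This immediately yields $\per(a) = \per(\overline a)$, where $\per(\overline a)$ is understood as the least $k\ge 1$ with $\overline a_i = \overline a_i^{[k]}$ for all $i\in J$ (with $\per = \infty$ if no such $k$ exists).

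Next I would identify $\per(\overline a)$ with $\lcm\{\per(\overline a_i): i\in J\}$. For the forward direction: if $k = \per(\overline a)$, then for each $i$, $\overline a_i = \overline a_i^{[k]}$, so $\per(\overline a_i) \mid k$ (using that the set of periods of a single element is closed under divisibility — an element that is $k$-periodic is also $k'$-periodic for every multiple $k'$ of a period, and more to the point, by the standard argument iterating $x\mapsto x^{\ell\ell}$ one shows the set $\{k : \overline a_i = \overline a_i^{[k]}\}$ is exactly the set of multiples of $\per(\overline a_i)$; this is essentially the remainder argument already used in the Remark after Lemma~\ref{l:double-l-auto}). Hence $\lcm\{\per(\overline a_i)\} \mid k$. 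Conversely, letting $m = \lcm\{\per(\overline a_i) : i\in J\}$ (finite, since every element of $\m{F}_n(\ZZ)$ is $n$-periodic, so each $\per(\overline a_i) \mid n$ and thus $m \mid n$), each $\per(\overline a_i) \mid m$ gives $\overline a_i = \overline a_i^{[m]}$ for all $i$, so $\overline a = \overline a^{[m]}$, whence $\per(\overline a) \mid m$. Combining the two divisibilities gives $\per(\overline a) = m$.

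One subtlety to address is that $J$ may be infinite, so a priori the least common multiple of an infinite family need not be well-defined; but this is harmless here precisely because each $\per(\overline a_i)$ divides the fixed $n$, so the family of periods is a finite set of divisors of $n$ and its lcm exists and divides $n$. I should also note at the outset that the statement is about $a \in L$ for a subalgebra $\m L \le \m{F}_n(\m J\overrightarrow{\times}\ZZ)$, but periodicity is an intrinsic property (it only refers to $^{\ell\ell}$ and equality), so it does not matter whether we compute $\per(a)$ inside $\m L$ or inside the ambient $\m{F}_n(\m J\overrightarrow{\times}\ZZ)$; thus we may freely pass to the ambient algebra and use its wreath-product description.

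I do not anticipate a serious obstacle: the proof is essentially bookkeeping with the wreath-product coordinates, and the only point requiring a little care is the characterization of the period set of a single element as the set of multiples of its minimal period, which follows from the remainder-division argument already present in the paper. The main step — reducing periodicity of $a$ to coordinatewise periodicity of $\overline a$ — rests entirely on the fact, recorded in the lemma preceding Lemma~\ref{l: per}, that $^{\ell\ell}$ acts coordinatewise on local components and trivially on the global component, so that $a^{[k]} \equiv (\widetilde a, \overline a^{[k]})$.
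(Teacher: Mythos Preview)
Your proposal is correct and follows essentially the same approach as the paper: reduce $a^{[k]}=a$ to $\overline a^{[k]}=\overline a$ via the coordinatewise action of $^{\ell\ell}$ on the wreath product, and then identify the period of $\overline a$ with the lcm of the coordinate periods. You supply more detail than the paper does (the divisibility argument for the lcm, the finiteness of the set of periods since each divides $n$, and the remark that periodicity is computed the same in $\m L$ as in the ambient algebra), all of which is accurate and welcome.
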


 \begin{proof}
By the definition of ${}^\ell$ on wreath products we get $a^{\ell\ell}=(\widetilde{a}, \overline{a})^{\ell\ell}=(\widetilde{a}, \overline{a}^{\ell\ell})$, so  $a^{[k]}=a$ iff $\overline{a}^{[k]}=\overline{a}$; hence $\per(a)=\per(\overline{a})$. 
Furthermore, $\overline{a}^{[k]}=((\overline{a}_i)_{i \in J})^{[k]}=((\overline{a}_i^{[k]})_{i \in J})$, so  $\overline{a}^{[k]}= \overline{a}$ iff $(\overline{a}_i^{[k]})_{i \in J}=(\overline{a}_i)_{i \in J}$ iff $\overline{a}_i^{[k]}=\overline{a}_i$, for all $i \in J$. Thus, $\per(\overline{a})=\lcm\{\per(\overline{a}_i) : i \in J\}$.
 \end{proof}

The following lemma characterizes the idempotent and the flat elements in $n$-periodic $\ell$-pregroups in terms of their global and local coordinates.

\begin{lemma}\label{l: flat}
If  $\m{L}\leq \m{F}_n(\m{J}\overrightarrow{\times} \ZZ)$ for some chain $\m{J}$, then an element $a\in L$ is idempotent (flat) iff \ $\widetilde{a} = id$ and $\overline{a}_i$ is idempotent (flat) in $\m{F}_n(\ZZ)$, for each $i\in J$. In particular, the only invertible flat element in an $n$-periodic $\ell$-pregroup is $1$.
\end{lemma}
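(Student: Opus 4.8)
The plan is to carry out everything inside the wreath product representation, identifying $\m{F}_n(\m{J}\overrightarrow{\times}\ZZ)$ with $\m{Aut}(\m{J})\wr\m{F}_n(\ZZ)$ and writing $a\equiv(\widetilde a,\overline a)$ for $a\in L$. Since $\m L$ is a subalgebra, idempotency (an equation) and the lattice order of $\m L$ agree with those of the ambient wreath product, so both equivalences may be verified in $\m{Aut}(\m{J})\wr\m{F}_n(\ZZ)$. For the idempotent case, $a^{2}\equiv(\widetilde a\circ\widetilde a,\,(\overline a\otimes\widetilde a)\cdot\overline a)$; as $\widetilde a$ is an order-automorphism of $\m J$ (hence cancellable), $\widetilde a\circ\widetilde a=\widetilde a$ is equivalent to $\widetilde a=id$, and once $\widetilde a=id$ we have $\overline a\otimes\widetilde a=\overline a$, so the local part of $a^{2}=a$ reads $\overline a\cdot\overline a=\overline a$, that is, $\overline a_i\overline a_i=\overline a_i$ for every $i\in J$, because multiplication on $\m F_n(\ZZ)^{J}$ is coordinatewise. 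This proves the idempotent equivalence in both directions.

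For the flat case, the forward direction is short: if $b\le a\le c$ with $b,c\in L$ idempotent, then $\widetilde b=\widetilde c=id$ by the idempotent case, so the definition of the wreath order forces $id=\widetilde b\le\widetilde a\le\widetilde c=id$, i.e.\ $\widetilde a=id$; and since then $\widetilde b(i)=\widetilde a(i)=\widetilde c(i)$ for all $i$, the wreath order gives $\overline b_i\le\overline a_i\le\overline c_i$ with $\overline b_i,\overline c_i$ idempotent in $\m F_n(\ZZ)$, whence each $\overline a_i$ is flat. For the converse, assume $\widetilde a=id$ and each $\overline a_i$ is flat in $\m F_n(\ZZ)$. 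The obvious idea---choosing idempotent bounds of $a$ coordinatewise---only produces bounds inside the ambient wreath product, not inside $\m L$, so instead one uses the decomposition $a=(a\jn1)(a\mt1)$. Since $\widetilde a=id$ we have $a\jn1\equiv(id,(\overline a_i\jn1)_i)$ and $a\mt1\equiv(id,(\overline a_i\mt1)_i)$, and by Lemma~\ref{l:flat-Fn}(b) a fixed point of $\overline a_i$ is also a fixed point of $\overline a_i\jn1$ and of $\overline a_i\mt1$, so these maps are again flat in $\m F_n(\ZZ)$, the former positive and the latter negative. By Lemma~\ref{l:flat-Fn}(c) we get $(\overline a_i\jn1)^{n}=(\overline a_i\jn1)^{n-1}$ and $(\overline a_i\mt1)^{n}=(\overline a_i\mt1)^{n-1}$ for all $i$; reading this off coordinatewise, $p:=(a\jn1)^{n-1}$ and $m:=(a\mt1)^{n-1}$ are idempotent, and they lie in $L$. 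Finally $1\le a\jn1$ and $a\mt1\le1$ give $m\le a\mt1\le a\le a\jn1\le p$, so $a$ is flat in $\m L$.

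For the "in particular" statement, let $\m M$ be an $n$-periodic $\ell$-pregroup and $a\in M$ invertible and flat, and fix a representation $\m M\le\m F_n(\m J\overrightarrow{\times}\ZZ)$ via Proposition~\ref{p: representation1}. Flatness of $a$ in $\m M$ implies flatness of $a$ in the ambient algebra, so by the equivalence just proved $\widetilde a=id$ and each $\overline a_i$ is flat in $\m F_n(\ZZ)$; but $a$ invertible forces each $\overline a_i$ to be invertible in $\m F_n(\ZZ)$, hence a power of $\cn s$ (as in Lemma~\ref{l:grp-hg}), and a power of $\cn s$ is flat---equivalently, has a fixed point, by Lemma~\ref{l:flat-Fn}(b)---only when it equals $1$. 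Thus $\overline a_i=1$ for all $i$ and $a=1$. The main obstacle is precisely the converse half of the flat equivalence: one must avoid idempotent bounds that only live in the ambient wreath product and instead exhibit the term bounds $(a\jn1)^{n-1}$ and $(a\mt1)^{n-1}$, using Lemma~\ref{l:flat-Fn} to see that they are idempotent.
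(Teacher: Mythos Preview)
Your proof is correct and runs along the same wreath-product lines as the paper, but you are actually \emph{more} careful than the paper in the converse direction of the flat equivalence. The paper's proof is a bare chain of ``iff''s that passes from ``there exist idempotents $b,c\in L$ with $\overline b\le\overline a\le\overline c$'' to ``there exist idempotents $\overline b,\overline c\in \mathrm F_n(\ZZ)^J$ with $\overline b\le\overline a\le\overline c$'' without explaining how to come back---that is, how to find idempotent bounds \emph{inside} $\m L$ rather than merely in the ambient wreath product. You identify this issue and resolve it by exhibiting the term-definable bounds $p=(a\jn1)^{n-1}$ and $m=(a\mt1)^{n-1}$, using Lemma~\ref{l:flat-Fn}(b),(c) to see that a fixed point of $\overline a_i$ persists in $\overline a_i\jn1$ and $\overline a_i\mt1$, hence that $p$ and $m$ are idempotent. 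This is exactly the right fix; the paper uses the same trick later (for positive elements only) in the proof of Lemma~\ref{l: spfpl}, but omits it here. One cosmetic remark: the factorization $a=(a\jn1)(a\mt1)$ that you announce is never actually used---your argument relies only on $a\mt1\le a\le a\jn1$ and their positivity/negativity, so you can drop that sentence.
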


\begin{proof}
 First note that an element $b$ of $L$ is idempotent iff $\widetilde{b} = id$ and  $\overline{b}$ is idempotent. Also, $\overline{b}$ is idempotent iff $\overline{b}_i$ is idempotent for all $i \in J$.

So, an element $a$ of $L$ is flat iff there exists idempotents $b, c \in L$ such that $b \leq a\leq c$ iff 
there exist idempotents $b, c \in L$ such that $\widetilde{b}=\widetilde{a}=\widetilde{c}=id$
and  $\overline{b} \leq \overline{a}\leq \overline{c}$ 
iff $\widetilde{a}=id$ and
there exist  idempotents $\overline{b}, \overline{c} \in F_n(\ZZ)^J$ such that $\overline{b} \leq \overline{a}\leq \overline{c}$ 
iff $\widetilde{a}=id$ and $\overline{a}$ is flat. Moreover, $\overline{a}$ is flat iff $\overline{a}_i$ is flat, for all $i \in J$.

In particular, it follows that $1$ is the only invertible flat element. 
\end{proof}

The following corollary characterizes positive flat elements in the $n$-periodic case. 

\begin{corollary}
    If $\m{L}$ is an $n$-periodic $\ell$-pregroup and $a\in L$ with $1 \leq a$, then $a$ is flat if and only if $\g(a) = 1$.
\end{corollary}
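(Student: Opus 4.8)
The plan is to reduce everything to the concrete case via the wreath-product representation. By Proposition~\ref{p: representation1} I may assume $\m{L}\leq\m{F}_n(\m{J}\overrightarrow{\times}\ZZ)\cong\m{Aut}(\m{J})\wr\m{F}_n(\ZZ)$ for some chain $\m{J}$, and I write $a\equiv(\widetilde a,\overline a)$. The hypothesis $1\leq a$ unpacks, by the definition of the wreath-product order, to $id\leq\widetilde a$ together with the implication $\widetilde a(i)=i\Rightarrow 1\leq\overline a_i$; so as soon as one knows $\widetilde a=id$ it follows that $1\leq\overline a_i$ for every $i\in J$. This observation is the only bookkeeping subtlety in the argument.

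For the direction ``flat $\Rightarrow\g(a)=1$'', I would invoke Lemma~\ref{l: flat}: flatness of $a$ gives $\widetilde a=id$ and flatness of each local component $\overline a_i$ in $\m{F}_n(\ZZ)$. By the observation above each $\overline a_i$ is then positive, so Lemma~\ref{l:flat-Fn} (which says that for positive elements of $\m{F}_n(\ZZ)$, flatness is equivalent to $\g=1$) yields $\g(\overline a_i)=1$ for all $i$, i.e.\ $\g(\overline a)=\overline 1$. Then the lemma computing $\g$ on the representation, namely $\g(a)\equiv(\widetilde a,\g(\overline a))$, gives $\g(a)\equiv(id,\overline 1)$, that is $\g(a)=1$. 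For the converse I would run the same chain backwards: from $\g(a)=1\equiv(id,\overline 1)$ and $\g(a)\equiv(\widetilde a,\g(\overline a))$ I get $\widetilde a=id$ and $\g(\overline a_i)=1$ for all $i$; positivity of each $\overline a_i$ then follows as above, Lemma~\ref{l:flat-Fn} makes each $\overline a_i$ flat in $\m{F}_n(\ZZ)$, and Lemma~\ref{l: flat} delivers flatness of $a$.

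I do not expect a real obstacle: the result is essentially the ``local-component'' shadow of the already-proved equivalence in $\m{F}_n(\ZZ)$ (Lemma~\ref{l:flat-Fn}) combined with the coordinatewise computation of $\g$ and of flatness in the wreath-product representation (Lemma~\ref{l: flat}). The only place to be careful is that $1\leq a$ must first be upgraded --- via $\widetilde a=id$ --- to positivity of the local components before Lemma~\ref{l:flat-Fn} can be applied. A representation-free proof seems less attractive, as it would require separately establishing, for abstract $n$-periodic $\ell$-pregroups, that positive idempotents satisfy $\g=1$ and that every positive element with $\g=1$ sits below an idempotent, and the natural proofs of both of these again go through the representation.
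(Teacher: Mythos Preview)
Your proposal is correct and takes essentially the same approach as the paper: both use the wreath-product representation, the coordinatewise formula $\g(a)\equiv(\widetilde a,\g(\overline a))$, Lemma~\ref{l: flat} for the characterization of flatness via local components, and Lemma~\ref{l:flat-Fn} for the equivalence of flatness with $\g=1$ on positive elements of $\m{F}_n(\ZZ)$. The only minor differences in execution are that in the forward direction the paper passes through the upper idempotent $c\geq a$ and uses monotonicity of $\g$ rather than applying Lemma~\ref{l:flat-Fn} directly to $a$, and in the backward direction the paper explicitly exhibits the idempotent $a^{n-1}$ rather than invoking the ``iff'' in Lemma~\ref{l: flat}; your version is arguably a bit more direct on both counts.
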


\begin{proof}
    We may consider $\m{L}$ as a subalgebra of $\m{F}_n(\m{J}\overrightarrow{\times}\ZZ)$ for some chain $\m{J}$.
    If $1\leq a$ is flat, then there exists an idempotent $c \in L$ such that $1 \leq a\leq c$. By Lemma~\ref{l: flat},
    we have $\widetilde{c} = id$ and  $\overline{c}_i$ is a positive idempotent, for each $i\in J$. By Lemma~\ref{l:flat-Fn}, $\g(\overline{c}_i) = 1$, for each $i\in J$, hence $1\leq \g(a) \leq \g(c) = 1$. 
    
    Conversely, if $\g(a) = 1$, then $\widetilde{a} = id$, and for each $i\in J$, $\g(\overline{a}_i) = 1$. Since $1\leq a$, by Lemma~\ref{l:flat-Fn}, {$b:=\overline{a}^{n-1}$ is idempotent and we have $1 \leq \overline{a} \leq b$,} so $(id, b) = a^{n-1}$ is idempotent and $1 \leq a \leq (id, b)$. Thus $a$ is flat.
\end{proof}

\subsection{Local subalgebra}

For $\m{L} \leq \m{F}_n(\m{J}\overrightarrow{\times} \ZZ)$, where $\m J$ is a chain, we define its \emph{local subalgebra} by $\loc{\m{L}} = \{f \in L : \widetilde{f} = id \}$ and we call the elements of $\loc{\m{L}}$ \emph{local}.
Since $\m{F}_n(\m{J}\overrightarrow{\times} \ZZ) \cong \m{Aut}(\m J) \wr \m{F}_n(\ZZ)=\m{Aut}(\m J) \rtimes \m{F}_n(\ZZ)^J$, we get that $\loc{\m{L}}$ is a subalgebra of $\{id\} \rtimes \m{F}_n(\ZZ)^J \cong \m{F}_n(\ZZ)^J$; here $\m G \rtimes \m L$ denotes the semidirect product of an $\ell$-group $\m G$ and an $\ell$-pregroup $\m L$, which is defined in \cite{GG2} and generalizes the usual semidirect product of monoids. Therefore, $\loc{\m{L}}$ is isomorphic to a subalgebra of $\m{F}_n(\ZZ)^J$. Note that, by Lemma~\ref{l: flat}, every flat element is local with respect to every representation. Also, for a given representation we have $\grp{(\loc{L})} = \loc{(\grp{L})} = \grp{L} \cap \loc{L}$.

\begin{example}  It turns out that in general the local subalgebra (and the notion of local element) depends on the representation, as we illustrate by an example.
   Consider the $\ell$-group $\m{L} =  \ZZ\times \ZZ$ and its two generators $(1,0)$ and $(0,1)$. Let $\m{J} = \m{2}\overrightarrow{\times} \ZZ$.  Note that the map $\f \colon L \to \mathrm{F}_n(\m{J}\overrightarrow{\times} \ZZ)$, induced by $\f(1,0) = f = (id_J,\overline{f})$, $\f(0,1) = g = (id_J,\overline{g})$ with 
    \[
        \overline{f}_{(i,n)} = 
        \begin{cases}
        \cn{s} &\text{if } i = 0 \\
        1 &\text{if } i = 1
        \end{cases} \quad 
        \overline{g}_{(i,n)} = 
        \begin{cases}
        1 &\text{if } i = 0\\
        \cn{s} &\text{if } i = 1
        \end{cases}
    \]
    is an embedding and with respect to $\f$ we have $\loc{\m{L}} = \m{L}$. On the other hand the map $\ps \colon L \to \mathrm{F}_n(\m{J}\overrightarrow{\times} \ZZ)$ induced by $\ps(1,0) = f$, $\ps(0,1) = h$ with  
    \[
        \widetilde{h}(i,n) = 
        \begin{cases}
            (i,n)   &\text{if } i=0 \\
            (i,n+1) &\text{if } i=1
        \end{cases}
        \quad
        \overline{h}_{(i,n)} = 
        \begin{cases}
        1 &\text{if } i=0 \\
        \cn{s} &\text{if } i=1
        \end{cases}
   \]
   is another embedding but with respect to $\ps$ we have $\loc{\m{L}} = \langle f \rangle$.
We conjecture that every $n$-periodic $\ell$-pregroup has a maximal local subalgebra. In any case, as we will see, the results we will establish either do not depend on the particular representation of the $n$-periodic $\ell$-pregroup or, when they mention local elements, they hold for all possible representations (and the associated local subalgebras).
\end{example}

We say that an $\ell$-pregroup is \emph{proper} if it is not an $\ell$-group (equivalently, it is not equal to its group skeleton).

\begin{lemma}\label{l: local-idempotent}
If $\m L$ is a proper periodic $\ell$-pregroup, then it has a strictly positive idempotent element that is local with respect to any given representation of $\m L$. 
\end{lemma}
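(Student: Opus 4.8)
The plan is to produce the required element explicitly and then observe that idempotents are automatically local. Since $\m L$ is proper it is not an $\ell$-group, so it is $n$-periodic for some $n\geq 2$ (it is periodic, and a $1$-periodic $\ell$-pregroup is an $\ell$-group), and it has a non-invertible element $a\in L$, that is, $\g(a)\neq a$. The key preliminary remark is that every idempotent is flat — take itself as both the lower and the upper idempotent bound — and every flat element is local with respect to every representation by Lemma~\ref{l: flat}; so it is enough to exhibit a strictly positive idempotent of $\m L$.

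First I would form $b:=\g(a)^{-1}a$. Since $\g(a)\leq a$ and multiplication by an invertible element is an order-automorphism, $1\leq b$; and since $a$ is not invertible we have $\g(a)<a$, hence $1<b$. Moreover $\g(b)=\g(\g(a)^{-1}a)=\g(a)^{-1}\g(a)=1$ by Remark~\ref{r:grp-conucleus}(1). So $b$ is a strictly positive element whose $\g$-part is trivial.

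Next I would pass to a suitable power of $b$ to obtain an idempotent. Fix a representation $\m L\leq \m F_n(\m J\overrightarrow{\times}\ZZ)$ and write $b\equiv(\widetilde b,\overline b)$. Since the global component of $\g(b)$ is $\widetilde b$ and its local component is $\g(\overline b)$ computed coordinatewise, the equality $\g(b)=1$ forces $\widetilde b=id$ and $\g(\overline b_i)=1$ for each $i\in J$; as $b$ is positive this also gives $1\leq \overline b_i$, so each $\overline b_i$ is a positive flat element of $\m F_n(\ZZ)$ by Lemma~\ref{l:flat-Fn}. Then $\overline b_i^{\,n}=\overline b_i^{\,n-1}$ by Lemma~\ref{l:flat-Fn}(c), and since all global components are $id$ we conclude $b^{n}=b^{n-1}$; thus $e:=b^{n-1}$ is idempotent. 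By monotonicity $1\leq b\leq b^{2}\leq\dots\leq b^{n-1}=e$, and since $b>1$ we get $e>1$, so $e$ is a strictly positive idempotent of $\m L$. Finally $e$, being idempotent, is flat, hence local with respect to every representation of $\m L$ by Lemma~\ref{l: flat}, which is what we wanted.

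I do not expect a serious obstacle here. The only points needing care are that properness of $\m L$ really does give an $n$-periodic presentation with $n\geq 2$ (so the power $b^{n-1}$ makes sense and, in the borderline case $n=2$, is just $b$ itself), and the passage through the wreath-product representation both to see that $b$ is flat and to deduce that $b^{n-1}$ is idempotent; these are exactly the facts packaged in Lemma~\ref{l: flat}, Lemma~\ref{l:flat-Fn}, and the coordinatewise behaviour of $\g$ on local components.
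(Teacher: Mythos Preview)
Your proof is correct, but it takes a longer route than the paper's. The paper simply observes that for any non-invertible $a$, the element $aa^\ell$ is already a strictly positive idempotent (by Remark~\ref{r: xxl}: $1\leq aa^\ell$ is always idempotent, and it equals $1$ only when $a$ is invertible), hence flat, hence local with respect to every representation by Lemma~\ref{l: flat}. Your approach instead forms $b=\g(a)^{-1}a$, argues via a representation that each coordinate of $b$ is flat, and then passes to the power $b^{n-1}$ to force idempotency; this is essentially the mechanism behind the \emph{next} lemma in the paper (Lemma~\ref{l: spfpl}), where one needs to preserve the periodicity of $a$ and therefore cannot use the $aa^\ell$ shortcut. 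For the present lemma, where periodicity is irrelevant, the paper's one-line construction avoids the detour through wreath-product coordinates entirely.
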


\begin{proof}
    Since $\m{L}$ is not an $\ell$-group, there exists a non-invertible element $a$. So, $b:=a a^\ell$ is a 
    non-identity element that is idempotent, by Remark~\ref{r: xxl}, hence $b$ is not invertible. Moreover, by Lemma~\ref{l: flat}, $b$ is local.
    Finally, $b=a a^\ell \geq 1$,  hence, since $b\neq 1$, $b$ is strictly positive.
\end{proof}

Like the preceding lemma, the next lemma also applies to periodic proper $\ell$-pregroups, but it replaces the idempotency of the produced element with the preservation of the periodicity.

\begin{lemma}\label{l: spfpl}
    If $\m{L}$ is an $n$-periodic $\ell$-pregroup and $a \in  L$ is a non-invertible element, then the element $\g(a)^{-1}a$ is strictly positive, flat, of the same  periodicity as $a$, and it  can be taken to be local with respect to any given representation of $\m L$.
\end{lemma}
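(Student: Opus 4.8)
\emph{Proof plan.} The strategy is to assemble the four assertions from facts already available for the conucleus $\g$ and for flat elements; the argument runs parallel to that of Corollary~\ref{c:flat-g}, which treats the special case $\m L = \m F_n(\ZZ)$. Throughout, write $b := \g(a)^{-1}a$ and recall that $\g(a)$ is an invertible element of $\m L$, since it lies in the image $\grp{L}$ of the conucleus $\g$ by Lemma~\ref{l: sigmagamma}(1).

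First, $b$ is positive: since $\g(a) \leq a$ by definition of $\g$, multiplying on the left by the invertible element $\g(a)^{-1}$ gives $1 \leq \g(a)^{-1}a = b$. Next, $b$ is flat: by Remark~\ref{r:grp-conucleus}(1), as $\g(a)^{-1} \in \grp{L}$ we have $\g(b) = \g(\g(a)^{-1}a) = \g(a)^{-1}\g(a) = 1$, so the corollary following Lemma~\ref{l: flat} (characterizing the positive flat elements as exactly those with $\g = 1$) yields that $b$ is flat. For strict positivity, note that $b = 1$ would give $a = \g(a) \in \grp{L}$, contradicting the non-invertibility of $a$; hence $1 < b$. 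For the periodicity, using that $x \mapsto x^{[k]}$ is an automorphism of $\m L$ that fixes $\grp{L}$ (Lemma~\ref{l:double-l-auto}), we get for every $k \in \ZZ$
\[
b^{[k]} = (\g(a)^{-1})^{[k]} a^{[k]} = \g(a)^{-1} a^{[k]},
\]
so cancelling the invertible factor $\g(a)^{-1}$ shows $b^{[k]} = b$ if and only if $a^{[k]} = a$; thus $\per(b) = \per(a)$. Finally, locality: by Lemma~\ref{l: flat}, every flat element of an $n$-periodic $\ell$-pregroup is local with respect to every representation, so $b$ is local in any chosen representation of $\m L$.

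I do not expect a genuine obstacle here; the only point requiring care is the appeal to the characterization of positive flat elements through $\g$ (the corollary after Lemma~\ref{l: flat}), which itself rests on the wreath-product representation $\m F_n(\m{J} \overrightarrow{\times} \ZZ) \cong \m{Aut}(\m J) \wr \m F_n(\ZZ)$ together with the corresponding fact for $\m F_n(\ZZ)$ from Lemma~\ref{l:flat-Fn}. Everything else is bookkeeping with the already established properties of $\g$.
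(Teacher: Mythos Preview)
Your proof is correct but proceeds differently from the paper's. The paper fixes a representation $\m{L}\leq \m{F}_n(\m{J}\overrightarrow{\times}\ZZ)$ and argues coordinate-wise: it first shows locality directly by computing $\widetilde{\g(a)}=\widetilde{a}$, then derives flatness by observing that each $\overline{c}_i=\g(\overline{a}_i)^{-1}\overline{a}_i$ is flat in $\m F_n(\ZZ)$ (via Corollary~\ref{c:flat-g}) and hence $c^n$ is idempotent with $1\leq c\leq c^n$; periodicity is obtained through Lemma~\ref{l: per} as $\lcm\{\per(\overline{a}_i)\}=\lcm\{\per(\overline{c}_i)\}$. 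You instead work abstractly: flatness comes from $\g(b)=1$ together with the corollary after Lemma~\ref{l: flat}, locality then falls out of flatness via Lemma~\ref{l: flat}, and periodicity is settled by the direct cancellation argument $b^{[k]}=\g(a)^{-1}a^{[k]}$ (exactly as in Corollary~\ref{c:flat-g}). Your route is shorter and avoids coordinate computations, at the price of invoking the corollary on positive flat elements---which itself encapsulates the representation-level work; the paper's route is more self-contained within the wreath-product picture. Both are entirely valid and of comparable depth.
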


\begin{proof} 
Since $\m L$  periodic, it has a representation $\m{L}\leq \m{F}_n(\m{J} \overrightarrow{\times} \ZZ)$, where $\m J$ is a chain. Then $\widetilde{\g(a)}=\widetilde{a}$, so $\widetilde{\g(a)^{-1}a}=id$ and $c:=\g(a)^{-1}a$ is local. For each $i \in J$, we have $\overline{c}_i=\overline{\g(a)^{-1}a}_i=\g(\overline{a}_i)^{-1}\overline{a}_i$, so, by Corollary~\ref{c:flat-g}, the element $\overline{c}_i$ is positive, flat and of the same  periodicity as $\overline{a}_i$. Therefore, $c$ is also positive and since $a$ is not invertible we get that $c$ is strictly positive. Since for each $i \in J$, $\overline{c}_i$ is flat, we have that $\overline{c}_i^n$ is idempotent. Thus $c^n$ is idempotent and, $1 \leq c \leq c^n$, so $c$ is flat.
By Lemma~\ref{l: per}, $\per(a)=\lcm\{\per(\overline{a}_i) : i \in J\}=\lcm\{\per(\overline{c}_i) : i \in J\}=\per(c)$.
\end{proof}

\begin{lemma}\label{l:local-subuniverse}
    Let $\m{L} \leq \m{F}_n(\m{J}\overrightarrow{\times} \ZZ)$.
    \begin{enumerate}[label = \textup{(\arabic*)}]
        \item $\loc{\m{L}}$ is a convex normal subalgebra of $\m{L}$.
        \item $\loc{(\Grp{\m{L}})}$ is a convex normal $\ell$-subgroup of $\Grp{\m{L}}$.
        \item $\m{L}/\loc{\m L} \cong \Grp{\m L}/\loc{(\Grp{\m L})}$.
    \end{enumerate}
\end{lemma}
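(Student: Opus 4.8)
The plan is to prove the three items essentially in order, using the wreath-product representation $\m{L} \leq \m{F}_n(\m{J}\overrightarrow{\times}\ZZ) \cong \m{Aut}(\m{J}) \wr \m{F}_n(\ZZ)$ throughout. For item (1), I would show that $\loc{\m{L}} = \{f \in L : \widetilde{f} = id\}$ is a subalgebra, is convex, and is closed under conjugation. The subalgebra claim is routine: the global component of a product is the composite of the global components (so $id \cdot id = id$), and the global component of ${}^\ell, {}^r$ is the inverse automorphism (so $id^{-1} = id$); meets and joins of elements with global component $id$ again have global component $id$ by the formulas $\widetilde{f\mt g} = \widetilde{f}\mt\widetilde{g}$, $\widetilde{f\jn g} = \widetilde{f}\jn\widetilde{g}$. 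For convexity, if $a \leq b \leq c$ with $\widetilde{a} = \widetilde{c} = id$, then the order on the wreath product forces $id = \widetilde{a} \leq \widetilde{b} \leq \widetilde{c} = id$, so $\widetilde{b} = id$; thus $b \in \loc{\m{L}}$ (using that $\m{L}$ is convex in $\m{F}_n(\m J \overrightarrow{\times}\ZZ)$ only in the sense that $b\in L$ is given). For normality, I would compute an iterated conjugate $g^r f g \mt 1$ for $f \in \loc{\m{L}}$ and $g \in L$: its global component is $\widetilde{g}^{-1}\circ id \circ \widetilde{g} \mt id = id$, since $\widetilde{g}^{-1}\widetilde{g} = id$ and meeting with $id$ leaves $id$. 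Hence $\loc{\m{L}}$ is closed under conjugation, completing (1).

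For item (2), I would simply note that $\loc{(\Grp{\m{L}})} = \Grp{\m{L}} \cap \loc{\m{L}}$ (stated just before the lemma), and an intersection of two convex normal subalgebras of $\m{L}$ is again convex and normal; restricting to the $\ell$-group $\Grp{\m{L}}$, this intersection is a convex normal $\ell$-subgroup. Alternatively, since $\Grp{\m{L}}$ is an $\ell$-group and $\loc{(\Grp{\m{L}})}$ is the kernel-type subset determined by the global component being $id$, convexity and normality follow by the same global-component argument as in (1), specialized to invertible elements.

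For item (3), the plan is to exhibit an explicit isomorphism. The natural map is induced by $\g$ (or by taking global components). I would consider the map $\Phi \colon \m{L} \to \Grp{\m L}/\loc{(\Grp{\m L})}$ sending $a$ to the congruence class of $\g(a)$ (equivalently, to the class determined by $\widetilde{a}$, since $\g(a) \equiv (\widetilde{a}, \g(\overline{a}))$ and the group elements of $\m{F}_n(\ZZ)$ are exactly the shifts $\cn{s}^k$). Using the isomorphism between $\mathcal{C}(\m{L})$ and $\mathcal{C}(\Grp{\m{L}})$ from Theorem~\ref{t:conv-sub-iso} together with Lemma~\ref{l: CNSandCONforRL}, the congruence on $\m{L}$ generated by $\loc{\m{L}}$ corresponds to the congruence on $\Grp{\m{L}}$ generated by $\loc{(\Grp{\m{L}})}$; since $\loc{\m L} \cap \grp{L} = \loc{(\Grp{\m L})}$ and $\loc{\m{L}}$ is convex and normal, $\loc{\m{L}}$ is itself a congruence class. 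So it suffices to check that $\Phi$ is a surjective homomorphism with kernel exactly $\loc{\m L}$, i.e.\ that $a \equiv b \pmod{\loc{\m L}}$ iff $\widetilde{a}, \widetilde{b}$ differ by an element of $\loc{(\Grp{\m L})}$. For surjectivity, every element of $\Grp{\m{L}}$ lies in $L$, and $\Phi$ restricted to $\Grp{\m{L}}$ is the quotient map. For the kernel: $a \in \loc{\m L}$ iff $\widetilde{a} = id$ iff $\g(a) \in \loc{(\Grp{\m L})}$; and by Lemma~\ref{l: CNSandCONforRL} the congruence class of $1$ modulo $\loc{\m L}$ is $\loc{\m L}$ itself, while for $\Phi$ it is $\{a : \g(a) \in \loc{(\Grp{\m L})}\} = \{a : \widetilde{a} = id\} = \loc{\m L}$. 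That $\Phi$ is a homomorphism follows because taking the global component is a homomorphism $\m{Aut}(\m J)\wr\m{F}_n(\ZZ) \to \m{Aut}(\m J)$ (the semidirect-product projection) and $\grp{(\m{F}_n(\ZZ))} = \langle \cn{s}\rangle$.

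The main obstacle I anticipate is item (3): one must be careful that the map $\Phi$ is well-defined as a homomorphism onto $\Grp{\m{L}}/\loc{(\Grp{\m{L}})}$ and not merely onto $\m{Aut}(\m{J})/(\text{something})$, since $\m{Aut}(\m J)$ need not equal $\Grp{\m L}$ — indeed $\Grp{\m{L}}$ is a subalgebra of $\m{Aut}(\m J)\wr\m F_n(\ZZ)$ whose global components form a subgroup of $\m{Aut}(\m J)$ and whose local components are shifts. The clean way around this is to work entirely inside $\m{L}$: use that $\loc{\m{L}}$ and $\loc{(\Grp{\m{L}})}$ are the $1$-classes of congruences $\theta$ on $\m{L}$ and $\theta_g$ on $\Grp{\m{L}}$ respectively (Lemma~\ref{l: CNSandCONforRL}), observe $\theta_g = \theta \cap \grp{L}^2$, and then show the composite $\Grp{\m{L}} \hookrightarrow \m{L} \twoheadrightarrow \m{L}/\loc{\m{L}}$ is surjective with kernel $\loc{(\Grp{\m L})}$. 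Surjectivity of this composite is the crux and follows from the fact that $\g \colon \m{L} \to \Grp{\m{L}}$ is a conucleus (Lemma~\ref{l: sigmagamma}) and $a$ is congruent to $\g(a)$ modulo $\loc{\m{L}}$ because $\g(a)^{-1}a$ is flat hence local (Lemma~\ref{l: spfpl}, together with Lemma~\ref{l: flat}), so every class of $\m{L}/\loc{\m{L}}$ contains a group element; injectivity on $\Grp{\m{L}}/\loc{(\Grp{\m L})}$ is immediate from $\theta_g = \theta\cap\grp{L}^2$. This packaging avoids any delicate bookkeeping about which automorphisms of $\m{J}$ actually occur.
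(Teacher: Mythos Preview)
Your proposal is correct and, once you settle on the final packaging for (3), it is essentially identical to the paper's proof. For (1) and (2) the paper argues exactly as you do (subalgebra by the wreath-product formulas, convexity via $\widetilde{a}\leq\widetilde{c}\leq\widetilde{b}$, normality by computing $bab^\ell$ directly, and (2) via $\loc{(\Grp{\m L})}=\grp{L}\cap\loc{L}$); for (3) the paper skips your preliminary attempt with $\Phi(a)=[\g(a)]$ and goes straight to the composite $\Grp{\m L}\hookrightarrow\m L\twoheadrightarrow\m L/\loc{\m L}$, using that $\g(a)^{-1}a$ is local (Lemma~\ref{l: spfpl}) to get surjectivity and the first isomorphism theorem to conclude---exactly the clean argument you describe at the end.
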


\begin{proof}
    (1) Clearly $\loc{\m{L}}$ is a subalgebra of $\m{L}$. 
    To see that it is convex let $a,b \in \loc{L}$ and $c\in L$ with $a\leq c \leq b$. Then, since $\widetilde{a} = id$ and $\widetilde{b} = id$, we also get that $\widetilde{c} = id$, since for each $(j,k) \in J\times \ZZ$, $(j,\overline{a}_j(k)) = a(j,k) \leq c(j,k) = (\widetilde{c}(j),\overline{c}_j(k)) \leq b(j,k) = (j,\overline{b}_j(k))$. Hence $c$ is also local, i.e., $c\in \loc{L}$. To see that it is normal let $a\in \loc{L}$ and $b\in L$. Then for each $(j,k) \in J\times \ZZ$, 
    \begin{align*}
    bab^\ell(j,k) &= ba(\widetilde{b}^{-1}(j), \overline{b}^{\ell}_{\widetilde{b}^{-1}(j)}(k))\\
    &= b(\widetilde{b}^{-1}(j),\overline{a}_{\widetilde{b}^{-1}(j)}\overline{b}^{\ell}_{\widetilde{b}^{-1}(j)}(k)) \\
    &= (j, \overline{b}_{\widetilde{b}^{-1}(j)} \overline{a}_{\widetilde{b}^{-1}(j)}\overline{b}^{\ell}_{\widetilde{b}^{-1}(j)}(k)).
    \end{align*}
    Thus $\widetilde{bab^\ell}=id$ and $bab^\ell \in \loc{L}$. In particular, $bab^\ell \mt 1 \in \loc{L}$.
    Thus $\loc{\m L}$ is normal.

    (2) Note that $\loc{(\Grp{L})} = \loc{L}\cap \Grp{L}$, so, by Lemma~\ref{l:groupNC-pregroupNC}, $\loc{(\Grp{\m{L}})}$ is a convex normal subgroup of $\grp{L}$.

    (3) Let $\pi \colon \m{L} \to \m{L}/\loc{\m L}$ be the canonical epimorphism and $\f\colon \Grp{\m L} \to \m{L}/\loc{\m L}$ its restriction to the subalgebra $ \Grp{\m L}$.  
    We have $\pi(a) = 1$ iff $a \in \loc{L}$, for all $a \in L$. So, since, by Lemma~\ref{l: spfpl}, $\g(a)^{-1}a$ is local for all $a \in L$, we get $\pi(a) = \pi(\g(a)\g(a)^{-1}a) =\pi(\g(a))$; thus $\pi[L]=\pi[\grp{L}]$ and
    $\f$ is an onto homomorphism.
   Moreover, for all $f \in \grp{L}$ we have  $f\in \ker(\f)$ iff $f\in \loc{L}$ iff $f\in \loc{L} \cap \grp{L}=\loc{(\grp{L})}$, so
    $\ker(\f) = \loc{(\grp{L})}$. The result follows by the first isomorphism theorem.
\end{proof}

\section{Axiomatization of \texorpdfstring{$\vr(\m F_n(\ZZ))$}{V(Fₙ(ℤ))}}\label{s: axiomatization}

In this section we establish the main result of the paper: $\vr(\m F_n(\ZZ))$ is axiomatized relative to $\va{LP}_n$ by the equation $x\g(y)^n \approx \g(y)^n x$. 

 In Corollary~\ref{c:fsi-commutative} we proved that among $n$-periodic $\ell$-pregroups that satisfy the equation $x\g(y)^n \approx \g(y)^n x$,  the FSIs  are exactly the ones whose group skeleton is totally ordered. 
 Also, in Proposition~\ref{p:Fn-closed-lexprod} we showed that
 $\vr(\m{F}_n(\ZZ))$ is closed under lexicographic products by totally ordered abelian $\ell$-groups.
  Here we show that  if a finitely generated algebra in $\va{LP}_n$ has a totally ordered abelian  group skeleton, then it is a  lexicographic product of a  totally ordered abelian $\ell$-group with $\m F_k(\ZZ)$ for some $k \mid n$. 
 We conclude that every  finitely generated FSI  $n$-periodic $\ell$-pregroup that satisfies the equation $x\g(y)^n \approx \g(y)^n x$ is in  $\vr(\m F_n(\ZZ))$.

 We use these results to further axiomatize all joins of varieties of the form $\vr(\m F_n(\ZZ))$ and fully describe the lattice they form.

\subsection{Algebras with totally ordered group skeleton}

We will study $n$-periodic $\ell$-pregroups with a totally ordered group skeleton and  prove that this skeleton is discretely ordered (Remark~\ref{r: discreteskeleton}) and  its local subalgebra is isomorphic to $\ZZ$ (Lemma~\ref{l:support-local}), when the $\ell$-pregroup is not an $\ell$-group; in particular, if the skeleton is further abelian, then it decomposes as a lexicographic product where the inner factor is $\ZZ$ (Lemma~\ref{l:loc-invariant-grp}). 
{This allows us to prove that  if a proper $n$-periodic $\ell$-pregroup has a totally ordered abelian group skeleton, then its local subalgebra is isomorphic to $\m{F}_k(\ZZ)$, for some $k \mid n$ (Theorem~\ref{t:Fn-FSI-local}).

In order to prove the axiomatization result, our main goal is to show that every FSI $n$-periodic $\ell$-pregroup satisfying $x\g(y)^n \approx \g(y)^n x$ is in $\vr(\m{F}_n(\ZZ))$; we also note that it is enough to focus only on finitely generated algebras. Since, by Proposition~\ref{p:Fn-closed-lexprod}, $\vr(\m{F}_n(\ZZ))$ is closed under lexicographic products by totally ordered abelian $\ell$-groups, in view of Corollary~\ref{c:fsi-commutative} it suffices to show that finitely generated $n$-periodic $\ell$-pregroups with a totally ordered abelian skeleton are lexicographic products of a totally ordered abelian $\ell$-group and an algebra in $\vr(\m{F}_n(\ZZ))$. In Theorem~\ref{t:lexprod-fsi-Fn} we will establish exactly that, by showing that this algebra can be taken to be $\m{F}_k(\ZZ)$ for some  $k \mid n$.

For a local function $f \in \m{F}_n(\m J \overrightarrow{\times} \ZZ)$ we define
its \emph{support}, $\supp(f)$, to be the support $\supp(\overline{f})= \{j \in J : \overline{f}_j \neq 1 \}$ of its local component $\overline{f}$. 

A key part of the next lemma is that if an $n$-periodic $\ell$-pregroup has a totally ordered skeleton, then all local elements have the same support.

\begin{lemma}\label{l:support-local} 
   Let $\m{L} \leq \m{F}_n(\m J \overrightarrow{\times} \ZZ)$  be an $n$-periodic $\ell$-pregroup. 
    \begin{enumerate}[label = \textup{(\arabic*)}]
        \item For all $a\in \loc{L}$, $\supp(a) = \supp(a^\ell) = \supp(a^r)$. 
        \item If $\Grp{\m L}$ is totally ordered, then for all $a,b \in \loc{L}\setminus \{1\}$, $\supp(a) = \supp(b)$.
        \item  If $\Grp{\m L}$ is totally ordered and $\Grp{\m L} \neq \m{L}$, then $\h(\overline{a}_i) = \h(\overline{a}_j)$ and $\g(\overline{a}_i) = \g(\overline{a}_j)$, for all $a \in \loc{L}$ and $i,j \in \supp(a)$. In particular, $\overline{h}_i=\overline{h}_j$, for all $h \in \grp{(\loc{L})}$ and $i,j \in \supp(h)$.
        \item If $\Grp{\m L}$ is totally ordered 
        and $\Grp{\m L} \neq \m{L}$, then $\loc{(\Grp{\m{L}})} \cong \ZZ$. 
        \item If $\Grp{\m L}$ is totally ordered, then for each $a\in \loc{L}$ with $a\notin \grp{L}$ and for all $i\in \supp(a)$, we have $\g(\overline{a}_i) \neq \overline{a}_i$.
    \end{enumerate}
\end{lemma}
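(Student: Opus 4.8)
The plan is to treat the five parts in the order (1), (2), (5), (4), (3), since part~(2) is the crucial one and the other three rest on it together with properness and the structural lemmas already available. Throughout, the guiding observation is that on $\loc{\m L} \leq \m{F}_n(\ZZ)^J$ every $\ell$-pregroup operation, as well as the term functions $\g$, $\h$, $\norm{\cdot}$, $\slt$, is computed coordinatewise. Granting this, part~(1) is immediate: for $a\in\loc L$ we have $\overline{a^\ell}_j=(\overline a_j)^\ell$ and $\overline{a^r}_j=(\overline a_j)^r$, and since $^\ell$ and $^r$ are mutually inverse bijections of $\m{F}_n(\ZZ)$ fixing $1$, each of $(\overline a_j)^\ell$, $(\overline a_j)^r$ equals $1$ exactly when $\overline a_j$ does; hence $\supp(a)=\supp(a^\ell)=\supp(a^r)$.

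For part~(2) I would first reduce to strictly positive invertible local elements: given $a,b\in\loc L\setminus\{1\}$, the norms $\norm a,\norm b$ lie in $\grp{(\loc L)}$, are $>1$ by Lemma~\ref{l:norm}(1), and satisfy $\supp(\norm a)=\supp(a)$, $\supp(\norm b)=\supp(b)$ because $\norm{\cdot}$ is coordinatewise and $\norm x = 1 \iff x=1$ in $\m{F}_n(\ZZ)$ (Lemma~\ref{l:norm}(1) again). So it suffices to prove $\supp g=\supp h$ for positive $g,h\in\grp{(\loc L)}$. Each coordinate $\overline g_k$ is invertible in $\m{F}_n(\ZZ)$, hence of the form $\cn{s}^{m_k}$ with $m_k\geq 0$ (Lemma~\ref{l:grp-hg}), and $\supp g=\{k:m_k>0\}$; similarly $\overline h_k=\cn{s}^{l_k}$. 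By total order of $\Grp{\m L}$ we may assume $g\leq h$, so $m_k\leq l_k$ coordinatewise and $\supp g\subseteq\supp h$. If this inclusion were proper, pick $k_0\in\supp h\setminus\supp g$ (so $l_{k_0}>0=m_{k_0}$) and $k_1\in\supp g$ (so $m_{k_1}\geq 1$, using $g>1$); then for $t$ large the invertible element $g^t$ would be incomparable to $h$ in $\Grp{\m L}$ — neither $g^t\leq h$ (look at coordinate $k_1$) nor $h\leq g^t$ (coordinate $k_0$: this needs $l_{k_0}\leq t m_{k_0}=0$) — contradicting total order. I expect this manufacturing of a failure of total order out of a support discrepancy to be the only genuinely nontrivial step; the rest is bookkeeping on top of it. From here on write $S$ for the common support of the nontrivial local elements; by Lemma~\ref{l: local-idempotent}, $S$ is nonempty whenever $\Grp{\m L}\neq\m L$.

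Part~(5) then follows by contradiction. Suppose $\g(\overline a_i)=\overline a_i$ for some non‑invertible $a\in\loc L$ and some $i\in\supp a$. By Lemma~\ref{l: sigmagamma}(1) this forces $\overline a_i$ invertible in $\m{F}_n(\ZZ)$, so $\g(\overline a_i)=\h(\overline a_i)=\overline a_i$; on the other hand some coordinate $\overline a_j$ is not invertible (otherwise $a^\ell=a^r$ and $a$ would be invertible), and such $j$ lies in $\supp a=S$, as does $i$. Now $c:=\h(a)\g(a)^{-1}\in\grp{(\loc L)}$ has $\overline c_i=\overline a_i\overline a_i^{-1}=1$ but $\overline c_j=\cn{s}^{\hg^+(\overline a_j)-\hg^-(\overline a_j)}\neq 1$ by Lemma~\ref{l:grp-hg} (the two heights differ since $\overline a_j$ is not a shift). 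Thus $c$ is a nontrivial local element whose support omits $i\in S$, contradicting part~(2).

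Finally, assume $\Grp{\m L}$ totally ordered and $\m L$ proper, and fix $i\in S$. Every nontrivial element of $\grp{(\loc L)}$ has support $S$ by part~(2), so the coordinate projection $g\mapsto\overline g_i$ is an injective, order-preserving group homomorphism $\grp{(\loc L)}\to\grp{\m F_n(\ZZ)}\cong\ZZ$; being injective and order-preserving between totally ordered groups it is an order-embedding, and its image, a nontrivial subgroup of $\ZZ$, is isomorphic to $\ZZ$ — this is part~(4) (recall $\grp{(\loc L)}=\loc{(\Grp{\m L})}$). For part~(3), let $v$ be the positive generator of $\grp{(\loc L)}\cong\ZZ$. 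Lemma~\ref{l: local-idempotent} supplies a strictly positive idempotent local element $e$, and applying $\slt$ coordinatewise together with Lemma~\ref{l:idempotent-slope1} shows that $\h(\slt(e))\in\grp{(\loc L)}$ has all coordinates on $S$ equal to $\cn{s}$; writing $\h(\slt(e))=v^m$ with $m\geq 1$ and $\overline v_k=\cn{s}^{q_k}$ forces $m q_k=1$, i.e.\ $m=1$ and $\overline v_k=\cn{s}$ for all $k\in S$. Hence every $g\in\grp{(\loc L)}$, being a power $v^t$, has $\overline g_k=\cn{s}^{t}$ for all $k\in S$; this is the ``in particular'' clause, and applying it to $\g(a),\h(a)\in\grp{(\loc L)}$ (computed coordinatewise) yields $\g(\overline a_i)=\g(\overline a_j)$ and $\h(\overline a_i)=\h(\overline a_j)$ for every $a\in\loc L$ and $i,j\in\supp a$, with the case $a=1$ vacuous.
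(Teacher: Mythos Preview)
Your argument is correct, and the overall architecture is the same as the paper's: manufacture an incomparable pair in $\Grp{\m L}$ out of any support discrepancy for (2), and use the element $\slt(e)$ of Lemma~\ref{l:idempotent-slope1} to pin down the coordinates of local invertible elements. The execution differs in two places. First, for (2) you pass through the norm to reduce to strictly positive invertible local elements, whereas the paper works with $\h$ directly and replaces $a$ by $aa^\ell$ or $a^\ell$ as needed; your reduction is tidier and makes the incomparability witness $g^t$ versus $h$ completely transparent. Second, and more interestingly, you reverse the order of (3) and (4): you obtain (4) straight from (2) via the coordinate projection $g\mapsto\overline g_i$, which is an injective order-preserving group map from the totally ordered group $\grp{(\loc L)}$ into $\ZZ$ and hence an order-embedding onto a nontrivial subgroup; only afterwards do you use (4) together with $\h(\slt(e))$ to force the generator $v$ to have every $S$-coordinate equal to $\cn{s}$, from which (3) follows by writing $\g(a),\h(a)$ as powers of $v$. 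The paper instead proves (3) first by constructing, from a hypothetical discrepancy $\g(\overline a_i)\neq\g(\overline a_j)$, the element $\h(\slt(e))^{-k}\g(a)$ whose support violates (2), and then deduces (4) from the ``in particular'' clause of (3). Your route isolates the purely group-theoretic content of (4) and avoids the ad hoc contradiction element, at the modest cost of invoking (4) inside (3); both buy the same conclusions with comparable effort.
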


\begin{proof}
    (1) For each $i\in I$, we have $\overline{a}_i =1$ iff $\overline{a^\ell}_i =1$.

    (2)
   Suppose that $\Grp{\m L}$ is totally ordered.
    If $a,b \in \loc{L}\setminus \{1\}$ are such that that there exists $i \in \supp(a)\setminus \supp(b)$, then $\overline{a}_i \neq 1$ and $\overline{b}_i = 1$, hence also $\overline{\h(b)}_i = 1$. 

    If $\overline{a_i}$ is not invertible, then, since $\overline{a}_i \neq 1$, $\overline{(aa^\ell)}_i > 1$ and $i\in \supp(aa^\ell)$, yielding $\overline{\h(aa^\ell)}_i>1$. Otherwise, $\overline{a}_i$ is invertible, and, since $\overline{a}_i \neq 1$, either $\overline{a}_i < 1$ or $\overline{a}_i > 1$, yielding $\overline{\h(a)}_i > 1$ or  $\overline{\h(a^\ell)}_i > 1$. Thus we may assume, by possibly replacing $a$ with $aa^\ell$ or $a^\ell$, that  $\overline{\h(a)}_i > 1$ and $\overline{\h(b)}_i = 1$.
    Since $b \neq 1$ there exists  $j\in \supp(b)$, so arguing as above, without loss of generality $\overline{\h(b)}_j > 1$. Note that there exists a $k\in \NN$ such that $\overline{\h(b)^k}_j > \overline{\h(a)}_j$  and also $\overline{\h(b)^k}_i = 1 < \overline{\h(a)}_i$ (using that $\grp{(\m{F}_n(\ZZ)^J)}$ is isomorphic to $\ZZ^J$). Hence, $\h(a)$ and $\h(b)$ are incomparable which contradicts the assumption that $\Grp{\m L}$ is totally ordered.

    (3) 
    Since $\Grp{\m L} \neq \m{L}$, by Lemma~\ref{l: local-idempotent}$,\m L$ has a strictly positive, local, idempotent element $d$. 
    Now, by Lemma~\ref{l:idempotent-slope1}, $e = \slt(d)$  is also idempotent and strictly positive, and for each $i\in \supp(e)$, $\h(\overline{e}_i) = \cn{s}$; also, if for some $i,j\in \supp(a)$, $\g(\overline{a}_i)  \neq \g(\overline{a}_j)$, then, without loss of generality, $\g(\overline{a}_i) = \cn{s}^k$ with $k\neq 0$. Therefore, since, by (2), $\supp(e) = \supp(a)$, we have $i \notin \supp(\h(e)^{-k}\g(a))$ and $j\in \supp(\h(e)^{-k}\g(a))$, contradicting (2). Similarly we get $\h(\overline{a}_i) = \h(\overline{a}_j)$, for all $i,j \in \supp(a)$.

     The statement about $h\in \grp{(\loc{L})}$ follows from the fact that the image of $\h$ consists precisely of the invertible elements.

    (4) For $h \in \Grp{(\loc{L})}$, we have that $\overline{h} \in  \Grp{\m{F}_n(\ZZ)^J}=\ZZ^J$. Since $\overline{h}_i=\overline{h}_j$, for all  $i,j \in \supp(h)$, and using (2), we get that 
    \[
    \Grp{(\loc{L})} \subseteq \{(x_i)_{i \in S}\in \ZZ^{S} : \exists k \in \ZZ, \forall i\in S,\,  x_i=k\} \times \{id\}^{J-S}\cong \ZZ,
    \]
    where $S=  \supp(h)$. Moreover, since $\Grp{\m L} \neq \m{L}$, we have that $\m L$ is non-trivial, hence there is a strictly positive idempotent element $a \in \loc{L}$; thus $\h(a)$ is a strictly positive invertible element, so $\loc{(\Grp{\m{L}})}$ is not trivial either. Therefore, $\loc{(\Grp{\m{L}})} \cong \ZZ$.
    
    (5) Note that, by (2),  $\supp(a\g(a)^{-1}) = \supp(a)$ or $\supp(a\g(a)^{-1}) = \emptyset$.
\end{proof}

\begin{remark}\label{r: discreteskeleton}
It follows from Lemma~\ref{l:support-local}(4) that the totally ordered group skeleton of any  proper periodic $\ell$-pregroup is discretely ordered. So if a totally ordered $\ell$-group is densely ordered, it cannot be the group skeleton of a proper periodic $\ell$-pregroup.
\end{remark}

For a proper $n$-periodic $\ell$-pregroup $\m{L} \leq \m{F}_n(\m J \overrightarrow{\times} \ZZ)$ such that $\Grp{\m L}$ is totally ordered we define $\loc{\supp}(\m{L}): = \supp(a) \subseteq J$, where $a \in \loc{L}\setminus \{1\}$. Note that, by Lemma~\ref{l:support-local}(2), $\loc{\supp}(\m{L})$ does not depend on the choice of $a$.

\begin{lemma}\label{l:fin-gen}
       If $\m{L}$ is a finitely generated proper periodic $\ell$-pregroup such that $\Grp{\m L}$ is a totally ordered abelian $\ell$-group, then $\Grp{\m{L}}$ is finitely generated. 
\end{lemma}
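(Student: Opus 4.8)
The plan is to present $\Grp{\m L}$ as an extension of $\ZZ$ by a finitely generated $\ell$-group and then invoke the standard fact that such an extension is finitely generated. Since $\m L$ is periodic, I would first fix, via Proposition~\ref{p: representation1}, a chain $\m J$ and an embedding $\m L \hookrightarrow \m F_n(\m J \overrightarrow{\times}\ZZ)$ (for some $n$ with $\m L$ being $n$-periodic), and identify $\m L$ with its image, so that the local subalgebras $\loc{\m L}$ and $\loc{(\Grp{\m L})}$ are available.

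The two structural inputs are as follows. First, as a homomorphic image of the finitely generated algebra $\m L$, the $\ell$-pregroup $\m L/\loc{\m L}$ is finitely generated; by Lemma~\ref{l:local-subuniverse}(3) it is isomorphic to $\Grp{\m L}/\loc{(\Grp{\m L})}$, which is therefore finitely generated, and being a quotient of the $\ell$-group $\Grp{\m L}$ by the convex normal $\ell$-subgroup $\loc{(\Grp{\m L})}$ (Lemma~\ref{l:local-subuniverse}(2)) it is itself an $\ell$-group, hence finitely generated as an $\ell$-group (on an $\ell$-group the operations $^\ell$ and $^r$ coincide with $^{-1}$). So I would choose $h_1,\dots,h_q \in \Grp{\m L}$ whose cosets generate $\Grp{\m L}/\loc{(\Grp{\m L})}$ as an $\ell$-group. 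Second, since $\m L$ is proper (so $\Grp{\m L}\neq \m L$) and $\Grp{\m L}$ is totally ordered, Lemma~\ref{l:support-local}(4) yields $\loc{(\Grp{\m L})}\cong \ZZ$, and in particular it is generated by a single element $g$.

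It then remains to verify that $S=\{g,h_1,\dots,h_q\}$ generates $\Grp{\m L}$. Let $\m M$ be the $\ell$-subgroup of $\Grp{\m L}$ generated by $S$ and let $\pi\colon \Grp{\m L}\to \Grp{\m L}/\loc{(\Grp{\m L})}$ be the canonical epimorphism. Since $\pi(h_1),\dots,\pi(h_q)\in \pi[\m M]$ generate the codomain, $\pi[\m M]=\Grp{\m L}/\loc{(\Grp{\m L})}$; hence, given $x\in \Grp{\m L}$, there is $m\in \m M$ with $\pi(m)=\pi(x)$, so $m^{-1}x\in \loc{(\Grp{\m L})}=\langle g\rangle\subseteq \m M$, and thus $x=m\cdot(m^{-1}x)\in \m M$. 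Therefore $\m M=\Grp{\m L}$, and $\Grp{\m L}$ is finitely generated. I do not expect a genuine obstacle here: the content is already packaged in Lemmas~\ref{l:local-subuniverse} and~\ref{l:support-local}, and the only points needing care are the bookkeeping between ``finitely generated as an $\ell$-pregroup'' and ``as an $\ell$-group'' for the skeleton quotient, and the observation that the usual ``an extension of a finitely generated group by a finitely generated group is finitely generated'' argument goes through verbatim for $\ell$-groups, since $\ell$-subgroups are closed under the group multiplication.
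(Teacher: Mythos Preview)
Your proposal is correct and follows essentially the same approach as the paper: both fix a representation, use Lemma~\ref{l:local-subuniverse}(3) to see that $\Grp{\m L}/\loc{(\Grp{\m L})}$ is finitely generated, use Lemma~\ref{l:support-local}(4) to see that $\loc{(\Grp{\m L})}\cong\ZZ$, and then conclude via the standard extension argument. Your version is slightly more explicit about the extension step and the $\ell$-pregroup versus $\ell$-group bookkeeping, but there is no substantive difference.
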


\begin{proof}
   Let $F \subseteq L$ be a finite generating set for $\m{L}$ and $\m{L} \leq \m{F}_n(\m J \overrightarrow{\times} \ZZ)$ a representation of $\m L$. Then, by Lemma~\ref{l:local-subuniverse}(3), there is a surjective homomorphism $\pi \colon \m{L} \to \Grp{\m L}/\loc{(\Grp{\m L})}$, and $\pi[F]$ is a finite generating set for $\Grp{\m L}/\loc{(\Grp{\m L})}$. Moreover, by Lemma~\ref{l:support-local}(4), $\loc{(\Grp{\m L})}\cong \ZZ$, so $\loc{(\Grp{\m L})}$ is one-generated by an element $a$. Let $F' \subseteq \grp{L}$ consist of a representative of each element in $\pi[F]$; in particular $F'$ is finite. Then every element of $\grp{L}$ is a product of an element in $\loc{(\grp{L})}$ and an element in $\langle F' \rangle$. So $F'\cup\{a\}$ is a finite generating set of $\grp{L}$.
\end{proof}

\begin{lemma}\label{l:loc-invariant-grp}
    If $\m{L}$ is a finitely generated proper periodic $\ell$-pregroup such that $\Grp{\m L}$ is a totally ordered abelian $\ell$-group, then we have $\Grp{\m{L}} \cong (\Grp{\m{L}}/\loc{(\Grp{\m{L}})}) \overrightarrow{\times} \ZZ$, for any representation of $\m L$.
\end{lemma}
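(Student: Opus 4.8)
The statement to prove is that for a finitely generated proper periodic $\ell$-pregroup $\m L$ with totally ordered abelian skeleton $\Grp{\m L}$, we have $\Grp{\m L} \cong (\Grp{\m L}/\loc{(\Grp{\m L})}) \overrightarrow{\times} \ZZ$ (for any fixed representation $\m L \leq \m F_n(\m J \overrightarrow{\times}\ZZ)$). The natural approach is to invoke Proposition~\ref{p:Zconv-lex} applied to the $\ell$-group $\Grp{\m L}$ with its convex subgroup $\loc{(\Grp{\m L})}$. To do that I need to check the three hypotheses of that proposition for $\m G := \Grp{\m L}$: (i) $\m G$ is a totally ordered abelian $\ell$-group — this is given; (ii) $\m G$ is finitely generated — this is exactly Lemma~\ref{l:fin-gen}, which applies since $\m L$ is finitely generated and proper with totally ordered abelian skeleton; (iii) $\loc{(\Grp{\m L})}$ is a convex subgroup of $\m G$ isomorphic to $\ZZ$ — convexity and normality come from Lemma~\ref{l:local-subuniverse}(2), and the isomorphism with $\ZZ$ is Lemma~\ref{l:support-local}(4), whose hypotheses ($\Grp{\m L}$ totally ordered and $\Grp{\m L}\neq \m L$, i.e.\ $\m L$ proper) are met here.

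**Execution.** First I would record that $\m G = \Grp{\m L}$ is a finitely generated totally ordered abelian $\ell$-group, citing Lemma~\ref{l:fin-gen} and the hypothesis. Next I would identify its convex subgroup: by Lemma~\ref{l:local-subuniverse}(2), $\loc{(\Grp{\m L})}$ is a convex normal $\ell$-subgroup of $\m G$, and by Lemma~\ref{l:support-local}(4) (using that $\m L$ is proper, so $\Grp{\m L}\neq\m L$) we have $\loc{(\Grp{\m L})}\cong\ZZ$. Fixing such an isomorphism, $\m G$ has a convex subgroup isomorphic to $\ZZ$. Then Proposition~\ref{p:Zconv-lex} applies verbatim and yields $\m G \cong (\m G/\ZZ)\overrightarrow{\times}\ZZ = (\Grp{\m L}/\loc{(\Grp{\m L})})\overrightarrow{\times}\ZZ$, which is precisely the claim. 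The remark that the decomposition depends on the chosen representation only through $\loc{(\Grp{\m L})}$ is worth noting but requires no argument beyond the definitions.

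**Main obstacle.** Honestly, there is no serious obstacle once the earlier machinery is in place — this lemma is essentially an assembly of Lemma~\ref{l:fin-gen}, Lemma~\ref{l:support-local}(4), Lemma~\ref{l:local-subuniverse}(2), and Proposition~\ref{p:Zconv-lex}. The only point deserving care is making sure the hypotheses of each cited result are genuinely satisfied in the present setting: in particular that ``proper'' for $\m L$ translates to ``$\Grp{\m L}\neq\m L$'' (so that Lemma~\ref{l:support-local}(4) applies and $\loc{(\Grp{\m L})}$ is nontrivial, hence genuinely $\cong\ZZ$ rather than trivial), and that the abelian hypothesis on $\Grp{\m L}$ is what lets us use Proposition~\ref{p:Zconv-lex}, whose proof relied on the structure theorem for finitely generated abelian groups. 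I would also double-check that $\loc{(\Grp{\m L})}$ is a convex subgroup of $\m G$ in the order-theoretic sense Proposition~\ref{p:Zconv-lex} expects — but this is immediate from Lemma~\ref{l:local-subuniverse}(2), which asserts exactly convexity (and normality, which is automatic here anyway since $\m G$ is abelian).
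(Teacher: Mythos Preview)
Your proposal is correct and follows essentially the same approach as the paper: invoke Lemma~\ref{l:support-local}(4) for $\loc{(\Grp{\m L})}\cong\ZZ$, Lemma~\ref{l:local-subuniverse}(2) for convexity, Lemma~\ref{l:fin-gen} for finite generation of $\Grp{\m L}$, and then apply Proposition~\ref{p:Zconv-lex}. Your write-up is in fact slightly cleaner than the paper's, which contains a minor slip (it says Lemma~\ref{l:fin-gen} shows $\loc{(\Grp{\m L})}$ is finitely generated, when what is needed---and what that lemma actually gives---is that $\Grp{\m L}$ is finitely generated).
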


\begin{proof}
    Let $\m{L} \leq \m{F}_n(\m J \overrightarrow{\times} \ZZ)$ be a representation of $\m L$. By Lemma~\ref{l:support-local}(4) we have $\loc{(\Grp{\m{L}})} \cong \ZZ$ and, by Lemma~\ref{l:fin-gen}, $\loc{(\Grp{\m L})}$ is finitely generated. Moreover, $\loc{(\Grp{\m{L}})}$ is a convex normal subalgebra of $\Grp{\m{L}}$, by Lemma~\ref{l:local-subuniverse}(2). Thus, by Proposition~\ref{p:Zconv-lex}, $\grp{L} \cong (\Grp{\m{L}}/\loc{(\Grp{\m{L}})}) \overrightarrow{\times} \ZZ$
\end{proof}

The following theorem shows that if a proper $n$-periodic $\ell$-pregroup has  totally ordered group skeleton, then all local elements follow a single common pattern in their local coordinates. This forces the local subalgebra to be isomorphic to $\m{F}_k(\ZZ)$, for some $k \mid n$. 

We say that an $\ell$-pregroup is of \emph{periodicity $n$} if it is $n$-periodic and it contains an element of periodicity $n$. 

\begin{remark}\label{r:per-of-alg}
    Note that, by Lemma~\ref{l: spfpl}, a periodic $\ell$-pregroup $\m{L}$ is of periodicity $n$ if and only if $\loc{\m{L}}$ is of periodicity $n$, where $\loc{\m{L}}$ is computed with respect to any representation of $\m L$.
\end{remark}

\begin{theorem}\label{t:Fn-FSI-local}
    Let $\m{L} \leq \m{F}_n(\m J \overrightarrow{\times} \ZZ)$, where $\m J$ is a chain, be a proper  $\ell$-pregroup of periodicity $n$ such that $\Grp{\m L}$ is a totally ordered  $\ell$-group and let $i\in S:= \loc{\supp}(\m{L})$. Then there exists a sequence $(k_j)_{j\in S} \subseteq \ZZ_n$ such that 
    for each $c\in \loc{L}$ and $j\in S$, $\overline{c}_j = \overline{c}_i^{[k_j]}$, and $k_i=0$. In particular, $\loc{\m{L}} \cong \m{F}_n(\ZZ)$.  
\end{theorem}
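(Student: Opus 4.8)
The plan is to exploit the fact that $\m L$ is proper of periodicity $n$ with totally ordered skeleton to pin down the local components of every local element against a single ``reference'' coordinate $i$. First I would use Lemma~\ref{l: spfpl} together with Remark~\ref{r:per-of-alg} to produce a local element $c_0\in\loc L$ of periodicity $n$; by Lemma~\ref{l: per}, $\per(c_0)=\lcm\{\per(\overline{(c_0)}_j):j\in S\}=n$. Now for $j\in S$ the key step is to show that $\overline{c}_j$ and $\overline{c}_i$ are related by a fixed double shift $[k_j]$, independent of $c$. To get the index $k_j$, I would first handle the case where $\overline c_i$ has periodicity $n$ (e.g.\ $c=c_0$, or $c=c_0 a$ for suitable $a$): by Theorem~\ref{t:Fn-generation}, $\langle\overline{(c_0)}_i\rangle=\m F_n(\ZZ)$, so in particular $\overline{(c_0)}_i$ generates some $n$-atom, hence an element whose maximal height is $1$; and by Lemma~\ref{l:support-local}(3) the operators $\h$ and $\g$ act identically on $\overline c_i$ and $\overline c_j$ for any local $c$. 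The heart of the argument is to upgrade this ``same $\h$ and $\g$'' information to ``same element up to $[k_j]$'': since all local elements have the same support (Lemma~\ref{l:support-local}(2)) and the skeleton is totally ordered, any mismatch between $\overline c_j$ and every double-shift $\overline c_i^{[k]}$ can be detected by producing, via products and meets/joins inside $\m L$, a local element whose support omits $j$ but not $i$ (or vice versa), contradicting Lemma~\ref{l:support-local}(2). This is essentially the same incomparability-of-supports trick used repeatedly in Lemma~\ref{l:support-local}, now applied to the ``flat part'' $\g(c)^{-1}c$ of elements.

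Concretely, I would argue as follows. Fix the reference coordinate $i$, and for each $j\in S$ let $\phi_j$ be defined by the requirement that the diagonal-like set $\{c\in\loc L:\overline c_j=\overline c_i^{[\phi_j]}\}$ is all of $\loc L$; I must show such a $\phi_j\in\ZZ_n$ exists. Apply the existence statement first to a generator of $\loc{\m L}$ of periodicity $n$ (which exists by Remark~\ref{r:per-of-alg}): call it $c_0$, and set $g_j:=\overline{(c_0)}_j$, $g_i:=\overline{(c_0)}_i$. Since $\per(c_0)=n$ and $g_i,g_j\in\m F_n(\ZZ)$, Corollary~\ref{c:flat-g} lets us pass to the strictly positive flat part, and Theorem~\ref{t:Fn-generation} (or Theorem~\ref{t:minimal-idemp} / Corollary~\ref{c:posidem-gen} on the idempotent $(\text{flat part})^{n-1}$) gives $\langle g_i\rangle=\m F_n(\ZZ)$. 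Now consider the element $d\in\loc L$ with $\overline d_i=\overline{(c_0)}_i$ unchanged; using the generation inside $\m F_n(\ZZ)$ applied simultaneously in every coordinate of $S$ (local products/meets/joins are computed coordinatewise), one shows $\overline{(c_0)}_j$ lies in $\langle g_i\rangle$'s orbit under the automorphisms $x\mapsto x^{[k]}$; the constraint that $\h,\g$ agree on coordinates $i,j$ (Lemma~\ref{l:support-local}(3)) together with $k_i=0$ forces the shift to be a well-defined $k_j\in\ZZ_n$. For a general $c\in\loc L$: write $c$ as a term in $c_0$ and other generators; because $\Grp{\m L}$ is totally ordered and all local generators share support $S$, each such generator's $j$-coordinate must be the $[k_j]$-shift of its $i$-coordinate for the same $k_j$ — otherwise Lemma~\ref{l:support-local}(2) is violated by a suitable meet of $\g$-images — and since $x\mapsto x^{[k_j]}$ is an $\ell$-pregroup automorphism (Lemma~\ref{l:double-l-auto}) it commutes with all term operations, so $\overline c_j=\overline c_i^{[k_j]}$ for all $c\in\loc L$.

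Finally, with the sequence $(k_j)_{j\in S}$ in hand, I would define $\psi\colon\loc{\m L}\to\m F_n(\ZZ)$ by $\psi(c)=\overline c_i$. By the displayed identity $\overline c_j=\overline c_i^{[k_j]}$ and the coordinatewise computation of operations, $\psi$ is a homomorphism; it is injective because $\supp(c)\subseteq S$ and every coordinate of $c$ in $S$ is determined by $\overline c_i$ (coordinates outside $S$ are $1$); and it is surjective because $c_0$ maps to a generator $g_i$ of $\m F_n(\ZZ)$, whence $\psi[\loc{\m L}]\supseteq\langle g_i\rangle=\m F_n(\ZZ)$ by Theorem~\ref{t:Fn-generation}. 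Hence $\loc{\m L}\cong\m F_n(\ZZ)$.

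The main obstacle I anticipate is the step ``same $\h$, $\g$, same support $\Rightarrow$ same element up to a fixed $[k_j]$.'' Knowing $\h(\overline c_i)=\h(\overline c_j)$ and $\g(\overline c_i)=\g(\overline c_j)$ only controls the extreme heights, not the full function; closing the gap really requires leveraging that the equality must hold \emph{uniformly} for all local $c$ simultaneously, and that $\loc{\m L}$ is closed under the coordinatewise operations of $\m F_n(\ZZ)$, so that ``test elements'' built inside $\m F_n(\ZZ)$ (atoms, flat idempotents, their $\g$-images) can be realized in $\loc L$ and then fed into Lemma~\ref{l:support-local}(2). Getting this bookkeeping right — in particular checking that the $k_j$ obtained from $c_0$ is forced on every other generator, and that $k_i=0$ is consistent — is where the real work lies; everything after that is routine verification that $\psi$ is an isomorphism.
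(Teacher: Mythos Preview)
Your overall strategy matches the paper's: find a ``reference'' local element whose coordinates determine the shifts $k_j$, then show every local element obeys the same shifts, then read off the isomorphism $c\mapsto\overline c_i$. But there is a genuine gap at the very first step.

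You assume that because $c_0\in\loc L$ has periodicity $n$, the single coordinate $g_i=\overline{(c_0)}_i$ also has periodicity $n$, so that Theorem~\ref{t:Fn-generation} gives $\langle g_i\rangle=\m F_n(\ZZ)$. This does not follow: by Lemma~\ref{l: per}, $\per(c_0)=\lcm\{\per(\overline{(c_0)}_j):j\in S\}$, so the individual $g_i$ may well have strictly smaller periodicity (think of $S=\{1,2\}$ with $g_1$ of periodicity $2$ and $g_2$ of periodicity $3$ inside $\m F_6(\ZZ)$). Until you know some coordinate already has periodicity $n$, you cannot invoke Theorem~\ref{t:Fn-generation} at coordinate $i$, and your surjectivity argument for $\psi$ collapses. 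The paper closes this gap with a separate bootstrapping step (its Claim~2): it first proves, for \emph{any} strictly positive idempotent $b\in\loc L$, that the coordinates $\overline b_j$ are shifts of one another (this is where Lemma~\ref{l:core-lemma}(3) does the real work --- it produces, from any two idempotents not in the same $[\,\cdot\,]$-orbit, a meet that kills one coordinate but not the other, contradicting Lemma~\ref{l:support-local}(2)). Only \emph{after} this uniform-shift fact for idempotents does it argue that the maximal coordinate-periodicity $k$ must equal $n$, by combining atoms of different periodicities via Proposition~\ref{p: maxper}.

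You correctly flag the ``same $\h,\g$ $\Rightarrow$ same element up to $[k_j]$'' step as the crux, but you never actually bridge it: equal extreme heights do not determine a function, and your appeal to ``test elements'' remains a promissory note. The concrete tool you are missing is Lemma~\ref{l:core-lemma}(3) (on the sets $P_a$), which converts ``not a shift of each other'' into an explicit support violation. Without that lemma --- and without the periodicity bootstrap above --- the argument does not go through.
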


\begin{proof}
    We let $i \in S$ and we first prove some claims.
    
     \underline{Claim 1.}    If  $b\in \loc{L}$ is a strictly positive idempotent, then there exists a sequence $(k_j)_{j\in S} \subseteq \ZZ_n$ such that for all $j \in S$, $\overline{b}_j = \overline{b}_i^{[k_j]}$.
    
    By way of contradiction, we assume that there exists a $j\in S = \supp(b)$ such that for all $k\in \ZZ_n$, $\overline{b}_i \neq \overline{b}_j^{[k]}$. By Lemma~\ref{l:core-lemma}(3), without loss of generality, there exist $l_1,\dots,l_m \in \ZZ_n$ such that $\overline{b}_i^{[l_1]} \mt \dots \mt \overline{b}_i^{[l_m]} > \overline{b}_j^{[l_1]} \mt \dots \mt \overline{b}_j^{[l_m]} =1$. 
    Since $b$ is local,  $c := b^{[l_1]} \mt \dots \mt b^{[l_m]}$ is local, as well; also, by the above, we have that $\overline{c}_i > 1$ and $\overline{c}_j = 0$, hence $j \notin \supp(c)$. On the other hand,
    by Lemma~\ref{l:support-local}(2) we get $\supp(b) = \supp(c)$, hence $j \in \supp(c)$, a contradiction.  
    
     \underline{Claim 2.}      There exists  $b\in \loc{L}$ such that for each $j\in S$, $\overline{b}_j$ is an $n$-atom.
    
    Let $k = \max\{\per(\overline{a}_i) : a \in L,i\in J\}= \max\{\per(\overline{a}) : a \in L\}$. 
    Since $\m{L}$ is proper, it contains a non-invertible element $a$, so $\per(a) \neq 1$. By Lemma~\ref{l: per}, $\lcm \{\per(\overline{a}) : a \in L\}\neq 1$, so $\max\{\per(\overline{a}) : a \in L\} \neq 1$, hence $k>1$. We want to show that $k=n$. So let $a\in L$ and $i\in J$ such that $\overline{a}_i$ has periodicity $k>1$, hence $a$ is non-invertible, and let $b:= \g(a)^{-1}a$.  By Corollary~\ref{c:flat-g}, $   \overline{b}_i:= \overline{(\g(a)^{-1}a)}_i$ is positive,  and flat of periodicity $k$,
    thus, by Theorem~\ref{t:Fn-generation}, we have $\langle \overline{b}_i \rangle =\m{F}_k(\ZZ)$.
    Let $c_i \in \mathrm{F}_k(\ZZ)$ be a $k$-atom, 
    and let $t(x)$ denote the unary term witnessing that $c_i \in\langle \overline{b}_i \rangle $, i.e., $t(\overline{b}_i)=c_i$. Note that for $c:=t(b)$ we have $\overline{c}_i=\overline{t(b)}_i=t(\overline{b}_i)=c_i$. 
    So there is a $c \in L$ with $\overline{c}_i$ a 
    $k$-atom. 
     Also, by Lemma~\ref{l: spfpl}, $b:= \g(a)^{-1}a$ is local,  so $c=t(b)$ is also local.
    Now $c' := cc^\ell$ is a positive idempotent, by Remark~\ref{r: xxl}, and $\overline{c}'_i =\overline{cc^\ell}_i =\overline{c}_i\overline{c}_i^\ell = \overline{c}_i$ (since $\overline{c}_i$ is positive idempotent). Also, since $c$ is local, so is $c'$, so Claim~1 yields that $\overline{c}'_j$ is a 
    $k$-atom for each $j\in \supp(c')$. If $k<n$, then, by the maximality of $k$ and, since $\m{L}$ is of periodicity $n$, there exists a $d \in L$ of periodicity $n$, so by Lemma~\ref{l: per} there exist $l \in \NN$ and  $i\in J$ such that $\per(\overline{d}_i)=l$, where $l\nmid k$ and $k\nmid l$. Arguing as above we obtain a $d' \in L$ such that $\overline{d'}_i$ is an $l$-atom for each $i\in J$. But then, by Proposition~\ref{p: maxper}, there exist $s,t\in \ZZ$ such that $\overline{(c'^{[s]}\jn d'^{[t]})}_i$ has periodicity 
    $\lcm(k,l) > k$ for each $i\in J$, contradicting the maximality of $k$. Hence $k=n$ and the claim is proved.

    \underline{Claim 3.}      There exists a sequence $(k_j)_{j\in S}$ such that 
        for each $c\in \loc{L}$ and $j\in S$, $\overline{c}_j = \overline{c}_i^{[k_j]}$.
    
    By Claim~1 together with Claim~2, there exists a $b\in \loc{L}$ and a sequence $(k_j)_{j\in S}$ such that $\overline{b}_i$ is an $n$-atom and for each $j \in S$, $\overline{b}_j = \overline{b}_i^{[k_j]}$. In particular, for each $\overline{a}\in \mathrm{F}(\ZZ)^J$ with $\overline{a}_j = \overline{a}_i^{[k_j]}$ we have $\overline{a} \in \langle b \rangle$, by Theorem~\ref{t:Fn-generation}.

    Suppose for a contradiction that there exists a $c \in \loc{L}$ and $j\in S$ such that $\overline{c}_j \neq \overline{c}_i^{[k_j]}$; by Lemma~\ref{l:support-local}(3), $c$ is not invertible. Thus, by Lemma~\ref{l: spfpl} and considering $\g(c)^{-1}c$,  we may assume that $c$ is strictly positive and flat, while remaining local. Note that $c$ is local iff $c^{[k]}$ is, so in view of Remark~\ref{r: flat via automorohism} and the fact that $c$ is flat, possibly by replacing $c$ by $c^{[k]}$ for a suitable $k \in \ZZ$  
    we may assume that $\overline{c}_j[\ZZ_n] \subseteq \ZZ_n$ and that there is a  $q\in \ZZ_n$ such that $\overline{c}_i^{[k_j]}(q) \neq \overline{c}_j(q)$. We set $p:= \overline{c}_j(q) \geq q$ and distinguish two cases.

    If $\overline{c}_i^{[k_j]}(q) < p$, then we let  $\overline{d}\in \mathrm{F}(\ZZ)^J$ be such that $\overline{d}_j(x) = p$ for $x \in [q,p]$ and $\overline{d}_j(x) = x$ for $x \in [0,q-1]\cup [p+1,n-1]$ extended $n$-periodically {with $\overline{d}_i = \overline{d}_j^{[-k_j]}$} and $\overline{d}_l = \overline{d}_i^{[k_l]}$ for each $l\in \ZZ$. 
    So, in particular, 
    $\overline{d} \in \langle \overline{b} \rangle$, 
    as shown above.
    Therefore, by setting $d:=(id, \overline{d})$, we get $d \in \langle b \rangle \subseteq \loc{L}$. Moreover, we have $(\overline{d}_j \mt \overline{c}_j)(q) = p$ and all other heights of $\overline{d}_j \mt \overline{c}_j$ are at most $p-q$, thus $\overline{\h(d \mt c)}_j(x) = x + p-q$, for all $x \in \ZZ$. On the other hand we have $(\overline{d}_j \mt \overline{c}_i^{[k_j]})(q) < p$ and all other heights of $\overline{d}_j \mt \overline{c}_i^{[k_j]}$ are strictly smaller than $p-q$, thus  for all $x\in \ZZ$,
    \[
    \overline{\h(d \mt c)}_i(x) = \h(\overline{d}_j^{[-k_j]}\mt \overline{c}_i)(x) =  \h(\overline{d}_j \mt \overline{c}_i^{[k_j]})(x)  < x + p-q.
    \]
    Hence $\overline{\h(d \mt c)}_i(x) \neq \overline{\h(d \mt c)}_j(x)$, contradicting Lemma~\ref{l:support-local}(3).

    Similarly we obtain a contradiction if $\overline{c}_i^{[k_j]}(q) > p$, so this concludes the proof of Claim~3, thus establishing the first part of the lemma.
    
    For the second part note that the map $\pi\colon \loc{L} \to \mathrm{F}_n(\ZZ)$, $\pi(c) = \overline{c}_i$ is an isomorphism, by Claim~3.
\end{proof}

The following lemma links conjugation by a group element to the automorphism $a \mapsto a^{[m]}$, for $m \in \ZZ$, in the setting of certain $n$-periodic $\ell$-pregroups, exactly as it happens in $\m F_n(\ZZ)$; see Lemma~\ref{l:b-conjugation}(1).

\begin{lemma}\label{l:conj-grp-elements}
    Let $\m{L} \leq \m{F}_n(\m J \overrightarrow{\times} \ZZ)$, where $\m J$ is a chain, be a proper $n$-periodic $\ell$-pregroup such that $\Grp{\m L}$ is a totally ordered  $\ell$-group. Then for each $f\in \grp{L}$, there is an $m_f\in \ZZ$ such that for each $a\in \loc{L}$, $faf^{-1} = a^{[m_f]}$. 
\end{lemma}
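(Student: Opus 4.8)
The plan is to exploit the wreath-product representation together with the structural results just established, in particular Theorem~\ref{t:Fn-FSI-local}. First I would dispose of the trivial case: if $\loc{L} = \{1\}$ then the statement is vacuous, so I may assume $\m L$ has a non-invertible element, hence (passing to the local subalgebra) $\m L$ is of periodicity $n$ for some $n$ with $\loc{\m L}\cong \m F_n(\ZZ)$; strictly speaking I should phrase it for the actual periodicity $k\mid n$ of $\loc{\m L}$, but the argument is identical with $k$ in place of $n$. Fix $f\in\grp L$ and let $S=\loc{\supp}(\m L)$ with distinguished index $i\in S$ as in Theorem~\ref{t:Fn-FSI-local}, so that $\overline{c}_j=\overline{c}_i^{[k_j]}$ for all $c\in\loc L$ and $j\in S$, and $\overline{c}_j=1$ for $j\notin S$.

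The key computation is the behaviour of the global component $\widetilde f$ of $f$ on $S$. Since $\loc L$ is a normal subalgebra (Lemma~\ref{l:local-subuniverse}(1)), for $a\in\loc L$ we have $faf^{-1}\in\loc L$, and the wreath-product multiplication formulas give, for each $j\in J$,
\[
\overline{(faf^{-1})}_j \;=\; \overline{f}_{j}\,\overline{a}_{\widetilde f^{-1}(j)}\,\overline{f}_{j}^{\,-1},
\]
(using that $f$ is invertible so $\widetilde f\in\mathrm{Aut}(\m J)$ and the local components of $f$ lie in $\grp{(\m F_n(\ZZ)^J)}\cong\ZZ^J$, hence are central shifts $\cn s^{\,\overline{f}_j}$). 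Because $\overline f_j=\cn s^{t_j}$ for some integer $t_j$, conjugation by it is the automorphism $x\mapsto x^{[t_j]}$ of $\m F_n(\ZZ)$ by Lemma~\ref{l:b-conjugation}(1). So $\overline{(faf^{-1})}_j = \bigl(\overline a_{\widetilde f^{-1}(j)}\bigr)^{[t_j]}$. Now $\widetilde f$ must map $S$ to $S$ (since $faf^{-1}$ is a nonidentity local element when $a$ is, and local elements all have support exactly $S$ by Lemma~\ref{l:support-local}(2)); writing $j'=\widetilde f^{-1}(j)\in S$ and using $\overline a_{j'}=\overline a_i^{[k_{j'}]}$ and $\overline{(faf^{-1})}_i=\overline a_i^{[m_f]}$ for whatever $\overline{(faf^{-1})}_i$ turns out to be, the equation $\overline{(faf^{-1})}_j=\overline{(faf^{-1})}_i^{[k_j]}$ (again Theorem~\ref{t:Fn-FSI-local}, since $faf^{-1}\in\loc L$) forces all the relevant shift amounts to be linked. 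Concretely: evaluating at $j=i$ gives $\overline a_i^{[k_{i'}]\,+\,t_i}=\overline a_i^{[m_f]}$ for all $a$, where $i'=\widetilde f^{-1}(i)$; since the maps $x\mapsto x^{[m]}$ on $\m F_n(\ZZ)$ are pairwise distinct for $m$ in a complete residue system mod $n$ and $\overline a_i$ ranges over all of $\mathrm F_n(\ZZ)$ as $a$ ranges over $\loc L$, we may read off a single integer $m_f := k_{i'}+t_i \pmod n$ with $faf^{-1}=a^{[m_f]}$ for every $a\in\loc L$.

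The main obstacle I anticipate is bookkeeping the indices: one must check that $\widetilde f$ genuinely permutes $S$ and that the identity $\overline{(faf^{-1})}_j=\overline{(faf^{-1})}_i^{[k_j]}$ is consistent for all $j\in S$ simultaneously — i.e.\ that $k_j + t_i = k_{\widetilde f^{-1}(j)} + t_j \pmod n$ for all $j$, so that a \emph{single} $m_f$ works uniformly in $a$. This consistency is automatic once we know $faf^{-1}\in\loc L$ and apply Theorem~\ref{t:Fn-FSI-local} to the element $faf^{-1}$ itself, but it is worth writing out: the $[k_j]$-coherence of $faf^{-1}$ is exactly what pins the shift amounts together. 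A clean way to package the whole argument, avoiding explicit $t_j$'s, is: pick $b\in\loc L$ with $\overline b_i$ an $n$-atom (Claim~2 in the proof of Theorem~\ref{t:Fn-FSI-local}), so $\langle b\rangle=\loc{\m L}\cong\m F_n(\ZZ)$; then $fbf^{-1}\in\loc L$ has $\overline{(fbf^{-1})}_i$ again an $n$-atom (conjugation by a central shift preserves being an $n$-atom, by Proposition~\ref{p:n-cover}), so by Corollary~\ref{c:n-cover-conj} there is $m_f\in\ZZ_n$ with $\overline{(fbf^{-1})}_i=\overline b_i^{[m_f]}$, i.e.\ $fbf^{-1}=b^{[m_f]}$ as elements of $\loc{\m L}$ (both sides are local and agree on the $i$-th coordinate, which by Theorem~\ref{t:Fn-FSI-local} determines a local element); finally the map $a\mapsto faf^{-1}$ and the map $a\mapsto a^{[m_f]}$ are both endomorphisms of $\loc{\m L}$ agreeing on the generator $b$, hence agree on all of $\loc L$. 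I would present this last version, as it is the shortest and sidesteps the index computation entirely.
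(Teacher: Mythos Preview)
Your preferred ``final version'' is correct and is a genuinely different route from the paper's. The paper proceeds by a direct wreath-product computation: it writes down $m_f := \overline{f}_{\widetilde{f}^{-1}(i)}(0) + k_{\widetilde{f}^{-1}(i)}$ explicitly, computes $\overline{(faf^{-1})}_i = \overline{f}_{\widetilde{f}^{-1}(i)}\,\overline{a}_{\widetilde{f}^{-1}(i)}\,\overline{f}_{\widetilde{f}^{-1}(i)}^{-1} = \overline{a}_i^{[m_f]}$ for \emph{arbitrary} $a\in\loc L$, and then invokes Theorem~\ref{t:Fn-FSI-local} twice to propagate this to all $j\in S$. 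Your approach instead verifies the conjugation identity on a single generator $b$ (with $\overline{b}_i$ an $n$-atom), observes that both $a\mapsto faf^{-1}$ and $a\mapsto a^{[m_f]}$ are automorphisms of $\loc{\m L}$, and concludes by the fact that $\langle b\rangle = \loc{\m L}$. This is cleaner and avoids index tracking, at the cost of citing a bit more structure (Claim~2 from the proof of Theorem~\ref{t:Fn-FSI-local}, Corollary~\ref{c:n-cover-conj}, and Theorem~\ref{t:Fn-generation}).

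One small correction to your first-approach computation: the correct formula is $\overline{(faf^{-1})}_j = \overline{f}_{\widetilde{f}^{-1}(j)}\,\overline{a}_{\widetilde{f}^{-1}(j)}\,\overline{f}_{\widetilde{f}^{-1}(j)}^{-1}$, not $\overline{f}_j\,\overline{a}_{\widetilde{f}^{-1}(j)}\,\overline{f}_j^{-1}$ (expand $(\widetilde{f},\overline{f}\cdot\overline{a})\cdot(\widetilde{f}^{-1},\overline{f}^{-1}\otimes\widetilde{f}^{-1})$ and apply the $\otimes\widetilde{f}^{-1}$ to both factors). This slip is harmless since you elect to present the generator-based argument instead, which sidesteps the issue entirely.
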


\begin{proof}
     Let  $S= \loc{\supp}(\m{L})$, $i\in S$, and $(k_j)_{j\in S}$ the sequence associated with $i$ as in Theorem~\ref{t:Fn-FSI-local}; we define 
     $m_f :=  \overline{f}_{\widetilde{f}^{-1}(i)}(0) +k_{\widetilde{f}^{-1}(i)}$.
     Using the definition of multiplication in a wreath product we have
     \begin{align*}
    faf^{-1} &=(\widetilde{f},\overline{f})(id,\overline{a})(\widetilde{f},\overline{f})^{-1}    \\
                            &=(\widetilde{f} \circ id,(\overline{f} \otimes id )\cdot \overline{a})(\widetilde{f}^{-1},\overline{f}^{-1} \otimes\widetilde{f}^{-1})  \\
                            &=(\widetilde{f},\overline{f}\cdot \overline{a})(\widetilde{f}^{-1},\overline{f}^{-1} \otimes\widetilde{f}^{-1})  \\
                            &=(\widetilde{f}\circ \widetilde{f}^{-1},((\overline{f}\cdot \overline{a}) \otimes \widetilde{f}^{-1})\cdot (\overline{f}^{-1} \otimes\widetilde{f}^{-1}) ) \\
                            &=(id,((\overline{f}\cdot \overline{a}) \otimes \widetilde{f}^{-1})\cdot (\overline{f}^{-1} \otimes\widetilde{f}^{-1}) ).
     \end{align*}                           
     So
     \begin{align*}
    \overline{(faf^{-1})}_i &=   ((\overline{f}\cdot \overline{a}) \otimes \widetilde{f}^{-1})_i\cdot (\overline{f}^{-1} \otimes\widetilde{f}^{-1})_i                    \\
                            &= (\overline{f}\cdot \overline{a})_{ \widetilde{f}^{-1}(i)}\cdot \overline{f}^{-1}_{ \widetilde{f}^{-1}(i)}\\
                            &= \overline{f}_{\widetilde{f}^{-1}(i)} \overline{a}_{\widetilde{f}^{-1}(i)} \overline{f}^{-1}_{\widetilde{f}^{-1}(i)},
    \end{align*}
  Recall that by Lemma~\ref{l:b-conjugation}, for every $c \in F_n(\ZZ)$ and $k \in \ZZ$, we have $\cn{s}^k c \cn{s}^{-k}=c^{[k]}$. Since each invertible element $g$ of $\m F_n(\ZZ)$ is of the form $g=\cn{s}^k$ for some $k$, and since $g(0)=\cn{s}^k(0)=k$, we get that $g c g^{-1}=c^{[g(0)]}$, for every $c \in F_n(\ZZ)$.  Using this fact
        for the second equality and Theorem~\ref{t:Fn-FSI-local} for the third equality we obtain
    \begin{align*}
    \overline{(faf^{-1})}_i &= \overline{f}_{\widetilde{f}^{-1}(i)} \overline{a}_{\widetilde{f}^{-1}(i)} \overline{f}^{-1}_{\widetilde{f}^{-1}(i)} \\
                            &= \overline{a}_{\widetilde{f}^{-1}(i)}^{[\overline{f}_{\widetilde{f}^{-1}(i)}(0)]} \\
                            &= \overline{a}_i^{[\overline{f}_{\widetilde{f}^{-1}(i)}(0)+k_{\widetilde{f}^{-1}(i)}]} \\
                            &= \overline{a}_i^{[m_f]},
    \end{align*}
    Thus, since $faf^{-1}$ is local, two applications of Theorem~\ref{t:Fn-FSI-local} yield that, for all $j \in S$,
    \[
    \overline{(faf^{-1})}_j = \overline{(faf^{-1})}_i^{[k_j]} = \overline{a}_i^{[m_f+ k_j]} = \overline{a}_j^{[m_f]} \qedhere
    \]
\end{proof}

Now we are able to prove the main result of this subsection.

\begin{theorem}\label{t:lexprod-fsi-Fn}
    If $\m{L}$  is a finitely generated proper $n$-periodic $\ell$-pregroup where $\Grp{\m L}$ is a  totally ordered abelian $\ell$-group, 
    then $\m{L} \cong \m{H} \overrightarrow{\times}\m{F}_k(\ZZ)$ for some totally ordered abelian $\ell$-group $\m{H}$ and some $k \mid n$.
\end{theorem}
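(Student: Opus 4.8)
The plan is to realize $\m{L}$ as an inner lexicographic product $\m{H}\overrightarrow{\times}\loc{\m{L}}$ via Lemma~\ref{l:inner-lex}, after identifying $\loc{\m{L}}$ with $\m{F}_k(\ZZ)$ through Theorem~\ref{t:Fn-FSI-local}. First I would set $k$ to be the maximum of $\{\per(a):a\in L\}$: this maximum exists since every periodicity divides $n$, and using Proposition~\ref{p: maxper} (its corollary shows periodicities are closed under least common multiples) one gets that $\m{L}$ is actually of periodicity $k$, with $k\mid n$ and $k\geq 2$ (as $\m{L}$ is proper). Fixing a representation $\m{L}\leq\m{F}_k(\m{J}\overrightarrow{\times}\ZZ)$, Theorem~\ref{t:Fn-FSI-local} yields an isomorphism $\pi\colon\loc{\m{L}}\to\m{F}_k(\ZZ)$, while Lemma~\ref{l:support-local}(4) gives $\loc{(\Grp{\m{L}})}\cong\ZZ$; since $\pi$ carries the invertible elements of $\loc{\m{L}}$ (which are exactly $\loc{(\Grp{\m{L}})}$) isomorphically onto $\langle\cn{s}\rangle$, I can choose the generator $t$ of $\loc{(\Grp{\m{L}})}$ so that $\pi(t)=\cn{s}$ (and then $t>1$).

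Next I would exploit Lemma~\ref{l:conj-grp-elements}: it assigns to each $f\in\Grp{\m{L}}$ an integer $m_f$ with $faf^{-1}=a^{[m_f]}$ for all $a\in\loc{\m{L}}$, and since $\loc{\m{L}}\cong\m{F}_k(\ZZ)$ has elements of periodicity $k$, $m_f$ is determined modulo $k$ and $f\mapsto m_f+k\ZZ$ is a homomorphism $\phi\colon\Grp{\m{L}}\to\ZZ/k\ZZ$. Applying $\pi$ to $tat^{-1}=a^{[m_t]}$ and using $\cn{s}\,b\,\cn{s}^{-1}=b^{[1]}$ in $\m{F}_k(\ZZ)$ (Lemma~\ref{l:b-conjugation}(1)) shows $\phi(t)=1+k\ZZ$, so $\phi$ is onto. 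Its kernel $K:=\ker\phi$ then consists precisely of the group elements that centralize $\loc{\m{L}}$. Now $\Grp{\m{L}}$ is finitely generated (Lemma~\ref{l:fin-gen}), so $\Grp{\m{L}}/\loc{(\Grp{\m{L}})}$ is finitely generated, torsion-free and abelian, hence free abelian; lifting a basis to elements of $\Grp{\m{L}}$ and correcting each lift $f_i$ by the power $t^{-m_{f_i}}$ (which lands it in $K$, since $m_{f_i t^{-m_{f_i}}}=m_{f_i}-m_{f_i}=0$, using $\phi(t)=1$), I obtain a subgroup $\m{H}\leq K$ with $\m{H}\cap\loc{(\Grp{\m{L}})}=\{1\}$ and $\m{H}\cdot\loc{(\Grp{\m{L}})}=\Grp{\m{L}}$; as a subgroup of a totally ordered abelian group, $\m{H}$ is itself a totally ordered abelian $\ell$-group.

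Finally I would check the three conditions of Lemma~\ref{l:inner-lex} for $\m{H}$ and $\m{M}:=\loc{\m{L}}$. Condition (i) is immediate: for $h\in\m{H}\subseteq K$ and $m\in\loc{\m{L}}$, $hmh^{-1}=m^{[m_h]}=m$ since $k\mid m_h$ and $\loc{\m{L}}$ is $k$-periodic. For (ii), the quotient map $q\colon\m{L}\to\m{L}/\loc{\m{L}}$ restricts on $\Grp{\m{L}}$ to a surjection with kernel $\loc{(\Grp{\m{L}})}$ (Lemma~\ref{l:local-subuniverse}(3)), so the defining properties of $\m{H}$ make $q|_{\m{H}}$ an isomorphism onto $\m{L}/\loc{\m{L}}$, giving existence and uniqueness of the factorization $a=hm$. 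Condition (iii) is the delicate point: writing $h:=h_2^{-1}h_1\neq 1$, I would use $h\notin\loc{(\Grp{\m{L}})}$ together with the convexity of $\loc{(\Grp{\m{L}})}$ in $\Grp{\m{L}}$ (Lemma~\ref{l:local-subuniverse}(2)) to conclude that $h$ lies strictly below (resp.\ above) every power of $t$ when $h<1$ (resp.\ $h>1$); since via $\pi$ and Lemma~\ref{l:grp-hg} every element of $\loc{\m{L}}$ is sandwiched between two powers of $t$, this forces $h_1m_1<h_2m_2\iff h_1<h_2$, and the case $h_1=h_2$ is trivial. Lemma~\ref{l:inner-lex} then gives $\m{L}\cong\m{H}\overrightarrow{\times}\loc{\m{L}}\cong\m{H}\overrightarrow{\times}\m{F}_k(\ZZ)$ with $k\mid n$, as required. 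The main obstacle is the coordinated construction in the second paragraph: producing a single subgroup $\m{H}$ that is simultaneously a complement of $\loc{(\Grp{\m{L}})}$ in $\Grp{\m{L}}$ and centralizes $\loc{\m{L}}$; this is exactly where surjectivity of $\phi$ (equivalently, $\phi(t)$ generating $\ZZ/k\ZZ$) and the free-abelian structure of $\Grp{\m{L}}/\loc{(\Grp{\m{L}})}$ are used.
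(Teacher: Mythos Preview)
Your proposal is correct and follows essentially the same strategy as the paper: identify $\loc{\m L}$ with $\m F_k(\ZZ)$ via Theorem~\ref{t:Fn-FSI-local}, then correct a lifted free basis of $\Grp{\m L}/\loc{(\Grp{\m L})}$ by suitable powers of the generator of $\loc{(\Grp{\m L})}$ so that the resulting subgroup centralizes $\loc{\m L}$, and finally verify the hypotheses of Lemma~\ref{l:inner-lex}. Your packaging of the conjugation data as a surjective homomorphism $\phi\colon \Grp{\m L}\to\ZZ/k\ZZ$ and your convexity argument for condition~(iii) are mild streamlinings of the paper's computations, but the underlying construction is the same.
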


\begin{proof}
    Since $\m{L}$ is a proper $n$-periodic $\ell$-pregroup, by Proposition~\ref{p: representation1}, it has a representation $\m{L} \leq \m{F}_n(J \overrightarrow{\times} \ZZ)$, where $\m J$ is a chain, and  there is a $k>1$ with $k\mid n$ such that $\m{L}$ is of periodicity $k$; hence we  have $\m{L} \leq \m{F}_k(J \overrightarrow{\times} \ZZ)$ and we want to show that  $\m{L} \cong \m{H} \overrightarrow{\times}\m{F}_k(\ZZ)$. So, without loss of generality we may assume that $k=n$.
    By Lemma~\ref{l:support-local}(4), we have $\loc{(\Grp{\m{L}})} \cong \ZZ$ and, by Lemma~\ref{l:loc-invariant-grp}, we have
    $\Grp{\m{L}} \cong (\Grp{\m{L}}/\loc{(\Grp{\m{L}})}) \overrightarrow{\times} \ZZ$. So, by  Lemma~\ref{l:inner-lex}
    there exists $\m H \leq \Grp{\m{L}}$ such that $\m{H} \cong \Grp{\m{L}}/\loc{(\Grp{\m{L}})}$ and   
    $\m{H} \overrightarrow{\times}  \loc{(\Grp{\m{L}})} \cong \Grp{\m{L}}$, via the map $(h,n) \mapsto hn$.
    By Lemma~\ref{l:fin-gen}, $\Grp{\m{L}}$ is finitely generated, so $\m H \cong \Grp{\m{L}}/\loc{(\Grp{\m{L}})} $ is finitely generated, as well; let $F$ be a set of independent generators of $\m{H}$. By Lemma~\ref{l:conj-grp-elements}, for each $f \in F$ there exists an $m_f \in \ZZ$ such that for each $a\in \loc{L}$, $faf^{-1} = a^{[m_f]}$. 
    Moreover, by Theorem~\ref{t:Fn-FSI-local}, we have $\loc{\m{L}} \cong \m{F}_n(\ZZ)$; abusing notation, we denote by
    $\cn{s}$, as well, the element of $\loc{(\grp{L})}$ corresponding to the positive generator $\cn{s}$ of $\ZZ$, hence $\loc{(\grp{L})}=\langle \cn{s}\rangle$, and define $F' = \{f\cn{s}^{n-m_f} : f\in F\}$.
    By Lemma~\ref{l:b-conjugation}(1), we have for every $a \in \m{F}_n(\ZZ)$,  $\cn{s}^m a \cn{s}^{-m} = a^{[m]}$.  Since $\loc{\m{L}} \cong \m{F}_n(\ZZ)$, for every $a\in \loc{L}$, we have $\cn{s}^m a \cn{s}^{-m} = a^{[m]}$, as well, hence we also have 
    \begin{align*}
    f\cn{s}^{n-m_f} a (f\cn{s}^{n-m_f})^{-1} &= f\cn{s}^{n-m_f} a \cn{s}^{-n+m_f}f^{-1} =f a^{[n-m_f]}f^{-1} \\
    &= (a^{[n-m_f]})^{[m_f]}=a^{[n]} = a.
    \end{align*}
    So every element of $F'$ commutes with every element of $\loc{\m{L}}$. Let $\m{H}'$ be the $\ell$-group generated by $F'$; we will show that  $\m{H}'\overrightarrow{\times} \loc{\m L} \cong \m{L}$ via $(h,a) \mapsto ha$. For this we check that (i)-(iii) of Lemma~\ref{l:inner-lex} are satisfied; we have already showed that every generator of $H'$ commutes with every element of $\loc{\m{L}}$, so every element of $H'$ commutes with every element of $\loc{\m{L}}$; i.e., (i) holds. 
   First we show
    \begin{equation}\tag{$\ast$}\label{eq:triv-intersection}
         H' \cap \grp{(\loc{L})} = \{1\}.
    \end{equation}
    If $a \in H' \cap \grp{(\loc{L})}$, then there exist $f_1,\dots, f_k,g_1,\dots, g_l \in F$ such that 
    \begin{align*}  
    a &= \cn{s}^{n-m_{f_1}}f_1\cdots \cn{s}^{n-m_{f_k}}f_k \cn{s}^{-n+m_{g_1}} g_1^{-1} \cdots \cn{s}^{-n+m_{g_l}}g_l^{-1} \\
      &= \cn{s}^{(k-l)n-m_{f_1}-\dots - m_{f_k} + m_{g_1} + \dots + m_{g_l}}f_1\cdots f_k g_1^{-1} \cdots g_l^{-1},
    \end{align*}
    where we used commutativity.
    So, since $a \in \grp{(\loc{L})}$, $f_1\cdots f_k g_1^{-1} \cdots g_l^{-1} =1$. Moreover, since $F$ is an independent generating set, we get that $\{g_1, \dots, g_l \}= \{f_1, \dots, f_k\}$ as multisets, hence $k=l$ and
    $(k-l)n-m_{f_1}-\dots - m_{f_k} + m_{g_1} + \dots + m_{g_l}=0+0=0$; thus $a =1$.

    If $a\in L$, then $\g(a) \in \grp{L}$  and, by Lemma~\ref{l: spfpl},  $\g(a)^{-1}a \in \loc{L}$. We have already mentioned that $\m{H} \overrightarrow{\times}  \loc{(\Grp{\m{L}})} \cong \Grp{\m{L}}$ via the map $(h,m) \mapsto hm$, so $\g(a)=hm$, for some $h \in H$ and $m \in \loc{(\grp{L})}$. Since $F$ is a generating set for $H$,  there are $f_1, \ldots, f_k \in F$ such that $h=f_1 \cdots f_k$. Also, since $m \in\loc{(\grp{L})}=\langle \cn{s}\rangle$, there exists $l \in \ZZ$ such that $m=\cn{s}^l$; hence 
    \[
    m=\cn{s}^l=\cn{s}^{n-m_{f_1}}\cdots \cn{s}^{n-m_{f_k}}\cdot \cn{s}^{l-kn + m_{f_1}+ \ldots + m_{f_k}}.
    \]
    Since  $\m{H} \overrightarrow{\times}  \loc{(\Grp{\m{L}})} \cong \Grp{\m{L}}$, every element of $H$ (hence also of $F$) commutes with every element of $\loc{(\grp{L})}$; thus 
    \begin{align*}
        \g(a)=hm &=f_1 \cdots f_k\cdot \cn{s}^{n-m_{f_1}}\cdots \cn{s}^{n-m_{f_k}}\cdot \cn{s}^{l-kn + m_{f_1}+ \ldots + m_{f_k}} \\
        &= f_1  \cn{s}^{n-m_{f_1}}\cdots f_k \cn{s}^{n-m_{f_k}}\cdot \cn{s}^{l-kn + m_{f_1} + \ldots + m_{f_k}} \in H' \cdot \loc{(\grp{L})}.
    \end{align*}
    Therefore, 
    $a = \g(a)(\g(a)^{-1}a) \in  H' \cdot \loc{(\grp{L})}\cdot \loc{L}\subseteq H' \cdot \loc{L}$. 
     Now suppose that $c_1,c_2 \in \loc{L}$ and $g_1,g_2 \in H'$ are such that $c_1g_1 = c_2g_2$; so, $c_1g_1g_2^{-1} =c_2\leq c_2$  and by residuation $ g_1g_2^{-1} \leq c_1^{r}c_2$. Since $\m H'$ is totally ordered, without loss of generality we may assume that $g_2 \leq g_1$; so $1 \leq g_1g_2^{-1}$.   So  $1 \leq g_1g_2^{-1} \leq c_1^{r}c_2$ and $1,  c_1^{r}c_2 \in \loc{L}$; moreover by Lemma~\ref{l:local-subuniverse}(1), $\loc{L}$ is convex, hence $g_1g_2^{-1} \in \loc{L}$. Since we also have $g_1g_2^{-1} \in H'$, \eqref{eq:triv-intersection} yields $g_1= g_2$, hence $c_1 = c_2$, as well. Thus (ii) holds.

    Finally for (iii)  suppose that $g_1,g_2 \in H'$, $a_1,a_2 \in \loc{L}$ and  $g_1a_1 < g_2a_2$. As we argued above, the elements of $F$ commute with the elements of $\langle \cn{s} \rangle$; so every element of $H'$, which is a product of elements of $F' \subseteq F \cdot \langle \cn{s} \rangle$, is also a product of elements of $F$ (hence an element of $H$) times a product of elements of $\langle \cn{s} \rangle$ (hence itself an element of $\langle \cn{s} \rangle$), thus $H' \subseteq H \cdot \langle \cn{s} \rangle$. Therefore,  $g_i = m_i h_i$ with $h_i \in H$, $m_i \in \langle \cn{s} \rangle=  \grp{(\loc{L})}$ for $i=1,2$, so $h_1m_1a_1 < h_2m_2a_2$. If $h_1 = h_2$, then $h_1h_2^{-1} =1$, so $g_1g_2^{-1}=m_1h_1h_2^{-1}m_2^{-1}= m_1m_2^{-1}\in H\cap \langle \cn{s} \rangle$, hence $g_1 = g_2$, by \eqref{eq:triv-intersection}; thus also $m_1 = m_2$, hence $a_1 < a_2$. 
    If $h_1 <h_2$, then $g_1 = h_1m_1 < h_2m_2 = g_2$, since $\Grp{\m{L}} = \m{H}\overrightarrow{\times} \langle \cn{s} \rangle$. Moreover, if $h_2 < h_1$, then, by Remark~\ref{r:grp-conucleus} and since $\Grp{\m{L}} = \m{H}\overrightarrow{\times} \langle \cn{s} \rangle$,
    \[
    h_2m_2 a_2 \leq \h(h_2m_2a_2) = h_2m_2\h(a_2) < h_1m_1\g(a_1) = \g(h_1m_1a_1) \leq h_1m_1a_1,
    \]
    contradicting the fact that $h_1m_1a_1 < h_2m_2a_2$.

    Conversely, if $g_1 < g_2$, then either $h_1 < h_2$, or ($h_1 = h_2$ and $m_1 < m_2$), since $\Grp{\m{L}} = \m{H}\overrightarrow{\times} \langle \cn{s} \rangle$. As seen above the second case cannot happen, so $h_1 < h_2$. But, then, by Remark~\ref{r:grp-conucleus} and since $\Grp{\m{L}} = \m{H}\overrightarrow{\times} \langle \cn{s} \rangle$, 
    \[
    h_1m_1 a_1 \leq \h(h_1m_1a_1) = h_1m_1\h(a_1) < h_2m_2\g(a_2) = \g(h_2m_2a_2) \leq h_2m_2a_2,
    \]
    yielding $h_1m_1a_1 <h_2m_2a_2$. Finally, if $g_1 = g_2$, and $a_1 < a_2$, then clearly $g_1a_1 < g_2a_2$.
\end{proof}

\subsection{Axiomatization and subvarieties}

We now have all the ingredients to prove the axiomatization result for all varieties of the form $\vr(\m{F}_n(\ZZ))$, as well as for all possible joins of such varieties. We also prove that these joins (other than the top variety) form an ideal in the subvariety lattice and characterize fully its lattice structure.

\begin{proposition}\label{p: main_FW}
Every $n$-periodic $\ell$-pregroup with a totally ordered abelian group skeleton is in $\vr(\m{F}_n(\ZZ))$.
\end{proposition}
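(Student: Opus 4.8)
The plan is to reduce the statement, as the authors signal in the introduction, to the case of finitely generated finitely subdirectly irreducible algebras and then assemble the three key structural facts already established. First I would recall that a variety is generated by its finitely subdirectly irreducible members, and that $\vr(\m{F}_n(\ZZ))$ is a variety; hence it suffices to show that every FSI $n$-periodic $\ell$-pregroup $\m L$ with a totally ordered (abelian) group skeleton lies in $\vr(\m{F}_n(\ZZ))$. Moreover, since membership in a variety is determined by the (finitely generated) subalgebras generated by finite subsets of elements — an equation fails in $\m L$ iff it fails in some finitely generated subalgebra — it is enough to treat finitely generated such $\m L$. Actually, to be careful: an FSI algebra need not be generated by finitely many elements, so the cleanest route is: take an arbitrary $n$-periodic $\ell$-pregroup $\m M$ with totally ordered abelian skeleton, take an arbitrary finitely generated subalgebra $\m L \leq \m M$; then $\m L$ is again $n$-periodic with totally ordered abelian skeleton (a subalgebra of a totally ordered $\ell$-group is totally ordered and abelian, and $\Grp{\m L} \leq \Grp{\m M}$), and it suffices to show $\m L \in \vr(\m{F}_n(\ZZ))$, since $\m M$ is then a subdirect product of (in fact, embeds into an ultraproduct of, or is covered by, directed unions of) its finitely generated subalgebras — more simply, $\m M \in \mathsf{ISP_U}$ of its finitely generated subalgebras, so $\vr(\m M) \subseteq \vr(\{\m L : \m L \leq \m M \text{ f.g.}\}) \subseteq \vr(\m{F}_n(\ZZ))$.

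With $\m L$ finitely generated, $n$-periodic, with totally ordered abelian group skeleton, there are two cases. If $\m L$ is an $\ell$-group, then $\m L = \Grp{\m L}$ is a finitely generated totally ordered abelian $\ell$-group, hence $\m L \in \vr(\ZZ) \subseteq \vr(\m{F}_n(\ZZ))$ (note $\ZZ = \Grp{\m{F}_n(\ZZ)}$ is a subalgebra of $\m{F}_n(\ZZ)$). If $\m L$ is proper, then by Theorem~\ref{t:lexprod-fsi-Fn} we have $\m L \cong \m H \overrightarrow{\times} \m{F}_k(\ZZ)$ for some totally ordered abelian $\ell$-group $\m H$ and some $k \mid n$; then $\m{F}_k(\ZZ) \in \vr(\m{F}_n(\ZZ))$ — this is because $\m{F}_k(\ZZ)$ embeds into $\m{F}_n(\ZZ)$ when $k \mid n$, as the $k$-periodic functions on $\ZZ$ are exactly the $n$-periodic functions whose period divides $k$, or alternatively because $\vr(\m{F}_k(\ZZ)) \subseteq \vr(\m{F}_n(\ZZ))$ by the generation results of Section~\ref{s: FnZ} — and by Proposition~\ref{p:Fn-closed-lexprod}, $\vr(\m{F}_n(\ZZ))$ is closed under lexicographic products with totally ordered abelian $\ell$-groups, so $\m H \overrightarrow{\times} \m{F}_k(\ZZ) \in \vr(\m{F}_n(\ZZ))$, i.e. $\m L \in \vr(\m{F}_n(\ZZ))$.

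The one genuine subtlety I would expect to have to nail down carefully is the reduction to finitely generated algebras: I need to ensure that an arbitrary $\m M$ with totally ordered abelian skeleton is indeed captured by its finitely generated subalgebras, and that these subalgebras still have totally ordered abelian skeletons (the latter is routine, but worth a sentence). Everything after that reduction is bookkeeping: the $\ell$-group case invokes the standard fact that finitely generated totally ordered abelian $\ell$-groups lie in $\vr(\ZZ)$ (indeed $\ZZ$ generates the variety of abelian $\ell$-groups as a quasivariety, as used in the proof of Lemma~\ref{l:closure-lex}), and the proper case is a direct combination of Theorem~\ref{t:lexprod-fsi-Fn} and Proposition~\ref{p:Fn-closed-lexprod} together with $\m{F}_k(\ZZ) \in \vr(\m{F}_n(\ZZ))$ for $k \mid n$. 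No new ideas beyond what is already in place are needed — the work was all in establishing the lexicographic-product decomposition and the closure proposition.

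Concretely, the proof I would write is: Let $\m M$ be an $n$-periodic $\ell$-pregroup with totally ordered abelian group skeleton. It suffices to show that every finitely generated subalgebra $\m L$ of $\m M$ lies in $\vr(\m F_n(\ZZ))$, since $\m M$ is a directed union of such subalgebras and hence $\m M \in \mathsf{ISP_U}(\{\m L : \m L \le \m M \text{ finitely generated}\})$. Each such $\m L$ is $n$-periodic, and $\Grp{\m L} = \Grp{\m M} \cap L$ is a subalgebra of the totally ordered abelian $\ell$-group $\Grp{\m M}$, hence itself totally ordered and abelian. If $\m L$ is an $\ell$-group, then $\m L = \Grp{\m L}$ is a finitely generated totally ordered abelian $\ell$-group, so $\m L \in \vr(\ZZ) \subseteq \vr(\Grp{\m F_n(\ZZ)}) \subseteq \vr(\m F_n(\ZZ))$. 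If $\m L$ is proper, then by Theorem~\ref{t:lexprod-fsi-Fn}, $\m L \cong \m H \overrightarrow{\times} \m F_k(\ZZ)$ for some totally ordered abelian $\ell$-group $\m H$ and some $k \mid n$; since $\m F_k(\ZZ)$ embeds into $\m F_n(\ZZ)$, we have $\m F_k(\ZZ) \in \vr(\m F_n(\ZZ))$, and by Proposition~\ref{p:Fn-closed-lexprod} the variety $\vr(\m F_n(\ZZ))$ is closed under lexicographic products with totally ordered abelian $\ell$-groups, so $\m L \cong \m H \overrightarrow{\times} \m F_k(\ZZ) \in \vr(\m F_n(\ZZ))$. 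In either case $\m L \in \vr(\m F_n(\ZZ))$, completing the proof.
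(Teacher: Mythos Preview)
Your proof is correct and follows essentially the same approach as the paper: reduce to finitely generated subalgebras (the paper phrases this as ``$\m L$ embeds into an ultrapower of its finitely generated subalgebras''), then invoke Theorem~\ref{t:lexprod-fsi-Fn} and Proposition~\ref{p:Fn-closed-lexprod}. You are actually slightly more careful than the paper in explicitly separating the $\ell$-group case from the proper case, since Theorem~\ref{t:lexprod-fsi-Fn} as stated requires properness.
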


\begin{proof}
  If $\m L$ is $n$-periodic $\ell$-pregroup with a totally ordered abelian group skeleton, then each finitely generated subalgebra $\m K$ of $\m L$  is also an $n$-periodic $\ell$-pregroup with a totally ordered abelian group skeleton. By Theorem~\ref{t:lexprod-fsi-Fn}, $\m{K} \cong \m{H} \overrightarrow{\times}\m{F}_k(\ZZ)$ for some totally ordered abelian $\ell$-group $\m{H}$ and some $k \mid n$. By Proposition~\ref{p:Fn-closed-lexprod}, $\vr(\m{F}_n(\ZZ))$  is closed under lexicographic products with totally ordered abelian $\ell$-groups and, since $\m{F}_k(\ZZ) \in \vr(\m{F}_n(\ZZ))$, we get $\m{K} \cong \m{H} \overrightarrow{\times}\m{F}_k(\ZZ) \in \vr(\m{F}_n(\ZZ))$. Since $\m L$  
 embeds into an ultrapower of its finitely generated subalgebras, we get that $\m L \in \vr(\m{F}_n(\ZZ))$.    
\end{proof}

\begin{theorem}\label{t: axiomatization}
    For each $n\in \ZZ^+$, the variety $\vr(\m{F}_n(\ZZ))$ is axiomatized relative to $\va{LP}_n$ by the equation $x\g(y)^n \approx \g(y)^n x$.
\end{theorem}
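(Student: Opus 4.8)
The plan is to establish the two inclusions between $\vr(\m{F}_n(\ZZ))$ and the variety $\mathcal{V}\subseteq\va{LP}_n$ determined, relative to $\va{LP}_n$, by the equation $x\g(y)^n\approx\g(y)^n x$.

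First I would dispatch the inclusion $\vr(\m{F}_n(\ZZ))\subseteq\mathcal{V}$. Since $\m{F}_n(\ZZ)$ is $n$-periodic it lies in $\va{LP}_n$, so it remains only to check that $\m{F}_n(\ZZ)$ satisfies the equation. For $y\in\m{F}_n(\ZZ)$, Lemma~\ref{l:grp-hg} gives $\g(y)=\cn{s}^{\hg^-(y)}$, hence $\g(y)^n=(\cn{s}^n)^{\hg^-(y)}$; and $\cn{s}^n$ is central by Lemma~\ref{l:b-conjugation}(3), so $\g(y)^n$ commutes with every $x\in\m{F}_n(\ZZ)$. Thus $\m{F}_n(\ZZ)\models x\g(y)^n\approx\g(y)^n x$, and being an equation this holds throughout $\vr(\m{F}_n(\ZZ))$; in particular $\m{F}_n(\ZZ)\in\mathcal{V}$, so $\vr(\m{F}_n(\ZZ))\subseteq\mathcal{V}$.

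For the reverse inclusion $\mathcal{V}\subseteq\vr(\m{F}_n(\ZZ))$ I would use that $\mathcal{V}$ is generated, as a variety, by its subdirectly irreducible members, so it suffices to show that every subdirectly irreducible $\m{L}\in\mathcal{V}$ lies in $\vr(\m{F}_n(\ZZ))$ (the trivial algebra is in $\vr(\m{F}_n(\ZZ))$ a fortiori). Such an $\m{L}$ is a non-trivial $n$-periodic $\ell$-pregroup satisfying $x\g(y)^n\approx\g(y)^n x$, and it is in particular finitely subdirectly irreducible, so Corollary~\ref{c:fsi-commutative} applies and yields that $\Grp{\m{L}}$ is totally ordered and abelian. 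Now Proposition~\ref{p: main_FW} says that every $n$-periodic $\ell$-pregroup with a totally ordered abelian group skeleton belongs to $\vr(\m{F}_n(\ZZ))$, so $\m{L}\in\vr(\m{F}_n(\ZZ))$. Hence every subdirectly irreducible member of $\mathcal{V}$ lies in $\vr(\m{F}_n(\ZZ))$, and therefore $\mathcal{V}\subseteq\vr(\m{F}_n(\ZZ))$. Combining the two inclusions gives $\mathcal{V}=\vr(\m{F}_n(\ZZ))$.

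At this stage in the development the theorem is essentially an assembly of the structural results already in place rather than a source of new difficulty: the genuinely hard work lies in Corollary~\ref{c:fsi-commutative} (the FSI characterization via the group skeleton) and in Proposition~\ref{p: main_FW}, whose proof in turn rests on Theorem~\ref{t:lexprod-fsi-Fn} (the lexicographic decomposition of finitely generated $n$-periodic $\ell$-pregroups with totally ordered abelian skeleton) and on Proposition~\ref{p:Fn-closed-lexprod} (closure of $\vr(\m{F}_n(\ZZ))$ under lexicographic products by totally ordered abelian $\ell$-groups). The only points requiring care in the write-up are (i) the reduction to subdirectly irreducibles — where one uses that a variety is generated by its subdirectly irreducibles and that these are finitely subdirectly irreducible, so that Corollary~\ref{c:fsi-commutative} is applicable — and (ii) the observation that Proposition~\ref{p: main_FW} already covers arbitrary, not merely finitely generated, algebras, so no additional ultraproduct argument is needed here; alternatively one could bypass Proposition~\ref{p: main_FW} and argue directly for finitely generated finitely subdirectly irreducibles via Theorem~\ref{t:lexprod-fsi-Fn} and Proposition~\ref{p:Fn-closed-lexprod}, then pass to arbitrary members of $\mathcal{V}$ by embedding into an ultraproduct of finitely generated subalgebras.
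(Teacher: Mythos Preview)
The proposal is correct and takes essentially the same approach as the paper: verify the equation in $\m{F}_n(\ZZ)$ via centrality of $\cn{s}^n$, then reduce the converse to (finitely) subdirectly irreducibles and invoke Corollary~\ref{c:fsi-commutative} followed by Proposition~\ref{p: main_FW}. The only cosmetic differences are that the paper works with FSIs directly rather than SIs, and argues the forward direction via $\Grp{\m{F}_n(\ZZ)}=\langle\cn{s}\rangle$ rather than via Lemma~\ref{l:grp-hg}.
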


\begin{proof}
   By Lemma~\ref{l:b-conjugation}(3) the element $\cn{s}^n$ is central in $\m{F}_n(\ZZ)$, hence so is every power of it. Since $\Grp{\m{F}_n(\ZZ)}=\langle \cn{s} \rangle$, for all  $g \in \grp{\mathrm{F}_n(\ZZ)}$ there exists $k \in \ZZ$ such that $g=\cn{s}^k$, so  $xg^n=x(\cn{s}^n)^k=(\cn{s}^n)^kx=g^nx$. So, the equation $x\g(y)^n \approx \g(y)^n x$ holds in $\m{F}_n(\ZZ)$, hence also in  $\vr(\m{F}_n(\ZZ))$.

   For the converse direction, it is enough to show that every  finitely subdirectly irreducible $\m L\in \va{LP}_n$ that satisfies the equation $x\g(y)^n \approx \g(y)^n x$ is in $\vr(\m{F}_n(\ZZ))$. By Corollary~\ref{c:fsi-commutative} for such an $\m L$ we have that $\Grp{\m L}$ is totally ordered and abelian. 
   Hence, by Proposition~\ref{p: main_FW}, $\m L \in \vr(\m{F}_n(\ZZ))$.
\end{proof}

\begin{remark} \label{r: half}
We note that the equation $x\g(y)^n \approx \g(y)^n x$ is equivalent to $x\g(y)^n \leq \g(y)^n x$; indeed, from the latter we get $\g(y)^n x \leq x\g(y)^n$, by multiplying both sides by $\g(y)^n$ on the left and by $\g(y)^{-n}$ on the right.  
Also, the $n$-periodicity equation $x  \approx x^{\ell^{2n}}$ is equivalent to  $x \leq x^{\ell^{2n}}$; indeed,  by substituting $x^\ell$ for $x$ in the latter, we get $x^\ell \leq x^{\ell^{2n+1}}$, and by applying the order-reversing map $^r$ we get $x^{\ell^{2n}} \leq x$. Therefore,  for each $n\in \ZZ^+$,  $\vr(\m{F}_n(\ZZ))$ is axiomatized relative to $\ell$-pregroups by the equations $x\g(y)^n \leq \g(y)^n x$ and $x \leq x^{\ell^{2n}}$, or equivalently $1 \leq \g(y)^n x \g(y)^{-n} x^\ell$ and $1 \leq x^{\ell^{2n}}x^\ell$, by residuation. 
\end{remark}

\begin{corollary}\label{c: FSI VFnZ}
    The FSIs  of $\vr(\m{F}_n(\ZZ))$ are exactly the non-trivial $n$-periodic $\ell$-pregroups with a totally ordered abelian group skeleton. In particular, each such FSI that is not an $\ell$-group is subdirectly irreducible. 
    So the FSIs of $\vr(\m{F}_n(\ZZ))$ are exactly the  SIs of $\vr(\m{F}_n(\ZZ))$ and the non-trivial totally ordered abelian $\ell$-groups.
\end{corollary}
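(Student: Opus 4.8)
The strategy is to assemble Theorem~\ref{t: axiomatization}, Corollary~\ref{c:fsi-commutative}, Proposition~\ref{p: main_FW}, and the local‑subalgebra analysis of Section~\ref{s: n-per l-pr}. I would first establish the displayed characterization of the FSIs. For the ``$\supseteq$'' direction: if $\m L$ is a non‑trivial $n$-periodic $\ell$-pregroup with a totally ordered abelian group skeleton, then $\m L\in\vr(\m F_n(\ZZ))$ by Proposition~\ref{p: main_FW}; since this variety satisfies $x\g(y)^n\approx\g(y)^nx$ by Theorem~\ref{t: axiomatization} and $\Grp{\m L}$ is totally ordered, Corollary~\ref{c:fsi-commutative} yields that $\m L$ is FSI. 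For ``$\subseteq$'': an FSI $\m L$ in $\vr(\m F_n(\ZZ))$ is a non‑trivial member of $\va{LP}_n$ satisfying $x\g(y)^n\approx\g(y)^nx$, so by Corollary~\ref{c:fsi-commutative} its group skeleton is totally ordered and abelian.

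Next I would show that such an FSI $\m L$ that is not an $\ell$-group is in fact SI. Fix a representation $\m L\leq\m F_n(\m J\overrightarrow{\times}\ZZ)$ via Proposition~\ref{p: representation1}. By Proposition~\ref{p:isoCN} the maps $\f\colon H\mapsto H\cap\grp L$ and $\ps\colon G\mapsto\Nc_\m{L}(G)$ are mutually inverse isomorphisms between $\NC(\m L)$ and $\NC(\Grp{\m L})$, and since $\Grp{\m L}$ is totally ordered the latter lattice is a chain (as in the proof of Lemma~\ref{l:representable-FSI}); hence $\operatorname{Con}(\m L)\cong\NC(\m L)$ is a chain by Lemma~\ref{l: CNSandCONforRL}. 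Now $\loc{\m L}$ is a convex normal subalgebra of $\m L$ by Lemma~\ref{l:local-subuniverse}(1), and $\f(\loc{\m L})=\loc L\cap\grp L=\loc{(\Grp{\m L})}\cong\ZZ$ by Lemma~\ref{l:support-local}(4) (here one uses $\Grp{\m L}\neq\m L$); the identification of $\loc{\m L}$ with $\loc{(\Grp{\m L})}$ under the correspondence is Lemma~\ref{l:groupNC-pregroupNC}. Since convexity is inherited and $\ZZ$ has no proper non‑trivial convex subgroup, $\loc{(\Grp{\m L})}$ is an atom of $\NC(\Grp{\m L})$; transporting through the isomorphisms, the congruence associated to $\loc{\m L}$ is an atom of $\operatorname{Con}(\m L)$ strictly above $\Delta_L$. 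A chain whose bottom is covered by an element has a completely meet‑irreducible bottom, so $\Delta_L$ is completely meet‑irreducible, i.e., $\m L$ is SI.

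The final sentence then follows formally. Every SI is FSI, and every non‑trivial totally ordered abelian $\ell$-group lies in $\vr(\m F_n(\ZZ))$ by Proposition~\ref{p: main_FW} and is FSI by Lemma~\ref{l:representable-FSI}, giving one containment. Conversely, an FSI of $\vr(\m F_n(\ZZ))$ is, by the first part, a non‑trivial $n$-periodic $\ell$-pregroup with a totally ordered abelian group skeleton; if it is an $\ell$-group it coincides with that skeleton, a non‑trivial totally ordered abelian $\ell$-group, and otherwise it is SI by the previous paragraph.

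The only genuinely delicate point is the middle step: exhibiting the atom of $\operatorname{Con}(\m L)$. The chain structure is immediate from Proposition~\ref{p:isoCN}, but to see that this chain actually possesses an atom one must pass through a wreath‑product representation and invoke the collapse $\loc{(\Grp{\m L})}\cong\ZZ$ (Lemma~\ref{l:support-local}(4)), which is where the hypothesis that $\Grp{\m L}$ is totally ordered and $\m L$ is proper is really consumed; the rest is bookkeeping with results already in hand.
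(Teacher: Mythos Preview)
Your proposal is correct and follows essentially the same route as the paper. The only cosmetic difference is that the paper argues first that $\Grp{\m L}$ is subdirectly irreducible (since $\loc{(\Grp{\m L})}\cong\ZZ$ is the least non-trivial element of the chain $\NC(\Grp{\m L})$) and then transfers this to $\m L$ via Proposition~\ref{p:isoCN}, whereas you transport the atom $\loc{(\Grp{\m L})}$ through the isomorphism to $\loc{\m L}\in\NC(\m L)$ and read off subdirect irreducibility there; these are the same argument unwound in opposite directions.
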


\begin{proof}
   By Theorem~\ref{t: axiomatization}, $\vr(\m{F}_n(\ZZ))$ is axiomatized relative to $\va{LP}_n$ by $x\g(y)^n \approx \g(y)^n x$, so by Corollary~\ref{c:fsi-commutative} its finitely subdirectly irreducibles are exactly the non-trivial members of $\vr(\m{F}_n(\ZZ))$ with a totally ordered abelian group skeleton, which, by Proposition~\ref{p: main_FW}, are  exactly the non-trivial  $n$-periodic $\ell$-pregroups with a totally ordered abelian group skeleton.
   This shows the first part.

For the second part, note that  if $\m L$ is a finitely subdirectly irreducible member of $\vr(\m{F}_n(\ZZ))$ and it is not an $\ell$-group, then it is a proper $n$-periodic $\ell$-pregroup, so $\loc{(\Grp{\m{L}})} \cong \ZZ$ with respect to any given representation, by Lemma~\ref{l:support-local}. Moreover,  $\loc{(\Grp{\m{L}})}$ is a convex normal subalgebra of $\Grp{\m{L}}$, by Lemma~\ref{l:local-subuniverse}. 
  As 
  the lattice of convex normal subalgerbas of $\Grp{\m{L}}$ is a chain,  $\loc{(\Grp{\m{L}})}$ is the smallest non-trivial convex normal subalgebra of $\Grp{\m{L}}$; so $\Grp{\m L}$ is subdirectly irreducible. 
  By Proposition~\ref{p:isoCN},
   $\m L$ is subdirectly irreducible, as well.
\end{proof}

\begin{corollary}\label{cor:char-of-fin-SI}
    The proper finitely generated SIs in $\vr(\m{F}_n(\ZZ))$ are exactly the algebras isomorphic to an algebra of the form $\m{H}\overrightarrow{\times} \m{F}_k(\ZZ)$, where $\m{H}$ is a finitely generated totally ordered abelian $\ell$-group and $k\mid n$. Also, these are precisely the  proper finitely generated FSIs in $\vr(\m{F}_n(\ZZ))$. 
\end{corollary}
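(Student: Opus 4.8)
The plan is to combine Corollary~\ref{cor:char-of-fin-SI}'s obvious companions: Theorem~\ref{t:lexprod-fsi-Fn}, Proposition~\ref{p:Fn-closed-lexprod}, Corollary~\ref{c: FSI VFnZ}, and Proposition~\ref{p:Zconv-lex}. First I would handle the forward inclusion. Let $\m{L}$ be a proper finitely generated SI in $\vr(\m{F}_n(\ZZ))$. By Corollary~\ref{c: FSI VFnZ}, $\m{L}$ is a proper $n$-periodic $\ell$-pregroup with a totally ordered abelian group skeleton; being finitely generated and proper, Theorem~\ref{t:lexprod-fsi-Fn} applies and gives $\m{L} \cong \m{H} \overrightarrow{\times} \m{F}_k(\ZZ)$ for some totally ordered abelian $\ell$-group $\m{H}$ and some $k \mid n$. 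It remains to see that $\m{H}$ is finitely generated. Since $\m{L}$ is finitely generated and $\m{H} \overrightarrow{\times} \m{F}_k(\ZZ)$ projects onto $\m{H}$ (the map $(h,a) \mapsto h$ is a homomorphism, as the $\m{F}_k(\ZZ)$-coordinate is a convex normal subalgebra with quotient $\m{H}$), $\m{H}$ is a homomorphic image of a finitely generated algebra, hence finitely generated.

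For the reverse inclusion, let $\m{H}$ be a finitely generated totally ordered abelian $\ell$-group, $k \mid n$, and consider $\m{A} = \m{H} \overrightarrow{\times} \m{F}_k(\ZZ)$. By Proposition~\ref{p:Fn-closed-lexprod}, $\vr(\m{F}_n(\ZZ))$ is closed under lexicographic products with totally ordered abelian $\ell$-groups, and $\m{F}_k(\ZZ) \in \vr(\m{F}_n(\ZZ))$ since $k \mid n$; hence $\m{A} \in \vr(\m{F}_n(\ZZ))$. It is finitely generated: $\m{H}$ is finitely generated, $\m{F}_k(\ZZ)$ is generated by a single $k$-atom by Proposition~\ref{p: n-cover generates}, and the generators of $\m{H}$ (embedded as $(h,1)$) together with a generator of $\m{F}_k(\ZZ)$ (embedded as $(1,a)$) generate $\m{A}$. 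It is proper since $k > 1$ forces $\m{F}_k(\ZZ)$, hence $\m{A}$, to be a proper $\ell$-pregroup (note if $k=1$ then $\m{F}_1(\ZZ) \cong \ZZ$ and $\m{A}$ is an $\ell$-group, which should be excluded; I should double-check whether the intended statement implicitly requires $k>1$ or whether "proper" in the conclusion rules out $k=1$—the natural reading is that $\m{A}$ proper already forces $k \geq 2$, so the $k\mid n$ in the statement should be understood as $1 < k \mid n$, or the $k=1$ case simply yields an $\ell$-group which is not proper and so is automatically absent). Finally, $\m{A}$ is subdirectly irreducible: its group skeleton is $\m{H} \overrightarrow{\times} \langle \cn{s}\rangle$, a totally ordered abelian $\ell$-group, so $\m{A}$ is FSI by Corollary~\ref{c: FSI VFnZ}, and being proper it is actually SI by the second part of Corollary~\ref{c: FSI VFnZ}.

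For the last sentence, the equivalence of "proper finitely generated SI" and "proper finitely generated FSI" in $\vr(\m{F}_n(\ZZ))$ is immediate from Corollary~\ref{c: FSI VFnZ}, which states that every proper FSI in this variety is in fact SI; the converse (SI $\Rightarrow$ FSI) is general. I anticipate the main obstacle is not mathematical depth—all heavy lifting is done in Theorem~\ref{t:lexprod-fsi-Fn}—but rather the bookkeeping around the degenerate case $k=1$ and making the "$\m{H}$ finitely generated" argument airtight, i.e., confirming that the lexicographic product genuinely surjects onto its first factor as $\ell$-groups/$\ell$-pregroups (which follows from Lemma~\ref{l:inner-lex} applied in reverse, or directly from the coordinatewise definition of the operations and the fact that the order projection $(h,a)\mapsto h$ respects the lexicographic order modulo the equivalence collapsing the second coordinate).
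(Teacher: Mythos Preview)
Your proof is correct and follows the same approach as the paper, which simply writes ``Immediate, by Corollary~\ref{c: FSI VFnZ} together with Theorem~\ref{t:lexprod-fsi-Fn}.'' Your additional details---the projection argument for finite generation of $\m{H}$, the explicit reverse inclusion via Proposition~\ref{p:Fn-closed-lexprod}, and the handling of the $k=1$ edge case---are precisely the bookkeeping one supplies when unpacking that one-line proof.
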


\begin{proof}
Immediate, by Corollary~\ref{c: FSI VFnZ} together with Theorem~\ref{t:lexprod-fsi-Fn}.
\end{proof}

\begin{corollary}\label{c: subFnZ}
   For all $k,n\in \ZZ^+$, $\m{F}_k(\ZZ) \in \vr(\m{F}_n(\ZZ))$ if and only if $k\mid n$.
\end{corollary}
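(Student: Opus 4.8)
The plan is to prove the two implications of the equivalence separately; both are short and rely only on facts already established.

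\medskip

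For the direction $k \mid n \Rightarrow \m{F}_k(\ZZ) \in \vr(\m{F}_n(\ZZ))$, the cleanest route is to note that $\m{F}_k(\ZZ)$ is a subalgebra of $\m{F}_n(\ZZ)$ whenever $k\mid n$: by Lemma~\ref{l:basic-per}(2) a map $f$ on $\ZZ$ lies in $\mathrm{F}_k(\ZZ)$ iff $f(x+k)=f(x)+k$ for all $x$, and writing $n=kq$ and iterating this identity gives $f(x+n)=f(x)+n$, so $\mathrm{F}_k(\ZZ)\subseteq \mathrm{F}_n(\ZZ)$; since the $k$-periodic elements of an $\ell$-pregroup form a subalgebra, $\m{F}_k(\ZZ)$ is in fact the subalgebra of $k$-periodic elements of $\m{F}_n(\ZZ)$. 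Hence $\m{F}_k(\ZZ)\in \mathsf{S}(\m{F}_n(\ZZ))\subseteq \vr(\m{F}_n(\ZZ))$. (Alternatively, one may invoke Proposition~\ref{p: main_FW}: $\m{F}_k(\ZZ)$ is $n$-periodic, being $k$-periodic with $k\mid n$, and its group skeleton $\langle \cn{s}\rangle\cong \ZZ$ is totally ordered and abelian.)

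\medskip

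For the converse, suppose $\m{F}_k(\ZZ)\in \vr(\m{F}_n(\ZZ))$. Since $\vr(\m{F}_n(\ZZ))\subseteq \va{LP}_n$, the algebra $\m{F}_k(\ZZ)$ satisfies the $n$-periodicity equation, i.e.\ $a^{[n]}=a$ for every $a\in \mathrm{F}_k(\ZZ)$. The case $k=1$ is trivial, so assume $k>1$ and fix an element $a\in \mathrm{F}_k(\ZZ)$ of periodicity exactly $k$; such elements exist, e.g.\ a $k$-atom (Proposition~\ref{p:n-cover}), whose fixed-point set $\ZZ\setminus k\ZZ$ is invariant under translation by $m$ precisely when $k\mid m$, so it has periodicity exactly $k$ by Remark~\ref{r:posidem}. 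Since $x\mapsto x^{[1]}$ is an automorphism of $\m{F}_k(\ZZ)$, the set $\{m\in\ZZ : a^{[m]}=a\}$ is a subgroup of $\ZZ$, and because $\per(a)=k$ this subgroup is exactly $k\ZZ$. From $a^{[n]}=a$ we obtain $n\in k\ZZ$, that is, $k\mid n$.

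\medskip

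I do not anticipate a genuine obstacle. The two points requiring a line of justification are the existence of an element of periodicity exactly $k$ in $\m{F}_k(\ZZ)$ (immediate from Proposition~\ref{p:n-cover}, or from the fact that $\m{F}_k(\ZZ)$ is of periodicity $k$, as used throughout Section~\ref{s: FnZ}) and the fact that the stabilizer of such an element under the powers of $x\mapsto x^{[1]}$ is exactly $\per(a)\ZZ$; both follow directly from the definitions.
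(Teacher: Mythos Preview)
Your proof is correct, but it takes a considerably more elementary route than the paper's. The paper derives the corollary from the heavy structural machinery just established: it observes that $\m{F}_k(\ZZ)$ is simple (hence SI), finitely generated, and proper for $k>1$, and then invokes Corollary~\ref{cor:char-of-fin-SI}, which characterizes the proper finitely generated SIs of $\vr(\m{F}_n(\ZZ))$ as exactly the algebras $\m{H}\overrightarrow{\times}\m{F}_m(\ZZ)$ with $m\mid n$; comparing periodicities then forces $k\mid n$. By contrast, your argument bypasses all of Section~\ref{s: axiomatization}: for $k\mid n$ you simply exhibit $\m{F}_k(\ZZ)$ as a subalgebra of $\m{F}_n(\ZZ)$, and for the converse you use nothing beyond the equational fact $\vr(\m{F}_n(\ZZ))\subseteq \va{LP}_n$ together with the existence of an element of periodicity exactly $k$ in $\m{F}_k(\ZZ)$. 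This is a genuine simplification and makes the corollary logically independent of the lexicographic-product decomposition; the paper's route, on the other hand, showcases how the corollary falls out immediately once the classification of finitely generated SIs is in hand.
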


\begin{proof}
As noted above  $\m{F}_k(\ZZ)$ is a simple algebra, for all $k \in \ZZ^+$, hence also subdirectly irreducible; also it is finitely generated by Proposition~\ref{p: n-cover generates} 
and it is proper for $k>1$ (as there are non-invertible $k$-periodic functions on $\ZZ)$. So, by Corollary~\ref{cor:char-of-fin-SI},
$\m{F}_k(\ZZ) \in \vr(\m{F}_n(\ZZ))$ iff $k \mid n$ (a fact that also holds for $k=1$). 
\end{proof}

\begin{corollary}\label{c: downsetsubvarieties}
    Any  subvariety of \,$\vr(\m{F}_n(\ZZ))$ is of the form 
    \[
    \vr(\m{F}_{k_1}(\ZZ),\dots, \m{F}_{k_m}(\ZZ))
    \]
    for some $k_1 \mid n,\dots, k_m \mid n$ and $m \in \NN$. 
\end{corollary}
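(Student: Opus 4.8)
The plan is to show that every subvariety $\mathcal{W}$ of $\vr(\m{F}_n(\ZZ))$ is generated by the finitely many algebras $\m{F}_k(\ZZ)$ with $k\mid n$ that it contains. The key input is Corollary~\ref{c: FSI VFnZ}, which says that the FSIs of $\vr(\m{F}_n(\ZZ))$ are exactly the non-trivial $n$-periodic $\ell$-pregroups with a totally ordered abelian group skeleton, and that the ones which are not $\ell$-groups are in fact SIs; moreover, by Corollary~\ref{cor:char-of-fin-SI}, the proper finitely generated SIs are precisely the algebras $\m{H}\overrightarrow{\times}\m{F}_k(\ZZ)$ with $\m H$ a finitely generated totally ordered abelian $\ell$-group and $k\mid n$. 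Since a variety is generated by its FSIs, and every FSI embeds into an ultrapower of its finitely generated subalgebras (which are again FSI, being subalgebras of an FSI with totally ordered abelian skeleton — note any subalgebra of such an algebra still has a totally ordered abelian skeleton), it suffices to understand which finitely generated FSIs lie in $\mathcal{W}$.

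First I would dispose of the $\ell$-group case: any FSI in $\vr(\m{F}_n(\ZZ))$ that is an $\ell$-group is, by Corollary~\ref{c: FSI VFnZ}, a non-trivial totally ordered abelian $\ell$-group, hence lies in the variety of abelian $\ell$-groups, which equals $\vr(\ZZ)=\vr(\m{F}_1(\ZZ))$; so if $\mathcal{W}$ contains any non-trivial $\ell$-group then $\m{F}_1(\ZZ)\in\mathcal{W}$ and all abelian $\ell$-group FSIs are accounted for. Next, for the proper FSIs: if $\m{H}\overrightarrow{\times}\m{F}_k(\ZZ)\in\mathcal{W}$, then its subalgebra $\{1\}\overrightarrow{\times}\m{F}_k(\ZZ)\cong\m{F}_k(\ZZ)$ is in $\mathcal{W}$, so $k$ belongs to the (finite) set $D:=\{k\mid n : \m{F}_k(\ZZ)\in\mathcal{W}\}$. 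Conversely, whenever $k\in D$, the variety $\vr(\m{F}_k(\ZZ))$ is contained in $\mathcal{W}$, and by Proposition~\ref{p:Fn-closed-lexprod} (closure under lexicographic products with totally ordered abelian $\ell$-groups) every algebra $\m{H}\overrightarrow{\times}\m{F}_k(\ZZ)$ with $\m H$ a totally ordered abelian $\ell$-group already lies in $\vr(\m{F}_k(\ZZ))\subseteq\mathcal{W}$. Thus every finitely generated FSI of $\mathcal{W}$ lies in $\bigvee_{k\in D}\vr(\m{F}_k(\ZZ))$.

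Putting this together: $D=\{k_1,\dots,k_m\}$ is a subset of the (finite) set of divisors of $n$, and every FSI of $\mathcal{W}$ — whether an $\ell$-group or proper — lies in $\vr(\m{F}_{k_1}(\ZZ),\dots,\m{F}_{k_m}(\ZZ))$ (folding the $\ell$-group case into $k=1$, which divides $n$, if needed). Since $\mathcal{W}$ is generated by its FSIs, $\mathcal{W}\subseteq\vr(\m{F}_{k_1}(\ZZ),\dots,\m{F}_{k_m}(\ZZ))$; the reverse inclusion is immediate because each $\m{F}_{k_i}(\ZZ)\in\mathcal{W}$ by construction. Hence $\mathcal{W}=\vr(\m{F}_{k_1}(\ZZ),\dots,\m{F}_{k_m}(\ZZ))$, which is the claimed form. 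The main obstacle is the bookkeeping around making sure that \emph{every} FSI of $\mathcal{W}$ — not just the finitely generated ones — is captured; this is handled by the embedding of an FSI into an ultrapower of its finitely generated (necessarily FSI) subalgebras, together with the fact that $\vr(\m{F}_{k_1}(\ZZ),\dots,\m{F}_{k_m}(\ZZ))$, being a variety, is closed under ultraproducts and subalgebras.
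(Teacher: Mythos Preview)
Your proof is correct and follows essentially the same approach as the paper: reduce to finitely generated FSIs via Corollaries~\ref{c: FSI VFnZ} and~\ref{cor:char-of-fin-SI}, show each such FSI generates exactly $\vr(\m{F}_k(\ZZ))$ for some $k\mid n$ (using Proposition~\ref{p:Fn-closed-lexprod} and the fact that $\m{F}_k(\ZZ)$ sits as a subalgebra of $\m H\overrightarrow{\times}\m{F}_k(\ZZ)$), and conclude. The ultrapower detour is harmless but unnecessary---the paper simply invokes that every variety is generated by its finitely generated FSIs.
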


\begin{proof}
By Corollary~\ref{c: FSI VFnZ} and Corollary~\ref{cor:char-of-fin-SI}, each finitely generated FSI member of $\vr(\m{F}_n(\ZZ))$ is either a non-trivial totally ordered abelian $\ell$-group
or it is a non-trivial algebra of the form  $\m{H}\overrightarrow{\times} \m{F}_k(\ZZ)$, where $\m{H}$ is a finitely generated totally ordered abelian $\ell$-group and $k\mid n$. 
Now, if $\m A$ is a non-trivial totally ordered abelian $\ell$-group, then it contains a copy of $\ZZ \cong \m{F}_1(\ZZ)$, which generates the variety of abelian $\ell$-groups, so $\vr(\m A)=\vr(\m{F}_1(\ZZ))$. Also, for all $k$, $\m{F}_k(\ZZ) \in \mathbb{S}(\m{H}\overrightarrow{\times} \m{F}_k(\ZZ))$ and $\m{H}\overrightarrow{\times} \m{F}_k(\ZZ) \in \vr(\m{F}_k(\ZZ))$, by Proposition~\ref{p:Fn-closed-lexprod}, thus $\vr(\m{H}\overrightarrow{\times}\m{F}_k(\ZZ))=\vr(\m{F}_k(\ZZ))$.
Hence,  the variety generated by any finitely generated FSI member of $\vr(\m{F}_n(\ZZ))$ is of the form $\vr(\m{F}_k(\ZZ))$ for $k\mid n$.
Therefore, if $\mathcal{V}$ is a subvariety of $\vr(\m{F}_n(\ZZ))$ and $\mathcal{K}$ is the class of its finitely generated FSIs, then $\mathcal{V}=\vr(\mathcal{K})=\bigvee \{\vr(\m L) : \m L \in \mathcal{K}\}=\bigvee \{\vr(\m{F}_k(\ZZ)) : k \in S\}$, for some finite set $S$ of divisors of $n$.
\end{proof}

\begin{remark}
We note that the variety generated by any infinite subset of the set $\{\m{F}_n(\ZZ) : n \in \ZZ^+\}$ is the whole variety $\mathsf{DLP}$, as is implicit in \cite{GG2}. Indeed, Theorem 3.8 of \cite{GG2} states that if an equation $\varepsilon$ fails in $\mathsf{DLP}$ then it fails in $\vr(\m{F}_n(\ZZ))$, where $n:=2^{\lvert\varepsilon\rvert} \lvert\varepsilon\rvert^4$ and $\lvert\varepsilon\rvert$ is the number of symbols in $\varepsilon$; the argument is based on the proof of Theorem 3.6 of \cite{GG2}, from which it follows that actually $\varepsilon$ fails in $\vr(\m{F}_m)$ for all $m \geq n$. As a result, given an infinite subset $K$ of $\{\m{F}_n(\ZZ) : n \in \ZZ^+\}$, if an equation fails in $\mathsf{DLP}$, then (actually if and only if) it fails in some  algebra (actually in infinitely-many algebras)  in $K$.
\end{remark}

The results we have proved allow us to also describe all of the varieties generated by finite subsets of $\{\m{F}_n(\ZZ) : n \in \ZZ^+\}$, as we see below.

We denote by $\m{D} = ( \ZZ^+, \mid )$ the lattice of positive integers ordered by divisibility   and by $\mathcal{O}_\mathrm{fin}(\m{D})$ the lattice of finite downsets of $\m{D}$. In the following for $S\subseteq \ZZ^+$, we will denote by ${\downarrow} S$ the downset $\{k\in \ZZ^+ : k\mid n \text{ for some  } n\in S \}$ of $S$ in $\mathcal{O}_\mathrm{fin}(\m{D})$. 

\begin{theorem}\label{t: ideal}
The subvarieties generated by finite subsets of the set $\{\m{F}_n(\ZZ) : n \in \ZZ^+\}$ form an ideal of the subvariety lattice of $\mathsf{DLP}$. This ideal is isomorphic to  $\mathcal{O}_{\mathrm{fin}}(\m{D})$ via the map $S \mapsto \vr(\{\m{F}_{k}(\ZZ) : k \in S\})$. 
\end{theorem}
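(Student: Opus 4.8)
The plan is to assemble this from the results already in hand, so the argument is mostly a bookkeeping exercise about the map $S \mapsto \vr(\{\m{F}_k(\ZZ) : k \in S\})$. First I would check that the collection $\mathcal{I}$ of varieties of the form $\vr(\{\m{F}_{k_1}(\ZZ),\dots,\m{F}_{k_m}(\ZZ)\})$ (with all $k_i \in \ZZ^+$, $m$ finite) is closed downward in the subvariety lattice of $\mathsf{DLP}$ and closed under finite joins. Closure under finite joins is immediate, since a union of two finite sets is finite. For downward closure, if $\mathcal{W} \subseteq \mathcal{V} = \vr(\{\m{F}_{k_1}(\ZZ),\dots,\m{F}_{k_m}(\ZZ)\})$, then $\mathcal{V}$ is a subvariety of $\vr(\m{F}_n(\ZZ))$ for $n = \lcm\{k_1,\dots,k_m\}$ (using Corollary~\ref{c: subFnZ}, each $k_i \mid n$), hence so is $\mathcal{W}$, and Corollary~\ref{c: downsetsubvarieties} then writes $\mathcal{W}$ in the required form. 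Thus $\mathcal{I}$ is an ideal.

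Next I would set up the isomorphism. Define $\Phi \colon \mathcal{O}_{\mathrm{fin}}(\m{D}) \to \mathcal{I}$ by $\Phi(S) = \vr(\{\m{F}_k(\ZZ) : k \in S\})$; by Corollary~\ref{c: subFnZ} we may as well restrict attention to downsets $S$, and since a finite set $\{k_1,\dots,k_m\}$ generates the same variety as its downset closure ${\downarrow}\{k_1,\dots,k_m\}$ (again by Corollary~\ref{c: subFnZ}, which gives $\m F_k(\ZZ) \in \vr(\m F_{k_i}(\ZZ))$ whenever $k \mid k_i$), $\Phi$ is onto $\mathcal{I}$. It is clearly order-preserving, and it preserves joins since $\Phi(S \cup T) = \vr(\{\m F_k(\ZZ) : k \in S\} \cup \{\m F_k(\ZZ) : k \in T\}) = \Phi(S) \vee \Phi(T)$. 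The one real point is injectivity: if $S \neq T$ are finite downsets, say $k \in S \setminus T$, then $\m{F}_k(\ZZ) \in \Phi(S)$ but $\m{F}_k(\ZZ) \notin \Phi(T)$. The second claim is exactly where Corollary~\ref{c: subFnZ} (together with Corollary~\ref{cor:char-of-fin-SI}, which identifies the finitely generated SIs of $\Phi(T) = \vr(\{\m F_l(\ZZ) : l \in T\}) \subseteq \vr(\m F_{n}(\ZZ))$ for $n = \lcm T$) does the work: $\m F_k(\ZZ)$ is a finitely generated SI, so membership in $\Phi(T)$ would force $k \mid l$ for some $l \in T$, contradicting that $T$ is a downset missing $k$. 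Hence $\Phi$ is an order-isomorphism, and since both posets are lattices it is a lattice isomorphism.

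The main obstacle — really the only nontrivial one — is injectivity of $\Phi$, i.e. the statement that $\m{F}_k(\ZZ)$ does not belong to a variety generated by $\m F_l(\ZZ)$'s unless $k$ divides one of the $l$'s. This is precisely Corollary~\ref{c: subFnZ} in the one-variety case, and its extension to finite joins is what the characterization of finitely generated (F)SIs in Corollary~\ref{cor:char-of-fin-SI} buys us: every finitely generated SI of $\vr(\{\m F_l(\ZZ) : l \in T\})$ is an $\ell$-group or of the form $\m H \overrightarrow{\times} \m F_j(\ZZ)$ with $j$ dividing $\lcm T$, and one checks (as in the proof of Corollary~\ref{c: downsetsubvarieties}) that in fact $j$ must divide some individual element of $T$ because $\vr(\{\m F_l(\ZZ):l\in T\}) = \bigvee_{l\in T}\vr(\m F_l(\ZZ))$ and a proper SI in a finite join of varieties lies in one of the joinands. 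Everything else — closure of $\mathcal{I}$ under joins, downward closure, the lattice structure of $\mathcal{O}_{\mathrm{fin}}(\m D)$ — is routine. I would close by remarking that the excluded variety $\mathsf{DLP}$ itself is not in $\mathcal{I}$ because, as noted in the preceding remark, any infinite subset of $\{\m F_n(\ZZ) : n \in \ZZ^+\}$ already generates $\mathsf{DLP}$, so $\mathsf{DLP}$ is not finitely generated from the $\m F_n(\ZZ)$; this justifies calling $\mathcal{I}$ a proper ideal.
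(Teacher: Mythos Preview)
Your proposal is correct and follows essentially the same approach as the paper: both arguments use Corollary~\ref{c: subFnZ} together with J\'onsson's Lemma (your ``a proper SI in a finite join lies in one of the joinands'') to control which $\m F_k(\ZZ)$ lie in $\vr(\{\m F_l(\ZZ):l\in T\})$, and both invoke Corollary~\ref{c: downsetsubvarieties} for downward closure. The paper packages the order-theoretic content slightly more efficiently by proving the single biconditional $V(S)\subseteq V(T)\iff S\subseteq{\downarrow}T$ in one chain of equivalences, whereas you separate surjectivity, monotonicity, and injectivity; but the substance is the same.
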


\begin{proof}
For every finite subset $S$ of $\ZZ^+$, we set $V(S):=\vr(\{\m{F}_{k}(\ZZ) : k \in S\})$ and $\Lambda:=\{V(S) : S \subseteq \ZZ^+, S$ is finite$\}$; we will show that $S \mapsto V(S)$ from $\mathcal{O}_{\mathrm{fin}}(\m{D})$ to  $\Lambda$ is a lattice isomorphism (from a lattice to a poset). 

Note that for every finite subset $S$ of $\ZZ^+$,  $V(S)=\vr(\{\m{F}_{k}(\ZZ) : k \in S\})=\bigvee \{\vr(\m{F}_{k}(\ZZ)) : k \in S\}$. By a corollary to Jónsson's Lemma the (finitely) subdirectly irreducibles of this join are the union of the (finitely) subdirectly irreducibles of each $\vr(\m{F}_{k}(\ZZ))$, where $k \in S$, i.e., $V(S)_{SI}=\bigcup\{ V(\{k\})_{SI} : k \in S\}$.
By combining the above equation 
with Corollary~\ref{c: subFnZ}, we have that for finite  $S,T\subseteq \ZZ^+$:
\begin{align*}
    V(S) \subseteq V(T) &\iff \vr(\{\m{F}_{k}(\ZZ) : k \in S\}) \subseteq \vr(\{\m{F}_{n}(\ZZ) : n \in T\}) \\
    & \iff \text{for all $k \in S$, $\m{F}_k(\ZZ) \in  \vr(\{\m{F}_{n}(\ZZ) : n \in T\})$}  \\
    &\iff \text{for all $k \in S$ there exists $n \in T$ with $\m{F}_k(\ZZ) \in  \vr(\m{F}_{n}(\ZZ))$} \\
    &\iff \text{for all $k \in S$ there exists $n \in T$ with $k \mid n$} \\
    &\iff S \subseteq {\downarrow} T.
\end{align*}
In particular, $V(S) = V(T)$ iff ${\downarrow} S = {\downarrow} T$; hence $V(S) = V({\downarrow} S)$.

 By the above argument we get that for $S,T \in \mathcal{O}_{\mathrm{fin}}(\m{D})$, $V(S) \subseteq V(T)$ iff $S \subseteq {\downarrow} T$ iff $S \subseteq T$, and also that for all  finite subsets $S$ of $\ZZ^+$, $V(S)=V({\downarrow} S)$. Thus the map $S \mapsto V(S)$ from $\mathcal{O}_{\mathrm{fin}}(\m{D})$ to  $\Lambda$  preserves and reflects order, and is onto; as a result it is an isomorphism of posets that is also a lattice isomorphism, since $\mathcal{O}_{\mathrm{fin}}(\m{D})$ is a lattice.
Furthermore, 
$V(S) \jn V(T)=V(S \cup T)\in \Lambda$.
Therefore, $\Lambda$ is closed under joins; that it is downward closed follows by Corollary~\ref{c: downsetsubvarieties},
so $\Lambda$ is an ideal of the subvariety lattice of $\mathsf{DLP}$.
\end{proof}

\begin{remark}
We note that in the proof of Theorem~\ref{t: ideal} we established that for every finite subset $S$ of $\ZZ^+$,  $\vr(\{\m{F}_{k}(\ZZ) : k \in S\})=\vr(\{\m{F}_{k}(\ZZ) : k \in {\downarrow} S\})= \vr(\{\m{F}_{k}(\ZZ) : k \in \max {\downarrow} S\})$. 
\end{remark}

For $n\geq 1$, let $D_n$ be the set of divisors of $n$ and let $\m{D}_n = ( D_n, \mid )$ be the associated lattice. 
We denote by $\mathcal{O}(\m{D}_n)$ the downset lattice of $\m{D}_n$. 
\begin{corollary}
    For each $n \geq 1$, the subvariety lattice of $\vr(\m{F}_n(\ZZ))$ is isomorphic to  $\mathcal{O}(\m{D}_n)$ via the map  $S \mapsto \vr(\{\m{F}_{k}(\ZZ)) : k \in S\})$, for $S \in \mathcal{O}(\m D_n)$.    
\end{corollary}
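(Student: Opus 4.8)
The plan is to deduce this corollary directly from Theorem~\ref{t: ideal} by restriction to the principal downset of $n$. First I would observe that a variety $\mathcal V$ is a subvariety of $\vr(\m F_n(\ZZ))$ if and only if $\mathcal V = \vr(\{\m F_k(\ZZ) : k \in S\})$ for some $S \subseteq D_n$; the forward direction is exactly Corollary~\ref{c: downsetsubvarieties} together with Corollary~\ref{c: subFnZ} (any $k_i$ appearing must divide $n$, since otherwise $\m F_{k_i}(\ZZ)\notin\vr(\m F_n(\ZZ))$), and the backward direction is clear since each $\m F_k(\ZZ)$ with $k\mid n$ lies in $\vr(\m F_n(\ZZ))$ by Corollary~\ref{c: subFnZ}. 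Moreover, by the remark following Theorem~\ref{t: ideal} (or just by the chain of equivalences in its proof), $\vr(\{\m F_k(\ZZ) : k\in S\}) = \vr(\{\m F_k(\ZZ) : k\in {\downarrow}S\})$, so without loss of generality $S$ can be taken to be a downset of $\m D_n$.

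Next I would invoke the isomorphism of Theorem~\ref{t: ideal}: the map $S \mapsto \vr(\{\m F_k(\ZZ) : k\in S\})$ is an isomorphism from $\mathcal O_{\mathrm{fin}}(\m D)$ onto the ideal $\Lambda$ of subvarieties of $\mathsf{DLP}$ generated by finite subsets of $\{\m F_n(\ZZ)\}$. I would restrict this isomorphism to the principal downset ${\downarrow}\{n\} = D_n$ in $\mathcal O_{\mathrm{fin}}(\m D)$, which is itself a sublattice isomorphic to $\mathcal O(\m D_n)$ (every subset of the finite set $D_n$ is automatically a finite downset-candidate, and its downward closure inside $\m D$ stays within $D_n$ since $D_n$ is already downward closed in $\m D$). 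Under the isomorphism, ${\downarrow}\{n\}$ corresponds to $\vr(\m F_n(\ZZ))$ itself, and its image is precisely the set of subvarieties of $\vr(\m F_n(\ZZ))$ by the first paragraph. Since an isomorphism of lattices restricts to an isomorphism between any principal ideal and its image, the map $S\mapsto \vr(\{\m F_k(\ZZ):k\in S\})$ from $\mathcal O(\m D_n)$ to the subvariety lattice of $\vr(\m F_n(\ZZ))$ is a lattice isomorphism.

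There is essentially no obstacle here beyond bookkeeping: the only point requiring a word of care is that $\mathcal O(\m D_n)$ really is (isomorphic to) the principal downset ${\downarrow}\{n\}$ in $\mathcal O_{\mathrm{fin}}(\m D)$ — i.e., that downsets of $\m D_n$ coincide with finite downsets of $\m D$ contained in $D_n$ — which holds because $D_n$ is a finite downset of $\m D$, so a subset of $D_n$ is a downset of $\m D_n$ iff it is a downset of $\m D$. One should also note injectivity explicitly: if $\vr(\{\m F_k(\ZZ):k\in S\}) = \vr(\{\m F_k(\ZZ):k\in T\})$ with $S,T$ downsets of $\m D_n$, then by the chain of equivalences in the proof of Theorem~\ref{t: ideal} we get ${\downarrow}S = {\downarrow}T$, hence $S = T$. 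Putting these observations together yields the stated isomorphism, and the proof can be written in just a few lines.

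\begin{proof}
    By Corollary~\ref{c: downsetsubvarieties} together with Corollary~\ref{c: subFnZ}, a variety is a subvariety of $\vr(\m F_n(\ZZ))$ if and only if it is of the form $\vr(\{\m F_{k}(\ZZ) : k \in S\})$ for some $S\subseteq D_n$; and by the remark following Theorem~\ref{t: ideal}, $\vr(\{\m F_{k}(\ZZ) : k \in S\}) = \vr(\{\m F_{k}(\ZZ) : k \in {\downarrow}S\})$, so we may take $S$ to be a downset of $\m D_n$. Since $D_n$ is itself a (finite) downset of $\m D$, a subset of $D_n$ is a downset of $\m D_n$ precisely when it is a finite downset of $\m D$; thus $\mathcal O(\m D_n)$ is exactly the principal downset ${\downarrow}\{n\}$ in $\mathcal O_{\mathrm{fin}}(\m D)$. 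By Theorem~\ref{t: ideal}, the map $S \mapsto \vr(\{\m F_k(\ZZ):k\in S\})$ is a lattice isomorphism from $\mathcal O_{\mathrm{fin}}(\m D)$ onto the ideal $\Lambda$ of the subvariety lattice of $\mathsf{DLP}$ generated by finite subsets of $\{\m F_m(\ZZ):m\in\ZZ^+\}$. Restricting this isomorphism to the principal downset ${\downarrow}\{n\}=\mathcal O(\m D_n)$, whose image is the principal ideal of $\Lambda$ determined by $\vr(\m F_n(\ZZ))$, i.e.\ the set of all subvarieties of $\vr(\m F_n(\ZZ))$ by the first sentence, we obtain that $S\mapsto\vr(\{\m F_k(\ZZ):k\in S\})$ is a lattice isomorphism from $\mathcal O(\m D_n)$ onto the subvariety lattice of $\vr(\m F_n(\ZZ))$.
\end{proof}
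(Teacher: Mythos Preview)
Your proposal is correct and follows exactly the approach implicit in the paper, which states the corollary without proof as an immediate consequence of Theorem~\ref{t: ideal}: restrict the isomorphism $S\mapsto\vr(\{\m F_k(\ZZ):k\in S\})$ to the principal ideal below $D_n$ in $\mathcal O_{\mathrm{fin}}(\m D)$ and identify that ideal with $\mathcal O(\m D_n)$. One small notational wrinkle: you write ``the principal downset ${\downarrow}\{n\}$ in $\mathcal O_{\mathrm{fin}}(\m D)$,'' but $\{n\}$ is not an element of $\mathcal O_{\mathrm{fin}}(\m D)$ for $n>1$; what you mean is the principal ideal of $\mathcal O_{\mathrm{fin}}(\m D)$ generated by the element $D_n={\downarrow}n$, so it would be cleaner to write ${\downarrow}D_n$ (computed in $\mathcal O_{\mathrm{fin}}(\m D)$) there.
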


\begin{corollary}\label{c: properly n per}
For each $n$, the subvarieties of $\mathsf{DLP}$ that are properly $n$-periodic (not periodic for a smaller period) comprise the interval 
$[\bigvee_{q\in P} \vr(\m{F}_{q}(\ZZ)), \mathsf{LP}_n]$, where $P = \{p_1^{k_1},\dots, p_{l}^{k_l} \}$ is the set of all maximal prime powers that divide $n$, i.e., $p_1,\dots,p_l$ are distinct primes such that $n = p_1^{k_1}\cdots p_l^{k_l}$;
if $n$ is a prime power this interval is $[\vr(\m{F}_n(\ZZ)), \mathsf{LP}_n]$. 
\end{corollary}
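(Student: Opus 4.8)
The plan is to identify the endpoints of the claimed interval and then show that membership in it is exactly the condition of being properly $n$-periodic. The first thing I would record is a general fact about periods: since $x \mapsto x^{[1]}$ is an automorphism of every $\ell$-pregroup (Lemma~\ref{l:double-l-auto}) and $x^{[m]}$ is its $m$-th iterate, the equation $x^{[\gcd(m_1,m_2)]}\approx x$ follows from $x^{[m_1]}\approx x$ and $x^{[m_2]}\approx x$, while $x^{[m']}\approx x$ follows from $x^{[m]}\approx x$ whenever $m \mid m'$. Hence the set of periods of any periodic variety $\mathcal V$ is closed under $\gcd$ and under passing to multiples, so it has a least element $n_0$ and equals $\{m : n_0 \mid m\}$. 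Consequently $\mathcal V$ is \emph{properly $n$-periodic} precisely when $\mathcal V \subseteq \mathsf{LP}_n$ and $\mathcal V$ refutes $x^{[m]}\approx x$ for every proper divisor $m$ of $n$. This already gives the top of the interval, namely $\mathsf{LP}_n$; and since each $\m F_q(\ZZ)$ with $q\in P$ is $q$-periodic, hence $n$-periodic, the join $\bigvee_{q\in P}\vr(\m F_q(\ZZ))$ lies in $\mathsf{LP}_n$, so the interval is well-defined.

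It then suffices to prove that an $n$-periodic variety $\mathcal V$ satisfies $\mathcal V \supseteq \bigvee_{q\in P}\vr(\m F_q(\ZZ))$ if and only if $\mathcal V$ refutes $x^{[m]}\approx x$ for every proper divisor $m\mid n$. The forward (easy) direction: if $\m F_{p^k}(\ZZ)\in\mathcal V$ for each maximal prime power $p^k\mid n$, then $\mathcal V$ contains an element of periodicity $p^k$ (recall $\m F_{p^k}(\ZZ)$ is of periodicity $p^k$), so $\mathcal V$ cannot be $m$-periodic unless $p^k\mid m$; and a proper divisor $m$ of $n$ must fail to be divisible by at least one $p_i^{k_i}$, since otherwise $n=p_1^{k_1}\cdots p_l^{k_l}$ would divide $m$. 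Hence $\mathcal V$ refutes $x^{[m]}\approx x$ for each such $m$.

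The converse is the substantive part, and here is the route I would take. Fix $q=p^k\in P$. Since $n/p$ is a proper divisor of $n$, there are $\m A\in\mathcal V$ and $a\in A$ with $a^{[n/p]}\neq a$; as $\m A$ is $n$-periodic, $a^{[n]}=a$, so $\per(a)$ is a divisor $d$ of $n$ that does not divide $n/p$, which forces the $p$-adic valuation of $d$ to equal that of $n$, i.e.\ $p^k\mid d$; in particular $d>1$, so $a$ is not invertible. By Lemma~\ref{l: spfpl}, $c:=\g(a)^{-1}a$ is local with respect to any representation and still has periodicity $d$. Fix a representation $\m A\le\m F_n(\m J\overrightarrow{\times}\ZZ)\cong\m{Aut}(\m J)\wr\m F_n(\ZZ)$; then $c\in\loc{\m A}$, which is isomorphic to a subalgebra of $\m F_n(\ZZ)^J$, and by Lemma~\ref{l: per}, $d=\lcm\{\per(\overline{c}_j):j\in J\}$, so $p^k\mid\per(\overline{c}_{j_0})$ for some $j_0\in J$. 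Composing the embedding $\loc{\m A}\hookrightarrow\m F_n(\ZZ)^J$ with the $j_0$-th coordinate projection yields a homomorphism $\loc{\m A}\to\m F_n(\ZZ)$ carrying $c$ to $\overline{c}_{j_0}$, an element of periodicity $d_0:=\per(\overline{c}_{j_0})>1$; its image lies in $\mathbb{H}\mathbb{S}(\m A)\subseteq\mathcal V$, and by Theorem~\ref{t:Fn-generation} that image contains $\langle\overline{c}_{j_0}\rangle=\m F_{d_0}(\ZZ)$, so $\m F_{d_0}(\ZZ)\in\mathcal V$. Since $p^k\mid d_0$ we have $\m F_{p^k}(\ZZ)\le\m F_{d_0}(\ZZ)$, hence $\m F_{p^k}(\ZZ)\in\mathcal V$; as $q\in P$ was arbitrary, $\bigvee_{q\in P}\vr(\m F_q(\ZZ))\subseteq\mathcal V$. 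When $n=p^k$ is a prime power, $P=\{n\}$, so the bottom of the interval is $\vr(\m F_n(\ZZ))$.

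I expect the main obstacle to be exactly this last step: turning the bare refutation of $(n/p)$-periodicity somewhere in $\mathcal V$ into an actual copy of $\m F_{p^k}(\ZZ)$ inside $\mathcal V$. Three points need care. First, the projection onto a single local coordinate is a homomorphism only on the local subalgebra, which is why $a$ has to be replaced by the local element $\g(a)^{-1}a$ via Lemma~\ref{l: spfpl}. Second, the periodicity surviving in the chosen coordinate is still divisible by $p^k$, which uses primality of $p$ together with Lemma~\ref{l: per}. Third, Theorem~\ref{t:Fn-generation} requires a generator of periodicity greater than $1$, which is guaranteed since $p^k\ge 2$. Everything else reduces to the closure of varieties under $\mathbb{H},\mathbb{S},\mathbb{P}$ and routine divisibility arithmetic.
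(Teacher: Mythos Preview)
Your proof is correct, and it takes a genuinely different route from the paper's. The paper first establishes (in the proof of Corollary~\ref{c: properly n per}) which varieties of the form $V(S)=\vr(\{\m F_k(\ZZ):k\in S\})$ are properly $n$-periodic, and then, given a properly $n$-periodic $\mathcal V$, produces a single $\m L\in\mathcal V$ of periodicity $n$, observes $\loc{\m L}\le\m F_n(\ZZ)^J\in\vr(\m F_n(\ZZ))$, and invokes Corollary~\ref{c: downsetsubvarieties} to write $\vr(\loc{\m L})=V(S)$; the first step then forces $P\subseteq{\downarrow}S$. That argument rests on the full axiomatization result and the FSI description of $\vr(\m F_n(\ZZ))$ behind Corollary~\ref{c: downsetsubvarieties}.

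Your argument bypasses all of that: you work prime-by-prime, localize via Lemma~\ref{l: spfpl}, project to a single coordinate of $\m F_n(\ZZ)^J$, and then apply Theorem~\ref{t:Fn-generation} directly to land $\m F_{p^k}(\ZZ)$ inside $\mathcal V$ via $\mathbb{HS}$. This is more elementary---it uses only the representation theorem, the generation result for $\m F_n(\ZZ)$, and standard divisibility---and it also avoids the minor gap in the paper's presentation (the existence of a \emph{single} $\m L\in\mathcal V$ of periodicity exactly $n$ is asserted without comment). The paper's approach, on the other hand, is the natural one once Corollary~\ref{c: downsetsubvarieties} is in hand and gives a more global picture of how the bottom of the interval arises from the structure of the subvariety lattice.
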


\begin{proof}

Note that a variety of the form  $\vr(\m{F}_k(\ZZ))$ is $n$-periodic iff $k \mid n$; and a variety of the form  $V(S):=\vr(\{\m{F}_{k}(\ZZ) : k \in S\})$, where  $S$ is a finite subset of $\ZZ^+$, is $n$-periodic iff $S \subseteq {\downarrow} n$, where the latter is computed in the divisibility lattice. So, asking that $V(S)$ fails to be $m$-periodic for a given $m \mid n$, $m\neq n$, amounts to asking that $S  \nsubseteq {\downarrow} m$.

We first show that that for $S\subseteq {\downarrow}n$, $V(S)$ is properly $n$-periodic if and only if $p_1^{k_1},\dots,p_l^{k_l} \in {\downarrow}S$.

For the right-to-left direction note that if $m\mid n$ with $m\neq n$, then there exists $i \in \{1,\dots,l\}$ such that $p_i^{k_k}\nmid m$, i.e., $S\nsubseteq {\downarrow}m$. Conversely assume that there is an $i\in \{1,\dots,l\}$ such that $p_i^{k_i}\notin {\downarrow}S$. Then, since $S\subseteq {\downarrow}n$, for $m = n/p_i$ we obtain $S\subseteq  {\downarrow}m$, i.e., $V(S)$ is not properly $n$-periodic.

Now let $\mathcal{V}$ be a properly $n$-periodic variety. Then $\mathcal{V}$ contains an $\ell$-pregroup $\m{L}$ of periodicity $n$; let $\m{L} \leq \m{F}_n(\m J \overrightarrow{\times} \ZZ)$ be a representation of $\m L$, where $\m J$ is a chain. By Remark~\ref{r:per-of-alg}, also $\loc{\m{L}}$  
is of periodicity $n$; hence, $\vr(\loc{\m{L}})$ is properly $n$-periodic. Also $\loc{\m{L}} \leq \m{F}_n(\ZZ)^J \in \vr(\m{F}_n(\ZZ))$,  
so by Corollary~\ref{c: downsetsubvarieties}, $\vr(\loc{\m{L}}) = V(S)$ for some finite $S\subseteq \ZZ^+$ with $S\subseteq {\downarrow}n$. Thus, by the above argument, $p_1^{k_1},\dots, p_{l}^{k_l} \in {\downarrow}S$, i.e., $\bigvee_{q\in P} \vr(\m{F}_q(\ZZ)) = V(P) \subseteq V(S) \subseteq \mathcal{V}\subseteq \mathsf{LP}_n$.
\end{proof}

\begin{remark}
For $n=1$, we can see that Corollary~\ref{c: properly n per} specializes to the interval $[\vr(\m{F}_1(\ZZ)), \mathsf{LP}_1]=[\vr(\ZZ), \mathsf{LG}]$, which together with the trivial variety constitutes the subvariety lattice of $\ell$-groups. 
\end{remark}

We now describe how to obtain finite axiomatizations for all of the varieties of the form  $\vr(\{\m{F}_{k}(\ZZ) : k \in S\})$, where $S$ is a finite subset of $\ZZ^+$, i.e., of all of the varieties mentioned in  Theorem~\ref{t: ideal}.

\begin{theorem}\label{t: expandedAxiom}
If  $\mathcal{V}_1, \ldots, \mathcal{V}_m$ are  subvarieties of $\mathsf{DLP}$ axiomatized by the equations $1 \leq  t_1 , \ldots,  1 \leq  t_m$, respectively, then their join is axiomatized by  the equation $1\leq  x_1t_1x_1^\ell \jn \cdots \jn x_mt_mx_m^\ell$, where $x_1, \ldots, x_m$ are variables not occurring in $t_1, \ldots, t_m$. 
\end{theorem}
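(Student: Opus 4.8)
The plan is to show the two inclusions between the variety $\mathcal{W}$ axiomatized by $1\leq x_1t_1x_1^\ell \jn \cdots \jn x_mt_mx_m^\ell$ and the join $\mathcal{V} := \mathcal{V}_1 \jn \cdots \jn \mathcal{V}_m$, working with finitely subdirectly irreducible algebras. First I would note that since all algebras involved are distributive $\ell$-pregroups, hence FSI distributive $\ell$-pregroups have a totally ordered lattice reduct — no, more carefully: the key fact I will use is that in an FSI $\ell$-pregroup the join of two elements being $\geq 1$ forces at least one of the joinands to be $\geq 1$ in a suitable sense. Actually the right tool is the following standard observation about residuated lattices: an $\ell$-pregroup $\m L$ is FSI iff $1$ is join-irreducible in the sense that $a \jn b \geq 1$ with $a,b$ suitably "conjugate-closed" forces $a \geq 1$ or $b\geq 1$; more usefully, I will use that each $\mathcal{V}_i$ being axiomatized by a \emph{single} inequation $1\leq t_i$ means $\mathcal{V}_i$ is closed under the relevant constructions and that for an FSI algebra $\m L$, $\m L \models 1 \leq t_i$ fails iff there is an assignment making $t_i \not\geq 1$, i.e. $t_i \mt 1 < 1$.

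The core argument runs as follows. For the easy inclusion $\mathcal{V} \subseteq \mathcal{W}$: each $\mathcal{V}_j$ satisfies $1\leq t_j$, hence satisfies $1 \leq x_jt_jx_j^\ell \jn \cdots$ because one of the joinands, namely $x_jt_jx_j^\ell$, can be made $\geq 1$ by... no — we need the equation to hold for \emph{all} assignments. The point is: in $\mathcal{V}_j$, for any assignment, $t_j \geq 1$, so $x_jt_jx_j^\ell \geq x_j x_j^\ell \geq 1$ (using Remark~\ref{r: xxl} that $x_jx_j^\ell \geq 1$), hence the whole join is $\geq 1$. So $\mathcal{V}_j \subseteq \mathcal{W}$ for each $j$, giving $\mathcal{V} \subseteq \mathcal{W}$ since $\mathcal{W}$ is a variety. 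For the reverse inclusion $\mathcal{W} \subseteq \mathcal{V}$, it suffices to show every FSI member $\m L$ of $\mathcal{W}$ lies in $\mathcal{V}$; by Jónsson-type reasoning (the varieties here are congruence-distributive, being distributive lattice-ordered), $\mathcal{V}$ is generated by the union of the FSIs of the $\mathcal{V}_j$, so it is enough to show $\m L \in \mathcal{V}_j$ for some $j$. Suppose not: then for each $j$ there is an assignment $\vec{a}^{(j)}$ in $\m L$ with $t_j(\vec{a}^{(j)}) \not\geq 1$, i.e. $p_j := t_j(\vec{a}^{(j)}) \mt 1 < 1$. Now I use that $\m L$ is FSI: the elements $p_1, \dots, p_m$ are all $< 1$ and negative, so in the $\ell$-pregroup setting one can conjugate them into a common "small" region. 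The main step is to choose the witnessing elements $b_j$ for the variables $x_j$ in the equation $1\leq x_1t_1x_1^\ell \jn\cdots$ so that each $b_j t_j(\vec a^{(j)}) b_j^\ell \mt 1$ becomes a negative element, and then use join-irreducibility of $1$ in the FSI algebra $\m L$ to derive $1\leq b_1 t_1 b_1^\ell \jn \cdots \jn b_m t_m b_m^\ell$ forcing some joinand $\geq 1$, contradicting that they are all $\leq 1$ and none equals $1$ — wait, we need them genuinely $<1$. This is where the conjugation-by-$x_j$ freedom is essential: taking $x_j := 1$ already gives $b_j t_j b_j^\ell \mt 1 = p_j < 1$ strictly, for each $j$, so the join $p_1 \jn \cdots \jn p_m$ with all $p_j < 1$ must itself be $< 1$ in an FSI algebra? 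That is exactly the property I need: \emph{in an FSI $\ell$-pregroup, a finite join of elements each strictly below $1$ is strictly below $1$}.

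Thus the heart of the proof reduces to this lemma: if $\m L$ is an FSI $\ell$-pregroup and $a_1,\dots,a_m \leq 1$ with $a_1 \jn \cdots \jn a_m = 1$, then some $a_j = 1$. I would prove this via the congruence/convex-normal-subalgebra correspondence (Lemma~\ref{l: CNSandCONforRL}): the convex normal subalgebra generated by $a_j$ corresponds to the principal congruence $\theta_j$ collapsing $a_j$ with $1$, and one checks $\theta_1 \cap \cdots \cap \theta_m = \Delta$ when $a_1\jn\cdots\jn a_m = 1$ with all $a_j \le 1$ — using that $c \in \Nc_{\m L}(a_j)$ iff $c$ is squeezed between a product of conjugates of $a_j \mt a_j^\ell \mt 1$ and its inverse, and that if $(u,v) \in \bigcap_j \theta_j$ then $uv^\ell \mt vu^\ell \mt 1$ lies in all $\Nc_{\m L}(a_j)$, which combined with $\bigjoin a_j = 1$ forces it to be $1$. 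Then $\Delta$ meet-irreducible (FSI) gives some $\theta_j = \Delta$, i.e. $a_j = 1$. \textbf{The main obstacle} I anticipate is precisely pinning down this lemma cleanly: verifying that $\bigcap_j \Nc_{\m L}(a_j)$ is trivial requires a careful computation with products of iterated conjugates of the $a_j$'s and the interplay with the join $\bigjoin a_j = 1$, and one must be careful that conjugation does not destroy the relation — this is exactly why the equation in the statement includes the conjugating variables $x_j$, and why in the forward direction I could afford to just take $x_j = 1$ but in setting up the contradiction I may actually need nontrivial conjugates to bring the witnesses $p_j$ (which live under possibly different assignments) into a single normal subalgebra computation. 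I would handle this by invoking Lemma~\ref{l: CNSforRL} to replace each $p_j$ by $p_j \mt p_j^\ell \mt 1$ and noting products of conjugates generate the same convex normal subalgebra, so that the triviality of the intersection is a purely convex-normal-subalgebra statement independent of the original assignments.
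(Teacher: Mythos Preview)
Your easy inclusion $\mathcal{V}\subseteq\mathcal{W}$ is fine. The hard direction has the right architecture (reduce to FSI members, produce witnesses $p_j=t_j(\vec a^{(j)})\mt 1<1$, contradict the equation), but the lemma you isolate is \emph{false}: it is not true that in an FSI $\ell$-pregroup a finite join of strictly negative elements is strictly negative. Take $\m L=\m{Aut}(\mathbb{Q})$, which is a simple (hence FSI) $\ell$-group; choose $a,b<1$ with disjoint supports so that $a\jn b=1$ while $a,b\neq 1$. Correspondingly, your proposed proof of the lemma fails at the step ``$\bigcap_j\theta_j=\Delta$ when $\bigvee_j a_j=1$'': in this example each $\theta_j$ is the full congruence and the intersection is not $\Delta$. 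So setting $x_j=1$ cannot work, and the conjugating variables are not an optional refinement but the whole point.

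The fix is to reverse the logic. From each $p_j<1$ you get $\Nc(p_j)\neq\{1\}$; by FSI the trivial subalgebra is meet-irreducible, so $\bigcap_j\Nc(p_j)\neq\{1\}$. What you then need is: if for \emph{every} choice of conjugates $c_j:=b_jp_jb_j^\ell\mt 1$ one had $\bigvee_j c_j=1$, then $\bigcap_j\Nc(p_j)=\{1\}$. This follows from the elementary Riesz-type identity valid in any $\ell$-pregroup (multiplication distributes over $\jn$): for negative $a,a',b$, if $a\jn b=1=a'\jn b$ then $aa'\jn b=(a\jn b)(a'\jn b)\mt 1=1$; iterating lets you pass from single conjugates to arbitrary products of conjugates, and Lemma~\ref{l: CNSforRL} then forces the intersection to be trivial. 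Contraposing, some choice of $b_j$'s makes $\bigvee_j(b_jp_jb_j^\ell\mt 1)<1$, so (using distributivity to drop the $\mt 1$) the axiom fails in $\m L$. The paper bypasses all of this by citing \cite[Corollary~4.3]{Ga04}, which packages exactly this FSI computation for residuated lattices, and then observes via Remark~\ref{r: conjLP} that iterated conjugates collapse to single ones in $\ell$-pregroups; the remaining lines are algebraic simplifications. One minor point you should also record: the terms $t_1,\dots,t_m$ must be taken in pairwise disjoint variables so that the separate witnessing assignments $\vec a^{(j)}$ can be merged into a single assignment.
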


\begin{proof}
By Corollary~4.3 of \cite{Ga04}, the join of the varieties is axiomatized by the set of all equations of the form  $1 \approx \rho_1(t_1 \mt 1) \jn \cdots \jn \rho_m(t_m \mt 1)$, where the $\rho_i$'s range over all possible iterated conjugates. By Remark~\ref{r: conjLP}, each iterated conjugate reduces to a single right conjugate, so the join of the varieties is axiomatized by the single equation $1 \approx [x_1(t_1 \mt 1)x_1^\ell \mt 1] \jn \cdots \jn [x_m(t_m \mt 1)x_m^\ell \mt 1]$, where $x_1, \ldots, x_m$ are distinct variables not occurring in $t_1, \ldots, t_m$. By Lemma~\ref{l: InRL}, the equation is equivalent to  $1\approx [x_1t_1x_1^\ell \mt x_1x_1^\ell \mt 1] \jn \cdots \jn [x_mt_mx_m^\ell \mt x_mx_m^\ell \mt 1]$. Since $1 \leq xx^\ell$ in $\ell$-pregroups, the equation can be further written in the form
$1 \approx  [x_1t_1x_1^\ell \mt 1] \jn \cdots \jn [x_mt_mx_m^\ell  \mt 1]$. In distributive $\ell$-pregroups this simplifies to
$1\leq  x_1t_1x_1^\ell \jn \cdots \jn x_mt_mx_m^\ell$.
\end{proof}

\begin{remark} 
Following \cite{Ga04}, we note that the demand that a finitely axiomatized variety of $\ell$-pregroups is axiomatized by a single equation of the form $1 \leq  r$ is not a real restriction and it always holds. Indeed, first note that every equation $s=t$ is equivalent to the conjunction of $s \leq t$ and $t \leq s$, hence also to the conjunction of $1 \leq s^\ell t$ and $1 \leq t^\ell s$, by residuation. Therefore, every finite equational axiomatization is equivalent to a conjunction of inequations of the form $1 \leq r_1, \ldots, 1 \leq r_m$. This conjunction is further equivalent to the inequality $1 \leq r_1 \mt\cdots \mt r_m$. 

In the setting of distributive $\ell$-pregroups we can further prove that  the term $r$ can be taken to be a join of $\{1, \cdot, {}^\ell, {}^r\}$-terms. Indeed, since multiplication distributes over joins and meets by Lemma~\ref{l: InRL}, $r$ is equivalent to a lattice term over $\{1, \cdot, {}^\ell, {}^r\}$-terms. By lattice distributivity, the lattice term can be written as a meet of joins, so $1 \leq r$ is equivalent to a conjunction of equations of the form $1 \leq r_1, \ldots, 1 \leq r_m$, where each $r_i$ is a join of $\{1, \cdot, {}^\ell, {}^r\}$-terms; we can further assume that the variable sets in the terms $r_i$ are disjoint. Then the conjunction is equivalent to the inequality $1 \leq r_1 r_2 \cdots  r_m$ (where the converse direction is obtained by setting all variables equal to $1$ except the ones in $r_i$, for one $i$ at a time). By distributivity of multiplication over join, we get an inequality of the form $1 \leq r'$, where $r'$ is a join of $\{1, \cdot, {}^\ell, {}^r\}$-terms.
\end{remark}

\begin{corollary} 
 For every  finite subset $S$ of $\ZZ^+$, the variety $\vr(\{\m{F}_{k}(\ZZ) : k \in S\})$ is axiomatized by
 the single $4\lvert S\rvert$-variable equation
 \[
1 \leq \bigvee_{k \in S} w_k[ z_k^{\ell^{2k}}z_k^\ell  \mt  \g_k(y_k)^k x_k \g_k(y_k)^{-k}x_k^\ell]w_k^\ell.
\]
\end{corollary}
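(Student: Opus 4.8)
The plan is to combine the single-variety axiomatization of Theorem~\ref{t: axiomatization} (in the form given in Remark~\ref{r: half}) with the join-axiomatization machinery of Theorem~\ref{t: expandedAxiom}. First I would recall from Remark~\ref{r: half} that, for a fixed $k \in \ZZ^+$, the variety $\vr(\m{F}_k(\ZZ))$ is axiomatized relative to $\ell$-pregroups by the two inequations $1 \leq z^{\ell^{2k}}z^\ell$ (which is the $k$-periodicity equation) and $1 \leq \g_k(y)^k x \g_k(y)^{-k} x^\ell$ (which, together with $k$-periodicity, is equivalent to $x\g_k(y)^k \approx \g_k(y)^k x$). As observed in the last Remark of the excerpt, a finite conjunction of inequations $1 \leq r_1, \dots, 1 \leq r_j$ over disjoint variable sets is equivalent to the single inequation $1 \leq r_1 r_2 \cdots r_j$; hence $\vr(\m{F}_k(\ZZ))$ is axiomatized by the single inequation
\[
1 \leq t_k := z_k^{\ell^{2k}}z_k^\ell \mt \g_k(y_k)^k x_k \g_k(y_k)^{-k}x_k^\ell,
\]
where $x_k, y_k, z_k$ are three fresh variables (so $t_k$ involves $3$ variables). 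Here I am using that, in distributive $\ell$-pregroups, $1 \leq a$ and $1 \leq b$ together are equivalent to $1 \leq ab$ exactly as in the Remark; and I should double-check that the variable-disjointness needed for the converse direction is available, which it is since the two inequations use $\{z_k\}$ and $\{x_k, y_k\}$ respectively.

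Next I would apply Theorem~\ref{t: expandedAxiom} with $m = \lvert S\rvert$, taking $\mathcal{V}_k = \vr(\m{F}_k(\ZZ))$ for $k \in S$ and $t_k$ as above. Since the variety $\vr(\{\m{F}_k(\ZZ) : k\in S\})$ is by definition $\bigvee_{k\in S}\vr(\m{F}_k(\ZZ))$, Theorem~\ref{t: expandedAxiom} gives that it is axiomatized, relative to $\mathsf{DLP}$, by the single equation
\[
1 \leq \bigvee_{k \in S} w_k\, t_k\, w_k^\ell = \bigvee_{k\in S} w_k\big[z_k^{\ell^{2k}}z_k^\ell \mt \g_k(y_k)^k x_k \g_k(y_k)^{-k}x_k^\ell\big]w_k^\ell,
\]
where the $w_k$ are fresh conjugating variables disjoint from all the $x_k, y_k, z_k$. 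This is exactly the displayed equation in the statement, and the variable count is $4\lvert S\rvert$: for each $k \in S$ we use $x_k, y_k, z_k$ from $t_k$ plus the conjugating variable $w_k$. One small point to verify is that $\mathsf{DLP}$ is the appropriate ambient variety, which is fine because $\mathsf{LP}_n \subseteq \mathsf{DLP}$ for all $n$ (cited from \cite{GJ}), so every $\vr(\m{F}_k(\ZZ)) \subseteq \mathsf{DLP}$, and the equation is understood to be taken together with the (distributive) $\ell$-pregroup axioms.

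I do not anticipate a serious obstacle here; the result is essentially a bookkeeping corollary. The one place to be careful is matching the exact syntactic form: Theorem~\ref{t: expandedAxiom} requires each $\mathcal{V}_k$ to be presented as ``$1 \leq t_k$'' with the conjugating variables $w_k$ not occurring in $t_k$, so I must make sure that collapsing the two defining inequations of $\vr(\m{F}_k(\ZZ))$ into the single term $t_k$ (via the product/meet trick) is done before invoking Theorem~\ref{t: expandedAxiom}, and that the fresh variables $w_k$ are genuinely new. I should also note explicitly that $\g_k(y_k)^{-k}$ is legitimate notation inside an $\ell$-pregroup term: $\g_k(y_k)$ is always an invertible (group-skeleton) element by Lemma~\ref{l: sigmagamma}, so $\g_k(y_k)^{-1} = \g_k(y_k)^\ell$, and thus $\g_k(y_k)^{-k}$ abbreviates $(\g_k(y_k)^\ell)^k$, a bona fide $\ell$-pregroup term. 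With these remarks in place the corollary follows immediately.
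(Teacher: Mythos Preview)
Your proposal is correct and follows essentially the same approach as the paper: cite Remark~\ref{r: half} for the two-inequation axiomatization of each $\vr(\m{F}_k(\ZZ))$, collapse them into a single inequation $1 \leq t_k$, and then invoke Theorem~\ref{t: expandedAxiom} for the join. One small expository slip: you justify the collapse by citing the \emph{product} trick $1 \leq r_1 r_2$ from the last Remark, but the term $t_k$ you actually write down uses a \emph{meet}, which is what the statement requires and which works for the trivial reason that $1 \leq a \mt b$ iff ($1 \leq a$ and $1 \leq b$)---no disjoint-variable hypothesis needed there.
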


\begin{proof}
By Theorem~\ref{t: axiomatization} and Remark~\ref{r: half}, for each $k \in S$, the variety $\vr(\m{F}_{k}(\ZZ)$ is axiomatized by the equations $1 \leq z_k^{\ell^{2k}}z_k^\ell$ and $1 \leq\g_k(y_k)^k x_k \g_k(y_k)^{-k}x_k^\ell$.

The variety  $\vr(\{\m{F}_{k}(\ZZ) : k \in S\})$ is equal to the join of the varieties $\vr(\m{F}_{k}(\ZZ)$, where $k \in S$; by Theorem~\ref{t: expandedAxiom} it is axiomatized by the   $4\lvert S\rvert$-variable equation $1 \leq \bigvee_{k \in S} w_k[ z_k^{\ell^{2k}}z_k^\ell  \mt  \g_k(y_k)^k x_k \g_k(y_k)^{-k}x_k^\ell]w_k^\ell$.
\end{proof}

\section*{Acknowledgments}
This project has received funding from the European Union’s Horizon 2020 research and innovation programme under the Marie Skłodowska-Curie grant agreement No 101007627. The second author was supported by the Swiss National Science Foundation (SNSF), grant no. 200021\textunderscore215157.

\end{document}